\documentclass[11pt]{amsart}
\usepackage{lmodern}
\usepackage{amsmath, amsthm, amssymb, amsfonts}
\usepackage[normalem]{ulem}
\usepackage{hyperref}

\usepackage{mathrsfs}

\usepackage{verbatim} 
\usepackage{longtable}

\usepackage[style=alphabetic,backend=biber]{biblatex}
\usepackage{import}

\usepackage{mathtools}

\usepackage{tikz}
\usetikzlibrary{decorations.pathmorphing}
\tikzset{snake it/.style={decorate, decoration=snake}}

\usepackage{caption}

\usepackage{tikz-cd}
\usetikzlibrary{arrows}

\theoremstyle{plain}
\newtheorem{thm}{Theorem}[section]
\newtheorem{cor}[thm]{Corollary}
\newtheorem{lem}[thm]{Lemma}
\newtheorem{prop}[thm]{Proposition}

\theoremstyle{definition}
\newtheorem{defn}[thm]{Definition}
\newtheorem{example}[thm]{Example}

\theoremstyle{remark}
\newtheorem{rmk}[thm]{Remark}

\newcommand{\BA}{{\mathbb{A}}}

\newcommand{\BC}{{\mathbb{C}}}

\newcommand{\BG}{{\mathbb{G}}}

\newcommand{\BN}{{\mathbb{N}}}

\newcommand{\BP}{{\mathbb{P}}}
\newcommand{\BQ}{{\mathbb{Q}}}

\newcommand{\BZ}{{\mathbb{Z}}}

\newcommand{\CC}{{\mathbb{C}}}

\newcommand{\mc}{\mathcal}
\newcommand{\mf}{\mathfrak}
\newcommand{\ms}{\mathscr}

\newcommand{\pg}{\mf{p}^{\mf{G}}}
\newcommand{\qg}{\mf{q}^{\mf{G}}}
\newcommand{\ptau}{\prescript{\mf p}{}{\tau}}
\newcommand{\pH}{\prescript{\mf p}{}{\mc H}}

\DeclareMathOperator{\CH}{CH}
\DeclareMathOperator{\Tor}{Tor}
\DeclareMathOperator{\td}{td}
\DeclareMathOperator{\Td}{Td}
\DeclareMathOperator{\ttd}{\widetilde{td}}
\DeclareMathOperator{\tch}{\widetilde{ch}}
\DeclareMathOperator{\tCh}{\widetilde{Ch}}
\DeclareMathOperator{\Ch}{Ch}
\DeclareMathOperator{\fix}{fix}
\DeclareMathOperator{\mov}{mov}
\DeclareMathOperator{\Aut}{Aut}
\DeclareMathOperator{\Mod}{Mod}
\DeclareMathOperator{\Corr}{Corr}
\DeclareMathOperator{\Prym}{Prym}
\DeclareMathOperator{\oPrym}{\overline{\Prym}}
\DeclareMathOperator{\oJ}{\overline{J}}
\DeclareMathOperator{\J}{J}
\DeclareMathOperator{\PMod}{PMod}
\DeclareMathOperator{\codim}{codim}
\DeclareMathOperator{\SL}{SL}
\DeclareMathOperator{\PGL}{PGL}
\DeclareMathOperator{\GL}{GL}
\DeclareMathOperator{\Nm}{Nm}

\DeclareMathOperator{\Gal}{Gal}
\DeclareMathOperator{\sm}{sm}
\DeclareMathOperator{\Supp}{Supp}

\newcommand{\ch}{{\mathrm{ch}}}

\DeclareFontFamily{OT1}{rsfs}{}
\DeclareFontShape{OT1}{rsfs}{n}{it}{<-> rsfs10}{}
\DeclareMathAlphabet{\curly}{OT1}{rsfs}{n}{it}

\newcommand\Hom{\operatorname{Hom}}

\newcommand\Spec{\operatorname{Spec}}

\newcommand{\Coh}{\mathrm{Coh}}
\newcommand{\Pic}{\mathop{\rm Pic}\nolimits}

\usepackage{tikz}
\usepackage{lmodern}
\usetikzlibrary{decorations.pathmorphing}

\bibliography{main}

\begin{document}
\title{Fourier duality on compactified Prym fibrations}
\date{\today}

\author[A. Larsen]{Anne Larsen}
\address{Massachusetts Institute of Technology}
\email{annelars@mit.edu}

\begin{abstract}
We study Fourier-Mukai duality for a class of compactified Prym fibrations including moduli spaces of $\SL$ Higgs bundles over the elliptic locus. This leads to a shadow of the Hausel-Thaddeus conjecture, proof of the Corti-Hanamura motivic decomposition conjecture for these fibrations, and multiplicativity of the perverse filtration.

Our approach can be described as a generalization of the Maulik-Shen-Yin \cite{MSY} package for compactified Jacobian fibrations to the case when the dual abelian fibration is a stack. This forces us to move beyond the case of full supports treated in \cite{MSY}. Technical tools include a new pullback identity for the stacky Grothendieck-Riemann-Roch tau functor introduced by To\"en \cite{Toen}, results on descent of (Arinkin-)Poincar\'e sheaves, and a comparison of Poincar\'e sheaves of the Prym varieties of families of smooth and nodal curves along the lines of \cite{FHHO}.
\end{abstract}

\maketitle

\setcounter{tocdepth}{1} 

\tableofcontents
\setcounter{section}{-1}

\section{Introduction}

\subsection{Overview}
Fix a smooth complex projective curve $C$ of genus $g \ge 2$. The \textit{perverse filtration} on the cohomology of the moduli space $M_{\GL_n}(C)$ of semistable (rank $n$, degree $d$) Higgs bundles on $C$ has been an object of intense study over the past fifteen years, motivated especially by the $P=W$ conjecture of \cite{PW}, as well as connections with enumerative geometry \cite{CDP}. One consequence of the $P=W$ conjecture, which was also historically the last step in its proof, is the statement that the perverse filtration is multiplicative with respect to cup product.

Initial proofs of the multiplicativity of the perverse filtration, in \cite{MSPW} and \cite{HMMS}, seemed to rely heavily on the specific topology of the Hitchin fibration. However, recent work of Maulik-Shen-Yin \cite{MSY} proves that a similar result holds for all \textit{dualizable abelian fibrations}, a notion of compactified abelian schemes admitting a (derived-equivalent) dual compactified abelian scheme. (The main examples to date are compactified Jacobians of families of integral, locally planar curves; this includes in particular an open subset of $M_{\GL_n}(C)$, which is the compactified Jacobian of the family of \textit{spectral curves} on the smooth surface $|T^*C|$. But folklore now expects similar results to hold in greater generality.)
%Folklore now suggests that that such a result should hold for an even more general class of abelian fibrations (see \cite{MSY-survey} for further discussion).

This work is a generalization of the \cite{MSY} framework to the case when the dual abelian fibration of $A$ exists not as a variety, but as a stack. As noted, e.g., in \cite{HT}, this situation naturally arises when one considers moduli spaces $M_{\SL_n}(C)$ of semistable, fixed degree $\SL_n$-Higgs bundles on a curve $C$ (which form a compactified Prym fibration of the family of spectral curves over $C$). In this case, the correct dual is the stack quotient $M_{\PGL_n}(C) := [M_{\SL_n}(C)/\Gamma]$, where the finite abelian group of $n$-torsion line bundles $\Gamma := \J^0(C)[n]$ acts on the moduli space of $\SL_n$-Higgs bundles by tensor product.

Using additionally the hyperk\"ahler structures on these moduli spaces, Hausel and Thaddeus \cite{HT} suggested that $M_{\SL_n}(C)$ and $M_{\PGL_n}(C)$ give an example of (noncompact) SYZ mirror partners. As a result, they conjectured, and proved in the case $n \le 3$, an agreement of (stringy) Hodge numbers. (The conjectured agreement of Hodge numbers is trivial in the case of $M_{\GL_n}(C)$, which is self-dual. Thus the pair $M_{\SL_n}(C)$ and $M_{\PGL_n}(C)$ provides the simplest nontrivial example for which to test such a mirror symmetry conjecture.) The Hausel-Thaddeus conjecture was verified for all $n$ in \cite{GWZ}, using $p$-adic integration, and in \cite{MS}, using the Ng\^o endoscopic correspondence of \cite{Yun}. In this paper, Fourier-Mukai transform provides another perspective on the relationship between the cohomology of the moduli space $M_{\SL_n}(C)$ and the stringy cohomology (i.e., the cohomology of the inertia stack) of its dual $M_{\PGL_n}(C)$.

\subsection{Main theorem}
Given a family of integral curves with planar singularities $X \to B$ over a smooth base $B$, equipped with a flat degree $n$ morphism $X \to C \times B$ over $B$, we define the compactified Prym fibration $\pi: \oPrym(X/B) \to B$ to be the kernel of the relative norm map $\Nm_{X/C}: \oJ^0(X/B) \to \J^0(C) \times B$. The dual fibration is defined to be the stack quotient $\oPrym(X/B)^\vee := [\oPrym(X/B)/\Gamma]$, for $\Gamma := \J^0(C)[n]$. The inertia stack of the dual is then the disjoint union of fixed loci
$$I\oPrym(X/B)^\vee = \coprod_{\gamma \in \Gamma} [\oPrym(X/B)^{\gamma}/\Gamma].$$

We call such a compactified Prym fibration \textit{good} if it satisfies the list of conditions in Definition \ref{defn: good fibration}: some mild assumptions on flatness and smoothness, two support conditions satisfied by weak abelian fibrations (in the sense of Ng\^o), and a technical hypothesis that the generic curve of each support be at worst nodal.  This definition is in particular satisfied by $\SL_n$ Higgs-type moduli spaces (with $d=0$) over the elliptic locus, see Proposition \ref{prop: SL Higgs is good}.

\begin{thm} \label{thm: main intro}
    Let $\pi: M \to B$ be a good compactified Prym fibration.\\
    (a) There is an isomorphism of complex mixed Hodge structures up to Tate twists $H^*(M; \CC) \cong H^*(IM^\vee; \BC)$.\\
    (b) There is a motivic decomposition of $R\pi_* \BQ_M$ into its shifted perverse components.\\
    (c) The perverse filtration $P_\bullet H^*(M; \BC)$ is multiplicative with respect to cup product.
\end{thm}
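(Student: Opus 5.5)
The plan is to follow the Maulik--Shen--Yin strategy, replacing the dual abelian fibration by the stack $M^\vee=[M/\Gamma]$. Throughout, stacky Chow groups and the To\"en Grothendieck--Riemann--Roch tau functor play the role that ordinary Chow groups and the Chern character play in \cite{MSY}.

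The first step is to construct a Fourier--Mukai kernel on $M\times_B M^\vee$. Concretely, this is a $\Gamma$-equivariant (Arinkin--)Poincar\'e sheaf $\mathcal P$ on $M\times_B M$. I would obtain it by restricting Arinkin's Poincar\'e sheaf on $\oJ^0(X/B)\times_B\oJ^0(X/B)$ and descending along the quotient by $\Gamma$ in the second factor. The descent works because twisting by $\Gamma$ is dual to translation by the kernel of $\oJ^0\to \oJ^0/\mathrm{Prym}$. I would then show that $\mathcal P$ induces a derived equivalence $D^b(M)\simeq D^b(M^\vee)$, or at least an inverse up to shift and sign on the smooth locus.

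Next, I would pass to cohomological correspondences. Taking $\ptau(\mathcal P)$ via To\"en's tau functor gives a correspondence $\mathfrak F\colon H^*(M)\to H^*(IM^\vee)$, and its inverse transform gives $\mathfrak F^{-1}$. To show these are mutually inverse up to Tate twist, which proves (a), I need a pullback identity for $\tau$. This identity should express $\tau$ of a convolution restricted to the inertia components $[M^\gamma/\Gamma]$ in terms of ordinary Chern characters twisted by the eigen-decomposition of $\gamma$. Combined with the Fourier inversion on the level of sheaves, this gives the isomorphism. Hodge compatibility follows because everything is algebraic.

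For (b) and (c), I would decompose $\mathfrak F$ by codimension as $\mathfrak F=\sum_i \mathfrak F_i$. The key claim, following \cite{MSY}, is the Fourier vanishing / perversity statement: the components $\mathfrak F_i$ respect the perverse filtration, with $\mathfrak F_i\circ \mathfrak F^{-1}_j$ vanishing outside the expected range of degrees. Granting this, the projectors $\mathfrak p_k$ built from the $\mathfrak F_i$ give the motivic (Corti--Hanamura) decomposition of $R\pi_*\BQ_M$, which is (b). Multiplicativity, which is (c), then follows as in \cite{MSY}: cup product is intertwined by $\mathfrak F$ with Pontryagin-type convolution on the dual, and convolution visibly respects the grading.

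I expect the main obstacle to be proving the vanishing statement for $\mathfrak F_i$. Since the supports are no longer full (the $\gamma$-fixed loci $M^\gamma$ have smaller supports), one cannot check this over the generic point alone. Instead it must be checked at the generic point of each support of $R\pi_*\BQ_M$, and it is here that the hypotheses of Definition~\ref{defn: good fibration} enter. I would first try the following: over each support the generic curve is nodal, so the fibration there is a compactified Prym of a nodal family. Comparing Poincar\'e sheaves along the lines of \cite{FHHO} with those of the normalized smooth curves reduces the needed degree bounds to the abelian-scheme case, where the Beauville decomposition gives them. The support conditions of Ng\^o type then ensure that checking on these generic points suffices.
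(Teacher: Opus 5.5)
There is a genuine gap in (b): you put the generic-point/FHHO reduction where the paper needs a global dimension argument. A motivic decomposition requires $\pg_k$ to be idempotent in $\CH_*$. That is the Chow-level Fourier vanishing $\mf G^{-1}_{\beta,i}\circ\mf G_{\gamma,j}=0$ for $\beta\neq\gamma$ or $i+j<2h$; note this is composition on the dual side, not $\mf F_i\circ\mf F^{-1}_j$. It cannot be checked at generic points of supports. A Chow class vanishing over a dense open subset of $B_\gamma$ need not vanish, and the Ng\^o full-support condition only controls objects of $D^b_c(B)$.

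The missing idea is a support bound. The paper shows $\dim(\Supp\mc F^n\cap M^{\gamma,\vee}\times M^{\beta,\vee})\le b-h+h_\gamma+h_\beta$ for $\mc F^n=\mc P^{-1}\circ(j_*\mc P)^{\otimes n}$. It does this by pulling back to $\oJ^0$, running Arinkin's codimension-$g(\tilde X_u)$ estimate on the fixed loci, and applying the Severi inequality. It then applies the Adams-operation trick $\ch^{loc,i}\circ\psi^n=n^i\ch^{loc,i}$ to the $\gamma$-localized, $\rho_\gamma$-twisted kernels on an approximation scheme. The case $\beta\neq\gamma$ is a separate argument using the $\Gamma$-action and nondegeneracy of the Weil pairing.

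The FHHO comparison, Yun's lemma for the endoscopic correspondence $\Sigma$, and full support on $B_\gamma$ enter only afterwards. Their role is to identify the homological realization of the already-constructed projectors with the $\kappa$-isotypic perverse truncations. Even there, the smooth comparison case is an extension of an abelian scheme by a finite group. It is handled via Brochard's duality for abelian stacks before Deninger--Murre applies, not directly by a Beauville-type argument.

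Part (c) has the same problem. There is no group law on a compactified Prym, and ``convolution visibly respects the grading'' is exactly what must be proved. The paper needs a second Arinkin-type bound, $\dim(\Supp\mc K\cap M^{\gamma,\vee}\times_B M^{\beta,\vee}\times_B M^{\alpha,\vee})\le b-h+h_\gamma+h_\beta+h_\alpha$ for $\mc K=\mc P^{-1}\circ\ms O_{\Delta^s_M}\circ(\mc P\boxtimes\mc P)$. This places the $(\gamma,\beta,\alpha)$-component of the Todd-twisted $\tau(\mc K)$, called $\mf C$, in $\Corr_B^{\ge h_\alpha-h_\beta-h_\gamma}$. Combining this with $\mf G\circ\mf C\circ(\mf G^{-1}\times\mf G^{-1})=[\Delta^s_M]$ and Fourier vanishing, a degree count shows that $\qg_{k+l+1}\circ[\Delta^s_M]\circ(\pg_k\times\pg_l)$ has degree both $0$ and $\ge1$, hence vanishes. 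Because the inertia components have different dimensions, the count closes only thanks to $h-h_\gamma=b-b_\gamma$. That identity is where the inequality $\delta(B_\gamma)\ge\codim B_\gamma$ is used; the full-support half of the Ng\^o conditions is what justifies the generic check, and only in the realization step.

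Your outline of (a) is essentially right, with two caveats. An inverse ``on the smooth locus'' is not enough: you need $\mc P\circ\mc P^{-1}\cong\ms O_{\Delta_M}$ and $\mc P^{-1}\circ\mc P\cong\ms O_{\Delta_{M^\vee}}$ globally, and the second uses indecomposability of the pieces of $D^b\Coh(M^\vee)$. Also, the $\tau$-identity actually required is compatibility with Gysin pullback along $\delta_M$, whose inertia map is not regular (Theorem~\ref{thm: tau properties}(e)), together with the Todd twists of Definition~\ref{def: G and p}.
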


The first statement is a simple consequence of the existence of a derived equivalence between $M$ and $M^\vee$, an enhancement of the nonrelative case treated in \cite{Popa} rather than a consequence of the \cite{MSY} machine; however, we include it as a shadow of the Hausel-Thaddeus conjecture. The second part (see \ref{thm: main sec 2}(i) for a more precise statement) is a Prym version of \cite[Theorem 0.3]{MSY}, and proves the Corti-Hanamura motivic decomposition conjecture \cite{Corti-Hanamura} for $\pi: M \to B$. The third statement expands the known list of abelian fibrations whose perverse filtrations are multiplicative. (Conditions under which one should expect multiplicativity or, stronger, the existence of a multiplicative splitting are the subject of ongoing research; see the survey \cite{MSY-survey} for more discussion and references.)

\begin{rmk}
    The Hitchin fibration for $M_{\SL_n}(C)$ is not a good compactified Prym fibration as defined above; it is a compactified Prym fibration for a family of curves including non-integral (both non-irreducible and non-reduced) members. In general it appears difficult to extend Arinkin's Poincar\'e sheaf, which plays a key role in the proof, to a family of curves with non-reduced members. In \cite[\S 5.4]{MSY}, for the case of $M_{\GL_n}(C)$, this difficulty is circumvented using Springer theory and suitable moduli spaces of parabolic Higgs bundles.
    This argument does not immediately generalize to $\SL_n$, but in upcoming work we will present a different proof of the multiplicativity of the perverse filtration on $H^*(M_{\SL_n}(C))$. It is also worth pointing out that in contrast to the case of $\GL_n$, the multiplicativity statement is not enough to complete the proof of the $P=W$ conjecture for $\SL_n$, which remains open in general.
    (Indeed, as far as the author is aware, there is currently no literature on the weight filtration on the non-$\Gamma$-fixed classes of the $\SL_n$ character variety when $n$ is composite, other than the $E$-polynomial computation in \cite{Mereb}.)
\end{rmk}

\begin{rmk} \label{rmk: LSV}
    Our use of the term \textit{compactified Prym fibration} for the situation of a family of (possibly singular) curves with a (not necessarily \'etale) degree $n$ morphism to a fixed smooth base curve should be contrasted with the compactified Prym fibrations discussed, e.g., in \cite{LSV}, for an \'etale double cover of a family of (possibly singular, non-fixed) base curves. The recent preprint \cite{Huishi} shows that an \'etale double cover-type compactified Prym is a dualizable abelian fibration, i.e., that the results of \cite{MSY} apply directly. (Although the two definitions of Prym variety are not entirely comparable except in the case of an \'etale double cover of a smooth base curve, an important difference is that the \cite{LSV} definition includes only the connected component of the identity, whereas ours takes the whole kernel of the norm map. Taking only the identity component has the effect of restricting the discussion to abelian schemes, rather than abelian scheme extensions of finite groups, so that the \cite{MSY} setup suffices for the purposes of \cite{Huishi}.)
\end{rmk}

\subsection{Approach}
The starting point of this paper, and before it of \cite{MSY}, is the observation that if $A \to S$ is an abelian scheme over a smooth base $S$, with dual $A^\vee := \Pic^0(A/S)$, then cohomological Fourier-Mukai transform by the Chern characters of the \textit{Poincar\'e line bundle} (the normalized universal line bundle) on $A \times_S A^\vee$ recovers the perverse (Leray) filtration on $H^*(A)$. The original proof in \cite{Deninger-Murre} relied heavily on the abelian group structure of the fibers of $A \to S$.

Given a family of integral curves $X \to B$ as described before, Arinkin \cite{Arinkin} constructed a \textit{Poincar\'e sheaf} on the compactified Jacobians $\oJ^0(X/B) \times_B \oJ^0(X/B)$ as an extension of the Poincar\'e line bundle over the locus $U \subset B$ for which $X_U \to U$ is smooth. The insight of \cite{MSY} was that cohomological correspondences defined in terms of the Chern characters of this sheaf exactly as in the smooth case would continue to recover the perverse filtration, given the (somewhat restrictive) condition of \textit{full supports}, which in the end allows one to check compatibility with the perverse filtration on the open subset $U$.

We follow the same approach, defining cohomological correspondences in terms of (suitable stacky) Chern characters of the (suitably descended) Arinkin-Poincar\'e sheaf and checking that they recover the perverse filtration as in \cite{MSY}. The main difficulty is that we can no longer rely on the full support condition in this setting; this is related to the fact that the various connected components of the inertia stack $IM^\vee$ can map onto proper subsets of the base $B$. Thus, instead of reducing to the statement of \cite{Deninger-Murre} for abelian schemes, we are forced to consider compactified Prym fibrations of families of nodal curves over $C$. To check whether our projectors recover the perverse filtration in this setting, we use a comparison of the Poincar\'e sheaves on the compactified Prym variety of a family of nodal curves and of a family of resolutions, along the lines of \cite{FHHO}. This reduces the problem to the Prym variety of a family of smooth curves, which is an extension of a finite group by an abelian scheme; finally, to reduce from this to abelian schemes, we use the duality for commutative abelian group stacks described in \cite{Brochard}.

One technical challenge takes up a significant fraction of the paper: although a singular, stacky Grothendieck-Riemann-Roch $\tau$ functor, taking values in the Chow groups of the inertia stack and generalizing the Fulton ``Todd-twisted Chern character" functor of \cite[\S 18]{Fulton}, was defined by To\"en in \cite{Toen}, we were unable to find in the literature a full list of the properties needed to carry out our argument. (In particular, there was a question of compatibility of $\tau$ with pullback by a representable regular embedding such that the induced embedding of inertia stacks is no longer regular.) This problem is treated extensively in \S \ref{sec: GRR}, and we hope that the main theorem of that section (Theorem \ref{thm: tau properties}) may be of some independent interest.

\subsection{Plan of the paper}
In \S \ref{sec: GRR}, we start by developing the theory of the stacky Grothendieck-Riemann-Roch $\tau$ functor on global quasiprojective quotients by finite abelian groups, along the lines of \cite{Toen} and \cite{Edidin-RRforDM}. The main goal is to establish compatibility of this functor with some representable regular embeddings of singular stacks. However, we also (re)prove various other identities necessary for generalizing the arguments of \cite{MSY} to the stacky case (see Theorem \ref{thm: tau properties}). A reader willing to take these for granted can skip this section, returning to \S \ref{subsec: stacky tau background and def} for definitions as needed.

In \S \ref{sec: main theorem}, we recall the necessary background and state a more precise form of the main theorem. Definitions relating to relative correspondences and the perverse filtration can be found in \S \ref{subsec: stacky MSY background}. In \S \ref{subsec: compactified Prym} we discuss compactified Prym fibrations, and \S \ref{subsec: Poincare} recalls the construction of the Arinkin Poincar\'e sheaf for Pryms. With this background, in \S \ref{subsec: main theorem} we define the projectors used to prove Theorem \ref{thm: main intro}(b) and (c), which are restated in a sharpened form in Theorem \ref{thm: main sec 2}. We prove Theorem \ref{thm: main intro}(a) in \S \ref{subsec: proof of (a)}.

In \S \ref{sec: homological realization}, we show the compatibility of the projectors defined in \S \ref{sec: main theorem} with the perverse filtration. In \S \ref{subsec: hom realization smooth} we generalize the result of \cite{Deninger-Murre} from abelian schemes to extensions of a finite abelian group by an abelian scheme. In \S \ref{subsec: FHHO Pryms in families} we compare the Poincar\'e sheaf of a family of nodal curves to that of a family of normalizations, proving a relative, Prym version of the identity of \cite[Theorem 5.3]{FHHO}. With these preliminaries, in \S \ref{subsec: proof of realization} we prove that the projectors defined in \S \ref{sec: main theorem} indeed recover the perverse filtration.

Finally, in \S \ref{sec: projectors and mult} we turn to the proofs of Theorem \ref{thm: main intro}(b) and (c). In \S \ref{subsec: FV} we prove an analogue of the Fourier vanishing result of \cite[\S 3]{MSY}, using the Prym version of the needed Arinkin-style convolution kernel dimension bounds established in \S \ref{subsec: supp K}. We complete the proofs of Theorem \ref{thm: main intro}(b) and (c) in \S \ref{subsec: pf of projectors} and \ref{subsec: pf of multiplicativity}, respectively.

\textbf{Acknowledgements:} I thank my advisor, Davesh Maulik, for suggesting the problem and for years of helpful conversations. Thanks also to Mark de Cataldo for pointing out a mistake in an initial version of Proposition \ref{prop: SL Higgs is good}, and to David Zhiyuan Bai for a discussion of seesaw lemmas. This work was partly supported by an NSF Graduate Research Fellowship under grant no. 2141064 and a Simons Investigator Grant. AI was not used in the preparation of this paper.

\section{Stacky Grothendieck-Riemann-Roch} \label{sec: GRR}
Grothendieck-Riemann-Roch theorems for stacks have been intensively studied (see, e.g., \cite{Kock}, \cite{Toen}, \cite{EG-RRequivChow} and subsequent papers, \cite{Roy} and subsequent, \cite{Khan}).
However, the author is not aware of a reference that describes the compatibility of a generalization of the ``Todd-twisted Chern character" $\tau$ functor of \cite[\S 18]{Fulton} with pullback by a regular embedding of singular Deligne-Mumford stacks, which is crucial for an argument along the lines of \cite{MSY}. (Most references seem not to treat compatibility with pullback at all, and \cite[Th\'eor\`eme 4.10]{Toen} only addresses representable \'etale pullback, though using the properties established there it is not hard to give a pullback formula for a morphism of smooth stacks, see the proof of Theorem \ref{thm: tau properties}(d).)

In \S \ref{subsec: stacky tau background and def} we define the stacky Chern class, Todd class, and $\tau$ functors for global quotients of quasiprojective varieties by finite abelian groups; our definitions are somewhat more concrete, though equivalent to, those of \cite{Toen}. Then \S \ref{subsec: stacky tau identities} is dedicated to establishing the various needed properties of $\tau$ on such quotients, summarized in Theorem \ref{thm: tau properties}. (Various results also hold in the generality of DM stacks, but for simplicity of exposition, we restrict ourselves to this case.) Some readers may prefer to skip this section, taking these foundational results for granted.

\subsection{Background and definitions} \label{subsec: stacky tau background and def}
In this section, we give a construction of the stacky $\tau$ functor, in our somewhat restricted context:
\begin{defn} \label{defn: nice quotient}
    By \textit{nice quotient} we will mean a quotient stack $[X/G]$, where $X$ is a quasiprojective complex variety equipped with an action of a finite abelian group $G$. Given a stack $\mc X$, we use \textit{nice presentation} to mean a global quotient presentation $\mc X = [X/G]$ of this form.
\end{defn}
Throughout this section, we will use $X, Y, \ldots$ to denote varieties; $G, H, \ldots$ for finite abelian groups; and $\mc X, \mc Y, \ldots$ for the resulting quotient stacks.

\subsubsection{Equivariant $K$-theory} \label{subsubsec: equivar K-theory}
Let $K_0(\mc X)$, respectively $K^0(\mc X)$, be the Grothendieck group of the exact category of $G$-equivariant coherent, respectively locally free, sheaves on $X$ (or equivalently, of coherent, respectively locally free, sheaves on the quotient stack $\mc X$). These have the usual functorialities of $K$-theory:
\begin{enumerate}
    \item Tensor product equips $K^0(\mc X)$ with the structure of a commutative ring, and $K_0(\mc X)$ with the structure of a $K^0(\mc X)$-module.
    \item There is a natural map $K^0(\mc X) \to K_0(\mc X)$. If $\mc X$ is smooth, this map is an isomorphism \cite[Corollary 5.8]{Thomason-foundations}.
    \item Given a $G$-equivariant morphism $f: X \to Y$, we obtain a pullback morphism $f^*: K^0(\mc Y) \to K^0(\mc X)$. If $f$ is $\Tor$-finite, there is also a pullback morphism $f^*: K_0(\mc Y) \to K_0(\mc X)$. More generally, if $f: \mc X \to \mc Y$ is any morphism of stacks (not necessarily representable), we have a pullback $f^*: K^0(\mc Y) \to K^0(\mc X)$.
    \item In particular, taking $Y = \Spec \BC$, we obtain a morphism $K^0(BG) \to K^0(\mc X)$, giving $K_0(\mc X)$ the structure of a $K^0(BG)$-module. Note that $K^0(BG) \cong \BZ[G^\vee]$ is the representation ring of $G$, or alternatively, since $G$ is finite abelian, the group ring of $G^\vee := \Hom(G, \BC^*)$.
    \item Given a proper $G$-equivariant morphism $f: X \to Y$, we obtain a morphism $f_*: K_0(\mc X) \to K_0(\mc Y)$. 
    \item In the situation of (5), given $\mc F \in K_0(\mc X)$ and $\mc E \in K^0(\mc Y)$, the canonical (and therefore $G$-equivariant) isomorphism of sheaves of \cite[Tag 08EU]{stacks-project}  gives a projection formula in equivariant $K$-theory $f_*(f^* \mc E \cdot \mc F) = \mc E \cdot f_* \mc F$.
    \item More generally, given any proper morphism of stacks $f: \mc X \to \mc Y$, not necessarily representable, we have a pushforward $f_*: K_0(\mc X) \to K_0(\mc Y)$ defined in terms of higher pushforwards of coherent sheaves on $\mc X$. Again, the projection formula holds \cite[Proposition 3.7]{Khan}.
    \item Given a $G$-equivariant closed embedding $X \hookrightarrow Y$, we define the relative $K$-group $K^0_{\mc X}(\mc Y)$ to be the Grothendieck group of $G$-equivariant finite complexes of locally free sheaves on $Y$ with cohomology supported on $X$. Given a $G$-equivariant morphism $f: Y \to Z$ and $A \subset Z$ with $f^{-1}(A) \subset X$ there is a pullback
    $f^*: K^0_{\mc A}(\mc Z) \to K^0_{\mc X}(\mc Y).$
    Furthermore, given another closed embedding $B \hookrightarrow Y$, tensor product gives
    $$\cap: K^0_{\mc X}(\mc Y) \times K_0(\mc B) \to K_0(\mc X \cap \mc B).$$
    \item Linearly extending the function $[\mc E] \mapsto [\wedge^k \mc E]$ of $G$-equivariant vector bundles $\mc E$, we obtain a well-defined map $\lambda^k: K^0(\mc X) \to K^0(\mc X)$, and the collection of maps $\lambda^k$ satisfies the properties of a $\lambda$-ring \cite[Lemma 2.4]{Kock}. We also define an operation $\lambda_{-1} := \sum_{i \ge 0} (-1)^i \lambda^i$ on classes $v \in K^0(\mc X)$ such that $\lambda^i(v) = 0$ for $i$ sufficiently large. By definition of a $\lambda$-ring, we have $\lambda_{-1}(\mc E + \mc F) = \lambda_{-1}(\mc E) \cdot \lambda_{-1}(\mc F)$ for all $\mc E, \mc F \in K^0(\mc X)$ on which $\lambda_{-1}$ is defined.
    \item Given a $G$-equivariant pullback square
    \[
    \begin{tikzcd}
        Z \arrow[r] \arrow[d]& W \arrow[d] \\
        X \arrow[r, "f"] & Y
    \end{tikzcd}
    \]
    and an $f$-perfect, $G$-equivariant complex $\ms F \in D^b\Coh(X)$, there is a refined Gysin map $f^{\ms F}: K_0(\mc W) \to K_0(\mc Z)$ defined by $[\ms G] \mapsto \sum_i (-1)^i [\Tor_i^Y(\ms F, \ms G)]$,
    satisfying various properties analogous to those of the refined Gysin map on Chow groups (see \cite[\S 3]{Anderson-Payne}). If $\ms F = \ms O_X$, we write $f^{\ms O_X} =: f^!$; if further $W = Y$, then $f^! = f^*$.
    \item Given $\mc X = [X/G]$ and $\mc Y = [Y/H]$, we have an exterior product
$\boxtimes: K_0(\mc X) \times K_0(\mc Y) \to K_0(\mc X \times \mc Y)$
defined on a $G$-equivariant sheaf $\mc F$ on $X$ and an $H$-equivariant sheaf $\mc G$ on $Y$ by
$$[\mc F] \boxtimes [\mc G] := [p_1^* \mc F \otimes p_2^* \mc G]$$
where $p_1^* \mc F \otimes p_2^* \mc G$ is the sheaf on $X \times Y$ with induced $G \times H$-equivariant structure. (To see that this descends to the Grothendieck group, note that $p_1^* \mc F \otimes p_2^* \mc G = Lp_1^* \mc F \otimes^L Lp_2^* \mc G$.)
\end{enumerate}

The following seems to be well-known, but for lack of a reference, we include a proof:
\begin{lem} \label{lemma: pushforward exterior product}
    If $i: X \to X'$, respectively $j: Y \to Y'$, is a proper and $G$-, respectively $H$-, equivariant morphism of quasiprojective varieties, then
    $$(i \times j)_*(\mc F \boxtimes \mc G) = (i_* \mc F) \boxtimes (j_* \mc G) \in K_0(\mc X' \times \mc Y')$$
    for $\mc F \in K_0(\mc X)$ and $\mc G \in K_0(\mc Y)$.
\end{lem}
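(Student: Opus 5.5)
We factor $i \times j = (i \times \mathrm{id}_{Y'}) \circ (\mathrm{id}_X \times j)$. Functoriality of pushforward in equivariant $K$-theory then reduces the lemma to the case where one of the two maps is the identity. By symmetry, it suffices to show
$$(i \times \mathrm{id}_Y)_*(\mc F \boxtimes \mc G) = (i_*\mc F) \boxtimes \mc G$$
for a proper $G$-equivariant $i: X \to X'$ and an $H$-equivariant coherent sheaf $\mc G$ on $Y$. Both sides are additive in $\mc F$ and in $\mc G$: the left because $\boxtimes$ is biadditive and exact in each variable, being flat pullback followed by an exterior tensor product over $\BC$. We may therefore take $\mc F$ and $\mc G$ to be honest equivariant coherent sheaves.

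On the left, $(i \times \mathrm{id})_*$ is $\sum_k (-1)^k [R^k(i\times \mathrm{id})_*(p_1^*\mc F \otimes p_2^*\mc G)]$. Write $p_1: X \times Y \to X$, $p_1': X' \times Y \to X'$ and $p_2, p_2'$ for the projections to $Y$. Then $p_1^*\mc F \otimes p_2^*\mc G = p_1^*\mc F \otimes (i\times\mathrm{id})^* p_2'^*\mc G$. Since $p_2'^*\mc G$ need not be locally free, the $K$-theoretic projection formula (item (6)) does not apply directly; we instead use the derived projection formula at the level of complexes:
$$R(i\times \mathrm{id})_*\bigl(p_1^*\mc F \otimes^L L(i\times\mathrm{id})^* p_2'^*\mc G\bigr) \cong R(i\times\mathrm{id})_* p_1^*\mc F \otimes^L p_2'^*\mc G.$$
This requires only that $i \times \mathrm{id}$ be qcqs, and the isomorphism is canonical, hence $G\times H$-equivariant. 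The underived tensor product on the left agrees with the derived one by the remark in item (11): $p_1^*\mc F \otimes p_2^*\mc G = Lp_1^*\mc F \otimes^L Lp_2^*\mc G$, and $L(i\times \mathrm{id})^*Lp_2'^* = Lp_2^*$.

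Next, flat base change along the flat projection $p_1': X'\times Y \to X'$, whose pullback along $i$ is $p_1$, gives $R(i\times \mathrm{id})_* p_1^*\mc F \cong p_1'^* Ri_*\mc F$. This isomorphism is again canonical and therefore equivariant. Combining the two isomorphisms, the left side is the class of
$$p_1'^*Ri_*\mc F \otimes^L p_2'^*\mc G.$$
Because both pullbacks are flat and the factors live on different products over $\BC$, the Künneth/Tor-independence remark of (11) shows this complex has cohomology sheaves
$$\bigoplus_{a+b=k} p_1'^* R^a i_*\mc F \otimes p_2'^* \mathcal{H}^b(\mc G).$$
Taking the alternating sum yields $\sum_a (-1)^a [R^ai_*\mc F] \boxtimes [\mc G] = (i_*\mc F)\boxtimes \mc G$.

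The main point needing care is equivariance: that the projection-formula and flat base change isomorphisms respect the $G \times H$-linearizations. The plan is to argue that they are natural isomorphisms of functors compatible with the action maps. Equivalently, one can phrase the whole argument on the quotient stacks $[X/G]\times[Y/H]$, where the projection formula and flat base change hold for representable morphisms of algebraic stacks; this removes the need to check equivariance by hand.
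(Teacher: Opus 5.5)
Your proof is correct and is essentially the paper's argument. The paper also writes $(i\times j)_*$ as $(\mathrm{id}_X\times j)_*$ followed by $(i\times\mathrm{id}_{Y'})_*$, combines the projection formula with base change, and gets equivariance from the canonicity of these isomorphisms; the only difference is that you prove the one-sided identity and apply it twice, so you only need flat base change, whereas the paper's single computation first commutes $(\mathrm{id}_X\times j)_*$ past $(i\times\mathrm{id}_{Y'})^*$ by a Tor-independent (non-flat) base change and then applies two flat base changes.
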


\begin{proof}
    We start with the corresponding statement at the level of non-equivariant complexes of sheaves.
    Consider the Cartesian squares
    \[\begin{tikzcd}
	{X \times Y} & {X' \times Y} & Y \\
	{X \times Y'} & {X' \times Y'} & {Y'} \\
	X & {X'} & {\Spec \BC}
	\arrow[from=1-1, to=1-2]
	\arrow[from=1-1, to=2-1]
	\arrow[from=1-2, to=1-3]
	\arrow[from=1-2, to=2-2]
	\arrow[from=1-3, to=2-3, "a"]
	\arrow[from=2-1, to=2-2]
	\arrow[from=2-1, to=3-1]
	\arrow[from=2-2, to=2-3]
	\arrow[from=2-2, to=3-2]
	\arrow[from=2-3, to=3-3, "b"]
	\arrow[from=3-1, to=3-2, "c"]
	\arrow[from=3-2, to=3-3, "d"]
\end{tikzcd}\]
    (where we use the same letter to denote all parallel arrows, and all functors are assumed derived). Then, for $\ms F \in D^b\Coh(X)$ and $\ms G \in D^b\Coh(Y)$, we have
    $$(i \times j)_*(\ms F \boxtimes \ms G) := c_* a_*((a^*b^*\ms F) \otimes (c^*d^* \ms G)) = c_*(b^* \ms F \otimes a_*c^*d^*\ms G)$$
    by the projection formula \cite[Tag 08EU]{stacks-project}. Next, note that $X \times Y'$ and $X' \times Y$ are $\Tor$-independent over $X' \times Y'$, and so we have the base change formula $a_* c^* = c^* a_*$ \cite[Tag 08IB]{stacks-project}. Thus
    $$ c_*(b^* \ms F \otimes a_*c^*d^*\ms G) = c_*(b^* \ms F \otimes c^*a_*d^*\ms G) = (c_*b^* \ms F) \otimes (a_*d^* \ms G)$$
    by another application of the projection formula. Now, as $d$ and $b$ are flat, by another application of base-change, we have
    $$(c_* b^* \ms F) \otimes (a_* d^* \ms G) = (b^* c_* \ms F) \otimes (d^* a_* \ms G) = c_* \ms F \boxtimes a_* \ms G =: i_* \ms F \boxtimes j_* \ms G.$$
    Finally, note that since all isomorphisms involved are canonical, they respect the $(G \times H)$-equivariant structure.
\end{proof}

We end with the features unique to equivariant $K$-theory. First, if $g \in G$ acts trivially on $X$, we note that any $\mc E \in K_0(\mc X)$ or $K^0(\mc X)$ can be uniquely decomposed into a sum of $g$-eigensheaves $\sum_\zeta \mc E_\zeta$, where $g$ acts on $\mc E_\zeta$ with eigenvalue $\zeta$. We write $\mc E^{\fix} := \mc E_1$ and $\mc E^{\mov} := \sum_{\zeta \ne 1} \mc E_\zeta$. We also write $K_0(\mc X) = K_0(\mc X)^{\fix} \oplus K_0(\mc X)^{\mov}$ for the corresponding decomposition of $K_0(\mc X)$ (and similarly for $K^0(\mc X)$).

Lastly, we come to fixed point localization. Given $g \in G$, let $\mf m_g \subset K^0(BG) \cong \BZ[G^\vee]$ be the prime ideal of character sums vanishing on $g$. For any $K^0(BG)$-module $M$ we write $M_{\mf m_g}$ for the localization of $M$ at $\mf m_g$.
\begin{prop} \label{prop: Thomason localization}
    \cite[Th\'eor\`eme 2.1]{Thomason} Given $g \in G$, the inclusion of the fixed locus $\iota: X^g \hookrightarrow X$ induces an isomorphism $\iota_*: K_0(\mc X^g)_{\mf m_g} \xrightarrow{\sim} K_0(\mc X)_{\mf m_g}$.
\end{prop}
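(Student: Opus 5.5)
This is Thomason's concentration theorem, and I would prove it by dévissage on the open complement. Set $U := X \setminus X^g$, a $G$-stable open subset (since $G$ is abelian, $X^g$ is $G$-stable). The localization exact sequence in equivariant $G$-theory
$$K_0(\mc X^g) \xrightarrow{\iota_*} K_0(\mc X) \to K_0(\mc U) \to 0$$
is exact on the right, and the preceding term is $K_1(\mc U)$. Localization at $\mf m_g$ is exact, so it suffices to show that $K_i(\mc U)_{\mf m_g} = 0$ for $i = 0,1$, and indeed for all $i$ if one wants injectivity through the long exact sequence. All these groups are $K^0(BG)$-modules, so it is enough to show that they are annihilated by some element of $K^0(BG) \setminus \mf m_g$, that is, by a character sum not vanishing at $g$.

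To obtain the annihilator, I would stratify $U$ by stabilizers. Since $G$ is finite, $U$ admits a finite $G$-stable stratification by locally closed subvarieties $U_H$, on each of which the stabilizer is a fixed subgroup $H \subset G$; here $g \notin H$ because $g$ has no fixed points on $U$. Using localization sequences and induction on the number of strata, it suffices to treat each stratum. On such a stratum the $G$-action factors through $G/H$, and $G/H$ acts freely, so $\mc U_H \cong [(U_H/(G/H))/H]$, with $H$ acting trivially. Its $K$-theory is a module over $K^0(BG) \to K^0(BH)$, acting through restriction of characters to $H$.

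I would then choose $\chi \in G^\vee$ with $\chi|_H = 1$ and $\chi(g) \ne 1$, which exists because $g \notin H$. The element $1 - [\chi]$ restricts to zero in $K^0(BH)$, so it annihilates $K_*(\mc U_H)$, while its value at $g$ is $1 - \chi(g) \neq 0$, so $1 - [\chi] \notin \mf m_g$. Multiplying these annihilators over the finitely many strata gives a single element outside the prime $\mf m_g$ that kills $K_*(\mc U)$, which proves the claim.

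I expect the main obstacle to be the identification of the module structure on the strata. The point is that the $K^0(BG)$-action on the $K$-theory of a stack with trivial $H$ inertia genuinely factors through $K^0(BH)$; for a free $G/H$ action this follows from the fact that $[U_H/G] \to B(G/H)$ factors through a scheme, so that the pulled-back characters of $G/H$ become trivializable line bundles. This step, together with the need for higher $K$-groups in the long exact sequence, is where the bulk of the argument lies. In the paper it is simply cited from \cite{Thomason}.
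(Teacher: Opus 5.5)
Your reduction is sound as far as it goes. The localization sequence, exactness of localization at $\mf m_g$, the stratification of $U=X\setminus X^g$ by stabilizer type (with $g\notin H$ on each stratum), the choice of $\chi$ with $\chi|_H=1$ and $\chi(g)\neq 1$, and multiplying annihilators across strata (all maps being $K^0(BG)$-linear) all work. The gap is exactly at the step you flagged as the crux, and the claim made there is false. The $K^0(BG)$-action on $K_*(\mc U_H)$ does not factor through restriction to $K^0(BH)$, so $1-[\chi]$ need not annihilate. Since $\chi|_H=1$, the equivariant line bundle $\ms O_{U_H}\otimes\chi$ descends to a line bundle $M_\chi$ on the quasi-projective variety $Y:=U_H/(G/H)$; this is the bundle attached to the $G/H$-torsor $U_H\to Y$ and the character $\chi$ of $G/H$. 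The class $[\chi]$ then acts on $K_*(\mc U_H)$ by tensoring with the pullback of $M_\chi$. Factoring $\mc U_H\to B(G/H)$ through the scheme $Y$ only shows that $M_\chi$ is torsion, not that it is trivial. Concretely, let $G=\BZ/2$ act on an elliptic curve $E$ by translation by a nonzero $2$-torsion point, with $g$ the generator. Then $X^g=\emptyset$, $H=1$, and $\mc U=E':=E/G$. The nontrivial $\chi$ acts on $K_0(E')$ through the nontrivial $2$-torsion bundle $M$ with $q_*\ms O_E=\ms O_{E'}\oplus M$, where $q\colon E\to E'$. Hence $(1-[\chi])\cdot[\ms O_{E'}]=[\ms O_{E'}]-[M]\neq 0$, since the determinants differ, although $(1-[\chi])^2$ does act by zero. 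Relatedly, your identification $\mc U_H\cong[Y/H]$ with trivial $H$-action is not right in general: $\mc U_H$ is an $H$-gerbe over $Y$, which can be nontrivial when $H$ is not a direct factor of $G$. Even when that identification holds, characters of $G/H$ still act through line bundles on $Y$.

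The missing ingredient is to replace vanishing by nilpotence, or else to shrink further. For the first option, $1-[M_\chi]$ is nilpotent in $K^0(Y)$ because $Y$ is quasi-projective. Write $M_\chi\cong A\otimes B^{-1}$ with $A,B$ very ample. The Koszul complex on $\dim Y+1$ sections of $A$ with no common zero gives $(1-[A^{-1}])^{\dim Y+1}=0$, hence $1-[A]=-[A](1-[A^{-1}])$ is nilpotent, and likewise $1-[B^{-1}]$ is nilpotent. Then $1-[A\otimes B^{-1}]=(1-[A])+[A](1-[B^{-1}])$ is a sum of commuting nilpotents. Since $K_*(\mc U_H)$ is a $K^0(Y)$-module via pullback, some power $(1-[\chi])^N$ annihilates it, and $(1-[\chi])^N\notin\mf m_g$ because $\mf m_g$ is prime. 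For the second option, refine the stratification. $M_\chi$ is trivial over a dense open $Y^\circ\subset Y$, and over its preimage $\ms O\otimes\chi\cong\ms O$ equivariantly, so $1-[\chi]$ really does annihilate $K_*$ there. The complement is then handled by noetherian induction. With either fix, the rest of your argument goes through unchanged.
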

We have the following description of the inverse isomorphism, extending \cite[Lemme 3.3]{Thomason} (which is the case $X=Y$):
\begin{lem} \label{lem: localization inverse}
    Let $G$ be a finite abelian group and $f: X \hookrightarrow Y$ a $G$-equivariant closed embedding of quasiprojective varieties, with $Y$ being smooth. Let $f_g: X^g \to Y^g$ be the restriction to the fixed loci and $\iota_Y: Y^g \hookrightarrow Y$ the inclusion. Then
    $$f_g^* (\lambda_{-1}(\mc N^\vee_{\iota_{\mc Y}})^{-1}) \cdot \iota_{\mc Y}^!: K_0(\mc X)_{\mf m_g} \to K_0(\mc X^g)_{\mf m_g}$$
    is inverse to the isomorphism $\iota_{\mc X*}$, where $\mc N_{\iota_{\mc Y}}^\vee \in K^0(\mc Y^g)$ is the class of the conormal bundle and $\lambda_{-1}(\mc N^\vee_{\iota_{\mc Y}})^{-1} \in K^0(\mc Y^g)_{\mf m_g}$ is the unique class such that $\lambda_{-1}(\mc N^\vee_{\iota_{\mc Y}})^{-1} \cdot \lambda_{-1}(\mc N^\vee_{\iota_{\mc Y}}) = 1 \in K^0(\mc Y^g)_{\mf m_g}$ \cite[Lemme 3.3]{Thomason}.
\end{lem}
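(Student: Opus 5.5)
Since $\iota_{\mc X*}$ is already an isomorphism after localization by Proposition \ref{prop: Thomason localization}, it suffices to show that the proposed map $\Phi := f_g^*(\lambda_{-1}(\mc N^\vee_{\iota_{\mc Y}})^{-1}) \cdot \iota_{\mc Y}^!$ is a left inverse, i.e. $\Phi \circ \iota_{\mc X*} = \mathrm{id}$ on $K_0(\mc X^g)_{\mf m_g}$. Here $\iota_{\mc Y}^!$ is the refined Gysin map of item (10) for the square with $Y^g \hookrightarrow Y$ along the bottom and $X \hookrightarrow Y$ on the right. Its fiber product is $X \times_Y Y^g = X \cap Y^g = X^g$, so $\iota_{\mc Y}^!$ lands in $K_0(\mc X^g)$. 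Because $Y$ is smooth, $Y^g$ is smooth and $\iota_Y$ is a regular embedding, so $\mc O_{Y^g}$ has finite Tor-dimension over $Y$ and the map is defined.

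The key step is the self-intersection formula. For $\mc F \in K_0(\mc X^g)$, write $\iota_{\mc X*}\mc F$ viewed on $Y$ as the pushforward of $\mc F$ along $X^g \hookrightarrow Y^g \hookrightarrow Y$. Compatibility of refined Gysin maps with proper pushforward (the $K$-theoretic analogue of \cite[Theorem 6.2]{Fulton}, cf. \cite[\S 3]{Anderson-Payne}) reduces the computation to $\iota_{\mc Y}^!$ applied to a sheaf already supported on $Y^g$. By the excess/self-intersection formula, the result is $\sum_i (-1)^i \Tor_i^Y(\mc O_{Y^g}, \mc F) = \mc F \cdot f_g^*\lambda_{-1}(\mc N^\vee_{\iota_{\mc Y}})$. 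To justify this, I would compute $\Tor_i^{\mc O_Y}(\mc O_{Y^g}, \mc G)$ for a $G$-equivariant $\mc O_{Y^g}$-module $\mc G$. Locally on $Y$, take the $G$-equivariant Koszul resolution of $\mc O_{Y^g}$, which exists because $G$ is finite and linearly reductive, so equivariant local coordinates can be chosen. Tensoring with $\mc G$ kills the differentials, giving $\Tor_i = \wedge^i \mc N^\vee \otimes \mc G$ equivariantly. These identifications are canonical, hence glue, since the Tor sheaves are global objects and the local isomorphisms are independent of the choice of coordinates. This canonicity is where I expect the main care is needed. An alternative is to deform to the normal cone equivariantly and avoid gluing altogether.

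Multiplying by $f_g^*(\lambda_{-1}(\mc N^\vee)^{-1})$ then gives $\Phi(\iota_{\mc X*}\mc F) = \mc F$. The inverse exists in the localization at $\mf m_g$ by \cite[Lemme 3.3]{Thomason}: $g$ acts on $\mc N$ with no eigenvalue $1$, so $\lambda_{-1}(\mc N^\vee)$ becomes a unit once localized. Since $\iota_{\mc X*}$ is bijective after localization, a left inverse is a two-sided inverse. One also checks that $\Phi$ is $K^0(BG)$-linear, so that it descends to the localizations; this is immediate because both the Gysin map and multiplication by a class are $K^0(BG)$-linear. In the case $X = Y$ this recovers \cite[Lemme 3.3]{Thomason}.
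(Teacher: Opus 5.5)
Your proof is correct and has the same skeleton as the paper's. Both identify $\iota_{\mc Y}^!\circ\iota_{\mc X*}$ with multiplication by $f_g^*\lambda_{-1}(\mc N^\vee_{\iota_{\mc Y}})$. Both then invert, using \cite[Lemme 3.3]{Thomason} and the bijectivity of $\iota_{\mc X*}$ on $\mf m_g$-localizations. Where you differ is in how you establish the self-intersection identity.

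The paper never computes Tor sheaves of a general module. Repeated use of the projection formula gives
$$\iota_{Y*}L\iota_Y^*f_*\iota_{X*}F\cong f_*\iota_{X*}\bigl(Lf_g^*(L\iota_Y^*\iota_{Y*}\ms O_{Y^g})\otimes F\bigr).$$
So the only geometric input is the class $[L\iota_Y^*\iota_{Y*}\ms O_{Y^g}]=\lambda_{-1}(\mc N^\vee_{\iota_Y})$ in $K^0(\mc Y^g)$, which is Thomason's computation \cite[(3.3.3)]{Thomason}, i.e.\ the case $X=Y$. Equivariance comes for free because every isomorphism in the chain is canonical. The comparison is made after pushing forward to $K_0(\mc X)$, and the paper then uses injectivity of $\iota_{\mc X*}$.

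You instead reprove the self-intersection formula at the sheaf level, via $\Tor_i^{\ms O_Y}(\ms O_{Y^g},\mc G)\cong\wedge^i\mc N^\vee\otimes\mc G$ for any $\ms O_{Y^g}$-module $\mc G$. This is a finer statement and lands directly in $K_0(\mc X^g)$. The cost is the coordinate-independence check you flagged. That check is standard: for two generating regular sequences, the induced map of Koszul complexes intertwines the two local identifications, so the isomorphism is canonical. Canonicity by itself gives $G$-equivariance. You can therefore drop the claim about equivariant local Koszul resolutions, which is imprecise as stated, since points of $Y^g$ need not be fixed by all of $G$.
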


\begin{proof}
    First, since $Y$ is smooth, we see that $\ms O_{Y^g}$ is an $\iota_Y$-perfect complex, with $X \times_Y Y^g = X^g$, so that the above gives a well-defined morphism $K_0(\mc X) \to K_0(\mc X^g)$. Moreover, both the Gysin morphism and the $K^0(\mc X^g)$-action commute with the action of $K^0(BG)$, and we obtain a morphism of $\mf m_g$-localizations.

    Also, since $\iota_{\mc X*}$ is an isomorphism and $\lambda_{-1}(\mc N^\vee_{\iota_{\mc Y}}) \in K^0(\mc Y^g)_{\mf m_g}$ is invertible, it suffices to prove that
    \begin{equation} \label{eq: nonsmooth localization}
        \iota_{\mc X*}(\iota_{\mc Y}^!(\iota_{\mc X*}\mc F)) = \iota_{\mc X*}(f_g^* (\lambda_{-1}(\mc N^\vee_{\iota_{\mc Y}})) \cdot  \mc F)
    \end{equation}
    for any $\mc F \in K_0(\mc X^g)_{\mf m_g}$.
    
    For any $F \in \Coh(X^g)$ we have (with all functors derived)
    \begin{multline} \label{eq: Thomason 3.3}
    \iota_{Y*} \iota_Y^* f_* \iota_{X*} F = \iota_{Y*} \iota_Y^*\iota_{Y*} f_{g*} F = \iota_{Y*} \ms O_{Y^g} \otimes \iota_{Y*} f_{g*} F = \iota_{Y*}(\iota_Y^* \iota_{Y*} \ms O_{Y^g} \otimes f_{g*} F) \\
    = \iota_{Y*} f_{g*}(f_g^*(\iota_Y^* \iota_{Y*} \ms O_{Y^g}) \otimes F) = f_* \iota_{X*}(f_g^*(\iota_Y^* \iota_{Y*} \ms O_{Y^g}) \otimes F)
    \end{multline}
    by repeated applications of the projection formula \cite[08EU]{stacks-project}. Now, viewing both sides of (\ref{eq: Thomason 3.3}) as sheaves on $X \subset Y$, we have
    $$[\iota_{Y*} \iota_Y^* f_* \iota_{X*} F ] = [\iota_{X*}(f_g^*(\iota_Y^* \iota_{Y*} \ms O_{Y^g}) \otimes F)] \in K_0(\mc X).$$
    On the left side, by definition
    $$ \iota_Y^!(\iota_{X*} [F]) = [\iota_Y^* f_* \iota_{X*} F] \in K_0(X^g),$$
    considering the right side as a complex supported on $X^g$, and so since pushing forward by closed immersions doesn't change this complex, we have
    $$\iota_{X*}(\iota_Y^!(\iota_{X*} [F]))) = [\iota_{Y*} \iota_Y^* f_* \iota_{X*} F].$$
    On the right side, we have
    $$[\iota_{X*}(f_g^*(\iota_Y^* \iota_{Y*} \ms O_{Y^g}) \otimes F)] = \iota_{X*}(f_g^*([\iota_Y^* \iota_{Y*} \ms O_{Y^g}] \cdot [F])) = \iota_{X*}(f_g^* (\lambda_{-1}(\mc N_{\iota_Y}^\vee)) \cdot [F])$$
    by \cite[(3.3.3)]{Thomason}. Moreover, since all the isomorphisms involved are canonical, if $F \in \Coh(X^g)$ is $G$-equivariant, then the identities above hold for equivariant sheaves. Since $K_0(\mc X^g)_{\mf m_g}$ is generated over $K^0(BG)_{\mf m_g}$ by the classes of $G$-equivariant sheaves and equation (\ref{eq: nonsmooth localization}) is $K^0(BG)_{\mf m_g}$-linear, this completes the proof.
\end{proof}

\subsubsection{Equivariant Chow groups and the Edidin-Graham $\tau$ functor} \label{subsubsec: EG def of Chow}
Equivariant Chow groups are defined in \cite{EG-equivint}, following the approximation technique of \cite{Totaro}. In the case that $G$ is finite abelian, the definition can be stated as follows: Let $V$ be a $G$-representation and $U \subset V$ the open subset on which $G$ acts freely. Assume that $V$ is such that $\dim V - \dim (V \setminus U) > \dim X$. (For example, decomposing $G \cong \prod_i \BZ/a_i\BZ$ as a product of cyclic groups and letting $\chi_i$ be a generator of $(\BZ/a_i\BZ)^\vee$, with corresponding 1-dimensional representation $V_i$ defined by $G \twoheadrightarrow \BZ/a_i \BZ \xrightarrow{\chi_i} \BC^*$, we may take $V :=\oplus V_i^{\dim X + 1}$.) Since $X \times U$ is quasiprojective and $G$ acts freely, the (stack) quotient $X \times_G U =: X_G$ is a quasiprojective variety (``approximation scheme"), and we define $\CH_i(\mc X) := \CH_{i+\dim V}(X_G)$ for $0 \le i \le \dim X$, and otherwise $\CH_i(X) := 0$. (In other words, we have that $p: X_G \to \mc X$ is an open subset of an affine bundle over $\mc X$ whose complement is of sufficiently high codimension that the pullback on Chow groups is an isomorphism in the relevant degree range.) Chern classes $\ch^{EG}$ taking values in the operational Chow groups $\CH^*(\mc X)$ are defined similarly, pulling back to $X_G$ and using the standard definitions on this space. The equivariant Chow groups are independent of the choice of $V$ \cite[Definition-Proposition 1]{EG-equivint} and have the usual functorialities for $G$-equivariant morphisms $X \to Y$ \cite[Proposition 3]{EG-equivint}, i.e., for representable morphisms of quotient stacks. Making the same choice of $U$ for both $X$ and $Y$, it follows immediately from the corresponding statements on $X_G$ and $Y_G$ that the Chern classes, proper pushforward, and Gysin pullback satisfy the standard compatibilities (e.g. the projection formula).

Proper pushforward and flat pullback in Chow along nonrepresentable morphisms of quotient stacks $f:[X/G] \to [Y/H]$ can be described as follows: Consider the diagram
\begin{equation} \label{diagram: proper pushforward}
\begin{tikzcd}
    X \arrow[d, "q"] \times_{[Y/H]} Y \arrow[rr, "p"] &  & Y\arrow[d, "r"] \\
     X \arrow[r, "s"] & \left [X/G\right ] \arrow[r, "f"] & \left[Y/H\right]
\end{tikzcd}
\end{equation}
Note that $X \times_{[Y/H]} Y \to [X/G]$ is a finite, flat, representable morphism of degree $|G| \cdot |H|$. Since pushforward, respectively pullback, by such a morphism induces a surjection, respectively injection, on rational Chow groups after passing to approximation schemes \cite[Example 1.7.4]{Fulton}, we obtain a surjection $\CH_*(X \times_{[Y/H]} Y)_\BQ \to CH_*([X/G])_\BQ$, respectively injection in the opposite direction. Thus proper pushforward $f_*: \CH_*([X/G])_\BQ \to \CH_*([Y/H])_\BQ$ is determined by the pushforward map $(r \circ p)_*: \CH_*([X \times_{[Y/H]} Y])_\BQ \to \CH_*([Y/H])_\BQ$, which is as described in \cite{EG-equivint} since now $X \times_{[Y/H]} Y \to [Y/H]$ is proper and representable. Similarly, flat pullback $f^*: \CH_*([Y/H])_\BQ \to \CH_*([X/H])_\BQ$ is determined by the flat, representable pullback $(r \circ p)^*: \CH_*([Y/H])_\BQ \to \CH_*(X \times_{[Y/H]} Y)_\BQ$.
(The fact that these are well-defined follows, e.g., from the existence of projective pushforward and flat pullback for Chow groups of Artin stacks, see \cite[\S 2.2]{Kresch}.) As described in \cite[\S 3.6]{Kresch}, nonrepresentable proper pushforward satisfies the usual projection formula.

\begin{defn} \label{defn: tauEG}
    If $\mc X = [X/G]$ is a nice quotient, the Edidin-Graham tau functor $\tau^{EG}$ is defined by
    $$\tau^{EG}: K_0(\mc X) \xrightarrow{p^*} K_0(X_G) \xrightarrow{\tau^F} \CH_*(X_G)_\BQ \rightarrow \CH_{*-\dim V}(\mc X)_\BQ$$
    where the first arrow is $G$-equivariant flat pullback along the map $X \times U \to X$, the second is Fulton's $\tau$ functor \cite[Theorem 18.2]{Fulton}, and the third is the identification in the definition of $\CH_*(\mc X)$ in degrees $* \ge \dim V$ and 0 otherwise.
\end{defn}

\begin{rmk}
    This appears to differ from the definition in \cite[\S 3.2]{EG-RRequivChow} by a factor of $\td^G(V)$. However, since $G$ is finite abelian, $V$ is a sum of irreducible 1-dimensional representations $V_i$, where $\ch_1^G(V_i) = 0 \in \CH_*(\mc X)_\BQ$ since $V_i^{\otimes |G|}$ is the trivial representation.
\end{rmk}

\begin{rmk} \label{rmk: tauEG independent}
    Choosing two nice presentations $\mc X = [X/G] = [Y/H]$, by pulling back to $X_G \times_{\mc X} Y_H$ and using the compatibility of $\tau^F$ with smooth pullback \cite[Theorem 18.3(4)]{Fulton}, we see that $\tau^{EG}$ is in fact independent of the choice of nice presentation.
\end{rmk}

The functor $\tau^{EG}$ satisfies the expected functorialities with respect to $G$-equivariant morphisms $X \to Y$ \cite[Theorem 3.1]{EG-RRequivChow}. However, it does not in general commute with pushforward along nonrepresentable morphisms. The simplest example is the following (cf. \cite[Remarque, \S 4.1]{Toen}):
\begin{example} \label{ex: need stacky tau}
    Consider the nonrepresentable morphism $f: BG \to \Spec \BC$, for $G$ a nontrivial finite abelian group. Let $p:U/G \to BG$ be an approximation scheme, and $V \in K_0(BG)$ the class of a nontrivial 1-dimensional representation. Then
    $$\tau^{EG}(V) = [BG] \in \CH_*(BG)_\BQ = \CH_0(BG)_{\BQ}$$ since
    $$\tau^F(p^* V) = \td(p^* V) \cap \Td(U/G) = [U/G] + \ldots $$
    so in particular
    $$f_* \tau^{EG}(V) = \frac{1}{|G|} [\Spec \BC] \ne 0 \in \CH_*(\Spec \BC)_\BQ,$$
    whereas $f_* V = V^G = 0$ and so $\tau^{EG}(f_* V) = 0$.
\end{example}

One issue is the following:
\begin{prop} \label{prop: EG factors through m1}
    The functors $\ch^{EG}, \td^{EG}: K^0(\mc X) \to \CH^*(\mc X)_\BQ$, respectively $\tau^{EG}: K_0(\mc X) \to \CH_*(\mc X)_\BQ$, factor through the projection $K^0(\mc X) \to K^0(\mc X)_{\mf m_1}$, respectively $K_0(\mc X) \to K_0(\mc X)_{\mf m_1}$.
\end{prop}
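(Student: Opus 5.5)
We want to show that $\ch^{EG}$, $\td^{EG}$ and $\tau^{EG}$ factor through localization at $\mf m_1$. Since the targets $\CH^*(\mc X)_\BQ$ and $\CH_*(\mc X)_\BQ$ are $\BQ$-vector spaces, it is enough to prove two things. First, each functor is compatible with the $K^0(BG)$-module structures, where $K^0(BG)$ acts on the target through a ring homomorphism $\phi: K^0(BG) \to \BQ$, namely $\phi(\chi) = 1$ for every character $\chi \in G^\vee$. Second, $\phi$ factors through $K^0(BG)_{\mf m_1}$. The second point is immediate. The kernel of $\phi$ is exactly the ideal $\mf m_1$ of character sums vanishing at $g = 1$ (the augmentation ideal). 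Every element $s \notin \mf m_1$ has $\phi(s) \ne 0$, so $s$ acts invertibly on a $\BQ$-vector space. The universal property of localization then gives a unique extension to $K_0(\mc X)_{\mf m_1}$ (respectively $K^0(\mc X)_{\mf m_1}$) through which the map factors. For the Todd class, which is multiplicative rather than additive, we use instead that it is a group homomorphism into the units of $\CH^*(\mc X)_\BQ$, and argue the same way.

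The main step is computing how a character $\chi \in G^\vee$ acts after applying $\ch^{EG}$. Let $L_\chi$ be the corresponding line bundle on $\mc X$, pulled back from $BG$. Its Chern character is computed on the approximation scheme $X_G$, where it is the pullback of the line bundle $U \times_G \BC_\chi$ on $U/G$. Since $L_\chi^{\otimes |G|}$ is trivial, $|G|\, c_1(L_\chi) = 0$, so $c_1(L_\chi) = 0$ in rational Chow groups (the same observation as in the remark after Definition \ref{defn: tauEG}). Hence $\ch^{EG}(L_\chi) = e^{c_1} = 1$ and $\td^{EG}(L_\chi) = 1$.

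The three functors then follow from their standard properties on $X_G$.

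For $\ch^{EG}$: it is a ring homomorphism, so $\ch^{EG}(\chi \cdot \mc E) = \ch^{EG}(\mc E)$. This makes $\ch^{EG}$ $\phi$-linear.

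For $\td^{EG}$: it is multiplicative, so $\td^{EG}(\sum_\chi a_\chi L_\chi + \mc E) = \td^{EG}(\mc E)$. On the level of classes in $K^0$, this means it factors through the quotient by the subgroup on which $\phi$ vanishes, together with the localization. Precisely, in the localized $\lambda$-ring the inverse elements are handled by extending multiplicatively, which is well defined because $\td^{EG}$ sends classes of augmentation-ideal type to unipotent elements.

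For $\tau^{EG}$: we use the module property of Fulton's $\tau$, namely $\tau^F(E \cdot F) = \ch(E) \cap \tau^F(F)$ \cite[Theorem 18.3]{Fulton}, together with compatibility with the flat pullback $p^*$. This gives $\tau^{EG}(\chi \cdot \mc F) = \ch^{EG}(L_\chi) \cap \tau^{EG}(\mc F) = \tau^{EG}(\mc F)$.

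I expect the only real care to be needed in making precise what factoring through the localization means for $\td^{EG}$, because it is not additive. Cleanly, one factors $\td^{EG}$ as $\td$ applied to $\ch^{EG}$ via the universal formal expression. Since $\ch^{EG}$ is additive and factors, and $\td$ is determined through the Chern character by the splitting principle on $X_G$, $\td^{EG}$ factors as well.
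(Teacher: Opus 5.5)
Your argument is correct. For $\ch^{EG}$ and $\td^{EG}$ it is essentially the paper's. The paper also observes that $\ch^{EG}$ restricted to $K^0(BG)$ is the rank map, whose kernel is $\mf m_1$. So any $r \notin \mf m_1$ with $r\cdot\mc F = 0$ forces $\ch^{EG}(\mc F) = 0$; your $\phi$-linearity plus the universal property of localization is the same fact. The paper then deduces the Todd case from $\td^{EG}$ being a universal expression in $\ch^{EG}$, exactly as in your last paragraph.

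Your middle paragraph on $\td^{EG}$ should be dropped. The $K^0(BG)$-action is by tensor product, so the identity $\td^{EG}(\sum_\chi a_\chi L_\chi + \mc E) = \td^{EG}(\mc E)$ is beside the point. If you do want to ``argue the same way'', the relevant identity is $\td^{EG}(s\cdot c) = \td^{EG}(c)^{\phi(s)}$, which holds because twisting by $L_\chi$ does not change Chern roots rationally. An element of the form $1 + (\text{positive degree})$ whose nonzero integer power is $1$ must equal $1$ in $\CH^*(\mc X)_\BQ$, which finishes that route.

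The genuine difference is for $\tau^{EG}$. The paper simply cites \cite[Corollary 5.1]{EG-RRequivChow}. You instead prove the factorization directly from the module property $\tau^F(E\cdot F) = \ch(E)\cap\tau^F(F)$ on the approximation $X_G$, together with $\ch(p^*L_\chi) = 1$. This gives $\tau^{EG}(s\cdot\mc F) = \phi(s)\,\tau^{EG}(\mc F)$, so all three cases become one uniform, self-contained argument. The citation buys more: the induced map $K_0(\mc X)_{\mf m_1,\BQ} \to \CH_*(\mc X)_\BQ$ is an isomorphism, which the paper needs anyway in (\ref{eq: tauEG}) to define the stacky $\tau$.
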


\begin{proof}
    Given $\mc F \in \ker(K^0(\mc X) \to K^0(\mc X)_{\mf m_1})$, there must be some $r \in K^0(BG) \setminus \mf m_1$ such that $r \cdot \mc F = 0$. Note that $\ch^{EG} = \mathrm{rk}: K^0(BG) \to \CH^*(BG) = \CH^0(BG) = \BZ$ has kernel $\mf m_1$. Now, letting $q: \mc X \to BG$ be given by the projection $X \to \Spec \BC$, we have
    $$0 = \ch^{EG}(r \cdot \mc F) = q^*\ch^{EG}(r) \cap \ch^{EG}(\mc F) \in \CH^*(\mc X)$$
    and since $q^* \ch^{EG}(r)$ is given by multiplication by a nonzero integer, we conclude that $\ch^{EG}(\mc F) = 0$. Thus $\ch^{EG}$ factors through the projection to $K^0(\mc X)_{\mf m_1}$, and since $\td^{EG}$ is defined in terms of $\ch^{EG}$, the same is true of $\td^{EG}$.

    Finally, the needed statement for $\tau^{EG}$ is \cite[Corollary 5.1]{EG-RRequivChow}.
\end{proof}
In the next section we will define a $\tau$ functor which incorporates the projections to all the components $K_0(\mc X)_{\mf m_g}$, not only $K_0(\mc X)_{\mf m_1}$, whose formation is in general not compatible with nonrepresentable proper pushforward.

\subsubsection{The inertia stack and stacky tau functor} \label{subsubsec: stacky tau}
In order to define a stacky tau functor compatible with nonrepresentable proper pushforward, we must pass to the \textit{inertia stack}.
Recall that the inertia stack $I \mc X$ of a stack $\mc X$ is defined to be the fiber product $\mc X \times_{\mc X \times \mc X} \mc X$, where each map $\mc X \to \mc X \times \mc X$ is the diagonal; this parametrizes points of $\mc X$ with an automorphism, i.e., pairs $(f \in \mc X(S), \alpha \in \Aut_{\mc X(S)}(f))$. Let $\iota: I\mc X \to \mc X$ be given by (either) projection, i.e., by forgetting the automorphism. Note that any morphism of stacks $f: \mc X \to \mc Y$ induces a morphism of inertia stacks $If: I \mc X \to I \mc Y$ compatible with $f$ under the projections $\iota_{\mc X}$ and $\iota_{\mc Y}$.

When $\mc X = [X/G]$ is a global quotient by a finite abelian group $G$, the inertia stack admits a simple description \cite[Exercise 4.2.14(c)]{Alper}
\begin{equation} \label{eq: inertia stack description}
   I \mc X \cong \coprod_{g \in G} [X^g/G]. 
\end{equation}
The projection $\iota: I \mc X \to \mc X$ is given by the $G$-equivariant maps $X^g \hookrightarrow X$ for each $g$.

We start by noting that
$$K^0(BG)_\BC = \BZ[G^\vee] \otimes_\BZ \BC = \ms O_G(G) = \prod_{g \in G} \ms O_G(G)/\mf m_g = \prod_{g \in G} \ms O_G(G)_{\mf m_g},$$
so that the $\ms O_G(G)_{\mf m_g}$ are canonically both quotient rings and subrings of $K^0(BG)_\BC$. Since $K_0(\mc X)_\BC$ is a $K^0(BG)_\BC$-module, using
$$\pi_g: K_0(\mc X)_\BC \to K_0(\mc X)_{\mf m_g, \BC}$$
to denote the canonical projection induced by $K^0(BG)_\BC \to \ms O_G(G)_{\mf m_g}$, the above decomposition gives a canonical isomorphism
\begin{equation} \label{eq: pi}
    \pi = (\oplus_g \pi_g): K_0(\mc X)_\BC \xrightarrow{\sim} \oplus_{g \in G} K_0(\mc X)_{\mf m_g, \BC}.
\end{equation}

By the fixed-point localization theorem (Proposition \ref{prop: Thomason localization}) we have an isomorphism
\begin{equation} \label{eq: iota}
    \iota_* = (\oplus_g\iota_{g*}): \oplus_{g \in G} K_0(\mc X^g)_{\mf m_g, \BC} \xrightarrow{\sim} \oplus_{g \in G} K_0(\mc X)_{\mf m_g, \BC}.
\end{equation}

Next, breaking each $\mc E \in K_0(\mc X^g)$ into $g$-eigensheaves $\mc E_\zeta$, we have a well-defined group isomorphism
$$\rho_g: K_0(\mc X^g)_\BC \to K_0(\mc X^g)_\BC, \sum_\zeta a_\zeta \mc E_\zeta \mapsto \sum_\zeta \zeta a_\zeta \mc E_\zeta$$
compatible with the ring isomorphisms on $K^0(\mc X^g)_\BC, K^0(BG)_\BC$ defined by the same formula (which we also call $\rho_g$).
% \sum a_i \phi_i such that \sum a_i \phi_i(h) = 0
% becomes \sum a_i \phi_i(g) \phi_i
Note that $\rho_g(\mf m_h) = \mf m_{g^{-1}h}$ for any $h \in G$, and so $\rho_g$ acts on the decomposition of $K_0(\mc X)_\BC$ by
\begin{equation} \label{eq: rho components}
    \rho_g: K_0(\mc X^g)_{\mf m_h, \BC} \xrightarrow{\sim} K_0(\mc X^g)_{\mf m_{g^{-1}h, \BC}}.
\end{equation}
In particular, we can define an isomorphism
\begin{equation} \label{eq: rho}
    \rho = (\oplus_g \rho_g): \oplus_{g \in G} K_0(\mc X^g)_{\mf m_g, \BC} \xrightarrow{\sim} \oplus_{g \in G} K_0(\mc X^g)_{\mf m_1, \BC}.
\end{equation}

Finally, by \cite[Corollary 5.1]{EG-RRequivChow}, the Edidin-Graham tau functor $\tau^{EG}: K_0(\mc X^g) \to \CH_*(\mc X^g)_\BQ$ described in Definition \ref{defn: tauEG} in fact factors through an isomorphism $K_0(\mc X^g)_{\mf m_1, \BQ} \xrightarrow{\sim} \CH_*(\mc X^g)_\BQ$, so that we have an isomorphism
\begin{equation}\label{eq: tauEG}
    \tau^{EG} = (\oplus_g \tau^{EG}): \oplus_{g \in G} K_0(\mc X^g)_{\mf m_1, \BC} \xrightarrow{\sim} \oplus_{g \in G} \CH_*(\mc X^g)_\BC.
\end{equation}

\begin{defn} \label{defn: stacky tau}
    Given a nice quotient $\mc X = [X/G]$,
    the stacky tau functor $\tau: K_0(\mc X)_\BC \to \CH_*(I\mc X)_\BC$ is defined by
    $$\tau: K_0(\mc X)_\BC \xrightarrow{\pi} \oplus_{g \in G} K_0(\mc X)_{\mf m_g, \BC} \xrightarrow{\iota_*^{-1}} \oplus_{g \in G} K_0(\mc X^g)_{\mf m_g, \BC} \xrightarrow{\rho} \oplus_{g \in G} K_0(\mc X^g)_{\mf m_1, \BC}$$
    $$\xrightarrow{\tau^{EG}} \oplus_{g \in G} \CH_*(\mc X^g)_\BC = \CH_*(I\mc X)_\BC$$
    where the isomorphisms $\pi, (\iota_*)^{-1}, \rho, \tau^{EG}$ are defined in equations (\ref{eq: pi}), (\ref{eq: iota}), (\ref{eq: rho}), (\ref{eq: tauEG}), respectively, and the last identification comes from the description of $I\mc X$ in (\ref{eq: inertia stack description}).
\end{defn}

\begin{rmk}
    In the smooth case, this is the Edidin-Graham definition as found in, e.g., \cite[Theorem 5.1]{Edidin-RRforDM}, using Thomason's explicit description of $\iota_*^{-1}$ \cite[Lemme 3.3]{Thomason}. We will see in Corollary \ref{cor: agrees with Toen} that this $\tau$ also matches the functor constructed by To\"en in \cite[Th\'eor\`eme 4.10(3)]{Toen} (and thus in particular is independent of the choice of nice presentation). However, the author is not aware of a literature reference for Definition \ref{defn: stacky tau} in this form in the singular case; Edidin-Graham only work with smooth stacks, whereas To\"en uses smooth hyperresolutions to extend his tau functor in the singular case. In any case, this more concrete definition is useful in proving the needed properties of $\tau$.
\end{rmk}

To describe the compatibilities of $\tau$, we will also need definitions of the inertia stack-valued Chern and Todd classes. (These definitions are more or less taken from To\"en \cite[D\'efinitions 4.5, 4.7]{Toen}, although the definition of Todd classes other than $\widetilde{\Td}(\mc X)$ is from \cite[Definition A.0.5]{Tseng}.)

\begin{defn} \label{defn: stacky ch}
    The stacky Chern character is the ring homomorphism
    $$\tch: K^0(\mc X)_\BC \xrightarrow{\iota^*} K^0(I\mc X)_\BC \xrightarrow{\rho} K^0(I\mc X)_\BC \xrightarrow{\ch^{EG}} \CH^*(I \mc X)_\BC.$$
\end{defn}

Now recall the decomposition $K^0(I \mc X) = K^0(I \mc X)^{\fix} \oplus K^0(I\mc X)^{\mov}$, where $K^0(I \mc X)^{\fix}$, respectively $K^0(I \mc X)^{\mov}$, is the Grothendieck group of locally free sheaves on $I \mc X$  such that $g$ acts on the restriction to $\mc X^g$ with eigenvalue 1, respectively eigenvalues $\ne 1$. Also recall the operation $\lambda_{-1} := \sum_i (-1)^i \lambda_i$ defined on those classes of $K^0(I\mc X)$ for which the sum terminates.

\begin{prop} \label{prop: moving part of ttd}
    There is a group homomorphism $K^0(I \mc X)^{\mov} \to (\CH^*(I \mc X)_\BC)^\times$ uniquely defined by the property of extending $\ch^{EG} \circ \rho \circ \lambda_{-1}$ on classes $v \in K^0(I \mc X)^{\mov}$ for which $\lambda_{-1}(v)$ is well-defined. We again call this extended morphism $\ch^{EG} \circ \rho \circ \lambda_{-1}$.
\end{prop}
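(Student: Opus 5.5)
The plan is to show two things: every element of $K^0(I\mc X)^{\mov}$ is a difference of classes on which $\lambda_{-1}$ is defined, and on such classes $\ch^{EG}\circ\rho\circ\lambda_{-1}$ lands in units and is additive-to-multiplicative. The extension is then forced, and it is well-defined because the target is an abelian group.

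First, let $S \subset K^0(I\mc X)^{\mov}$ be the set of classes of genuine vector bundles $\mc E$ on $[X^g/G]$ on which $g$ acts with no eigenvalue $1$. For such $\mc E$, $\lambda^i(\mc E)=0$ for $i>\mathrm{rk}\,\mc E$, so $\lambda_{-1}(\mc E)$ is defined. Every element of $K^0(I\mc X)^{\mov}$ has the form $[\mc E]-[\mc F]$ with $\mc E,\mc F\in S$. Indeed, an element is a difference of bundles, and by the eigensheaf decomposition we may replace each bundle by its moving part, since the fixed parts must cancel in the moving summand.

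Next, I would check that $\phi(\mc E):=\ch^{EG}(\rho(\lambda_{-1}\mc E))$ is a unit in $\CH^*(I\mc X)_\BC$. Its degree-zero part is the rank computation on each component $[X^g/G]$. On a component of $X^g$, write $\mc E=\oplus_\zeta \mc E_\zeta$ with all $\zeta\ne 1$; since $\lambda_{-1}$ is multiplicative and $\lambda^i(\mc E_\zeta)$ is again a $\zeta^i$-eigenbundle, we get
\[\rho(\lambda_{-1}\mc E)=\prod_\zeta \sum_i (-\zeta)^i\,\lambda^i(\mc E_\zeta).\]
Its $\ch^{EG}$ has degree-zero part $\prod_\zeta (1-\zeta)^{\mathrm{rk}\,\mc E_\zeta}\ne 0$. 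The positive-degree part is nilpotent, because $\CH^*$ of the approximation scheme vanishes above the dimension. Hence $\phi(\mc E)$ is invertible. Multiplicativity $\phi(\mc E\oplus\mc F)=\phi(\mc E)\phi(\mc F)$ follows from the $\lambda$-ring identity $\lambda_{-1}(\mc E+\mc F)=\lambda_{-1}(\mc E)\lambda_{-1}(\mc F)$ together with the fact that $\rho$ and $\ch^{EG}$ are ring homomorphisms.

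Now define $\Phi([\mc E]-[\mc F]):=\phi(\mc E)\phi(\mc F)^{-1}$. The main point is well-definedness, since $K^0$ only identifies classes up to short exact sequences. If $[\mc E]-[\mc F]=[\mc E']-[\mc F']$, then $[\mc E\oplus\mc F']=[\mc E'\oplus\mc F]$ in $K^0$. It therefore suffices that $\phi$ factors through the Grothendieck group of the monoid $S$ with relations from exact sequences, i.e.\ that $\phi$ is additive on exact sequences. That holds because $\lambda_{-1}$ is already defined on $K^0$ classes, for which the $\lambda$-operations are well-defined by \cite[Lemma 2.4]{Kock}.

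A subtlety is that a relation in $K^0(I\mc X)^{\mov}$ might pass through non-moving bundles. We handle this by applying the projection to the moving summand, which is exact on equivariant bundles since eigen-decomposition is exact. Thus $[\mc E]=[\mc E']$ in $K^0$ implies $\lambda_{-1}(\mc E)=\lambda_{-1}(\mc E')$ directly, as both are computed from the same $K^0$ class. Uniqueness is immediate because $S$ generates $K^0(I\mc X)^{\mov}$ as a group, and any homomorphism agreeing with $\phi$ on $S$ agrees everywhere. Finally, $\Phi$ extends $\ch^{EG}\circ\rho\circ\lambda_{-1}$ on every $v$ with $\lambda_{-1}(v)$ defined. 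To see this, write $v=\mc E-\mc F$; then $\lambda_{-1}(v)\lambda_{-1}(\mc F)=\lambda_{-1}(\mc E)$, so $\ch^{EG}\rho\lambda_{-1}(v)=\phi(\mc E)\phi(\mc F)^{-1}$. The main obstacle is the invertibility step; the remaining steps are formal.
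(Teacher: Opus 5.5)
Your proposal is correct and follows the same route as the paper. Both arguments generate $K^0(I\mc X)^{\mov}$ by genuinely moving bundles, show that $\ch^{EG}(\rho(\lambda_{-1}V))$ is a unit because its virtual rank $\prod_{\zeta\ne 1}(1-\zeta)^{\dim V_\zeta}$ is nonzero, and use $\lambda_{-1}(V'+V'')=\lambda_{-1}(V')\lambda_{-1}(V'')$ to obtain a homomorphism. You also spell out the nilpotence of the positive-degree part, well-definedness on the Grothendieck group, uniqueness, and the extension property, all of which the paper's terser proof leaves implicit.
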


\begin{proof}
    We start by noting that $K^0(I \mc X)^{\mov}$ is generated by classes of equivariant vector bundles $V = V^{\mov}$, where $\lambda_{-1}([V])$ is well-defined and furthermore, on each component of $I\mc X$ we have that
    $$\rho \circ \lambda_{-1}([V]) = \sum_{i = 0}^{\mathrm{rk} V} (-1)^i \rho ([\wedge^i V])$$
    is of virtual rank
    $$\prod_{\zeta \ne 1} (1 - \zeta)^{\dim V_\zeta} \ne 0$$
    so that $\ch^{EG}(\rho \circ \lambda_{-1}(V))$ is invertible. It remains to note that given an identity $[V] = [V'] + [V'']$, we have $\lambda_{-1}([V]) = \lambda_{-1}([V']) \cdot \lambda_{-1}([V''])$ and thus also
    $$\ch^{EG} \circ \rho \circ \lambda_{-1}([V]) = \ch^{EG} \circ \rho \circ \lambda_{-1}([V']) \cap \ch^{EG} \circ \rho \circ \lambda_{-1}([V'']).$$
\end{proof}

\begin{defn} \label{defn: stacky td}
    We define an operation $\td^I: K^0(I\mc X) \to \CH^*(I\mc X)_\BC$ by
    $$\td^I(\mc E) := (\ch^{EG} \circ \rho \circ \lambda_{-1})(( \mc E^{\mov})^{\vee})^{-1} \cap \td^{EG}(\mc E^{\fix}) \in \CH^*(I \mc X)_\BC,$$
    where the first term is defined in Proposition \ref{prop: moving part of ttd}.
    The stacky Todd class of $\mc F \in K^0(\mc X)$ is 
    defined by
    $$\ttd(\mc F) := \td^I(\iota^*\mc F).$$
\end{defn}

\subsection{Identities} \label{subsec: stacky tau identities}
In this section, we prove identities for the stacky $\tau, \tch, \ttd$ generalizing those for $\tau^F, \ch, \td$ in the scheme case.
Again, throughout this section, we work only with nice quotients (Definition \ref{defn: nice quotient}).
% and all morphisms of stacks are of the form $f: [X/G] \to [Y/H]$ induced by a group homomorphism $G \to H$ and a $G$-equivariant map $X \to Y$ (where $G$ acts on $Y$ through $H$).

\begin{prop} \label{prop: ttd properties}
    Let $f: \mc X \to \mc Y$ be a morphism of nice quotients.\\
    (a) We have
    $$If^* \td^I(\mc E) = \td^I(If^* \mc E) \in \CH^*(I \mc X)_\BC$$
    for any $\mc E \in K^0(I\mc Y)$. In particular
    $$If^*\ttd(\mc F) = \ttd(f^* \mc F) \in \CH^*(I \mc X)$$
    for any $\mc F \in K^0(\mc Y)$. The respective statements also hold for $\ch^I := \ch^{EG} \circ \rho$ and $\tch$.\\
    (b) If $f$ is in addition proper, we have
    $$If_*(\td^I(If^* \mc E) \cap \alpha) = \td^I(\mc E) \cap If_* \alpha \in \CH_*(I \mc Y)_\BC$$
    for any $\mc E \in K^0(I\mc Y)$ and $\alpha \in \CH_*(I\mc X)$. In particular
    $$If_*(\ttd(f^* \mc F) \cap \alpha) = \ttd(\mc F) \cap If_* \alpha \in \CH_*(I \mc Y)_\BC$$
    for any $\mc F \in K^0(\mc Y)$ and $\alpha \in \CH_*(I \mc X)$.
\end{prop}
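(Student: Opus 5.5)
The plan is to reduce both statements to the standard compatibilities of $\ch^{EG}$ and $\td^{EG}$ with pullback and proper pushforward for representable morphisms. These are recorded in \S\ref{subsubsec: EG def of Chow}, and they extend to the nonrepresentable case through the finite flat cover in diagram (\ref{diagram: proper pushforward}).

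\textbf{Part (a).} I first check that $If^*$ is compatible with each ingredient of $\td^I$.

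1. The decomposition into fixed and moving parts is preserved. A point of $I\mc X$ with automorphism $g$ maps under $If$ to a point of $I\mc Y$ with automorphism $f(g)$ (writing $f(g)$ for the image under $G\to H$). On $If^*\mc E$, the element $g$ acts through $f(g)$, so eigenvalues are preserved and $If^*(\mc E^{\mov}) = (If^*\mc E)^{\mov}$, and likewise for the fixed part.

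2. $\rho$ and $\lambda_{-1}$ commute with pullback. For $\rho$ this follows from the eigenvalue statement in step 1. For $\lambda_{-1}$ it holds because pullback of vector bundles commutes with exterior powers and with duals.

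3. $\ch^{EG}$ commutes with pullback along $If$, a morphism of nice quotients. If $If$ is representable, this is the usual compatibility on approximation schemes $X_G \to Y_G$, taking a common $U$ after restricting along $G\to H$. In general, I factor $[X^g/G] \to [X^g\times_{[Y^h/H]}Y^h/H]\to[Y^h/H]$, or pull back to the finite flat cover $q$ of (\ref{diagram: proper pushforward}), on which $q^*$ is injective on rational Chow groups. This reduces to the representable case.

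4. Since $If^*$ is a ring homomorphism on operational Chow groups, it commutes with inverting the unit of Proposition \ref{prop: moving part of ttd}. The extended map there is characterized uniquely, so it suffices to check compatibility on classes of genuine moving bundles, which steps 1 and 2 cover.

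Multiplying the fixed and moving factors gives the statement for $\td^I$. The statements for $\ch^I=\ch^{EG}\circ\rho$ and for $\tch$ are subsets of the same checks. For $\ttd$ one also needs $If^*\iota_{\mc Y}^* = \iota_{\mc X}^* f^*$, which holds because $\iota_{\mc Y}\circ If = f\circ \iota_{\mc X}$.

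\textbf{Part (b).} This is the projection formula for operational classes: $If_*(If^*c \cap \alpha) = c\cap If_*\alpha$ for $c\in \CH^*(I\mc Y)_\BC$. Part (b) then follows from (a) with $c=\td^I(\mc E)$, and with $c = \ttd(\mc F)$ for the second statement.

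- For representable proper $If$ (that is, $G$-equivariant), the projection formula is the usual one on approximation schemes.
- For nonrepresentable $If$, the Chow pushforward is defined via diagram (\ref{diagram: proper pushforward}). We have $\alpha = \frac{1}{|G||H|}q_*\beta$ for some $\beta$, by the surjectivity recorded there. Then $If_*(If^*c\cap q_*\beta) = (r p)_*(q^* If^* c\cap \beta)$, using the projection formula for the finite flat representable map $q$. Since $q^*If^* = (rp)^*$, this equals $c\cap (rp)_*\beta$, by the representable projection formula. Alternatively, I can cite the projection formula for nonrepresentable proper pushforward from \cite[\S 3.6]{Kresch}, as recalled above.

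\textbf{Expected obstacle.} The main subtlety is step 3 of (a) together with the nonrepresentable case of (b). One must make sure the operational Chern classes $\ch^{EG}$ on $I\mc X$ are compatible with nonrepresentable morphisms. The components $[X^g/G]$ map to $[Y^{f(g)}/H]$, and the induced group map need not be injective. I would handle this uniformly by passing to the finite flat representable cover and using injectivity of pullback on rational Chow groups.
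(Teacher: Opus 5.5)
Your proposal is correct and follows the paper's proof essentially step for step. Because the canonical automorphism pulls back, $If^*$ respects the fixed/moving splitting, $\rho$ and $\lambda_{-1}$. Next, $If^*$ commutes with $\ch^{EG}$ and $\td^{EG}$: the paper simply cites \cite[\S 3.6]{Kresch}, where you instead reduce to the representable case by pulling back to the finite flat cover of (\ref{diagram: proper pushforward}). The identity $If^*\iota_{\mc Y}^* = \iota_{\mc X}^* f^*$ then handles $\ttd$ and $\tch$, and (b) is (a) plus the projection formula for $If$.

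The only slip is the first factorization you suggest in step 3. The middle term $[(X^g\times_{[Y^h/H]}Y^h)/H]$ is just $X^g$, which receives no map from $[X^g/G]$. Drop that route in favor of the finite-flat-cover argument you also give.
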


\begin{thm} \label{thm: tau properties}
    Let $\mc X$ and $\mc Y$ be nice quotients.\\
%    (a) If $\mc X$ is smooth, we have
%    $$\tau(\ms O_{\mc X}) = \ttd(\ms O_{\mc X}) \cap [I\mc X] \in \CH_*(I\mc X)_\BC.$$
    (a) For any $\mc F \in K_0(\mc X)$ and $\mc E \in K^0(\mc X)$ we have
    $$\tau(\mc E \cdot \mc F) = \tch(\mc E) \cap \tau(\mc F) \in \CH_*(I\mc X)_\BC.$$
    (b) For $\mc F \in K_0(\mc X)$ and $\mc G \in K_0(\mc Y)$ we have
    $$\tau(\mc F \boxtimes \mc G) = \tau(\mc F) \times \tau(\mc G) \in \CH_*(I(\mc X \times \mc Y))_\BC = \CH_*(I\mc X \times I\mc Y)_\BC.$$
    (c) If $f: \mc X \to \mc Y$ is a proper morphism, we have
    $$If_* \tau(\mc F) = \tau(f_* \mc F) \in \CH_*(I\mc Y)_\BC$$
    for any $\mc F \in K_0(\mc X)$.\\
    (d) If $f: \mc X \to \mc Y$ is a representable morphism and $\mc X$ and $\mc Y$ are smooth, then
    $$\tau(f^*\mc F) = \ttd(T_f) \cap If^! \tau(\mc F) \in \CH_*(I\mc X)_\BC$$
    for any $\mc F \in K_0(\mc Y)$. In particular, if $\mc X$ is smooth, then
    $$\tau(\mc E) = \tch(\mc E) \cap \ttd(T_{\mc X}) \cap [I\mc X] \in \CH_*(I\mc X)_\BC$$
    for any $\mc E \in K^0(\mc X) = K_0(\mc X)$.\\
    (e) Suppose $f: \mc X \to \mc Y$ is a representable regular embedding fitting into a Cartesian square of equidimensional nice quotients
    \[
    \begin{tikzcd}
        \mc X \arrow[r,hook, "f"] \arrow[d,hook, "c"] & \mc Y \arrow[d, hook]\\
        \mc Z \arrow[r,hook, "e"] & \mc W
    \end{tikzcd}
    \]
    such that all morphisms are representable and $\mc Z$ and $\mc W$ are smooth with $\dim \mc W - \dim \mc Z = \dim \mc Y - \dim \mc X$. Then
    $$\tau(f^* \mc F) = \ttd(T_f) \cap (Ie)^! \tau(\mc F) \in \CH_*(I\mc X)_\BC$$
    for any $\mc F \in K_0(\mc Y)$.
\end{thm}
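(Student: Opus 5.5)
The plan is to prove each part by following $\mc F$ through the four stages of Definition \ref{defn: stacky tau}, namely $\pi$, $\iota_*^{-1}$, $\rho$ and $\tau^{EG}$, and checking that each stage is compatible with the operation in question.

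\textbf{(a) and (b).} For (a), multiplication by $\mc E$ commutes with $\pi$, since it is $K^0(BG)$-linear. By the projection formula, $\iota_{g*}(\iota_g^*\mc E\cdot \mc F') = \mc E\cdot\iota_{g*}\mc F'$, so $\iota_*^{-1}$ turns multiplication by $\mc E$ into multiplication by $\iota^*\mc E$. The map $\rho_g$ is multiplicative. Finally, $\tau^{EG}$ satisfies the module property with $\ch^{EG}$ (Fulton's $\tau^F$ on approximation schemes). Composing these gives $\tch(\mc E)\cap\tau(\mc F)$.

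For (b), use $I(\mc X\times\mc Y)=\coprod_{(g,h)}[X^g\times Y^h/G\times H]$. The maximal ideal $\mf m_{(g,h)}$ is generated by $\mf m_g\otimes 1+1\otimes\mf m_h$, so $\pi$ is compatible with $\boxtimes$. Lemma \ref{lemma: pushforward exterior product} applied to $\iota_g\times\iota_h$ shows that $\iota_*^{-1}$ commutes with $\boxtimes$. The map $\rho_{(g,h)}$ is $\rho_g\boxtimes\rho_h$ on eigensheaves. Fulton's $\tau^F$ is compatible with exterior products. To make this last step work I would take product approximation schemes $X_G\times Y_H$.

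\textbf{(c).} Factor $f:[X/G]\to[Y/H]$ as a representable morphism followed by a change of group, for instance through $[X\times H/G\times H]$ or via diagram (\ref{diagram: proper pushforward}). The representable case is the Edidin--Graham/Thomason argument. Proper pushforward commutes with the localizations $\pi_g$, and it also commutes with $\iota_*$ because $f(X^g)\subset Y^g$. The maps $\rho$ and $\tau^{EG}$ are then compatible by \cite[Theorem 3.1]{EG-RRequivChow}.

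The genuinely stacky case is the quotient map $[X/G]\to[X/(G/K)]$, or $BG\to\Spec\BC$. Here I would use the description of invariants as an average over the group, so that the pushforward picks out the $K$-invariant part. On the $g$-twisted sectors this average is exactly compensated by the factors $\zeta$ introduced by $\rho$, with the scalar $1/|K|$ coming from the degree of $I\mc X\to I\mc Y$. Example \ref{ex: need stacky tau} is the model computation. This is where I would lean on To\"en's theorem \cite[Th\'eor\`eme 4.10]{Toen}, by first identifying our $\tau$ with his, but proving (c) directly by reduction to $BK$-gerbes seems feasible.

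\textbf{(d) and (e).} For (d), first treat $\mc Y$ smooth and $f=\mathrm{id}$. On each sector, Lemma \ref{lem: localization inverse} with $X=Y$ gives $\iota_g^*\mc E\cdot\lambda_{-1}(\mc N^\vee)^{-1}$. Applying $\rho$ and $\tau^{EG}$ then yields $\tch(\mc E)\cap(\ch\rho\lambda_{-1}(\mc N^\vee))^{-1}\cap\td(T_{X^g})\cap[I\mc X]$. Since $T\mc X|_{X^g}=T_{X^g}\oplus\mc N$, with $T_{X^g}$ the fixed part and $\mc N$ the moving part, this equals $\ttd(T_{\mc X})$. The general formula in (d) then follows from $If^![I\mc Y]=[I\mc X]$, $f^*$ being a ring map, (a), and multiplicativity of $\ttd$ in exact sequences. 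Proposition \ref{prop: ttd properties}(a) is used to move classes along $If^!$.

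For (e), which I expect to be the main obstacle, the difficulty is that $If$ need not be regular. Only the ambient $Ie:I\mc Z\to I\mc W$ is a map of smooth stacks, and the refined Gysin map $(Ie)^!$ is defined. The plan has four steps.

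First, write $f^*\mc F=e^!\mc F$ in $K$-theory. This uses that the square is Cartesian and that the dimension hypothesis makes it Tor-independent, so the excess bundle vanishes.

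Second, on each sector, compute $\iota_{X,g*}^{-1}(e^!\mc F)$ using Lemma \ref{lem: localization inverse}, with $\mc X\subset\mc Z$ and $\mc Y\subset\mc W$ as the smooth ambient spaces. The result is $\lambda_{-1}(\mc N^\vee_{Z^g/Z})^{-1}\cdot\iota_{Z}^!e^!\mc F$. Commuting the Gysin maps gives $\iota_Z^!e^!=e_g^!\iota_W^!$.

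Third, rewrite $\iota_W^!$ using the inverse formula for $\mc Y\subset\mc W$. This changes the normal-bundle factor from $\mc N_{W^g/W}$ to $\mc N_{Z^g/Z}$. The discrepancy is the moving part of $N_e|_{Z^g}=T_f$, and it produces the moving factor of $\ttd(T_f)$.

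Fourth, apply $\tau^{EG}$. For this I use Fulton's compatibility $\tau^F(e_g^!\,\cdot)=\td(N_{e_g})^{-1}\cap e_g^!\tau^F(\cdot)$ for regular embeddings of the approximation schemes $Z^g_G\hookrightarrow W^g_G$. This gives the fixed part of $\ttd(T_f)$, since $\td^{-1}$ of the normal bundle combines with the tangent bundles appropriately.

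The care needed is in checking that the fixed and moving splittings of $N_e|_{Z^g}$ match exactly $N_{e_g}$ and the difference of the normal bundles of the fixed loci.
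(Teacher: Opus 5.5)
Your arguments for (a), (b) and (e) essentially follow the paper's proof.

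In (e) your bookkeeping differs slightly: you localize $\mc X$ inside $\mc Z$, commute $\iota_{\mc Z}^!e^! = (Ie)^!\iota_{\mc W}^!$, and then re-localize $\mc Y$ inside $\mc W$. This reaches the formula of Lemma \ref{lem: pullback first step} a bit more directly than the paper's relative-$K$-group argument in Lemma \ref{lem: excess intersection}(b). You still need the paper's reduction to closed embeddings via the open-embedding case, because Lemma \ref{lem: localization inverse} requires $\mc X\subset\mc Z$ and $\mc Y\subset\mc W$ to be closed.

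For (d) you reverse the paper's order, and both routes are valid.
\begin{itemize}
\item You first obtain $\tau(\mc E)=\tch(\mc E)\cap\ttd(T_{\mc X})\cap[I\mc X]$ from Lemma \ref{lem: localization inverse} (with $X=Y$) and smooth Edidin--Graham Riemann--Roch on each $\mc X^g$. You then deduce the pullback formula from $(If)^![I\mc Y]=[I\mc X]$, part (a), and multiplicativity of $\ttd$.
\item The paper proves the pullback formula from Edidin--Graham's lci pullback formula and Lemma \ref{lem: smooth localization}. It then gets the smooth formula by applying this to $\mc X\to BG$.
\end{itemize}

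The gap is in (c) for nonrepresentable $f$, which is the whole reason the stacky $\tau$ exists. Your reduction to $BK$-gerbes is fine, but the gerbe case is only a heuristic, and it skips the real issue. By Example \ref{ex: need stacky tau}, $\tau^{EG}$ does not commute with nonrepresentable pushforward. So ``the average is compensated by the factors $\zeta$'' needs a proof that it does commute on the $\mf m_1$-summand.

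The paper avoids any gerbe computation, in three steps.
\begin{itemize}
\item Present $f$ by an equivariant $\tilde f:X\to Y$ along $\phi:G\to H$ (Lemma \ref{lemma: nonrepresentable presentation}). The projection formula together with $\phi^*\mf m_{\phi(g)}\subset\mf m_g$ gives (\ref{eq: pushforward loc}), namely $\iota_*^{-1}\pi(f_*\mc F)_h=\sum_{g\in\phi^{-1}(h)}If_{g*}\mc F_g$. This replaces your averaging.
\item $If_*$ commutes with $\rho$.
\item $\tau^{EG}\circ If_*=If_*\circ\tau^{EG}$ holds on $K_0(I\mc X)_{\mf m_1}$. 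To see this, surject onto that group from $K_0(\coprod_g X^g)$ through the representable cover $p$; pushforwards from a scheme are killed by $\mf m_1$, so they lie in the $\mf m_1$-summand. Then apply representable Riemann--Roch to $If\circ p$.
\end{itemize}

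Your other option, importing the nonrepresentable case from To\"en, is non-circular only if stated carefully. The paper deduces Corollary \ref{cor: agrees with Toen} from (c) and (d). However, the pushforwards used there (from a closed substack and from an equivariant resolution) are representable. So you could prove (d) and the representable case of (c), identify $\tau$ with To\"en's functor, and then import the nonrepresentable case from his Th\'eor\`eme 4.10(1). That order is legitimate, but it outsources exactly the stacky content of (c), and your proposal does not make the ordering explicit.
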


\begin{rmk}
    The appearance of $(Ie)^!$ instead of $(If)^!$ in (e) may require some explanation. The issue is that $(If)^!$ is not always well-defined; for example, in Proposition \ref{prop: G and G-1 inverse} we will deal with a representable regular embedding $f: \mc X \to \mc Y$ such that $If$ is not regular (see Remark \ref{rmk: not reg}). The approach here with $\mc Z$ and $\mc W$ can be described as introducing a derived structure for which $If$ is quasismooth; then the corresponding Gysin pullback is exactly $(Ie)^!$. 
\end{rmk}

\begin{cor} \label{cor: agrees with Toen}
    The functor $\tau$ of Definition \ref{defn: stacky tau} agrees with the one constructed in \cite[Th\'eor\`eme 4.10]{Toen} on nice quotients.
\end{cor}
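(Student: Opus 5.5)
The plan is to show that both functors satisfy the same list of properties, and that these properties already pin $\tau$ down on nice quotients. We then argue by a dévissage, in the same way one proves uniqueness of Fulton's $\tau$. Write $\tau'$ for To\"en's functor of \cite[Th\'eor\`eme 4.10]{Toen}, and take from that source the following two facts.
\begin{enumerate}
    \item $\tau'$ commutes with proper pushforward, including along nonrepresentable morphisms.
    \item On a smooth stack $\mc X$, $\tau'$ is given by $\tau'(\mc E) = \tch(\mc E) \cap \ttd(T_{\mc X}) \cap [I\mc X]$. This holds by construction, once one checks that To\"en's Chern character and Todd class agree with Definitions \ref{defn: stacky ch} and \ref{defn: stacky td}.
\end{enumerate}
For our $\tau$, the same two properties are Theorem \ref{thm: tau properties}(c) and (d). So it suffices to prove the following claim: any two functors $K_0(\mc X)_\BC \to \CH_*(I\mc X)_\BC$, defined on all nice quotients, that satisfy (1) and (2) must coincide.

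We prove the claim by induction on $\dim X$, where $\mc X = [X/G]$. The case $\dim X = 0$ is covered by a reduction we make at every step. Pushforward along the closed embedding $X_{\mathrm{red}} \hookrightarrow X$ is surjective on $K_0$, since any coherent sheaf has a filtration by $\ms O_{X_{\mathrm{red}}}$-modules. This pushforward is compatible with both functors by (1). Hence we may assume $X$ is reduced.

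Next choose a $G$-equivariant resolution of singularities $\pi: \tilde X \to X$ that is an isomorphism over a dense open $G$-stable $U \subset X$. Such a resolution exists by equivariant Hironaka, applied componentwise to the normalization; the resulting $\tilde X$ is quasiprojective, so $[\tilde X/G]$ is a smooth nice quotient. Let $Z = X \setminus U$, which has $\dim Z < \dim X$.

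Now fix a class $\mc F \in K_0(\mc X)$. Its restriction to $[U/G]$ lifts to some $\tilde{\mc F} \in K_0([\tilde X/G])$, by extending equivariant coherent sheaves from $U \cong \pi^{-1}(U)$. By the localization sequence $K_0(\mc Z) \to K_0(\mc X) \to K_0([U/G]) \to 0$, we can then write
$$\mc F = \pi_* \tilde{\mc F} + i_* \mc G, \qquad i: \mc Z \hookrightarrow \mc X,\ \mc G \in K_0(\mc Z).$$
We conclude as follows.
\begin{enumerate}
    \item By (1), $\tau(\pi_*\tilde{\mc F}) = I\pi_*\tau(\tilde{\mc F})$, and likewise for $\tau'$.
    \item By (2) on the smooth stack $[\tilde X/G]$, $\tau(\tilde{\mc F}) = \tau'(\tilde{\mc F})$.
    \item By (1) and the induction hypothesis on $\mc Z$, $\tau(i_*\mc G) = Ii_*\tau(\mc G) = Ii_*\tau'(\mc G) = \tau'(i_*\mc G)$.
\end{enumerate}
Adding these gives $\tau(\mc F) = \tau'(\mc F)$.

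The main obstacle is not this dévissage but the verification of (2) for To\"en's functor in exactly our normalization. Definition \ref{defn: stacky tau} involves the eigenvalue twist $\rho$ and a choice of convention on $\lambda_{-1}$ of the dual of the moving part of the normal bundle, and a mismatch here (for instance $g$ versus $g^{-1}$) would break the comparison. I would check this first on $BG$, using the representation $V$ of Example \ref{ex: need stacky tau}. Our $\tau(V)$ is the function $g \mapsto \chi_V(g)$ on $I(BG) = \coprod_g BG$, and one checks that To\"en's twisted character gives the same function. To match on all smooth $\mc X$, I would compare the two Chern characters by representable \'etale-local pullback, using Proposition \ref{prop: ttd properties}(a). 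I would compare the two Todd classes by the explicit formula \cite[Lemme 3.3]{Thomason}, which is the same localization input that enters both constructions.
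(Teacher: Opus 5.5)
Your proposal is correct and follows essentially the paper's own argument. Both match the two functors on smooth nice quotients using Theorem \ref{thm: tau properties}(d) together with To\"en's Th\'eor\`eme 4.10(4). Both then induct on dimension, using an equivariant resolution $\widetilde{\mc X} \to \mc X$, the localization sequence to obtain the surjection $K_0(\mc Z) \oplus K_0(\widetilde{\mc X}) \to K_0(\mc X)$, and the compatibility of both functors with proper pushforward. Your reduction to reduced $X$ and your proposed check of the $\rho$/Todd normalizations against To\"en's $Ch$ and $Td$ just make explicit points that the paper asserts without detail.
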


\begin{proof}
    First, note that $\tch$ and $\ttd(T_{\mc X}) \cap [I \mc X]$ agree with $Ch$ and $Td(\mc X)$, respectively, as defined in \cite[D\'efinitions 4.5, 4.7]{Toen}. It follows from Theorem \ref{thm: tau properties}(d) and \cite[Th\'eor\`eme 4.10(4)]{Toen} that $\tau$ and To\"en's functor, which we call $\tau_T$, match for any smooth nice quotient $\mc X$.

    To show that $\tau$ and $\tau_T$ agree on all nice quotients, we proceed by induction on $\dim \mc X$. Note that by canonical (and thus $G$-equivariant) resolution of singularities, we can find a proper birational map $f: \widetilde{\mc X} \to \mc X$ where $\widetilde{\mc X}$ is a smooth nice quotient. Let $i: \mc Z \hookrightarrow \mc X$ be a closed substack of lower dimension such that $f$ is an isomorphism over the complement $\mc U\xhookrightarrow{j} \mc Z$. Then, using the localization exact sequence
    $$K_0(\mc Z) \xrightarrow{i_*} K_0(\mc X) \xrightarrow{j^*} K_0(\mc U) \to 0$$
    \cite[Theorem 2.7]{Thomason-foundations} and the fact that $j^* \circ f_* = j_{\widetilde{\mc X}}^*$ (using the canonical, thus $G$-equivariant, isomorphism of \cite[Tag 08IB]{stacks-project}), we have a surjection
    $$i_* \oplus f_*: K_0(\mc Z) \oplus K_0(\widetilde{\mc X}) \twoheadrightarrow K_0(\mc X).$$
    Now, $\tau$ and $\tau_T$ agree on $K_0(\mc Z)$ by the induction hypothesis, and on $K_0(\widetilde{\mc X})$ by the previous paragraph. Then it suffices to note that $\tau$ and $\tau_T$ both commute with proper pushforward (by Theorem \ref{thm: tau properties}(c) and \cite[Th\'eor\`eme 4.10(1)]{Toen}, respectively).
\end{proof}

\begin{cor} \label{cor: part e}
    Let $B$ be a smooth variety, $\mc X$ a smooth nice quotient flat over $B$, and $\mc Y_i$ nice quotients flat over $B$ with representable embeddings into smooth nice quotients $\mc Y_i \hookrightarrow \mc Z_i$. Consider the morphism
    $$\delta_{\mc X}: \mc Y_1 \times_B \mc X \times_B \mc Y_3 \to \mc Y_1 \times_B \mc X \times \mc X \times_B \mc Y_2$$
    which is a flat base change of the diagonal $\Delta_{\mc X}: \mc X \to \mc X \times \mc X$. Then
    $$\tau(\delta_{\mc X}^* \mc F) = \ttd(T_{\delta_{\mc X}}) \cap (I\Delta_{\mc X})^! \tau(\mc F) = \ttd(-p_2^*T_{\mc X}) \cap (I\Delta_{\mc X})^! \tau(\mc F)$$
    for any $\mc F \in K_0(\mc Y_1 \times_B \mc X \times \mc X \times_B \mc Y_2)$.
\end{cor}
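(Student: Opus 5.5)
This is a direct application of Theorem \ref{thm: tau properties}(e). (There is an apparent typo: the source should be $\mc Y_1 \times_B \mc X \times_B \mc Y_2$.) Set
$$\mc Y := \mc Y_1 \times_B \mc X \times \mc X \times_B \mc Y_2, \qquad \mc X' := \mc Y_1 \times_B \mc X \times_B \mc Y_2,$$
and $f := \delta_{\mc X}: \mc X' \to \mc Y$.

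\textbf{Step 1: build the Cartesian square.} For the ambient smooth stacks take
$$\mc W := \mc Z_1 \times \mc X \times \mc X \times \mc Z_2, \qquad \mc Z := \mc Z_1 \times \mc X \times \mc Z_2,$$
with $e := \mathrm{id} \times \Delta_{\mc X} \times \mathrm{id}$. Both $\mc Z$ and $\mc W$ are smooth nice quotients, since products of nice quotients are nice. The map $e$ is a representable regular embedding, because $\Delta_{\mc X}$ is representable for a quotient by a finite group and is regular as $\mc X$ is smooth. The vertical map $\mc Y \hookrightarrow \mc W$ is the composite of the closed embedding $\mc Y \hookrightarrow \mc Y_1 \times \mc X \times \mc X \times \mc Y_2$, which is a base change of a diagonal of $B$ and is representable, with the embeddings $\mc Y_i \hookrightarrow \mc Z_i$.

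\textbf{Step 2: check the square's hypotheses.} One checks directly on functors of points that $\mc Y \times_{\mc W} \mc Z \cong \mc X'$: fibering $\mc Y$ over $\mc W$ with $\mc Z$ exactly imposes that the two $\mc X$-coordinates agree. The map $f$ is a base change of $\Delta_{\mc X}$ along the flat map $\mc Y \to \mc X \times \mc X$, which is flat because the $\mc Y_i$ are flat over $B$. Hence $f$ is a representable regular embedding of codimension $\dim \mc X$. This equals $\dim \mc W - \dim \mc Z$, which is what Theorem \ref{thm: tau properties}(e) requires.

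Equidimensionality of $\mc X'$ and $\mc Y$ follows from flatness over $B$ together with equidimensionality of the $\mc Y_i$ and $\mc X$. If the $\mc Y_i$ are not assumed equidimensional, I would first reduce to connected (or irreducible-dimension) components, since both sides of the identity are additive over components.

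\textbf{Step 3: identify the Gysin map and the tangent term.} Theorem \ref{thm: tau properties}(e) now gives
$$\tau(f^*\mc F) = \ttd(T_f) \cap (Ie)^! \tau(\mc F).$$
The refined Gysin map $(Ie)^!$ is the refined pullback along $I\mathrm{id} \times I\Delta_{\mc X} \times I\mathrm{id}$. By the compatibility of refined Gysin maps with products (exterior product with identities), this is what is denoted $(I\Delta_{\mc X})^!$, which gives the first equality.

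For the second equality, the virtual tangent bundle of the regular embedding $f$ is $T_f = -N_f$. Since $f$ is a flat base change of $\Delta_{\mc X}$, we have $N_f = f'^* N_{\Delta_{\mc X}}$, and $N_{\Delta_{\mc X}} \cong T_{\mc X}$. Hence $T_f = -p_2^* T_{\mc X}$, where $p_2$ is the projection to the $\mc X$ factor.

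\textbf{Main obstacle.} The only delicate point is verifying that all the maps in the square are representable and that the dimension and equidimensionality hypotheses hold, so that Theorem \ref{thm: tau properties}(e) applies. This is bookkeeping rather than a genuine obstruction.
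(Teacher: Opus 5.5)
Your argument breaks at Steps 1--2, where you assert that $e = \mathrm{id}\times\Delta_{\mc X}\times\mathrm{id}$ and $f = \delta_{\mc X}$ are representable regular \emph{embeddings}. They are representable, but they are not embeddings once $G$ has points with nontrivial stabilizer.

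\begin{itemize}
\item Pulled back along the atlas $X\times X\to\mc X\times\mc X$, the diagonal becomes $(\sigma,p_2)\colon G\times X\to X\times X$, $(g,x)\mapsto(gx,x)$. This map is finite and unramified but not injective. For $\mc X = BG$, $\Delta_{\mc X}$ is even finite \'etale of degree $|G|$.
\item Smoothness of $\mc X$ only makes $\Delta_{\mc X}$ a regular \emph{local} immersion.
\item Theorem \ref{thm: tau properties}(e) is stated and proved only for embeddings. Its proof reduces to a closed-embedding case and an open-embedding case. The closed case uses Lemmas \ref{lem: localization inverse} and \ref{lem: pullback first step}, which need $G$-equivariant closed embeddings with $X^g = X\times_Y Y^g$. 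The open case uses $I\mc X = \mc X\times_{\mc Y} I\mc Y$, valid for monomorphisms.
\item For the diagonal this description of inertia fails: $\mc X\times_{\mc X\times\mc X} I(\mc X\times\mc X)\cong I\mc X\times_{\mc X} I\mc X$, which is strictly larger than $I\mc X$.
\end{itemize}

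So what you call bookkeeping is actually the main obstruction: (e) simply does not cover $\delta_{\mc X}$. This is also the case of interest in the paper, e.g.\ $M^\vee = [M/\Gamma]$ with $\Gamma$ having fixed points.

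The missing idea is to factor the diagonal. Choose $\mc X = [X/G]$ and write $\Delta_{\mc X} = b\circ a$.
\begin{itemize}
\item $a\colon [X/G]\to[X\times X/G]$ is induced by the diagonal $X\hookrightarrow X\times X$. It is a genuine representable closed regular embedding.
\item $b\colon [X\times X/G]\to[X\times X/G\times G] = \mc X\times\mc X$ is induced by $G\hookrightarrow G\times G$. It is representable and finite \'etale.
\end{itemize}
Your Steps 1--3 then work for the base change $\delta_a$. Use the square whose bottom row is $\mc Z_1\times\mc X\times\mc Z_2\hookrightarrow\mc Z_1\times[X\times X/G]\times\mc Z_2$, a flat base change of $a$ of the same codimension.

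You also need separately that $\tau(\delta_b^*\mc G) = (I\delta_b)^*\tau(\mc G)$ for the representable \'etale $\delta_b$. The paper checks this through each step of the definition of $\tau$:
\begin{itemize}
\item $\pi$ and $\rho$ commute with $(I\delta_b)^*$ by representability;
\item $\iota_*^{-1}$ commutes by Lemma \ref{lem: localization inverse}, using smooth ambient stacks $\mc Z_1\times[X\times X/G]\times\mc Z_2$ and $\mc Z_1\times\mc X\times\mc X\times\mc Z_2$;
\item $\tau^{EG}$ commutes by Edidin--Graham.
\end{itemize}
Finally, $(Ia)^!(Ib)^* = (I\Delta_{\mc X})^!$, since $(Ib)^*$ is flat pullback. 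Also $T_{\delta_a} = T_{\delta_{\mc X}}$ because $\delta_b$ is \'etale, and at that point your identification $T_{\delta_{\mc X}} = -p_2^*T_{\mc X}$ is fine. Your observation that the source should read $\mc Y_1\times_B\mc X\times_B\mc Y_2$ is correct.
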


\begin{proof}
    Choosing a nice presentation $\mc X = [X/G]$, recall that $\Delta_{\mc X}$ can be represented by $(\sigma, p_2): G \times X \to X \times X$, where $\sigma$ is the action map and $p_2$ the projection. Then consider the factorization
    $$\Delta_{\mc X}: \mc X \xrightarrow{a} [X \times X/G] \xrightarrow{b} \mc X \times \mc X$$
    obtained from the $(G \times G)$-equivariant morphisms
    $$G \times X \xrightarrow{(1,\sigma,p_2)} G \times X \times X \xrightarrow{p_{23}} X \times X.$$
    We note that $a$ is a regular embedding, and $b$ is \'etale; letting $\delta_a$ and $\delta_b$ be the corresponding base changes, the same is true of $\delta_a$ (by flatness) and $\delta_b$. Furthermore, note that $\delta_a$ fits into a Cartesian square of representable morphisms
    \[
    \begin{tikzcd}
        \mc Y_1 \times_B \mc X \times_B \mc Y_2 \arrow[r, "\delta_a", hook] \arrow[d, hook] &  \mc Y_1 \times_{B,p_1} [X \times X/G] \times_{p_2,B} \mc Y_2\arrow[d, hook] \\
        \mc Z_1 \times \mc X \times \mc Z_2 \arrow[r, hook, "e"] & \mc Z_1 \times [X \times X/G] \times \mc Z_2
    \end{tikzcd}
    \]
    where the bottom terms are smooth, and both bottom and top arrows are flat base changes of $a$, thus of the same codimension.
    
    Then we have
    $$\tau(\delta_{\mc X}^* \mc F) = \tau(\delta_a^* \delta_b^* \mc F) = \ttd(T_{\delta_a}) \cap (Ia)^!\tau(\delta_b^* \mc F)$$
    by Theorem \ref{thm: tau properties}(e) applied to the square above (for each component of $\mc X$ separately, if there are components of varying dimensions), since $Ie$ is a flat base change of $Ia$
    $$ = \ttd(T_{\delta_a}) \cap (Ia)^!(I\delta_b)^*\tau( \mc F)$$
    since $\pi \circ \delta_b^* = I\delta_b^* \circ \pi$ because $\delta_b$ is representable; $\iota_*^{-1}$ commutes with $I\delta_b^*$ by Lemma \ref{lem: localization inverse}, using $\mc Z_1 \times [X \times X/G] \times \mc Z_2$ and $\mc Z_1 \times \mc X \times \mc X \times \mc Z_2$ as smooth embeddings; $\rho$ commutes because $\delta_b$ is representable; and $\tau^{EG}$ commutes by \cite[Theorem 3.1(d)(ii)]{EG-RRequivChow} (or alternatively by \cite[Th\'eor\`eme 4.10(2)]{Toen}, since by Corollary \ref{cor: agrees with Toen} the two $\tau$ functors agree)
    $$= \ttd(T_{\delta_a}) \cap (Ia)^!(Ib)^!\tau( \mc F) = \ttd(T_{\delta_a}) \cap (I\Delta_{\mc X})^!\tau( \mc F) = \ttd(T_{\delta_{\mc X}}) \cap (I\Delta_{\mc X})^!\tau( \mc F)$$
    since $(I\delta_b)^* = (Ib)^!$ is flat pullback, and $T_{\delta_a} = T_{\delta_{\mc X}}$ since $\delta_b$ is \'etale.
\end{proof}

For the proof of Theorem \ref{thm: tau properties}, it will be convenient to have concrete presentations of the various morphisms of nice quotients. The following two lemmas are well-known, but we include proofs for lack of a precise reference.
\begin{lem} \label{lemma: nonrepresentable presentation}
    If $f: \mc X \to \mc Y$ is a morphism of nice quotients, then there exist nice quotient representations $\mc X = [X/G]$ and $\mc Y = [Y/H]$, a group homomorphism $G \to H$, and a $G$-equivariant morphism $X \to Y$ (where $G$ acts on $Y$ through $H$) inducing the morphism $f$.
\end{lem}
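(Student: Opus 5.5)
Start from arbitrary nice presentations $\mc X = [X'/G']$ and $\mc Y = [Y/H]$, and replace the presentation of $\mc X$ by one adapted to $f$. The natural candidate is the fiber product $X := X' \times_{\mc Y} Y$, i.e.\ the $H$-torsor over $X'$ obtained by pulling back the $H$-torsor $Y \to \mc Y$ along $X' \to \mc X \xrightarrow{f} \mc Y$. This carries commuting actions of $G'$ (through $X'$) and of $H$ (through the torsor structure), so $G := G' \times H$ is a finite abelian group acting on $X$. We then have $[X/G] = [[X/H]/G'] = [X'/G'] = \mc X$, because $X \to X'$ is an $H$-torsor and $X' \to \mc X$ is a $G'$-torsor. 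For the group homomorphism take the projection $G = G' \times H \to H$. The second projection $X \to Y$ is $H$-equivariant by construction and invariant under $G'$, since $G'$ acts only through $X'$ and the map $X \to Y$ factors as $X \to X' \times_{\mc Y} Y \to Y$ without involving the $G'$-action. Hence $X \to Y$ is $G$-equivariant when $G$ acts on $Y$ via $G \to H$.

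The steps, in order, are as follows.

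First, check that $X$ is a quasiprojective variety. The morphism $X \to X'$ is finite étale (a torsor under a finite group over a scheme is representable by a scheme, finite étale over the base), so $X$ is quasiprojective since $X'$ is. If one insists on ``variety'' meaning irreducible, this must be dropped or handled componentwise; I expect the paper's usage tolerates reducible and disconnected schemes.

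Second, verify the identification $[X/G] \cong \mc X$ precisely. The map $X \to \mc X$ is the composition of two torsors, and one checks that it is a $G'\times H$-torsor. For this, show that $X \times_{\mc X} X \cong G \times X$ by computing $X\times_{\mc X}X = (X'\times_{\mc X}X')\times_{\mc Y\times\mc Y}(Y\times Y)$-type fiber products, or more simply by checking étale locally on $\mc X$, where both torsors are trivial.

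Third, check that the induced morphism $[X/G] \to [Y/H]$ agrees with $f$ up to 2-isomorphism. This holds because the $H$-torsor $X \to X'$, together with its equivariant map to $Y$, is by definition the object of $\mc Y(X')$ given by $f$ applied to the tautological object, and this identification is $G'$-equivariant.

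The main obstacle is the bookkeeping of the $G'$-equivariant structure. The action of $G'$ on $X = X' \times_{\mc Y} Y$ requires the 2-isomorphisms $f\circ (g\cdot) \Rightarrow f$ coming from the $G'$-descent data of the object $X' \to \mc X$, and one must check that these assemble into a genuine group action commuting with $H$. The cleanest route is to avoid explicit cocycles: define $X$ intrinsically as $\mc X' \times_{\mc Y} Y$, where $\mc X' := X'$ is viewed over $\mc X$. Then $X = X'\times_{\mc X}(\mc X\times_{\mc Y} Y)$, where $P:=\mc X \times_{\mc Y} Y \to \mc X$ is an $H$-torsor over $\mc X$ (an algebraic space, representable over $\mc X$), and $X'\times_{\mc X} P$ inherits commuting $G'$- and $H$-actions functorially from the two torsor structures over $\mc X$. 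With this formulation all three steps reduce to formal properties of fiber products of torsors.
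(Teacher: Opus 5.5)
Your proposal is correct and is essentially the paper's proof: the paper also takes $X := Z \times_{\mc Y} Y$ with $G := J \times H$ acting through the two factors, and it checks $[X/G] \cong [Z/J]$ by pulling back along the atlas $Z \to [Z/J]$ to get $[X/H] = Z$, which is your quotient-in-stages (torsor) argument. Your two extra checks, that $X$ is quasiprojective because $X \to X'$ is finite \'etale and that the induced map $[X/G] \to [Y/H]$ is $2$-isomorphic to $f$, are left implicit in the paper.
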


\begin{proof}
    Pick nice quotient representations $\mc X = [Z/J]$ and $\mc Y = [Y/H]$. Take $X := Z \times_{\mc Y} Y$ and $G := J \times H$. Note that $J$ acts on $X$ through the action on $Z$ over $\mc X$, and $H$ acts on $X$ through the action on $Y$; these actions commute and thus define an action of $G$ on $X$, compatible with the action of $J$ on $Z$, respectively of $H$ on $Y$, via the projection maps. We claim that the projection $X \to Z$ compatible with the projection of groups $G \to J$ induces an isomorphism of quotient stacks $a: [X/G] \cong [Z/J]$: indeed, it suffices to note that $[X/G] \times_{[Z/J]} Z = [X/H] = Z$, i.e., $a$ pulls back to an isomorphism under the cover $Z \to [Z/J]$ \cite[Tags 04XD, 041Y]{stacks-project}.
\end{proof}

\begin{lem} \label{lem: representable representation}
    If $f: \mc X \to \mc Y$ is representable morphism of nice quotients, there is a finite abelian group $G$, quasiprojective varieties $X$ and $Y$ with $\mc X = [X/G]$ and $\mc Y = [Y/G]$, and a $G$-equivariant morphism $X \to Y$ inducing the map $f$. Conversely, given a morphism of quasiprojective varieties $X \to Y$ equivariant with respect to the action of a finite group $G$, the corresponding morphism of nice varieties $[X/G] \to [Y/G]$ is representable.
\end{lem}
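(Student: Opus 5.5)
We will reduce to the construction already used in Lemma \ref{lemma: nonrepresentable presentation}, and check that representability forces the group homomorphism there to be injective, so that the $J$-part of the group can be absorbed. Start with nice presentations $\mc X = [Z/J]$ and $\mc Y = [Y'/H]$. Since $f$ is representable, the fiber product $X := Z \times_{\mc Y} Y'$ is an algebraic space, and it is a $J \times H$-torsor-type cover: $X \to Z$ is an $H$-torsor and $X \to Y'$ is representable. Moreover $X$ is quasiprojective, since $X \to Z$ is finite étale over a quasiprojective $Z$ (a finite étale algebraic space over a quasiprojective variety is a quasiprojective scheme). As in Lemma \ref{lemma: nonrepresentable presentation}, $[X/(J\times H)] \cong \mc X$.

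Set $G := J \times H$ and $Y := Y' \times_{\mc Y} ([\ast/H]\text{-twist})$. Concretely, we take $Y := \operatorname{Ind}_H^{G} Y' = G \times^H Y' \cong J \times Y'$, with $G$ acting by translation on the $J$ factor and by the given action on $Y'$ through $H$. Then $[Y/G] \cong [Y'/H] = \mc Y$, because inducing along the inclusion $H \hookrightarrow G$ does not change the quotient stack. Now define the $G$-equivariant map $X \to J \times Y'$ by $x \mapsto (1, \text{pr}_{Y'}(x))$, twisted so that it is equivariant. The natural candidate is to use the $J$-torsor structure over $\mc X$: the composition $X \to \mc X$ comes with the $J$-action, but there is no canonical $J$-valued function on $X$. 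The key point is therefore this step. Representability of $f$ means exactly that for each geometric point the map on stabilizers $\operatorname{Stab}_J(z) \to H$ is injective. This is what allows one to find, after replacing $Z$ by a suitable space, a presentation in which $J$ itself maps injectively into $H$.

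Our plan for the key step is to use the alternative construction $X := Y' \times_{\mc Y} \mc X$, an algebraic space with a free-on-stabilizers $H$-action, with $[X/H] = \mc X$ because $Y' \to \mc Y$ is an $H$-torsor and pullback of torsors commutes with quotients. Then $G := H$, $Y := Y'$, and the projection $X \to Y'$ is $H$-equivariant. Representability of $f$ is exactly what makes $X$ an algebraic space rather than a stack. It remains to see that $X$ is a quasiprojective variety. Indeed, $X \to \mc X$ is finite étale, so $X \times_{\mc X} Z \to Z$ is finite étale with quasiprojective target, and $X \times_{\mc X} Z \to X$ is a finite étale $J$-quotient. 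A quotient of a quasiprojective scheme by a finite group acting freely is quasiprojective, so $X$ is quasiprojective. We expect this quasiprojectivity check, and the verification that $X$ is a scheme, to be the main obstacle; it is handled by the standard facts on quotients by finite groups.

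For the converse, given a $G$-equivariant $X \to Y$, the base change $[X/G] \times_{[Y/G]} Y \cong X$ is a scheme. Hence $[X/G] \to [Y/G]$ is representable by definition, since representability can be checked after the smooth cover $Y \to [Y/G]$ \cite[Tag 04XD]{stacks-project}.
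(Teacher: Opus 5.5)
This is the paper's proof: your final construction $X := Y' \times_{\mc Y} \mc X$ with $G := H$ and $Y := Y'$ matches it, as does the quasiprojectivity check through the finite \'etale cover $X \times_{\mc X} Z$ and the converse by base change along $Y \to [Y/G]$. The paper finishes quasiprojectivity with the finite-cover lemmas where you use the free $J$-quotient, which also shows explicitly that $X$ is a scheme; the opening $J \times H$ induced-space attempt and the stabilizer-injectivity remark are a dead end you never use, and you should delete them.
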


\begin{proof}
    Choose a nice quotient presentation $\mc Y = [Y/G]$, and take $X := \mc X \times_{\mc Y} Y$. Then we have a Cartesian diagram
    \[
    \begin{tikzcd}
        X \arrow[d] \arrow[r] & Y \arrow[d] \arrow[r] & \Spec \BC \arrow[d] \\
        \mc X \arrow[r] & \left[Y/G\right] \arrow[r] & BG
    \end{tikzcd}
    \]
    where $X \to Y$ is the needed $G$-equivariant map and $\mc X = [X/G]$. (For quasiprojectivity of $X$, note that $\mc X$ has a nice quotient presentation $[X'/G']$, so that $X' \times_{\mc X} X$ is a finite cover of the quasiprojective variety $X'$, therefore quasiprojective \cite[Tag 0C4M]{stacks-project}, and since it is a finite cover of $X$, then $X$ too is quasiprojective \cite[Tag 0C4N]{stacks-project}.)

    In the other direction, such a $G$-equivariant morphism induces a Cartesian diagram
        \[
    \begin{tikzcd}
        X \arrow[d] \arrow[r] & Y \arrow[d] \arrow[r] & \Spec \BC \arrow[d] \\
        \left[X/G\right] \arrow[r, "f"] & \left[Y/G\right] \arrow[r] & BG
    \end{tikzcd}
    \]
    Since $Y \to [Y/G]$ is an \'etale cover and $X$ a scheme, we conclude that the induced map $f$ is representable \cite[Tag 04ZP]{stacks-project}.
\end{proof}

With these in hand, we turn to the proofs:

\begin{proof}[Proof of Proposition \ref{prop: ttd properties}]
    (a) Let $f: \mc X \to \mc Y$ be a morphism of nice quotients and $\mc E \in K^0(I \mc Y)$. We note that the canonical isomorphism of $\mc E$ induced by the canonical isomorphism on $I\mc Y$ pulls back to the canonical isomorphism on $If^*\mc E$. Thus $If^* ( \mc E^{\fix}) = (If^* \mc E)^{\fix}$, and similarly for the dual of the moving part, and $If^*$ commutes with $\rho$ and $\lambda_{-1}$. Then it suffices to note that $If^*$ commutes with the equivariant classes $\td^{EG}$ and $\ch^{EG}$ \cite[\S 3.6]{Kresch}. Finally, for $\mc F \in K^0(\mc Y)$, it suffices to note that $If^*\iota_{\mc Y}^* \mc F = \iota_{\mc X}^* f^* \mc F$.\\
    
    (b) We have
    $$If_*(\td^I(If^* \mc E) \cap \alpha) = If_*(If^*\td^I(\mc E) \cap \alpha) = \td^I(\mc E) \cap If_* \alpha,$$
    where the first equality follows from part (a) and the second from the projection formula for $If$, which is again a proper morphism of nice quotients.
\end{proof}

\begin{proof}[Proof of Theorem \ref{thm: tau properties}(a)]
    Let $\mc X = [X/G]$ be a nice presentation. We trace through the steps in the definition of $\tau$. First, we have
    $$\pi(\mc E \cdot \mc F) = \mc E \cdot \pi(\mc F)$$
    since the actions of $K^0(BG) \to K^0(\mc X)$ on $K_0(\mc X)$ commute, and thus the product with $\mc E$ is an isomorphism of $K^0(BG)$-modules. Then by the projection formula
    $$\iota_*^{-1} \circ \pi(\mc E \cdot \mc F) = \iota_*^{-1}(\mc E \cdot \pi(\mc F)) = \iota^* \mc E \cdot \iota_*^{-1}(\pi(\mc F)).$$
    Next, we note that
    $$\rho(\iota^* \mc E \cdot \iota_*^{-1}(\pi(\mc F))) = \rho(\iota^* \mc E) \cdot \rho(\iota_*^{-1}(\pi(\mc F))) $$
    and so
    $$\tau(\mc E \cdot \mc F) = \tau^{EG} \circ \rho \circ \iota_*^{-1} \circ \pi(\mc E \cdot \mc F) = \tau^{EG}(\rho(\iota^* \mc E) \cdot \rho(\iota_*^{-1}(\pi(\mc F))))$$
    $$=\ch^{EG}(\rho(\iota^* \mc E)) \cap \tau^{EG}(\rho(\iota_*^{-1}(\pi(\mc F))))) = \tch(\mc E) \cap \tau(\mc F)$$
    by \cite[Theorem 3.1(c)]{EG-RRequivChow}.
\end{proof}

\begin{proof}[Proof of Theorem \ref{thm: tau properties}(b)]
    Let $\mc X = [X/G]$ and $\mc Y = [Y/H]$ be nice presentations, so that we have a nice presentation $\mc X \times \mc Y \cong [X \times Y/G \times H]$. We start by claiming that
    $$\pi_{(g,h)}(\mc F \boxtimes \mc G) = \pi_g(\mc F) \boxtimes \pi_h(\mc G)$$
    Indeed, since $\mf m_g \subset K^0(BG)_\BC$ annihilates $\pi_g(\mc F)$ and similarly for $\mc G$, we certainly have
    $$\pi_g(\mc F) \boxtimes \pi_h(\mc G) \subset K_0(\mc X \times \mc Y)_{\mf m_{(g,h)}, \BC} \subset K_0(\mc X \times \mc Y)_\BC$$
    and on the other hand
    $$\sum_{g \in G, h \in H} \pi_g(\mc F) \boxtimes \pi_h(\mc G) = \mc F \boxtimes \mc G.$$
    Next,
    $$\iota_{(g,h) *} (\iota_{g*}^{-1}(\pi_g(\mc F) \boxtimes \iota_{h*}^{-1}(\pi_h(\mc G)))) = (\iota_g \times \iota_h)_*(\iota_{g*}^{-1}(\pi_g(\mc F) \boxtimes \iota_{h*}^{-1}(\pi_h(\mc G)))) = \pi_g(\mc F) \boxtimes \pi_h(\mc G)$$
    by Lemma \ref{lemma: pushforward exterior product}, i.e., writing $\iota_*^{-1} \circ \pi(\mc F) =: (\mc F_g)_{g \in G}$ and similarly for $\mc G$, we have
    \begin{equation} \label{eq: boxtimes loc}
        \iota_*^{-1} \circ \pi (\mc F \boxtimes \mc G) = (\mc F_g \boxtimes \mc G_h)_{(g,h) \in G \times H}.
    \end{equation}

    Now, we have
    $$\rho_{(g,h)}(\mc F_g \boxtimes \mc G_h) = \rho_g(\mc F_g) \boxtimes \rho_h(\mc G_h)$$
    and so
    $$\tau^{EG}(\rho_g(\mc F_g) \boxtimes \rho_h(\mc G_h)) = \tau^{EG}(\rho_g(\mc F_g)) \times \tau^{EG}(\rho_h(\mc G_h)),$$
    which follows from the corresponding fact for $\tau^F$ \cite[Example 18.3.1]{Fulton} when one uses representations $V_G, V_H, V_G \oplus V_H$ to compute the functors $\tau^{EG}$ for the groups $G,H, G \times H$ (respectively).
    By definition, this means
    $$\tau(\mc F \boxtimes \mc G) = \tau(\mc F) \times \tau(\mc G).$$
\end{proof}

\begin{proof}[Proof of Theorem \ref{thm: tau properties}(c)]
    First, by Lemma \ref{lemma: nonrepresentable presentation}, there are nice presentations $\mc X = [X/G]$ and $\mc Y = [Y/H]$ such that $f$ is induced by a morphism $\tilde f:X \to Y$ equivariant with respect to a homomorphism $\phi: G \to H$. Note that the morphism $If: I\mc X \to I \mc Y$ breaks into morphisms
    $If_g: \mc X^g \to \mc Y^{\phi(g)}$ induced by $\tilde f|_{X^g}: X^g \to Y^{\phi(g)}$. 

    Writing $\iota_*^{-1} \circ \pi(\mc F) = (\mc F_g)_{g \in G}$, we claim that
    \begin{equation} \label{eq: pushforward loc}
        \iota_*^{-1} \circ \pi(f_* \mc F)_h = \sum_{g \in \phi^{-1}(h)} If_* \mc F_g \in K_0(\mc Y^h)_{\mf m_h,\BC}.
    \end{equation}
    Since
    $$\sum_{h \in H}\iota_{\mc Y, h*} \sum_{g \in \phi^{-1}(h)} If_{g*} \mc F_g = \iota_{\mc Y*} If_{*} \left(\sum_{g \in G} \mc F_g\right) = f_* \iota_{\mc X*}\left(\sum_{g \in G} \mc F_g\right) = f_* \mc F,$$
    it suffices to verify that $If_{g*} \mc F_g \in K_0(\mc Y^h)_{\mf m_h, \BC}$ for $h = \phi(g)$, or in other words that for any $r \in \mf m_h \subset K^0(BH)$ we have $r \cdot If_{g*} \mc F_g = 0$. This follows since, with notation as in the following commutative (but not necessarily Cartesian) square
    \[
    \begin{tikzcd}
    \mc X^g \arrow[r, "If_g"] \arrow[d, "p"] & \mc Y^h \arrow[d, "q"] \\
    BG \arrow[r, "\phi"] & BH
    \end{tikzcd}
    \]
    the projection formula \cite[Proposition 3.7]{Khan} implies that
    $$q^*r \cdot If_{g*} \mc F_g = If_{g*}(If^*_g q^*r \cdot \mc F_g) = If_{g*}(p^* \phi^*r \cdot \mc F_g) = 0,$$
    since $\phi^* r \in \mf m_g$ and thus by assumption that $\mc F_g \in K_0(\mc X^g)_{\mf m_g, \BC}$ we have $p^* \phi^* r \cdot \mc F_g = 0$.

    Next, we note that $If_* \circ \rho_X = \rho_Y \circ If_*$ since the canonical automorphism on $If_* \mc G$ for any $\mc G \in K_0(I \mc X)$ is given by the composition of $If_*$ with the canonical automorphism on $\mc G$, and so in particular the eigensheaf decomposition is preserved by $If_*$.
    
    Finally, to show that $If_* \circ \tau^{EG} = \tau^{EG} \circ If_*$ we use compatibility of $\tau^{EG}$ with representable proper pushforward \cite[Theorem 3.1(b)]{EG-RRequivChow}: Since $p: \coprod_{g \in G} X^g \to I\mc X$ is proper and representable, we have a diagram
    \[
    \begin{tikzcd}
        K_0(\coprod_{g \in G} X^g)_{\mf m_1, \BQ} \arrow{r}{\sim}[swap]{\tau^{EG}} \arrow[d, "p_*"] & \CH_*(\coprod_{g \in G} X^g)_\BQ \arrow[d, two heads, "p_*"] \\
        K_0(I \mc X)_{\mf m_1, \BQ} \arrow{r}{\sim}[swap]{\tau^{EG}} & \CH_*(I \mc X)_\BQ
    \end{tikzcd}
    \]
    (where the horizontal arrows are isomorphisms by \cite[Corollary 5.1]{EG-RRequivChow}, and the right vertical arrow is a surjection by \cite[Example 1.7.6]{Fulton}). Thus, for any $\mc G \in K_0(I\mc X)_{\mf m_1, \BQ}$ we can find $\mc H \in K_0(\coprod_{g \in G} X^g)_\BQ$ with $p_* \mc H = \mc G$. To show that $\tau^{EG}(If_* \mc G) = If_* \tau^{EG}(\mc G)$ it suffices to see that $\tau^{EG}((If \circ p)_* \mc H) = (If \circ p)_* \tau^{EG}(\mc H)$, but now note that $If \circ p: \coprod_{g \in G} X^g \to I \mc Y$ is again representable and proper, as we can factor $\coprod_{g \in G} X^g \to \coprod_{h \in H} Y^h \to I\mc Y$.
    
    Thus we conclude that
    $$\tau(f_* \mc F) = \tau^{EG} \circ \rho \circ \iota_*^{-1} \circ \pi(f_* \mc F) = \tau^{EG} \circ \rho\left(\sum_{g \in \phi^{-1}(h)} If_{g*} \mc F_g\right)_{h \in H} $$
    $$= \tau^{EG} \left( \sum_{g \in \phi^{-1}(h)} If_{g*} \rho_g(\mc F_g) \right)_{h \in H} = \left( \sum_{g \in \phi^{-1}(h)}If_{g*} \tau^{EG}(\rho_g(\mc F_g)) \right)_{h \in H} = If_* \tau(\mc F).$$
\end{proof}

\begin{proof}[Proof of Theorem \ref{thm: tau properties}(d)]
    First, since $f$ is representable, by Lemma \ref{lem: representable representation} we may choose nice presentations $\mc X = [X/G]$ and $\mc Y = [Y/G]$ such that $f$ is induced by a $G$-equivariant morphism $\tilde f: X \to Y$. Since $\mc X$ and $\mc Y$ are smooth, the same is true of $X$ and $Y$, as well as the fixed loci $X^g, Y^g$ \cite[Proposition 3.1]{Thomason}.

    By Lemma \ref{lem: smooth localization} we have
    $$\tau(f^*\mc F) = \td^I(\mc N_{\iota_{\mc X}}) \cap \tau^{EG}(\rho(\iota_{\mc X}^* f^* \mc F)),$$
    where we note that
    $$\tau^{EG}(\rho(\iota_{\mc X}^* f^* \mc F)) = \tau^{EG}(\rho(If^* \iota_{\mc Y}^* \mc F)) = \tau^{EG}(If^*(\rho(\iota_{\mc Y}^* \mc F)))$$
    $$= \td^{EG}(T_{If}) \cap If^!\tau^{EG}(\rho(\iota_{\mc Y}^* \mc F)))$$
    by the proof of \cite[Theorem 3.1(d)(ii)]{EG-RRequivChow}, 
    since $If: I \mc X \to I\mc Y$ is induced by the $G$-equivariant morphism of smooth quasiprojective varieties $\coprod_{g \in G} \tilde f|_{X^g}
    : \coprod X^g \to \coprod Y^g$ (a morphism of smooth varieties being l.c.i.), and we replace the hypothesis on $G$ with the quasiprojectivity of $X^g$ and $Y^g$, so that one may now apply \cite[Proposition 23(1)]{EG-equivint}).

    We conclude that
    $$\tau(f^* \mc F) = \td^I(\mc N_{\iota_{\mc X}}) \cap \td^{EG}(T_{If}) \cap If^!\tau^{EG}(\rho(\iota_{\mc Y}^* \mc F))$$
    $$= \td^I(\mc N_{\iota_{\mc X}}) \cap \td^{EG}(T_{If}) \cap If^!(\td^I(\mc N_{\iota_{\mc Y}})^{-1} \cap \td^I(\mc N_{\iota_{\mc Y}}) \cap
    \tau^{EG}(\rho(\iota_{\mc Y}^* \mc F))) $$
    $$= \td^I(\mc N_{\iota_{\mc X}}) \cap \td^{EG}(T_{If}) \cap \td^I(If^* \mc N_{\iota_{\mc Y}})^{-1} \cap If^!(\td^I(\mc N_{\iota_{\mc Y}}) \cap
    \tau^{EG}(\rho(\iota_{\mc Y}^* \mc F)) )$$
    $$= \td^I(\mc N_{\iota_{\mc X}} - If^* \mc N_{\iota_Y}) \cap \td^{EG}(T_{If}) \cap If^!(\tau(\mc F)) $$
    again by Lemma \ref{lem: smooth localization}. Now, note that $\iota_{\mc X}^* T_f$ satisfies
    $$(\iota_{\mc X}^* T_f)^{\mov} = (\iota_{\mc X}^* T_{\mc X} - \iota_{\mc X}^* f^* T_{\mc Y})^{\mov} = \mc N_{\iota_{\mc X}} - If^* \mc N_{\iota_{\mc Y}}$$
    $$ (\iota_{\mc X}^* T_f)^{\fix} = (\iota_{\mc X}^* T_{\mc X} - \iota_{\mc X}^* f^* T_{\mc Y})^{\fix} = T_{I\mc X} - If^* T_{I \mc Y} = T_{If},$$
    so that we may rewrite the above as
    $$\tau(f^* \mc F) = \ttd(T_f) \cap If^!(\tau(\mc F)).$$

    Finally, for the last part, if $\mc X = [X/G]$ is a nice quotient presentation, consider the morphism $\pi: \mc X \to BG$. Note that $T_\pi = T_{\mc X}$ since $T_{BG}$ is of rank 0. Note also that
    $$\tau(\ms O_{BG}) = ([BG])_{g \in G} \in \CH_*(IBG) = \oplus_{g \in G} \CH_*(BG)$$
    since $\CH_*(BG) = \CH_0(BG)$, and on an approximation $U/G \to BG$ we have $\tau(\ms O_{U/G}) = [U/G] + \ldots$ \cite[Theorem 18.3(5)]{Fulton}. Then for any $\mc E \in K^0(\mc X)$ we have
    $$\tau(\mc E) \overset{(a)}{=} \tch(\mc E) \cap \tau(\ms O_{\mc X}) = \tch(\mc E) \cap \tau(\pi^* \ms O_{BG}) = \tch(\mc E) \cap \ttd(T_{\mc X}) \cap [I \mc X].$$
\end{proof}

\begin{lem} \label{lem: smooth localization}
    If $\mc X$ is a smooth nice quotient, then
    $$\tau(\mc F) = \td^I(\mc N_\iota) \cap \tau^{EG}(\rho(\iota^* \mc F)) \in \CH_*(I\mc X)_\BC$$
    for any $\mc F \in K_0(\mc X)_\BC$, where $\mc N_\iota$ is the normal bundle of the local embedding $\iota: I\mc X \to \mc X$.
\end{lem}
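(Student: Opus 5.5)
We unwind Definition \ref{defn: stacky tau} in the smooth case, using the explicit inverse of $\iota_*$ from Lemma \ref{lem: localization inverse}. Choose a nice presentation $\mc X = [X/G]$; then $X$ and every fixed locus $X^g$ are smooth, by \cite[Proposition 3.1]{Thomason}. Applying Lemma \ref{lem: localization inverse} with $X = Y$ and $f = \mathrm{id}$ (this is exactly \cite[Lemme 3.3]{Thomason}), the $g$-component of $\iota_*^{-1}\circ\pi(\mc F)$ is
$$\lambda_{-1}(\mc N^\vee_{\iota_g})^{-1}\cdot \iota_g^*\pi_g(\mc F)\in K_0(\mc X^g)_{\mf m_g,\BC}.$$
Here $\iota_g^! = \iota_g^*$, since $\iota_g$ is a regular embedding of smooth varieties and $K_0 = K^0$.

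The first point to check is that $\pi_g$ may be dropped. After localizing at $\mf m_g$, the pullback $\iota_g^*\pi_{g'}(\mc F)$ vanishes for $g'\ne g$. Indeed, $g$ acts trivially on $X^g$, so $K_0(\mc X^g)_\BC$ decomposes under the eigenvalue of $g$. Classes pulled back from $\mf m_{g'}$-torsion classes are killed by an element of $\mf m_{g'}$ that is not in $\mf m_g$, and that element becomes a unit after localizing at $\mf m_g$. Hence the $g$-component equals $\lambda_{-1}(\mc N^\vee_{\iota_g})^{-1}\cdot\iota_g^*\mc F$, viewed in the localization. Concretely, this localization is the projection onto the summand of $K_0(\mc X^g)_\BC$ on which $\mf m_g$ acts nilpotently. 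Because $g$ acts trivially on $X^g$, that summand can be read off from the $g$-eigenvalue decomposition.

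Next we apply $\rho$ and $\tau^{EG}$. Since $\rho_g$ is a ring isomorphism compatible with the module structures, we get
$$\rho_g(\lambda_{-1}(\mc N^\vee_{\iota_g})^{-1}\cdot\iota_g^*\mc F) = \rho_g(\lambda_{-1}(\mc N^\vee_{\iota_g}))^{-1}\cdot \rho_g(\iota_g^*\mc F),$$
which lands in the $\mf m_1$-part by (\ref{eq: rho components}). Then \cite[Theorem 3.1(c)]{EG-RRequivChow}, the module property already used in the proof of Theorem \ref{thm: tau properties}(a), gives
$$\tau^{EG}\bigl(\rho(\lambda_{-1}(\mc N^\vee)^{-1})\cdot\rho(\iota^*\mc F)\bigr) = (\ch^{EG}\circ\rho\circ\lambda_{-1})(\mc N^\vee)^{-1}\cap\tau^{EG}(\rho(\iota^*\mc F)).$$
This uses Proposition \ref{prop: EG factors through m1}: $\ch^{EG}$ factors through the $\mf m_1$-localization, and this is what makes $\ch^{EG}$ of the inverted class well defined and equal to the inverse of $\ch^{EG}\circ\rho\circ\lambda_{-1}$ from Proposition \ref{prop: moving part of ttd}.

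It remains to identify this factor with $\td^I(\mc N_\iota)$. The normal bundle of $X^g\subset X$ is exactly the moving part of $T_X|_{X^g}$, so $\mc N_\iota = \mc N_\iota^{\mov}$ and $\mc N_\iota^{\fix}=0$. Definition \ref{defn: stacky td} then gives
$$\td^I(\mc N_\iota) = (\ch^{EG}\circ\rho\circ\lambda_{-1})(\mc N_\iota^\vee)^{-1}\cap\td^{EG}(0) = (\ch^{EG}\circ\rho\circ\lambda_{-1})(\mc N_\iota^\vee)^{-1},$$
which is precisely the factor obtained above.

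The main obstacle is the bookkeeping of localizations. One has to make sure that the inverse $\lambda_{-1}(\mc N^\vee)^{-1}$, which exists only in $K^0(\mc X^g)_{\mf m_g}$, is carried by $\rho_g$ to the inverse in the $\mf m_1$-localization. One also has to check that $\ch^{EG}$ of that inverse agrees with the extension in Proposition \ref{prop: moving part of ttd}. I would handle both by working on the generators $[V]$ with $V=V^{\mov}$. There $\rho\lambda_{-1}[V]$ has nonzero virtual rank $\prod(1-\zeta)^{\dim V_\zeta}$, so it is a unit both after $\mf m_1$-localization and under $\ch^{EG}$, and multiplicativity then gives the identity on all classes.
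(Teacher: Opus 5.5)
Your proof is correct and is essentially the paper's. Both arguments unwind Definition \ref{defn: stacky tau} via Lemma \ref{lem: localization inverse} and pull the inverted class out of $\tau^{EG}$ with \cite[Theorem 3.1(c)]{EG-RRequivChow}. Both then identify $\ch^{EG}(\rho(\lambda_{-1}(\mc N^\vee_\iota)^{-1}))$ with $\td^I(\mc N_\iota)$, using that $\rho_g$ carries the $\mf m_g$-part to the $\mf m_1$-part together with Proposition \ref{prop: EG factors through m1}; this is the paper's (\ref{eq: lambda-1 inverse}). The only difference is ordering: you discard $\pi_g$ at the start by orthogonality of the idempotents, while the paper discards it at the end because $\tau^{EG}\circ\rho_g$ already factors through $\pi_g$.

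One side remark of yours is wrong, though nothing in your argument uses it: the $\mf m_g$-summand of $K_0(\mc X^g)_\BC$ is not a sum of $g$-eigenvalue pieces. Tensoring with a character $\chi$ multiplies $g$-eigenvalues by $\chi(g)$; for instance, on $B(\BZ/2)$ the $\mf m_g$-summand is spanned by $\tfrac12(1-\chi)$, which mixes both eigenvalues.
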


\begin{proof}
    Recall from Lemma \ref{lem: localization inverse} that for each $g \in G$ we have
    $$(\lambda_{-1} \mc N_{\iota_g}^\vee)^{-1} \cdot \iota_g^* = \iota_{g*}^{-1}: K_0(\mc X)_{\mf m_g, \BC} \to K_0(\mc X^g)_{\mf m_g, \BC}$$
    where the inverse $(\lambda_{-1} \mc N_{\iota_g}^\vee)^{-1}$ is taken in the quotient ring $K^0(\mc X^g)_{\mf m_g, \BC}$.
    Note that although this is not an inverse in $K^0(\mc X^g)_\BC$, it nevertheless satisfies the property that
    \begin{equation} \label{eq: lambda-1 inverse}
    \ch^{EG}(\rho((\lambda_{-1} \mc N_{\iota_g}^\vee)^{-1})) \cap \ch^{EG}(\rho(\lambda_{-1} \mc N_{\iota_g}^\vee)) = \ch^{EG}(\rho(\pi_g(\ms O_{\mc X^g}))) =  1 \in \CH^*(\mc X^g)_\BC,
    \end{equation}
    where $\pi_g$ is the projection onto the $\mf m_g$ component as in (\ref{eq: pi}), and we note that
    $$\ch^{EG}(\rho(\pi_g(\ms O_{\mc X^g}))) = \ch^{EG}(\rho(\ms O_{\mc X^g})) = \ch^{EG}(\ms O_{\mc X^g}) = 1$$
    since $\rho$ permutes components as described in equation (\ref{eq: rho components}) and $\ch^{EG}: K^0(\mc X^g)_\BC \to \CH^*(\mc X^g)_{\BC}$ factors through $K^0(\mc X^g)_{\mf m_1, \BC}$ (Proposition \ref{prop: EG factors through m1}).

    Then we have
    $$\tau(\mc F) = \tau^{EG} \circ \rho(\lambda_{-1}(\mc N^\vee_\iota)^{-1} \cdot \iota_g^* \pi(\mc F))$$
    $$ = \tau^{EG}(\rho(\lambda_{-1}(\mc N_\iota^\vee)^{-1}) \cdot \rho(\iota^* \pi( \mc F))) = \ch^{EG}(\rho(\lambda_{-1}(\mc N^\vee_\iota)^{-1})) \cap \tau^{EG}(\rho(\iota^* \pi(\mc F)))$$
    by \cite[Theorem 3.1(c)]{EG-RRequivChow}
    $$= \td^I(\mc N_\iota) \cap \tau^{EG}(\rho(\iota^* \pi(\mc F)))$$
    by definition and (\ref{eq: lambda-1 inverse})
    $$ = \td^I(\mc N_\iota) \cap \tau(\iota^* \mc F)$$
    since $\tau^{EG} \circ \rho$ factors through the projections $\pi_g: K_0(I\mc X^g)_\BC \to K_0(I\mc X^g)_{\mf m_g, \BC}$ by (\ref{eq: rho components}) and Proposition \ref{prop: EG factors through m1}, and $\pi_g(\iota_g^* \mc F) = \iota_g^* \pi_g(\mc F)$.
\end{proof}

\begin{proof}[Proof of Theorem \ref{thm: tau properties}(e)] 
    \textbf{Case 1:} Suppose all morphisms in the square are closed embeddings.
    By Lemma \ref{lem: pullback first step}, for any $\mc F \in K_0(\mc Y)_\BC$ we have
    $$\iota_{\mc X *}^{-1} \circ \pi(f^* \mc F) =\lambda_{-1}((Ic)^*(\iota_{\mc Z}^*\mc N^\vee_e)^{\mov}) \cdot (Ie)^!(\iota_{\mc Y*}^{-1} \circ \pi(\mc F)).$$
    Note that $\rho$ commutes with $(Ie)^!$ since $(Ie)^!(\mc F) = \sum (-1)^i\Tor_i^{I\mc W}(\ms O_{I \mc Z}, \mc F)$ and each $\ms O_{\mc Z^g}$ is $g$-fixed, so we have
    $$\rho \circ \iota_{\mc X *}^{-1} \circ \pi(f^* \mc F) = \rho(\lambda_{-1}((Ic)^*(\iota_{\mc Z}^*\mc N^\vee_e)^{\mov})) \cdot (Ie)^!(\rho \circ \iota_{\mc Y*}^{-1} \circ \pi(\mc F)).$$
    Thus
    $$\tau(f^* \mc F) = \tau^{EG}(\rho(\lambda_{-1}((Ic)^*(\iota_{\mc Z}^*\mc N^\vee_e)^{\mov})) \cdot (Ie)^!(\rho \circ \iota_{\mc Y*}^{-1} \circ \pi(\mc F)))$$
    $$ = \ch^{EG}(\rho(\lambda_{-1}((Ic)^*(\iota_{\mc Z}^*\mc N^\vee_e)^{\mov}))) \cap \tau^{EG}((Ie)^!(\rho \circ \iota_{\mc Y*}^{-1} \circ \pi(\mc F)))$$
    by \cite[Theorem 3.1(c)]{EG-RRequivChow}
    \begin{equation} \label{eq: tau pullback case 1}
     = \ch^{EG}(\rho(\lambda_{-1}((Ic)^*(\iota_{\mc Z}^*\mc N^\vee_e)^{\mov}))) \cap \td^{EG}((Ic)^* T_{Ie}) \cap (Ie)^! \tau^{EG}(\rho \circ \iota_{\mc Y*}^{-1} \circ \pi(\mc F))   
    \end{equation} 
    by Lemma \ref{lem: tauEG and Gysin}.
    Note that we have a surjection of vector bundles $c^* \mc N_e^\vee \twoheadrightarrow \mc N_f^\vee$ \cite[Tag 0473]{stacks-project}, where by assumption these are of the same rank; thus this map is an isomorphism, and
    $$\iota_{\mc X}^* T_f = - \iota_{\mc X}^* \mc N_f = -\iota_{\mc X}^*(c^* \mc N_e) = - (Ic)^*(\iota_{\mc Z}^* \mc N_e)$$
    has fixed part
    $$(\iota_{\mc X}^* T_f)^{\fix} = -(Ic)^*(\iota_{\mc Z}^* \mc N_e)^{\fix} = (Ic)^* T_{Ie}$$
    and moving part
    $$(\iota_{\mc X}^* T_f)^{\mov} = -(Ic)^*(\iota_{\mc Z}^* \mc N_e)^{\mov}.$$
    Furthermore, since $\ch^{EG} \circ \rho \circ \lambda_{-1}$ is a group homomorphism, as described in Proposition \ref{prop: moving part of ttd},
    we have
    $$\ch^{EG} \circ \rho \circ \lambda_{-1}(-(\iota_{\mc X}^* T^\vee_f)^{\mov}) = \ch^{EG} \circ \rho \circ \lambda_{-1}((\iota_{\mc X}^* T^\vee_f)^{\mov})^{-1}.$$
    Thus (\ref{eq: tau pullback case 1}) gives
    $$\tau(f^* \mc F) = \ttd(T_f) \cap (Ie)^! \tau(\mc F).$$
    \\
    \textbf{Case 2:} Suppose $e: \mc Z \hookrightarrow \mc W$ (and thus also $f: \mc X \to \mc Y$) is an open embedding. As in Lemma \ref{lem: representable representation}, by choosing a nice quotient presentation of $\mc W$ we can find a finite abelian group $G$ acting on quasiprojective varieties $X,Y,Z,W$ such that the morphisms of stacks are induced by $G$-equivariant morphisms of varieties.
    
    First, we claim that
    \begin{equation} \label{eq: tau pullback step 2}
     \iota_{\mc X *}^{-1}(\pi(f^*\mc F)) = \iota_{\mc X *}^{-1}(f^*(\pi \mc F)) = (If)^* \iota_{\mc Y*}^{-1}(\pi \mc F).   
    \end{equation}
    Indeed, the first equality follows from the fact that $f^*$ is compatible with the $K^0(BG)_\BC$-module structure and the second from the fact that
    $$f^*\iota_{\mc Y *} = \iota_{\mc X *}(If)^*,$$
    by the corresponding (canonical, thus $G$-equivariant) isomorphism of complexes of sheaves \cite[Tag 08IB]{stacks-project}, since $f$ is flat and $I \mc X = \mc X \times_{\mc Y} I\mc Y$ by \cite[Tag 06R5]{stacks-project}, as $\mc X \to \mc Y$ is an open embedding, therefore a monomorphism.

    We conclude that
    $$\tau(f^* \mc F) = \tau^{EG} \circ \rho(\iota_{\mc X *}^{-1}(\pi(f^*\mc F))) = \tau^{EG} \circ \rho((If)^* \iota_{\mc Y*}^{-1}(\pi \mc F))$$
    by (\ref{eq: tau pullback step 2})
    $$=\tau^{EG}((If)^* \rho(\iota_{\mc Y*}^{-1}(\pi \mc F))) $$
    since $(If)^*$ respects the $G$-equivariant structure and thus commutes with $\rho$
    $$ = (If)^*\tau^{EG}(\rho(\iota_{\mc Y*}^{-1}(\pi \mc F))) = (If)^* \tau(\mc F).$$
    by \cite[Theorem 3.1(d)(i)]{EG-RRequivChow}, since as described above $If$ is the base change of $f$, and thus an open embedding. Note that in this case $T_f$ is trivial. Furthermore, for the same reasons $Ie$ is an open embedding, and $(Ie)^! = (If)^*: \CH_*(I \mc Y) \to \CH_*(I \mc X)$ is simply the restriction.
    \\
    \textbf{The general case:} Suppose we are given a Cartesian square as described in the statement of the theorem. Again, choose a presentation by a finite abelian group $G$ and quasiprojective varieties $X,Y,Z,W$. By replacing $W$ and $Z$ by the $G$-stable open subsets $W' := W\setminus (\overline{Y} \setminus Y)$ and $Z' := Z \cap W'$ if necessary, we may assume that the vertical morphisms are closed embeddings. Now factoring $e: Z \hookrightarrow W$ as $Z \hookrightarrow W \setminus (\overline{Z} \setminus Z) \hookrightarrow W$ and considering the corresponding factorization of $f: X \hookrightarrow Y$, we reduce to the previous two cases.
\end{proof}

\begin{lem} \label{lem: tauEG and Gysin}
    Suppose there is a $G$-equivariant Cartesian square of varieties
    \[
    \begin{tikzcd}
        A \arrow[d, "a"] \arrow[r] & B \arrow[d, "b"] \\
        C \arrow[r, "c"]  & D 
    \end{tikzcd}
    \]
    such that all the morphisms are closed embeddings and $C$ and $D$ are smooth. Then
    $$\tau^{EG}(c^! \mc F) = \td^{EG}(-a^*\mc N_{\mc C/\mc D}) \cap c^! \tau^{EG}(\mc F)$$
    for all $\mc F \in K_0(\mc B)$.
\end{lem}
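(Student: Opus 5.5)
The plan is to reduce to the non-equivariant statement of Fulton on approximation schemes, which is the $\tau$ analogue of the Verdier–Riemann–Roch formula for a regular embedding. Fix a representation $V$ with free locus $U$ as in \S\ref{subsubsec: EG def of Chow}, with $\dim V - \dim(V\setminus U)$ larger than $\dim D$. Then the $G$-equivariant Cartesian square of closed embeddings
\[
A \to B,\quad C \to D,\quad a,\ b,\ c
\]
induces a Cartesian square of approximation schemes $A_G \to B_G$, $C_G \xrightarrow{c_G} D_G$. Here $C_G$ and $D_G$ are smooth, since they are free quotients of $C\times U$ and $D\times U$. So $c_G$ is a regular embedding of smooth quasiprojective varieties, with normal bundle the descent of $\mc N_{\mc C/\mc D}$.

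First, check that the two refined Gysin maps are compatible with the flat pullbacks $p^*$ to approximation schemes.

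In $K$-theory, $c^!\mc F=\sum(-1)^i[\Tor_i^D(\ms O_C,\mc F)]$. Flat pullback along $B\times U\to B$, followed by descent along the free quotient, carries this to $\sum(-1)^i[\Tor_i^{D_G}(\ms O_{C_G},p^*\mc F)] = c_G^!(p^*\mc F)$. This holds because Tor commutes with flat base change, and the isomorphisms are canonical, hence $G$-equivariant.

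In Chow, the equivariant refined Gysin map is defined through approximation schemes by \cite{EG-equivint}, so $c^!$ on $\CH_*(\mc B)$ corresponds to $c_G^!$ on $\CH_*(B_G)$ under the identifications in Definition \ref{defn: tauEG}. The degree shift by $\dim V$ is the same on both sides because the codimension of $c_G$ equals that of $c$.

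Next, apply the scheme-level formula on $D_G$: for a regular embedding $c_G$ and a class in $K_0(B_G)$,
\[
\tau^F(c_G^!\,\ms G) = \td(a_G^*\mc N_{C_G/D_G})^{-1}\cap c_G^!\,\tau^F(\ms G).
\]
This is \cite[Theorem 18.2]{Fulton}, the compatibility of $\tau$ with refined Gysin maps for l.c.i. morphisms, i.e.\ the refined form of \cite[Theorem 18.3(4)]{Fulton}. In Fulton's formulation it appears as the twist by $\td(T_{c_G}) = \td(-\mc N)$ pulled back to $A_G$. Apply this with $\ms G=p^*\mc F$, and combine with the previous step and the fact that $\td^{EG}$ is by definition $\td$ on $A_G$ of the pulled-back bundle. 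Then push through the isomorphism $\CH_*(A_G)_\BQ\cong\CH_{*-\dim V}(\mc A)_\BQ$; this gives $\tau^{EG}(c^!\mc F)=\td^{EG}(-a^*\mc N_{\mc C/\mc D})\cap c^!\tau^{EG}(\mc F)$.

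The main obstacle is bookkeeping rather than mathematics. One must verify that Fulton's $\tau$ on the approximation scheme is compatible with the \emph{refined} Gysin map in the singular setting: $B_G$ need not be smooth, and only $C_G\subset D_G$ is regular. Fulton's Theorem 18.2 covers l.c.i. morphisms with refined pullback to arbitrary fiber squares, so this should apply directly. If needed, one can instead deform to the normal cone, using that $\tau^F$ commutes with specialization, as in Fulton's proof. One must also track the index shift by $\dim V$ and confirm that it is independent of the choice of $V$; this follows as in Remark \ref{rmk: tauEG independent}.
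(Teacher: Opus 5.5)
Your reduction to approximation schemes is the same as the paper's: the $\Tor$-defined $c^!$ on $K_0$ commutes with the flat pullback $p^*$, and the equivariant refined Gysin map on Chow groups is computed by $c_G^!$ for $C_G\subset D_G$.

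The gap is in the one step with real content. This is the scheme-level identity $\tau^F(c_G^!\ms G)=\td(a_G^*\mc N_{C_G/D_G})^{-1}\cap c_G^!\tau^F(\ms G)$ for a \emph{refined} Gysin map. Fulton's Theorem 18.2 is the existence statement for $\tau$ (covariance, module property, normalization) and contains no such compatibility. The ``refined form of Theorem 18.3(4)'' you invoke is exactly what has to be proved. An absolute pullback formula applied to $b_*\ms G$, together with $c^*b_*=a_*c^!$ in $K$-theory and in Chow, gives your identity only after applying $a_*$, and $a_*$ is not injective. So as written the scheme case is asserted rather than proved. The deformation-to-the-normal-cone fallback is only a sketch: it would need the $\Tor$-defined $K$-theoretic Gysin map to be identified with specialization to the normal cone followed by pullback along the zero section.

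The missing idea is to use the hypothesis that $C$ and $D$ are smooth, which your argument uses only to make $c_G$ regular. Since $A\subset C$ and $B\subset D$ are closed in smooth varieties, $\tau^F$ is computed on both by the embedding formula. Let $E_\bullet\to b_*\mc F$ be a locally free resolution on $D$. Then $c^*E_\bullet$ is a complex on $C$, exact off $A$, representing $c^!\mc F$, and
$\tau^F(c^!\mc F)=\td(a^*T_C)\cap\ch^C_A(c^*E_\bullet)\cap[C]$ and $\tau^F(\mc F)=\td(b^*T_D)\cap\ch^D_B(E_\bullet)\cap[D]$.

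Localized Chern characters are bivariant classes, so $\ch^C_A(c^*E_\bullet)=c^*\ch^D_B(E_\bullet)$, and they commute with refined Gysin maps. Together with $[C]=c^*[D]$ this gives $\ch^C_A(c^*E_\bullet)\cap[C]=c^!(\ch^D_B(E_\bullet)\cap[D])$. Substituting yields the twist $\td(a^*T_C-a^*c^*T_D)=\td(-a^*\mc N_{C/D})$. This is the paper's proof of the scheme case. Applied to $C_G\subset D_G$, it fits into your reduction and completes the argument.
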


\begin{proof}
    We start by proving the corresponding statement for the Fulton tau functor $\tau^F$. Recalling the definition of $\tau^F$ in terms of the localized Chern character and a smooth embedding \cite[p.~349]{Fulton}, we have
    $$\tau^F(c^! \mc F) = \td(a^*T_C) \cap \ch^C_A(c^! \mc F) \cap [C]$$
    where
    $$\ch^C_A(c^! \mc F) \cap [C] = \ch^C_A(c^! \mc F) \cap c^*[D] = c^!(\ch^D_B(\mc F) \cap [D])$$
    by \cite[p.~343, (19)]{Fulton}. Thus
    $$\tau^F(c^! \mc F) = \td(a^*T_C) \cap c^!(\ch^D_B(\mc F) \cap [D])$$
    $$= \td(a^*T_C -a^*c^* T_D) \cap \tau^F(\mc F) = \td(-a^*\mc N_{C/D}) \cap \tau^F(\mc F).$$
    
    Now, let $p_A: A_G \to \mc A$, etc. be schematic approximations given by a fixed choice of $U$. Then
    $$p_A^* \circ \tau^{EF} \circ c^! := \tau^F \circ p_A^* \circ c^! = \tau^F \circ c^! \circ p_A^*$$
    by \cite[Lemma 3.2]{Anderson-Payne}, where here we use $c^!: K_0(B_G) \to K_0(A_G)$ to mean the Gysin pullback induced by $\mc C \to \mc D$, which is the same as that induced by $C_G \to D_G$ by Lemma \ref{lem: excess intersection}(a). Then it suffices to note that from the previous paragraph we have
    $$\tau^F \circ c^! \circ p_A^* = \td(-a_G^* \mc N_{C_G/D_G}) \cap c^! \circ \tau^F \circ p_A^* =: p_A^*(\td^{EG}(-a^* \mc N_{\mc C/\mc D}) \cap c^! \tau^{EG}(\mc F))$$
    since $p_C^* \mc N_{\mc C/\mc D} = \mc N_{C_G/D_G}$.
\end{proof}

\begin{lem} \label{lem: pullback first step}
    In the situation of Theorem \ref{thm: tau properties}(e), if all morphisms in the Cartesian square are closed embeddings, then we have
    $$\iota_{\mc X *}^{-1} \circ \pi(f^* \mc F) = f^!(\iota_{\mc Y *}^{-1} \circ \pi(\mc F)) = \lambda_{-1}((Ic)^*(\iota_{\mc Z}^*\mc N^\vee_{Z/W})^{\mov}) \cdot (Ie)^!(\iota_{\mc Y*}^{-1} \circ \pi(\mc F))$$
    for all $\mc F \in K_0(\mc Y)_\BC$.
\end{lem}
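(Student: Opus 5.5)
The statement contains two equalities. I will prove each at the level of equivariant complexes, where all isomorphisms are canonical and hence $G$-equivariant, and then pass to the localized groups $K_0(-)_{\mf m_g,\BC}$ for each $g$.

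For the first equality, $\iota_{\mc X*}^{-1}\circ\pi(f^*\mc F) = f^!(\iota_{\mc Y*}^{-1}\circ\pi(\mc F))$, I would proceed as follows.

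\emph{Step 1.} Since $f^*$ is $K^0(BG)$-linear, $\pi(f^*\mc F)=f^*\pi(\mc F)$. It therefore suffices to show
$$f^*\iota_{\mc Y*}=\iota_{\mc X*}f^!$$
on $K_0(\mc Y^g)$, where $f^!$ is the refined Gysin map for the Cartesian square obtained by pulling $f$ back along $\iota_{\mc Y}:Y^g\hookrightarrow Y$.

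\emph{Step 2.} Because $f$ is a closed embedding, $X\cap Y^g = X^g$, so $X^g=Y^g\times_Y X$ and this square is Cartesian.

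\emph{Step 3.} The identity $f^*\iota_{\mc Y*}=\iota_{\mc X*}f^!$ is the compatibility of refined Gysin maps in $K$-theory with proper pushforward. Concretely, $\Tor^Y_i(\ms O_X,\iota_*G)$ computed on $Y$ agrees with the same $\Tor$ regarded as a sheaf on $X^g$.

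\emph{Step 4.} Apply $\iota_{\mc X*}^{-1}$ to both sides. This uses that $f^!$ preserves $\mf m_g$-localization, which holds because $f^!$ is $K^0(BG)$-linear.

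For the second equality, I would factor the Gysin map through the smooth square. The square with vertices $Y^g\to W^g$ and $X^g\to Z^g$ is again Cartesian, for the same reason: the fixed locus of a closed subvariety is its intersection with the ambient fixed locus. Consider the composite of Cartesian squares
$$X^g\to Y^g,\qquad Z^g\to W^g,\qquad Z\to W.$$
I would use that $\ms O_X$ is $f$-perfect and that, as a class on $Y$ via the Cartesian square, it equals $\ms O_Z$ restricted along $Y\to W$; this holds because $\dim$ matches and the square is Tor-independent. This gives
$$f^!=e^{\ms O_Z}$$
restricted to $Y^g$, that is, the refined Gysin map for $e$ pulled back along $W^g\to W$.

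The key computation is the excess intersection formula in equivariant $K$-theory for the square consisting of $Z^g\to W^g$ and $Z\to W$. For classes on $W^g$,
$$e^!=\lambda_{-1}(E^\vee)\cdot (Ie)^!,$$
where $E$ is the excess bundle. Here $E=\iota_{\mc Z}^*\mc N_e/\mc N_{Ie}$, and this is exactly $(\iota_{\mc Z}^*\mc N_e)^{\mov}$. This is because the normal bundle of the fixed locus is the fixed part, since $Z^g,W^g$ are smooth (\cite[Proposition 3.1]{Thomason}). Pulling back along $Ic$ and using the commutation of the refined Gysin maps with the further base change to $Y^g$ gives the stated formula.

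This should be the content of the Lemma \ref{lem: excess intersection} already cited in the proof of Lemma \ref{lem: tauEG and Gysin}. If that lemma is not available in the required form, I would prove the excess formula directly. The local argument runs as follows.
\begin{enumerate}
\item Locally $Z\subset W$ is cut out by a regular sequence, whose Koszul complex $\ms K$ resolves $\ms O_Z$.
\item Restrict $\ms K$ to $W^g$ and split it into fixed and moving parts. Choosing the sequence equivariantly with eigen-decomposed generators, the fixed generators cut out $Z^g$ regularly in $W^g$. The moving generators vanish identically on $W^g$, since $g$ scales them by $\zeta\ne1$.
\item Consequently the restricted Koszul complex is $\ms K^{\fix}\otimes\wedge^\bullet(\text{moving conormal})$ with zero differential on the second factor. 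This yields the factor $\lambda_{-1}((\iota_{\mc Z}^*\mc N_e^\vee)^{\mov})$.
\end{enumerate}
Globalizing requires care; I would use the deformation to the normal cone or a filtration argument to avoid gluing local choices.

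The main obstacle is this excess-intersection step, specifically checking Tor-independence and the equality $f^!=e^!$ on $Y$ in $K$-theory, where the codimension hypothesis is essential.
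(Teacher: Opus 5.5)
Your proposal is correct, but it reaches both equalities by a different route from the paper's.

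\textbf{First equality.} You use the compatibility $f^*\circ\iota_{\mc Y*}=\iota_{\mc X*}\circ f^!$, which is immediate since $X^g=X\times_Y Y^g$ and $\iota_{\mc Y}$ is a closed embedding. You then invert $\iota_{\mc X*}$ on $\mf m_g$-parts. The paper instead writes $\iota_{\mc X*}^{-1}$ explicitly through the smooth ambient $W$:
\begin{enumerate}
\item apply Lemma \ref{lem: localization inverse} to $X\hookrightarrow W$;
\item commute $\iota_W^!$ with $f^!$ by \cite[Lemma 3.2]{Anderson-Payne};
\item move the factor $\lambda_{-1}(\mc N^\vee_{\iota_{\mc W}})^{-1}$ inside $f^!$ by Lemma \ref{lem: Gysin pullback K0}(b);
\item recognize $\iota_{\mc Y*}^{-1}$ by Lemma \ref{lem: localization inverse} for $Y\hookrightarrow W$.
\end{enumerate}
Your argument is shorter and does not need $W$ to be smooth at this step.

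\textbf{Second equality.} Your reduction $f^!=e^!$, via Tor-independence from the equal codimensions, is exactly the paper's (Lemma \ref{lem: Tor indep}(b) and Lemma \ref{lem: excess intersection}(a)). The difference lies in the excess step.

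The paper proves Lemma \ref{lem: excess intersection}(b) from three ingredients:
\begin{enumerate}
\item Thomason's identity $[\ms O_{\mc Z}]=\iota_{\mc Z*}(\lambda_{-1}(\mc N^\vee_{\mc Z^g/\mc Z})^{-1})$ in $K_0(\mc Z)_{\mf m_g,\BC}$;
\item the self-intersection formula, saying that $i^*i_*$ is multiplication by $\lambda_{-1}(\mc N^\vee_{\mc W^g/\mc W})$ on relative $K$-groups;
\item the factorization $(Ie)^*\lambda_{-1}(\mc N^\vee_{\mc W^g/\mc W})=\lambda_{-1}((\iota_{\mc Z}^*\mc N^\vee_{Z/W})^{\mov})\cdot\lambda_{-1}(\mc N^\vee_{\mc Z^g/\mc Z})$.
\end{enumerate}
So it only obtains the formula after localizing at $\mf m_g$, which is all that is needed.

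You instead apply the general excess intersection formula to the Cartesian square $Z^g=Z\times_W W^g$, with excess bundle $(\iota_{\mc Z}^*\mc N_e)^{\mov}$. This gives the identity already in $K_0(\mc X^g)$, before localizing, at the cost of establishing that formula equivariantly.

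For that you need neither deformation to the normal cone nor gluing of local splittings. Your local eigen-Koszul computation shows that $\Tor^W_q(\ms O_Z,\ms O_{W^g})\cong\wedge^q E^\vee$, where $E^\vee=\ker(\iota_Z^*\mc N^\vee_e\to\mc N^\vee_{Z^g/W^g})=(\iota_Z^*\mc N^\vee_e)^{\mov}$. This identification is canonical, hence $G$-equivariant. Then take the spectral sequence $\Tor^{W^g}_p(\Tor^W_q(\ms O_Z,\ms O_{W^g}),\mc F)\Rightarrow\Tor^W_{p+q}(\ms O_Z,\mc F)$ for $\mc F$ on $Y^g$. Together with $\Tor^{W^g}_p(\wedge^qE^\vee,\mc F)\cong\wedge^qE^\vee\otimes\Tor^{W^g}_p(\ms O_{Z^g},\mc F)$, taking Euler characteristics gives the formula globally.
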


\begin{proof}
    As in Lemma \ref{lem: representable representation}, choosing a nice presentation $\mc W = [W/G]$, we may suppose the given Cartesian square is induced by a $G$-equivariant Cartesian square
    \[
    \begin{tikzcd}
        X \arrow[r,hook, "f"] \arrow[d, hook, "c"] & Y \arrow[d, hook, "d"] \\
        Z \arrow[r, hook, "e"] & W
    \end{tikzcd}
    \]
    with $Z$ and $W$ smooth, all morphisms closed embeddings, and $f$ a regular embedding.
    Now, from Lemma \ref{lem: localization inverse} applied to $X \hookrightarrow W$, we have
    \begin{equation} \label{eq: pullback 1}
        \iota_{\mc X*}^{-1}\circ \pi(f^* \mc F) = Ic^*e_g^*(\lambda_{-1}(\mc N_{\iota_{\mc W}}^\vee)^{-1}) \cdot \iota_W^!\circ \pi(f^* \mc F).
    \end{equation}
    where
    $$\iota_W^! \circ \pi(f^* \mc F) = \iota_W^!(f^* \circ \pi(\mc F)) = \iota_W^!(f^! \circ \pi( \mc F)) = f^!(\iota_W^! \circ \pi( \mc F))$$
    with the first equality since $f^*$ is a morphism of $K^0(BG)_\BC$-modules, the second by definition of Gysin pullback, and the third by \cite[Lemma 3.2]{Anderson-Payne}. Then from (\ref{eq: pullback 1}) we obtain
    \begin{equation*}
        \iota_{X*}^{-1}\circ \pi(f^* \mc F) = Ic^*Ie^*(\lambda_{-1}(\mc N_{\iota_{\mc W}}^\vee)^{-1}) \cdot f^!(\iota_{\mc W}^! \circ \pi(\mc F))
    \end{equation*}
    $$ = If^*Id^*(\lambda_{-1}(\mc N_{\iota_{\mc W}}^\vee)^{-1}) \cdot f^!(\iota_{\mc W}^! \circ \pi(\mc F)) = f^!(Id^*(\lambda_{-1}(\mc N_{\iota_{\mc W}}^\vee)^{-1}) \cdot \iota_{\mc W}^!(\pi(\mc F)))$$
    by Lemma \ref{lem: Gysin pullback K0}(b)
    $$= f^!(\iota_{\mc Y*}^{-1} \circ \pi(\mc F))$$
    by Lemma \ref{lem: localization inverse} applied to $d: Y \hookrightarrow W$.

    To prove the second equality in the statement of the lemma, we start by noting that $e$ is regular embedding since $Z,W$ are smooth \cite[Tag 0E9J]{stacks-project}, and $f$ is a regular embedding by assumption, so by Lemma \ref{lem: Tor indep}(b) we have that $Y$ and $Z$ are $\Tor$-independent over $W$.
    Thus by Lemma \ref{lem: excess intersection}(a) we have
    $$f^! = e^! : K_0(\mc Y^g)_{\mf m_g, \BC} \to K_0(\mc X^g)_{\mf m_g, \BC}$$
    and by Lemma \ref{lem: excess intersection}(b) we have
    $$e^! = (Ic)^* \lambda_{-1}((\iota_{\mc Z}^*\mc N_{\mc Z/\mc W}^\vee)^{\mov}) \cdot  (Ie)^!.$$
\end{proof}

\begin{lem} \label{lem: excess intersection}
    Consider a Cartesian diagram of $G$-equivariant varieties
    \[
    \begin{tikzcd}
        A \arrow[r, "a"] \arrow[d, "c"] & B \arrow[r, "b"] \arrow[d,hook, "e"] & C \arrow[d,hook, "f"] \\
        D \arrow[r, "h"] & E \arrow[r, "i"] & H
    \end{tikzcd}
    \]
    such that $e$ and $f$ are regular embeddings.\\
    (a) If $f$ and $i$ are $\Tor$-independent, then $e^! = f^!: K_0(\mc D) \to K_0(\mc A)$.\\
    (b) If $h$ and $f$ are closed embeddings, $C$ and $H$ are smooth, $G$ is finite abelian, and $E = H^g$ for some $g \in G$, then
    $$f^! = a^* \lambda_{-1}((b^*\mc N_{\mc C/\mc H}^\vee)^{\mov}) \cdot e^!: K_0(\mc D)_{\BC} \to K_0(\mc A)_{\mf m_g, \BC}.$$
\end{lem}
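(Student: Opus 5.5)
The plan is to prove both parts at the level of $G$-equivariant complexes of sheaves and then pass to classes in equivariant $K$-theory. All identifications used will be canonical, and hence automatically $G$-equivariant, as in the proof of Lemma \ref{lemma: pushforward exterior product}.

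For (a), take $\ms G$ a $G$-equivariant coherent sheaf on $D$. By definition, $e^!$ sends $[\ms G]$ to the class of $\ms O_B \otimes^L_{E} h_*\ms G$, and $f^!$ sends it to the class of $\ms O_C \otimes^L_H (i h)_*\ms G$. I will rewrite the second complex using associativity of the derived tensor product and the projection formula \cite[Tag 08EU]{stacks-project} for $i$:
$$\ms O_C \otimes^L_H i_*(h_*\ms G) \cong i_*\bigl(Li^*\ms O_C \otimes^L_E h_*\ms G\bigr).$$
Tor-independence of $f$ and $i$ gives $Li^*\ms O_C \cong \ms O_B$ \cite[Tag 08IB]{stacks-project}, which turns the right-hand side into the complex defining $e^!$. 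Taking alternating sums of cohomology sheaves then gives $e^!=f^!$.

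For (b), first identify the geometry. Since $C \subset H$ is a closed $G$-stable embedding, $B = C\times_H H^g = C^g$. Both $C^g$ and $H^g$ are smooth \cite[Proposition 3.1]{Thomason}, so $e$ is a regular embedding. Over $B$, $g$ acts on $b^*\mc N_{C/H}$, which splits into its fixed and moving parts. The fixed part is $\mc N_{C^g/H^g} = \mc N_{B/E}$, by the usual identification of the tangent spaces of fixed loci with the $g$-invariants. Hence the excess bundle $b^*\mc N_{C/H}/\mc N_{B/E}$ is exactly $(b^*\mc N_{C/H})^{\mov}$, and this identification is $G$-equivariant.

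The key input is the equivariant excess Tor computation
$$\Tor_i^H(\ms O_C, \ms O_E) \cong \wedge^i\bigl((b^*\mc N_{C/H}^\vee)^{\mov}\bigr),$$
viewed as sheaves on $B$. To prove it, I will work locally with a Koszul resolution of $\ms O_C$ by a regular sequence, restrict it to $E$, and use that the fixed part of the conormal sequence cuts out $B$ regularly in $E$. The identification is canonical, since $\Tor_1$ is the kernel of $b^*\mc N^\vee_{C/H} \to \mc N^\vee_{B/E}$ and the higher Tor groups are exterior powers of it; it therefore glues and is $G$-equivariant. With this in hand I write
$$\ms O_C\otimes^L_H (ih)_*\ms G \cong \bigl(\ms O_C\otimes^L_H \ms O_E\bigr)\otimes^L_E h_*\ms G.$$
Filtering the first factor by its canonical truncations, whose cohomology sheaves are locally free $\ms O_B$-modules, each graded piece contributes $\wedge^i(\text{excess}^\vee)\otimes_{\ms O_B}(\ms O_B\otimes^L_E h_*\ms G)$ in degree $-i$. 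Summing with signs in $K_0$ gives $f^! = a^*\lambda_{-1}\bigl((b^*\mc N^\vee_{C/H})^{\mov}\bigr)\cdot e^!$. This already holds before localizing, and it remains true after applying the projection to the $\mf m_g$-component.

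I expect the main obstacle to be the equivariant Tor identification: one has to check that the locally defined Koszul isomorphisms are canonical enough to glue and to respect the $G$-action. In particular, the eigenvalue decomposition under $g$, which acts trivially on $B$, must match the fixed/moving splitting. The rest is formal manipulation of derived tensor products.
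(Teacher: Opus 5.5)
Your proposal is correct. Part (a) is essentially the paper's argument, and part (b) takes a genuinely different route.

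\textbf{Part (a).} The paper checks locally that a free $R$-resolution of $S$ base-changes to a free $R'$-resolution of $R'\otimes_R S$. This is the affine-local form of your projection-formula step combined with $Li^*\ms O_C\cong\ms O_B$. One small point: in (a) $h$ is not assumed affine, so write
$$e^![\ms G]=[Lh^*\ms O_B\otimes^L_{\ms O_D}\ms G], \qquad f^![\ms G]=[L(ih)^*\ms O_C\otimes^L_{\ms O_D}\ms G]$$
rather than going through $h_*$. The argument is otherwise unchanged.

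\textbf{Part (b): the paper's route.} The paper never computes the excess $\Tor$ sheaves; it works in the $\mf m_g$-localization from the start.
\begin{itemize}
\item It writes $[\ms O_{\mc C}]=b_*(\lambda_{-1}(\mc N^\vee_{\mc C^g/\mc C})^{-1})$ using \cite[Lemme 3.3]{Thomason}.
\item Linearity of Gysin maps in the perfect complex (Lemma \ref{lem: Gysin pullback K0}) then gives $f^!=a^*\lambda_{-1}(\mc N^\vee_{\mc C^g/\mc C})^{-1}\cdot(f\circ b)^!$.
\item It rewrites $(f\circ b)^!=(i\circ e)^!$ and evaluates this with the self-intersection formula $i^*i_*=\lambda_{-1}(\mc N^\vee_{\mc H^g/\mc H})\cdot$ on relative $K$-groups.
\item Finally it merges the two $\lambda_{-1}$ factors via multiplicativity of $\lambda_{-1}$ and $(b^*\mc N^\vee_{\mc C/\mc H})^{\mov}=e^*\mc N^\vee_{\mc H^g/\mc H}-\mc N^\vee_{\mc C^g/\mc C}$.
\end{itemize}
This needs only Thomason's self-intersection identities, whose equivariance is automatic. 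But it inverts $\lambda_{-1}(\mc N^\vee_{\mc C^g/\mc C})$, so it only gives the identity after projecting to the $\mf m_g$-component. That is why the lemma's target is $K_0(\mc A)_{\mf m_g,\BC}$.

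\textbf{Part (b): your route.} You use the classical excess intersection formula and obtain the stronger integral identity in $K_0(\mc A)$. The cost is that you must produce a global $G$-equivariant isomorphism
$$\Tor_j^{H}(\ms O_C,\ms O_E)\cong\wedge^j\big((b^*\mc N^\vee_{C/H})^{\mov}\big).$$
Knowing this only locally would not determine the equivariant $K$-class. To make it airtight:
\begin{itemize}
\item Take the global map from the graded-commutative algebra structure on $\Tor^{\ms O_H}_\bullet(\ms O_C,\ms O_E)$. This gives a canonical, hence $G$-equivariant, map $\wedge^\bullet\Tor_1\to\Tor_\bullet$, where $\Tor_1=\ker(b^*\mc N^\vee_{C/H}\to\mc N^\vee_{B/E})$.
\item Check it is an isomorphism at each $x\in B$ by choosing minimal generators of $I_{C,x}$ that are $g$-eigenvectors. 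This is possible since $g$ has finite order, fixes $x$, and preserves $I_{C,x}$.
\item Generators with eigenvalue $\zeta\neq1$ vanish on $H^g$. The invariant ones restrict to a regular sequence cutting out $C^g$ in the regular local ring $\ms O_{E,x}$.
\item So the restricted Koszul complex is the Koszul complex of $B\subset E$ tensored with an exterior algebra with zero differential.
\end{itemize}
This local picture also settles the matching of $g$-eigenvalues with the fixed/moving splitting, which you flagged as the main obstacle.
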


\begin{proof}
    (a) It suffices to show that given ring maps $R \to R' \to R''$ and $R \to S$, where $\Tor^R_i(R', S) = 0$ for $i > 0$, and given $M \in \Mod(R'')$, the natural (thus $G$-equivariant) map $\Tor_i^R(M, S) \to \Tor_i^{R'}(M, R' \otimes_R S)$ is an isomorphism. Picking a resolution $S^\bullet$ of $S$ by free $R$-modules, note that $R' \otimes_R S^\bullet$ is a resolution of $R' \otimes_R S$ by free $R'$-modules, as the homology groups of this complex calculate $\Tor^R_*(R', S)$. We obtain an isomorphism of the complexes $M \otimes_R S^\bullet = M \otimes_{R'} (R' \otimes_R S^\bullet)$ computing $\Tor$ groups on each side. \\
    (b) We start by noting that the Gysin morphism
    $$f^{(-)}(-): K_0(\mc C) \times K_0(\mc D) \to K_0(\mc A)$$
    (where here all elements of $K_0(\mc C)$ are $f$-perfect by smoothness of $H$) is a bilinear morphism of $K^0(BG)$-modules (Lemma \ref{lem: Gysin pullback K0}), and so defines a morphism
    $$f^{(-)}(-): K_0(\mc C)_{\mf m_g, \BC} \times K_0(\mc D)_{\mf m_g, \BC} \to K_0(\mc A)_{\mf m_g, \BC}.$$
    Recall that $f^!(-) := f^{[\ms O_{\mc C}]}(-)$, where
    $$[\ms O_{\mc C}] = b_*(\lambda_{-1}(\mc N^\vee_{\mc C^g/\mc C})^{-1}) \in K_0(\mc C)_{\mf m_g, \BC}$$
    by \cite[Lemme 3.3]{Thomason}, since $C$ is assumed smooth and $B = C^g$. Then by definition of Gysin pullback
    $$f^{b_*(\lambda_{-1}(\mc N^\vee_{\mc C^g/\mc C})^{-1})} = (f \circ b)^{\lambda_{-1}(\mc N^\vee_{\mc C^g/\mc C})^{-1}}: K_0(\mc D)_{\mf m_g, \BC} \to K_0(\mc A)_{\mf m_g, \BC}$$
    and
    $$(f \circ b)^{\lambda_{-1}(\mc N^\vee_{\mc C^g/\mc C})^{-1}} = a^*(\lambda_{-1}(\mc N^\vee_{\mc C^g/\mc C})^{-1}) \cdot (f \circ b)^{\ms O_B} $$
    by Lemma \ref{lem: Gysin pullback K0}(a). Thus
    \begin{equation} \label{eq: f!}
        f^!(-) = a^*(\lambda_{-1}(\mc N^\vee_{\mc C^g/\mc C})^{-1}) \cdot (f \circ b)^!(-): K_0(\mc D)_{\mf m_g, \BC} \to K_0(\mc A)_{\mf m_g, \BC}
    \end{equation}
    
    Next, we claim that for any $\mc F \in K_0(\mc D)_{\BC}$ we have
    $$(f \circ b)^!(\mc F) = (i \circ e)^!(\mc F) = 
    a^* e^*\lambda_{-1}(\mc N^\vee_{\mc H^g/\mc H}) \cdot e^! \mc F$$
    Indeed, consider the morphisms of relative $K$-groups
    $$(\cap \mc F)_{\mc H}: K^0_{\mc B}(\mc H) \to K_0(\mc A), \,\,\,\,\, (\cap \mc F)_{\mc E}: K^0_{\mc B}(\mc E) \to K_0(\mc A),$$
    as well as the pullback and pushforward
    $$i^*: K^0_{\mc B}(\mc H) \to K^0_{\mc B}(\mc E), \,\,\,\,\, i_*: K^0_{\mc B}(\mc E) \xrightarrow{\sim} K^0_{\mc B}(\mc H).$$
    We note that by the projection formula
    $$(\cap \mc F)_{\mc E} \circ i^* = (\cap \mc F)_{\mc H}: K^0_{\mc B}(\mc H) \to K_0(\mc A).$$
    Then
    \begin{equation} \label{eq: excess 1}
        (i \circ e)^!(\mc F) := (\cap \mc F)_{\mc H}([i_* e_* \ms O_{\mc B}]) = (\cap \mc F)_{\mc E}(i^*[i_* e_* \ms O_{\mc B}]).
    \end{equation}
    Next
    \begin{equation} \label{eq: excess 2}
        i^* \circ i_* = (\lambda_{-1}(\mc N^\vee_{\mc H^g/\mc H}) \cdot) : K^0_{\mc B}(\mc E) \to K^0_{\mc B}(\mc H) \to K^0_{\mc B}(\mc E)
    \end{equation}
    as in \cite[Lemme 3.3]{Thomason}: because $i_*$ is an isomorphism (both groups being identified with $K_0(\mc B)$ by smoothness of $E,F$), it suffices to check that for a complex of $G$-equivariant sheaves $S$ supported on $B$ we have
    $$i_* \circ i^* \circ i_*([S]) = [i_*i^*i_* S] = [i_* \ms O_E \otimes i_* S] = [i_*(i^* i_* \ms O_E \otimes S)] = i_*(\lambda_{-1}(\mc N^\vee_{\mc H^g/\mc H}) \cdot [S]).$$
    Thus
    $$(i \circ e)^!(\mc F) \overset{(\ref{eq: excess 1})}{=} (\cap \mc F)_{\mc E}(i^*i_*[e_* \ms O_B]) \overset{(\ref{eq: excess 2})}{=} (\cap \mc F)_{\mc E}(\lambda_{-1}(\mc N^\vee_{F^g/F}) \cdot [e_* \ms O_B])$$
    $$ = (\cap \mc F)_{\mc E}(e_*[e^*\lambda_{-1}(\mc N^\vee_{\mc H^g/\mc H})]) = e^{e^* \lambda_{-1}(\mc N^\vee_{\mc H^g/\mc H})}(\mc F) = a^* e^*\lambda_{-1}(\mc N^\vee_{\mc H^g/\mc H}) \cdot e^! \mc F$$
    by Lemma \ref{lem: Gysin pullback K0}(a).
    
    Comparing this to (\ref{eq: f!}), we see that
    $$f^!(\mc F) = a^* \lambda_{-1}(\mc N^\vee_{\mc C^g/\mc C})^{-1} \cdot (i \circ e)^!(\mc F) = a^*(\lambda_{-1}(\mc N^\vee_{\mc C^g/\mc C})^{-1} \cdot e^* \lambda_{-1}(\mc N^\vee_{\mc H^g/\mc H})) \cdot e^! \mc F \in K_0(\mc A)_{\mf m_g, \BC}.$$
    Thus it suffices to note that
    $$\lambda_{-1}(\mc N^\vee_{\mc C^g/\mc C})^{-1} \cdot e^* \lambda_{-1}(\mc N^\vee_{\mc H^g/\mc H}) = \lambda_{-1}((b^*\mc N_{\mc C/\mc H}^\vee)^{\mov}) \in K^0(\mc B)_{\mf m_g, \BC}$$
    since
    $$(b^*\mc N_{\mc C/\mc H}^\vee)^{\mov} = (e^* i^* T_{\mc H}^\vee - b^* T_{\mc C}^\vee) - (e^* i^* T_{\mc H}^\vee - b^* T_{\mc C}^\vee)^{\fix} = e^*(\mc N^\vee_{\mc H^g/\mc H}) - \mc N^\vee_{\mc C^g/\mc C}$$
    and thus
    $$\lambda_{-1}(e^*\mc N^\vee_{\mc H^g/\mc H}) = \lambda_{-1}((b^*\mc N_{\mc C/\mc H}^\vee)^{\mov}) \cdot \lambda_{-1}(\mc N^\vee_{\mc C^g/\mc C}) \in K^0(\mc B).$$
\end{proof}

\begin{lem} \label{lem: Gysin pullback K0}
    Suppose given a Cartesian square of $G$-equivariant varieties
    \[
    \begin{tikzcd}
        A \arrow[r, "a"] \arrow[d, "b"] & B \arrow[d, hook, "c"]\\
        C \arrow[r, "d"] & D
    \end{tikzcd}
    \]
    with $c$ a closed embedding. \\
    (a) If $D$ is smooth, the Gysin pullback functor
    $$c^{(-)}(-): K_0(\mc B) \times K_0(\mc C) \to K_0(\mc A)$$
    is $K^0(\mc B)$-linear, where $K^0(\mc B)$ acts on the left side via its action on $K_0(\mc B)$ and on the right side via the action of $a^*: K^0(\mc B) \to K^0(\mc A)$ on $K_0(\mc A)$. \\
    (b) Given a $c$-perfect $G$-equivariant complex $\ms F \in D^b\Coh(B)$, the Gysin pullback
    $$c^{\ms F}: K_0(\mc C) \to K_0(\mc A)$$
    is $K^0(\mc C)$-linear, where $K^0(\mc C)$ acts on the left as usual and on the right through $b^*: K^0(\mc C) \to K^0(\mc A)$.
\end{lem}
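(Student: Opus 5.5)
Both statements are linearity properties of a $\Tor$ construction, so I would check them at the level of $G$-equivariant complexes and then pass to Grothendieck groups. Recall $c^{\ms F}([\ms G]) = \sum_i (-1)^i[\Tor_i^D(\ms F, \ms G)]$, computed on $D$ as $c_*\ms F \otimes^L_{\ms O_D} d_*\ms G$. This is a complex on $D$ whose cohomology is supported on $B \cap C = A$, viewed as a class in $K_0(\mc A)$. Concretely, I would use the cohomology sheaves, which are $\ms O_A$-modules via the two module structures on $\Tor$, and which agree because the square is Cartesian. All identifications will be canonical and hence $G$-equivariant, so it is enough to produce natural quasi-isomorphisms of complexes.

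For (b), take a $G$-equivariant locally free sheaf $\mc E$ on $C$ and a coherent $\ms G$ on $C$. I need $[\Tor_i^D(\ms F, d_*(\mc E \otimes \ms G))] = b^*\mc E \cdot [\Tor_i^D(\ms F, d_*\ms G)]$ in $K_0(\mc A)$. Because $\mc E$ is locally free, tensoring by it is exact, and the projection-formula-type isomorphism
$$c_*\ms F \otimes^L d_*(\mc E \otimes \ms G) \simeq (c_*\ms F \otimes^L d_*\ms G) \otimes_{\ms O_C} \mc E$$
holds. Here $\ms O_C$ acts on the Tor sheaves through the second factor, and by the Cartesian square that action factors through $\ms O_A$. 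It follows that $\Tor_i$ commutes with $\otimes \mc E$, and $\mc E$ acts through $b^*\mc E$. To make this precise I would work locally with rings $R \to S$ (coming from $D \to B$) and $R \to T$ (coming from $D \to C$), with $E$ a projective $T$-module. Then $\Tor^R_i(M, N \otimes_T E) = \Tor^R_i(M,N) \otimes_T E$, since $E$ is a direct summand of a free $T$-module. Finally, both sides are additive in $\mc E$, and $K^0(\mc C)$ is generated by such classes, so linearity follows.

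For (a), the argument is the same with the roles of the two factors swapped. Take $\mc E \in K^0(\mc B)$ locally free and $\ms F \in K_0(\mc B)$. Since $D$ is smooth, every coherent sheaf on $B$ is $c$-perfect, so the pairing is defined on all of $K_0(\mc B) \times K_0(\mc C)$. I also need to check it is biadditive, which follows from long exact sequences of $\Tor$ in each variable. Then $\Tor^R_i(M \otimes_S E, N) \cong \Tor^R_i(M,N) \otimes_S E$ for $E$ projective over $S$, and the resulting $S$-module structure is the one through $\ms O_A$, so the class is $a^*\mc E \cdot c^{\ms F}(\ms G)$. If linearity on the $K^0(\mc A)$-side is meant to be checked on the right-hand argument instead, it is handled in the same way.

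The only point needing care is the module structures. The $\Tor$ sheaves carry both an $\ms O_B$-action and an $\ms O_C$-action, and I have to confirm that these coincide as the $\ms O_A$-action used to regard them as classes on $A$. I also have to confirm that the natural isomorphisms above glue $G$-equivariantly. I expect this to be routine: equivariance follows from naturality, and compatibility of the actions follows from the balancing of $\Tor$. So there is no real obstacle beyond this bookkeeping.
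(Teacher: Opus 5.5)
Your proposal is correct and follows essentially the same route as the paper. Both of you reduce to the affine identity $\Tor^R_i(N, M \otimes_T L) \cong \Tor^R_i(N,M) \otimes_T L$ for $L$ finitely generated free (or projective) over $T$, with the resulting module structure coming through $S \otimes_R T$ (i.e.\ via $b^*$ or $a^*$), and use naturality for $G$-equivariance. Both also note that smoothness of $D$ makes every coherent sheaf on $B$ $c$-perfect, so the pairing in (a) is defined.
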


\begin{proof}
    For (a), note that since $c$ is a closed embedding into a smooth variety, all coherent sheaves on $B$ are in fact $c$-perfect, so the map above is well-defined on $K_0(\mc B)$.

    Now, for both parts of the lemma, it suffices to note that, in the affine case $R \to S, T$, given modules $N \in \Mod(S)$ and $M, L \in \Mod(T)$ with $L$ free and finitely generated over $T$, the natural (thus $G$-equivariant) morphism $\Tor_i^R(N, M \otimes_T L) \to \Tor^R_i(N, M) \otimes_T L = \Tor^R_i(N, M) \otimes_{S \otimes_R T} (S \otimes_R L)$ is an isomorphism. Taking a free $R$-resolution $N^\bullet$ of $N$, we have
    $$\Tor^R_i(N, M \otimes_T L) = H_i(N^\bullet \otimes_R M \otimes_T L) = H_i(N^\bullet \otimes_R M) \otimes_T L = \Tor^R_i(N, M) \otimes_T L$$
    where the second equality follows from the fact that $L$ is a flat $T$-module.
\end{proof}

\begin{lem} \label{lem: Tor indep}
    Suppose there is a Cartesian square of varieties
    \[
    \begin{tikzcd}
    A \arrow[r, "f'"] \arrow[d, "g'"] & B \arrow[d, "g"] \\
    C \arrow[r, hook, "f"] & D
    \end{tikzcd}
    \]
    where $f$ is a regular embedding of codimension $d$.\\
    (a) If $B$ is Cohen-Macaulay, and $A$ and $B$ are equidimensional with $\dim B - \dim A = d$, then $f'$ is also a regular embedding of codimension $d$. \\
    (b) If $f'$ is a regular embedding of codimension $d$, then
    $B$ and $C$ are $\Tor$-independent over $D$. In particular, we have a canonical isomorphism
    $$Lf^* Rg_* \ms F = Rg'_* Lf'^* \ms F$$
    for any $\ms F \in D^b\Coh(B).$
\end{lem}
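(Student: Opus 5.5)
Both parts are local on $D$ (and on $B$), so I will work on affine charts. Write $D = \Spec R$, $C = \Spec R/I$ with $I$ generated by a regular sequence $x_1, \ldots, x_d$, and $B = \Spec S$ with $g$ induced by $R \to S$. Then $A = \Spec S/IS$, and all statements reduce to properties of the images $\bar x_1, \ldots, \bar x_d \in S$.

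For (a), I localize at a point $y$ of $A \subset B$. The local ring $\ms O_{B,y}$ is Cohen--Macaulay and $\ms O_{A,y} = \ms O_{B,y}/(\bar x_1, \ldots, \bar x_d)$. Equidimensionality of $A$ and $B$ together with $\dim B - \dim A = d$ gives, after localizing, $\dim \ms O_{A,y} = \dim \ms O_{B,y} - d$. The ambient varieties are of finite type over $\BC$, hence catenary and equidimensional, so heights of primes behave additively with dimension.

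In a Cohen--Macaulay local ring, elements $\bar x_1, \ldots, \bar x_d$ of the maximal ideal cutting down the dimension by exactly $d$ form part of a system of parameters. Part of a system of parameters in a Cohen--Macaulay local ring is a regular sequence. Hence $f'$ is a regular embedding of codimension $d$ near every point of $A$, which proves (a).

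The main subtlety in (a) is justifying the local dimension count from the global equidimensionality hypothesis. This is where catenarity of finite-type $\BC$-algebras is used, via $\dim \ms O_{B,y} = \dim B - \dim \overline{\{y\}}$ and similarly for $A$.

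For (b), I use the Koszul complex. Since $x_1, \ldots, x_d$ is $R$-regular, $K_\bullet(x; R)$ is a free resolution of $R/I$. Therefore $\Tor_i^R(R/I, S) = H_i(K_\bullet(x;R) \otimes_R S) = H_i(K_\bullet(\bar x; S))$.

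The hypothesis that $f'$ is a regular embedding of codimension $d$ means that locally $IS$ is generated by a regular sequence of length $d$. I want to conclude that $\bar x_1, \ldots, \bar x_d$ itself is $S$-regular after localizing. Let $J = IS$. Then $J/J^2$ is locally free of rank $d$ on $A$, since $f'$ is regular of codimension $d$. The $d$ elements $\bar x_i$ generate $J/J^2$, so locally they form a basis. By Nakayama, at each local ring they minimally generate $J$. Minimal generating sets of an ideal generated by a regular sequence in a local ring are themselves regular sequences; one way to see this is that the Koszul homology $H_1$ is independent of the choice of minimal generators up to isomorphism, and it vanishes for a regular sequence.

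Hence the Koszul complex on $S$ is acyclic in positive degrees, and $\Tor_i^R(R/I, S) = 0$ for $i > 0$. This is the required $\Tor$-independence. The base change isomorphism $Lf^* Rg_* \ms F \cong Rg'_* Lf'^* \ms F$ then follows from the $\Tor$-independent base change theorem \cite[Tag 08IB]{stacks-project}; properness is not needed here since $f$ is a closed immersion and the statement is local, or one may invoke the affine version directly.

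I expect the main obstacle to be the step in (b) that the specific generators $\bar x_i$ form a regular sequence. I would handle it via the rank of the conormal sheaf together with the standard fact that regularity of an ideal in a local ring is independent of the chosen minimal generating set.
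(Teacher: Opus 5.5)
Your proof is correct and takes essentially the same route as the paper: in (a) the local dimension count $\dim \ms O_{B,y} - \dim \ms O_{A,y} = d$ combined with the Cohen--Macaulay criterion for regular sequences, and in (b) computing $\Tor$ by the pulled-back Koszul complex after showing that the $d$ generators $\bar x_i$ of the ideal of $A$ are still (Koszul-)regular. The only difference is that you justify this last step directly (rank of the conormal sheaf, Nakayama, invariance of Koszul homology under change of minimal generators), whereas the paper cites \cite[Tag 066A]{stacks-project}.
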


\begin{proof}
    (a) Choose any pair of points $x \in C \subset D$ and $y \in g^{-1}(x)$. By assumption, $\ms I_{C, x} \subset \ms O_{D,x}$ is generated by a regular sequence $f_1, \ldots, f_d$. Since $A$ and $B$ are varieties, we have $\dim \ms O_{B,y} - \dim \ms O_{A,y} = (\dim B - \dim \bar{y}) - (\dim A - \dim \bar{y}) = d$ \cite[Tag 00OS]{stacks-project}. Now, since $g^* f_1, \ldots, g^* f_d$ generate $\ms I_{A,y} \subset \ms O_{B,y}$ and $B$ is Cohen-Macaulay, it follows that $g^* f_1, \ldots, g^* f_d$ form a regular sequence \cite[Tag 02JN]{stacks-project}. \\
    (b) Choose points $x \in C \subset D, y \in g^{-1}(x)$ and a Koszul-regular sequence $f_1, \ldots, f_d \in \ms I_{C,x}$. (Regular implies Koszul-regular by \cite[Tag 062F]{stacks-project}.) Then $\Tor_i^{\ms O_{D,x}}(\ms O_{C,x}, \ms O_{B,y})$ can be calculated using the Koszul resolution $K_\bullet(\ms O_{D,x}, f_1, \ldots, f_d)$ of $\ms O_{C,x}$. Since $g^* f_1, \ldots, g^* f_d$ generate $\ms I_{A,y}$ and this can be generated by a regular sequence of length $d$, we conclude that $g^* f_1, \ldots, g^* f_d$ is still a Koszul-regular sequence \cite[Tag 066A]{stacks-project}.
    In particular the pullback Koszul complex $K_\bullet(\ms O_{D,x}, f_1, \ldots, f_d) \otimes_{\ms O_{D,x}} \ms O_{B,y} = K_\bullet(\ms O_{B,y}, g^*f_1, \ldots, g^*f_d)$ is exact in positive degree, and so $\Tor_i^{\ms O_{D,x}}(\ms O_{C,x}, \ms O_{B,y}) = 0$ for $i > 0$, proving Tor independence. The second part now follows from \cite[Tag 08IB]{stacks-project}.   
\end{proof}

\section{Fourier transform for compactified Pryms} \label{sec: main theorem}
\subsection{Relative correspondences and homological realization} \label{subsec: stacky MSY background}
We quickly review the definitions and notations of \cite[\S 1.3.2-3, 2.2]{MSY} (as extended to \textit{nice quotients}, i.e., global quotients of quasiprojective varieties by finite groups). In this section, all functors are derived.

\subsubsection{Coherent sheaves} \label{subsubsec: coherent sheaves}
Fix smooth nice quotients $\mc X_i = [X_i/G_i]$ with flat, proper maps to a smooth variety $B$. Given $\mc F \in D^b\Coh(\mc X_1 \times_B \mc X_2)$, which in this case can be defined as the bounded derived category of the abelian category of $G_1 \times G_2$-equivariant sheaves on $X_1 \times_B X_2$ (and similarly for $\mc X_1, \mc X_2$),
we have a relative Fourier-Mukai transform
$$\Phi_{\mc F}: D^b\Coh(\mc X_1) \to D^b\Coh(\mc X_2), \mc G \mapsto p_{2*}(p_1^* \mc G \otimes \mc F).$$
Given $\mc F \in D^b\Coh(\mc X_1 \times_B \mc X_2)$ and $\mc G \in D^b\Coh(\mc X_2 \times_B \mc X_3)$ we define a composition
\begin{equation} \label{eq: composition on Db}
    \mc G \circ \mc F := p_{13*}\delta_{\mc X_2}^*(p_{12}^* \mc F \otimes p_{34}^* \mc G) \in D^b\Coh(\mc X_1 \times_B \mc X_3)
\end{equation}
where $p_{ij}$ is the projection onto the corresponding factors and
$$\delta_{\mc X_2}: \mc X_1 \times_B \mc X_2 \times_B \mc X_3 \to \mc X_1 \times_B \mc X_2 \times \mc X_2 \times_B \mc X_3$$
is the (flat) base-change of the diagonal map $\Delta_{\mc X_2}: \mc X_2 \to \mc X_2 \times \mc X_2$. Then
$$\Phi_{\mc G \circ \mc F} = \Phi_{\mc G} \circ \Phi_{\mc F}: D^b\Coh(\mc X_1) \to D^b\Coh(\mc X_3).$$

For the purposes of \S \ref{sec: projectors and mult}, we will also define a multiple composition, as follows: given $\mc X_1, \mc X_2, \mc X_3, \mc Y_1, \mc Y_2$ and $B$ as above, we define a map
$$D^b\Coh(\mc X_1 \times_B \mc X_2 \times_B \mc X_3) \times  D^b\Coh(\mc Y_1 \times_B \mc X_1) \times D^b\Coh(\mc Y_2 \times_B \mc X_2) \to D^b\Coh(\mc Y_1 \times_B \mc Y_2 \times_B \mc X_3)$$
$$(\mc G, \mc F_1, \mc F_2) \mapsto \mc G \circ (\mc F_1 \boxtimes \mc F_2):= p_{125*}\delta_{\mc X_1}^*\delta_{\mc X_2}^*(p_{12}^*\mc F_1 \otimes p_{34}^* \mc F_2 \otimes p_{567}^*\mc G) $$
where
$$\delta_{\mc X_2}: \mc Y_1 \times_B \mc X_1 \times \mc Y_2 \times_B \mc X_1 \times_B \mc X_2  \times_B \mc X_3 \to \mc Y_1 \times_B \mc X_1 \times \mc Y_2 \times_B \mc X_2 \times \mc X_1 \times_B \mc X_2 \times_B \mc X_3$$
and
$$\delta_{\mc X_1}:  \mc Y_1 \times_B \mc Y_2 \times_B \mc X_1 \times_B \mc X_2  \times_B \mc X_3 \to \mc Y_1 \times_B \mc X_1 \times \mc Y_2 \times_B \mc X_1 \times_B \mc X_2  \times_B \mc X_3$$
are (flat) base-changes of the diagonals $\Delta_{\mc X_2}$ and $\Delta_{\mc X_1}$, respectively. (Note that $\delta_{\mc X_2} \circ \delta_{\mc X_1}$ is a base change of the diagonal $\Delta_{\mc X_1 \times \mc X_2}$, but not a \textit{flat} base change when the base $B$ is nontrivial.)

\subsubsection{Relative correspondences and Chow motives} \label{subsubsec: relative Chow}
Chow groups of nice quotients are discussed in \S \ref{subsubsec: EG def of Chow}. Given equidimensional, smooth nice quotients $\mc X_i$ with proper (but not necessarily flat) morphisms to a smooth variety $B$, we define the group of (complex-valued) relative correspondences to be
$$\Corr^k_B(\mc X_1, \mc X_2) := \CH_{\dim \mc X_2 - k}(\mc X_1 \times_B \mc X_2)_{\BC}.$$
Given $\alpha \in \Corr^k_B(\mc X_1, \mc X_2)$ and $\beta \in \Corr^l_B(\mc X_2, \mc X_3)$, we define the composition
$$\beta \circ \alpha := p_{13*} \Delta_{\mc X_2}^!(\alpha \times \beta) \in \Corr^{k+l}_B(\mc X_1, \mc X_3)$$
where $p_{13}$ and $\Delta_{\mc X_2}$ are as in \S \ref{subsubsec: coherent sheaves}. We say that $\alpha \in \Corr^0_B(\mc X, \mc X)$ is a \textit{projector} if $\alpha \circ \alpha = \alpha$.

We also recall from \cite[\S 2.2.3]{MSY} the following notion of multiple correspondence: Given $\mc X_i$ and $B$ as before, we let
$$\Corr^k_B(\mc X_1, \mc X_2; \mc X_3) := \Corr^k_B(\mc X_1 \times_B \mc X_2, \mc X_3).$$
Compositions are described as follows: given smooth $\mc Y_i$ proper over $B$, $\alpha_3 \in \Corr^{k_3}_B(\mc X_3, \mc Y_3)$, and $\beta \in \Corr^k_B(\mc X_1, \mc X_2; \mc X_3)$, we define
$$\alpha_3 \circ \beta \in \Corr_B^{k+k_3}(\mc X_1 \times_B \mc X_2, \mc Y_3) = \Corr^{k+k_3}_B(\mc X_1, \mc X_2; \mc Y_3)$$
by the formula above, viewing $\beta$ as an element of $\Corr^k_B(\mc X_1 \times_B \mc X_2, \mc X_3)$. On the other hand, given
$\alpha_i \in \Corr^{k_i}_B(\mc Y_i, \mc X_i)$, we define
$$\beta \circ (\alpha_1 \times \alpha_2) := p_{125*} \Delta_{\mc X_1}^!\Delta_{\mc X_2}^!(\alpha_1 \times \alpha_2 \times \beta) \in  \Corr^{k+k_1+k_2}_B(\mc Y_1, \mc Y_2; \mc X_3)$$
where $p_{125}, \Delta_{\mc X_1}, \Delta_{\mc X_2}$ are as in \S \ref{subsubsec: coherent sheaves}.

Returning to varieties (although one could make the same definitions for nice quotients), the category of relative Chow motives $CHM(B)$ over $B$ has objects $(X, \alpha, n)$, where $X$ is a smooth variety, proper over $B$; $\alpha \in \Corr^0_B(X, X)$ is a projector; and $n \in \BZ$ is an integer. Morphisms are given by
$$\Hom((X, \alpha, n), ( Y, \beta, m)) := \beta \circ \Corr_B^{m-n}( X, Y) \circ \alpha.$$
Note that if $n=m$ and $X=Y$ there is a preferred morphism given by $\Delta_X \in \Corr^0_B(X, X)$.
We define the motive of $X$ to be
$$h(X) := (X, \Delta_{X}, 0).$$

\subsubsection{Constructible sheaves} \label{subsubsec: constructible sheaves}
As noted in \cite[\S 2.8]{EG-locforequivint}, there is a cycle class map
$$cl: \CH_*(\mc X) \to H^{BM}_{2*}(\mc X)$$
given by the standard cycle class map of \cite[\S 19.1]{Fulton} on the approximation $X_G$ (see \S \ref{subsubsec: EG def of Chow}). Recall that the cycle class on schemes commutes with Chern classes \cite[Proposition 19.1.2]{Fulton}, proper pushforward \cite[Lemma 19.1.2]{Fulton}, and Gysin pullback by local complete intersections \cite[Example 19.2.1]{Fulton}. It follows immediately that the same is true for Chern classes of nice quotients and for representable morphisms. Moreover, by taking representable covers as in (\ref{diagram: proper pushforward}), we see that the same holds also for nonrepresentable morphisms.

Let $\mc X_i$ be smooth nice quotients equipped with proper morphisms $\pi_i: \mc X_i \to B$. Then there is a natural isomorphism \cite[Lemma 2.15(2)]{Corti-Hanamura}
\begin{equation} \label{eq: BM = sheaf Hom}
   H^{BM}_{2 \dim \mc X_2 - i}(\mc X_1 \times_B \mc X_2) = \Hom_{D^b_c(B)}(\pi_{1*} \BC_{\mc X_1}, \pi_{2*} \BC_{\mc X_2}[i]), 
\end{equation}
where $D^b_c(B)$ is the bounded derived category of constructible $\BC$-sheaves on $B$. Moreover, this isomorphism is compatible with composition, when composition on $H^{BM}_*$ is defined as for relative Chow correspondences \cite[Lemma 2.17]{Corti-Hanamura}. The corresponding statement for multiple correspondences
$$H^{BM}_{2 \dim \mc X_3 - i}(\mc X_1 \times_B \mc X_2 \times_B \mc X_3) = \Hom_{D^b_c(B)}(\pi_{1*} \BC_{\mc X_1} \otimes \pi_{2*} \BC_{\mc X_2}, \pi_{3*} \BC_{\mc X_3}[i]) $$
is \cite[Lemma 2.2]{MSY}. Thus, by the compatibilities in the previous paragraph, after applying the cycle class map, composition of relative Chow correspondences corresponds to composition of morphisms of sheaves. (Again, although the results cited in this paragraph are for schemes, the same follows for the nice quotients $\mc X_i = [X_i/G_i]$ by taking covers, using the pullback isomorphisms $\pi_* \BC_{\mc X_i} = (\pi_* \BC_{X_i})^{G_i}$, $H^{BM}_*(\mc X_1 \times_B \mc \mc X_2) = H^{BM}_*(X_1 \times_B X_2)^{G_1 \times G_2}$, etc. Recall that the diagonal $\Delta_{\mc X}: \mc X \to \mc X \times \mc X$ pulls back to the morphism $(\sigma, p_2): G \times X \to X \times X$.)

We note in particular that map of sheaves corresponding to the class of the small diagonal $[\Delta_{\mc X}^{s}] \in \Corr^0_B(\mc X, \mc X; \mc X)$ is the cup product
$$\cup: \pi_*\BC_{\mc X} \otimes \pi_*\BC_{\mc X} \to \pi_* \BC_{\mc X}.$$

As explained in \cite[\S 2.2.2]{MSY}, there is a homological realization map
$$CHM(B) \to D^b_c(B), (\pi: X \to B, \alpha, n) \mapsto \alpha_*(\pi_* \BC_{ X}[2n]),$$
where by $\alpha_*(\pi_* \BC_X[2n])$ we mean the $2n$-shifted image of $\pi_* \BC_{ X}$ under the projector which is the image of $\alpha$ under
$$\CH_{\dim  X}( X \times_B  X) \xrightarrow{cl} H^{BM}_{2\dim X}( X \times_B X) = \Hom_{D^b_c(B)}(f_*\BC_{ X}, f_*\BC_{ X}).$$
(The fact that it is well-defined to ``take the image" of a projector in $D^b_c(B)$ is \cite[Lemma 2.14]{Corti-Hanamura}.) In particular, the realization of $h(X)$ is $\pi_* \BC_{ X}$.

Recall that given a proper map $\pi: X \to B$, where $X$ is smooth, the decomposition theorem of \cite{BBD} states that the pushforward $\pi_* \BC_X$ splits (noncanonically) as a sum of its shifted perverse cohomology sheaves, i.e., 
$$\pi_* \BC_X \cong \oplus_i \pH^i(\pi_* \BC_X)[-i],$$
and furthermore each $\pH^i(\pi_* \BC_X)$ is a sum of simple perverse sheaves. 
If a finite group $G$ acts on $X$ over $B$, note that there is an induced action on $\pi_* \BC_X$, thus on each perverse cohomology group, which splits into a sum of semisimple perverse sheaves indexed by representations. In particular, there is a splitting
$$\pi_* \BC_X \cong \oplus_\rho (\pi_* \BC_X)_\rho$$
where $\rho$ runs over the set of irreducible representations of $G$ and each term is again a sum of shifted simple perverse sheaves, each supported on some closed subset of $B$.

Lastly, recall that the perverse truncations
$$\ptau_{\le k} \pi_* \BC_X \to \pi_* \BC_X, \,\,\, \pi_* \BC_X \to \ptau_{\ge k} \BC_X$$
are canonical and, after taking global sections, induce a canonical filtration
$$P_k H^*(X, \BC) := \mathrm{Im}(H^*(X, \ptau_{\le k+\dim B} \pi_* \BC_X) \to H^*(X, \BC)) \subset H^*(X, \BC).$$

\subsection{The compactified Prym setup} \label{subsec: compactified Prym}
For the rest of the paper, we fix a family of projective, locally planar, integral curves $X$ over a smooth quasiprojective variety $B$, a smooth projective curve $C$ of genus $g \ge 2$, and a flat, finite map $c: X \to C \times B$ over $B$ of degree $n$.

\begin{defn} \label{defn: compactified Prym}
    Let $\oJ^0(X/B)$ be the relative compactified Jacobian of $X/B$, parametrizing torsion free sheaves of rank 1 and degree 0 on the fibers $X_b$.
    Consider the relative norm map
    $$\Nm_{X/C}: \oJ^0(X/B) \to \J^0(C) \times B$$
    given pointwise by 
    $$\ms L_b \in \oJ^0(X_b) \mapsto \det (c_* \ms L_b) \otimes \det(c_* \ms O_{X_b})^\vee \in \J^0(C).$$
    (Note that the pushforward of a rank 1 torsion-free sheaf on $X_b$ is a rank $n$ vector bundle on $C$.)\\
    The compactified Prym fibration associated to $X \to B$ and $C$ is
    $$\pi: \oPrym(X/B) := \Nm_{X/C}^{-1}(\{\ms O_C\} \times B) \to B.$$
\end{defn}

\begin{rmk} \label{rmk: i is reg}
    Given our assumptions, the variety $\oJ^0(X/B)$ is Cohen-Macaulay \cite[Theorem 9]{AIK}. Note that $\Nm_{X/C}$ is $\J^0(C)$-equivariant, where $\J^0(C)$ acts on $\oJ^0(X/B)$ by pullback and tensor product, and on $\J^0(C) \times B$ by tensor product by the $n^{\mathrm{th}}$ power (in particular, transitively over each point of $B$). Then all fibers are of the same dimension $g(\tilde X_b) - g$, and by miracle flatness we conclude that $\Nm_{X/C}$ is flat. In particular, the embedding $\oPrym(X/B) \hookrightarrow \oJ^0(X/B)$, which is the flat base change of $\{\ms O_C\} \hookrightarrow \J^0(C)$, is regular of codimension $g$.
\end{rmk}

\begin{rmk} \label{rmk: etale cover}
    From the tensor product morphism
    $$\J^0(C) \times \oPrym(X/B) \to \oJ^0(X/B)$$
    we see that $\oJ^0(X/B)$ is the quotient of $\J^0(C) \times \oPrym(X/B)$ by the (free) diagonal action of the $n$-torsion subgroup $\J^0(C)[n]$. In particular, $\oPrym(X/B)$ is smooth if and only if the same is true of $\oJ^0(X/B)$.
\end{rmk}

\begin{defn} \label{defn: Gamma}
    Set $\Gamma := \J^0(C)[n]$. Then $\Gamma$ acts on $\oPrym(X/B)$ over $B$ by tensor product: for $\gamma \in \Gamma$ we set
    $$\gamma \cdot \ms L_b := c_b^*\gamma \otimes \ms L_b \in \oPrym(X_b).$$
    We use $\oPrym(X/B)^\gamma$ to denote the fixed locus and
    $$B_\gamma := \pi(\oPrym(X/B)^\gamma) \subset B$$
    to denote its image under $\pi$. (Here, the use of a subscript rather than superscript is to indicate that $B_\gamma$ is not a fixed locus, and indeed $B$ is not equipped with a nontrivial action of $\Gamma$.) Abusively, we write $IB := \coprod_{\gamma \in \Gamma} B_\gamma$ and $\iota_B: IB \to B$ for the obvious inclusion map, although $IB$ is not an inertia stack. 
\end{defn}

\begin{rmk} \label{rmk: endoscopic locus defs}
    Another, perhaps more usual, characterization of the \textit{endoscopic locus} $B_\gamma$ is given by
    $$b \in B_\gamma \iff \gamma \in \ker(\J^0(C) \to \J^0(\tilde X_b)),$$
    where $\nu: \tilde X_b \to X_b$ is the normalization.
    Indeed, if we have $b \in B_\gamma$, then by definition there is some $F \in \oPrym(X_b)$ such that $c^*_b \gamma \otimes F \cong F$. Then $\nu^* c^*_b \gamma$ fixes the line bundle $\nu^*F/T \in \J^0(\tilde X_b)$, where $T \subset \nu^* F$ is the torsion subsheaf, thus $\nu^* c^*_b \gamma \cong \ms O_{\tilde X_b}$. In the other direction, note that we have a pushforward map $\nu_*: \J^{-k}(\tilde X_b) \to \oJ^0(X_b)$, for $k := h^0(X_b, \nu_* \ms O_{\tilde X_b}/\ms O_{X_b})$ (since the pushforward of a line bundle from $\tilde X_b$ remains torsion-free of rank 1), and if $\nu^* c^*_b \gamma \cong \ms O_{\tilde X_b}$, then by the projection formula, the image of $\nu_*$ is a $\gamma$-fixed point in $\oJ^0(X_b)$; moreover, note that the map $\nu_*$ is $\J^0(C)$-equivariant, and since $\oPrym(X/B)$ meets each $\J^0(C)$-orbit, in particular it meets the image of $\nu_*$.
\end{rmk}

\begin{defn} \label{def: Mvee}
    Set $M := \oPrym(X/B)$. We define
    $$M^\vee := [\oPrym(X/B)/\Gamma] = [\oJ^0(X/B)/\J^0(C)].$$
    We also introduce notation
    $$IM^\vee = \coprod_{\gamma \in \Gamma} [M^\gamma/\Gamma] =: \coprod_{\gamma \in \Gamma} M^{\gamma, \vee}.$$
    Since $\Gamma$ acts along the fibers of $M \to B$, we can descend $\pi$ to a morphism $\pi^\vee: M^\vee \to B$. Note as well that we have a natural morphism $\pi_{IB}: IM^\vee \to IB$ such that $\iota_B \circ \pi_{IB} = \pi^\vee \circ \iota_{M^\vee}$.
\end{defn}
The notation reflects the fact that $M \to B$ and $M^\vee \to B$ are dual abelian fibrations: for a general $b \in B$, the fiber $M_b$ is an extension of an abelian variety by a finite abelian group, and $M^\vee_b$ is the dual finite abelian group gerbe on an abelian variety, in a sense to be described in \S \ref{subsubsec: duality for ab stacks}. (If the finite group in question is trivial, then this is simply the dual abelian variety.)

\begin{defn} \label{def: Weil pairing}
    Consider the canonical pairing on $\Gamma$ induced by the intersection pairing on $H^1(C, \BZ/n\BZ) = \J^0(C)[n]$. This gives an isomorphism
    $$\Gamma \to \Gamma^\vee, \gamma \mapsto \langle \gamma,-\rangle,$$
    where $\Gamma^\vee$ is the dual abelian group.
    For any fixed $\gamma \in \Gamma$, we use $\kappa$ to denote the corresponding element $\langle \gamma, - \rangle \in \Gamma^\vee$.
\end{defn}

Since $\Gamma$ acts on $\oPrym(X/B)$ over $B$, as described in \S \ref{subsubsec: constructible sheaves} there is a decomposition
$$R\pi_*\BC_{\oPrym} = \oplus_{\kappa \in \Gamma^\vee} (R\pi_* \BC_{\oPrym})_\kappa.$$

\begin{defn} \label{defn: delta invt}
    Recall that given a reduced curve $Y$ with normalization $\nu: \tilde Y \to Y$, the delta invariant is defined by $\delta(Y) := h^0(Y, \nu_* \ms O_{\tilde Y}/\ms O_Y),$ or alternatively the dimension of the affine (as opposed to abelian variety) part of $\J^0(Y)$. Given a subvariety $B' \subset B$, we define $\delta(B')$ to be $\delta(X_b)$ for a general point $b \in B'$.
\end{defn}

\begin{defn} \label{defn: good fibration}
    Let $X \to C \times B$ be as defined at the beginning of \S \ref{subsec: compactified Prym}. Let $M := \oPrym^0(X/B) \to B$ be the corresponding relative compactified Prym variety. 
    We say that $\pi: M \to B$ is a \textit{good compactified Prym fibration} if
    \begin{enumerate}
        \item $M$ and $IB$ are smooth, and $\pi_{IB}: IM^\vee \to IB$ is flat
        \item (``Ng\^o conditions") For each $\gamma \in \Gamma$, the component $(R\pi_* \BC_M)_\kappa$ has full support on $B_\gamma$, and each $B_\gamma$ satisfies the support inequality $\delta(B_\gamma) \ge \codim B_\gamma$
        \item The general fiber of $X|_{B_\gamma} \to B_\gamma$ is at worst nodal.
    \end{enumerate}
\end{defn}

\begin{rmk} \label{rmk: Severi/Ngo}
    By the Severi inequality (see, e.g., \cite[Lemma 4.1]{MS-chi-indep}), smoothness of $M$ (or equivalently, of $\oJ^0(X/B)$, by Remark \ref{rmk: etale cover}) implies that $\delta(B_\gamma) \le \codim B_\gamma$. Thus, the second part of (2) gives an equality $\delta(B_\gamma) = \codim B_\gamma$.
\end{rmk}

\begin{rmk} \label{rmk: Ngo}
    We call the conditions (2) ``Ng\^o conditions" because in the cases of interest, they will follow from \cite[Proposition 7.2.3]{Ngo-fundamental}. However, a priori we do not require the fibration $M \to B$ to be a weak abelian fibration as defined by Ng\^o; for the purposes of this paper, the two listed conditions suffice.
\end{rmk}

\begin{rmk} \label{rmk: condition 3}
    Condition (3) could probably be weakened, given an analysis of the Arinkin Poincar\'e sheaf and compactified Jacobian of curves with worse than nodal singularities, along the lines of \cite{FHHO}. However, the current formulation suffices for our cases of interest.
\end{rmk}

Recall that the moduli space $\check{M}_{n, \ms L}^D(C)$ of semistable, $D$-twisted $\SL_n$ Higgs bundles on $C$ with fixed determinant $\ms L \in \Pic(C)$ (where $D$ is a divisor on $C$), together with the Hitchin fibration $h: \check{M}_{n, \ms L}^D(C) \to \oplus_{i=2}^n H^0(C, \ms O_C(D)) =: \BA$ is a compactified Prym \textit{torsor} fibration, over the family of curves given by the spectral curve $X \subset |D| \times \BA \to C \times \BA$, where $|D|$ is the total space of the bundle $\ms O_C(D)$ on $C$. Since $|D|$ is a smooth surface, all curves in the family are locally planar. We now restrict to the open subset $B \subset \BA$ over which the spectral curves are integral (known as the \textit{elliptic locus}), and write $\pi: M \to B$ for the restricted Hitchin fibration. For each $b \in B$ we have
$$M_b = (\det a_{b*})^{-1}(\ms L) = \Nm_{X_b/C}^{-1}(\ms L \otimes \det(a_{b*} \ms O_{X_b})^\vee) = \Nm^{-1}_{X_b/C}(\ms L(n(n-1) D/2))$$
since $a_{b*} \ms O_{X_b} \cong \oplus_{i=0}^{n-1} \ms O(-iD)$. In particular, choosing $\ms L := \ms O(-\frac{n(n-1)}{2} D)$, we recover a compactified Prym fibration.

\begin{prop} \label{prop: SL Higgs is good}
    Let $D$ be either $K_C$ or a divisor on $C$ of degree $\ge 2g$. The subset $M$ over the elliptic locus of the moduli space of semistable $D$-twisted $\SL_n$ Higgs bundles of determinant $\ms L$, where $\deg \ms L \equiv \pm \frac{n(n-1)}{2} \deg D$ mod $n$, is a good compactified Prym fibration.
\end{prop}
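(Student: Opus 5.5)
We verify the three conditions of Definition \ref{defn: good fibration} in turn, after first reducing from the Prym torsor to an honest compactified Prym fibration. The degree condition $\deg \ms L \equiv \pm \frac{n(n-1)}{2}\deg D$ mod $n$ is what makes this reduction possible. Since $\J^0(C)$ acts on the fibers of $\Nm$ through the $n$-th power map, which is surjective, we may twist by a line bundle on $C$: the fiber over $\ms L(n(n-1)D/2)$ is isomorphic, over $B$, to the fiber over a bundle of degree $0$ or $n(n-1)\deg D$. The latter is a degree shift by a multiple of $n$ and can be absorbed by pulling back $\ms O_C(D)$-powers. This identifies $M$ with $\oPrym(X/B)$ for the spectral family restricted to the elliptic locus, compatibly with the $\Gamma$-actions. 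Local planarity and integrality hold by construction, and $c: X \to C \times B$ is finite flat of degree $n$.

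\textbf{Condition (1).} For smoothness of $M$, by Remark \ref{rmk: etale cover} it suffices to show that $\oJ^0(X/B)$ is smooth. This is the standard fact that the $\GL_n$ moduli space over the elliptic locus is smooth: it is a symplectic deformation theory computation with unobstructed Higgs bundles, valid when $D = K_C$ or $\deg D \ge 2g$. Alternatively, the spectral family is a versal family of locally planar curves, and one can apply \cite{MS-chi-indep}-type arguments.

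For smoothness of $IB$ and flatness of $\pi_{IB}$, use Remark \ref{rmk: endoscopic locus defs}. The locus $B_\gamma$ consists of spectral curves whose normalization kills $c^*\gamma$. Equivalently, by Ng\^o's description, it is the image of the Hitchin base of the endoscopic group $H_\kappa$: curves that factor through the cyclic étale cover $C_\gamma \to C$ of order $\mathrm{ord}(\gamma)$, i.e.\ spectral curves of $C_\gamma$ of degree $n/\mathrm{ord}(\gamma)$. That parametrizing space is an affine space (an open part of one), and its map to $B$ is a closed embedding on the elliptic locus, modulo the Galois action of $\BZ/\mathrm{ord}(\gamma)$, which acts freely on the relevant open set. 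Hence $B_\gamma$ is smooth.

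Flatness of $M^\gamma \to B_\gamma$ follows by identifying $M^\gamma$ with a compactified Prym for the family of curves over $C_\gamma$, via the pushforward map of Remark \ref{rmk: endoscopic locus defs}. That compactified Prym is Cohen--Macaulay by \cite{AIK}, with equidimensional fibers, so miracle flatness applies exactly as in Remark \ref{rmk: i is reg}.

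\textbf{Condition (2).} The Ng\^o conditions follow from \cite[Proposition 7.2.3]{Ngo-fundamental} together with the support theorem, as in \cite{MS}. The $\kappa$-part has supports contained in $B_\gamma$, and the $\delta$-inequality on the elliptic locus ensures that the only support is $B_\gamma$ itself. The inequality $\delta(B_\gamma) \ge \codim B_\gamma$ is checked by a direct dimension count. A general curve in $B_\gamma$ is the image of a smooth curve $Y$ over $C_\gamma$, and the pushforward to $|D|$ acquires nodes where distinct Galois-conjugate sheets meet. By Riemann--Hurwitz, the count of these nodes equals $\dim B - \dim B_\gamma$, so equality in fact holds, consistent with Remark \ref{rmk: Severi/Ngo}.

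\textbf{Condition (3).} This follows from the same description: for a general point of $B_\gamma$, the curve $Y \to X_b$ is the normalization, and the only singularities are transverse intersections of conjugate sheets. Genericity, using $\deg D \ge 2g$ or $D = K_C$ to have enough sections, makes these intersections transverse, hence nodes.

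The main obstacle is (1) for $IB$, together with the precise $\delta$-count in (2). In the $K_C$ case the sections are less abundant, and one must check that the endoscopic locus is still smooth and that general members are nodal. I would first try a Bertini-type argument on the linear system of $C_\gamma$-spectral curves pushed into $|D|$.
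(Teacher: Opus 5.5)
Your treatment of conditions (1) and (2) is broadly compatible with the paper's, which is also citation-driven. Two small remarks. First, the full-support statement needs $M\to B$ to be a weak abelian fibration, \cite[Theorem 4.8.1]{dC-SL}, before Ng\^o's Proposition 7.2.3 can be applied. Second, your dimension count is a legitimate alternative route to $\delta(B_\gamma)\ge\codim B_\gamma$: one has $\delta(X_b)\ge p_a(X_b)-p_a(Y_a)$ for the birational map $Y_a\to X_b$, and $p_a(X_b)-p_a(Y_a)=\codim B_\gamma$. This is an arithmetic-genus computation rather than Riemann--Hurwitz.

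The genuine gap is condition (3). You assert that for general $b\in B_\gamma$ the only singularities of $X_b$ are transverse crossings of two conjugate sheets, and you list this as the main obstacle, proposing Bertini on ``the linear system of $C_\gamma$-spectral curves pushed into $|D|$''. But $B_\gamma$ is not a linear system inside $B$: it is the image of $A(\pi)$ under a nonlinear, norm-type map. Moreover, the two statements that actually have to be proved are not consequences of Bertini.

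The paper first reduces to $\gamma$ of order $n$. Every $\gamma$ lies in some $\langle\beta\rangle$ with $\beta$ of order $n$, so $B_\beta\subset B_\gamma$; since nodality is open and $B_\gamma$ is irreducible, it suffices to find nodal members over $B_\beta$. There the upstairs curve is a section $V(t-\sigma)$ with $\sigma\in H^0(\tilde C,\ms O(\tilde D))_{var}$, hence automatically smooth. This removes any need to prove smoothness of $Y_a$, or to control its intersections with its conjugates, when $n/m\ge 2$. Two things then remain, and both are missing from your proposal.
\begin{itemize}
\item[(i)] Excluding three sheets through a point. One must show that the incidence $\{(p,\sigma):\sigma(p)=x\sigma(p)=y\sigma(p)\}$ has codimension $2$. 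Equivalently, $\sigma\mapsto(\sigma(p)-x\sigma(p),\sigma(p)-y\sigma(p))$ must have rank $2$ at general $p$ and rank $\ge 1$ at every $p$ (Lemma \ref{lem: alpha rk}). This rests on basepoint-freeness of the isotypic pieces $H^0(\tilde C,\ms O(\tilde D))_\chi$: by Riemann--Roch when $\deg D\ge 2g$, and by a Clifford-type argument for $K_C$ (Lemma \ref{lem: bpf}).
\item[(ii)] Transversality, i.e.\ that $\sigma-x\sigma$ has only simple zeros. Bertini gives this only away from basepoints of the relevant variant system. For $D=K_C$ such basepoints really occur: when $x$ has order $2$ and $\tilde C$ is hyperelliptic with hyperelliptic divisor an $\langle x\rangle$-orbit. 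At such a point $\sigma-x\sigma$ vanishes for every $\sigma$, so the two sheets always meet there. One must then show the basepoint is simple to rule out tacnodes (Lemma \ref{lem: bpf}(b)).
\end{itemize}
Without (i) and (ii), condition (3) is not established, and the $K_C$ case you flag is exactly where a naive genericity argument breaks down.
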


\begin{proof}
    First of all, note that tensoring by any $\ms M \in \Pic(C)$ defines an isomorphism $\check{M}_{n, \ms L}^D \cong \check{M}_{n, \ms L \otimes \ms M^{\otimes n}}^D$ over the Hitchin base; similarly, dualizing defines an isomorphism $\check{M}_{n, \ms L}^D \cong \check{M}^D_{n, \ms L^\vee}$ over the Hitchin base. Thus, we reduce to the case $\ms L = \ms O(-\frac{n(n-1)}{2} D)$, which as described above is a compactified Prym fibration.
    \\
    \\
    For condition (1), start by recalling that the open subvariety of $\check{M}_{n, \ms L}^D(C)$ of stable Higgs bundles is smooth. (This follows from the $\GL$ result \cite[Proposition 7.4]{Nitsure} by Remark \ref{rmk: etale cover}.) Now note that $M$ is contained in this locus, as the spectral curve of a direct sum of Higgs bundles breaks into a union of the spectral curves of the summands and is thus not integral.
    
    Next, recall from \cite[(29)]{MS} that $M^\gamma \to B_\gamma$ can be described as a $\BZ/m\BZ$-Galois quotient of an open subset of a relative Hitchin fibration $M(\pi) \to A(\pi)$, where $m := \mathrm{ord}_\Gamma(\gamma)$. Note that there is an \'etale cover $M(\pi) \times \J^0(C) \times H^0(C, \ms O(D)) \to M_{n/m, \deg \ms L}^{D'}(C')$ over the isomorphism $A(\pi) \times H^0(C, \ms O(D)) \to A(C')$, where $h_{C'}: M_{n/m, \deg \ms L}^{D'}(C') \to A(C')$ is the $D'$-twisted $\GL_{n/m}$ Hitchin fibration for the \'etale cover $C' \to C$ corresponding to $\gamma$. Since $A(\pi) \cong \BA^{\dim}$ with a linear action of $\BZ/m\BZ$, the quotient is again $\BA^{\dim}$, and in particular the open subset $B_\gamma$ is smooth. Furthermore, since $h_{C'}$ is flat \cite[Proposition 2.4.6]{dC-SL}, we conclude that the same is true of $M(\pi) \to A(\pi)$; in particular, the fiber dimension of its finite quotient $M^\gamma \to B_\gamma$ is constant, and so by miracle flatness, this morphism is flat.
    \\
    \\
    For condition (2), we note first that $M \to B$ is a weak abelian fibration by \cite[Theorem 4.8.1]{dC-SL} (which is stated for $\deg D > 2g-2$, but the proof holds for $D = K_C$). Then, as described in \cite[\S 4.9]{dC-SL}, from the characterization of the endoscopic loci in terms of $\ker(\J^0(C) \to \J^0(\tilde X_b)) = \pi_0(\Prym(X_b))^\vee$ (see Remark \ref{rmk: endoscopic locus defs} and \cite[Theorem 1.1(1)]{HP}), both statements follow from \cite[Proposition 7.2.3]{Ngo-fundamental}.
    \\
    \\
    Condition (3) may be well-known, at least in the case $\deg D \ge 2g$, but for lack of a reference, we provide a proof. First, any $\gamma \in \Gamma \cong (\BZ/n\BZ)^{2g}$ is contained in the subgroup generated by some $\beta \in \Gamma$ of order $n$, thus $M^\beta \subset M^\gamma$ and so $B_\beta \subset B_\gamma$. So without loss of generality one may assume $\gamma$ is of order $n$. Then if $\tilde C \to C$ is the \'etale degree $n$ cover trivializing $\gamma \in \J^0(C)[n]$, and $\tilde D$ the pullback of $D$, recall (e.g. from \cite[\S 1.5, Lemma 3.4]{MS}) that there is a $\BZ/n\BZ$-quotient map $H^0(\tilde C, \ms O_{\tilde C}(\tilde D))_{var} \to B_\gamma$, where $H^0(\tilde C, \ms O_{\tilde C}(\tilde D))_{var}$ is the sum of the nontrivial eigenspaces of the $\Gal(\tilde C/C)$ action on $H^0(\tilde C, \ms O_{\tilde C}(\tilde D))$; on the level of spectral curves, for each $\sigma \in H^0(\tilde C, \ms O_{\tilde C}(\tilde D))_{var}$, the spectral curve over the image is given by descending the union of the spectral curves $V(t-\sigma) \subset |\ms O_{\tilde C}(\tilde D)|$ of the Galois conjugates of $\sigma$. It thus suffices to check that for the general $\sigma \in H^0(\tilde C, \ms O_{\tilde C}(\tilde D))_{var}$, there is no $p \in \tilde C$ such that three distinct Galois conjugates of $\sigma$ take the same value on $p$, and moreover, when two branches $V(t-\sigma)$ and $V(t - x \cdot \sigma)$ meet, they do so transversely.
    % Furthermore, since the set of such $\sigma$ is clearly invariant under translation by $H^0(\tilde C, \ms O_{\tilde C}(\tilde D))_{fix}$, it suffices to prove that the subset of such $\sigma$ is nonempty in $H^0(\tilde C, \ms O_{\tilde C}(\tilde D))$.

    To check that no more than two branches meet at a point (when $n > 2$): trivialize $\ms O_{\tilde C}(\tilde D)$ over an open subset $U \subset \tilde C$, and choose two disjoint elements $x,y \ne 1 \in \Gal(\tilde C/C)$. We define a morphism
    \begin{equation} \label{eq: bpf map}
        \alpha: U \times H^0(\tilde C, \ms O_{\tilde C}(\tilde D))_{var} \to \BA^2, (p, \sigma) \mapsto (\sigma(p) - x \cdot \sigma(p), \sigma(p) - y \cdot \sigma(p)).
    \end{equation}
    From Lemma \ref{lem: alpha rk} we have $\dim \alpha^{-1}(0,0) = h^0(\tilde C, \ms O(\tilde D))_{var} - 1$, thus $\alpha^{-1}(0,0)$ does not dominate $H^0(\tilde C, \ms O(\tilde D))_{var}$, and we can find an open subset $V_{U,x,y} \subset H^0(\tilde C, \ms O(\tilde D))_{var}$ such that for any section $\sigma \in V_{U,x,y}$, elements of the set $\{\sigma, x \cdot \sigma, y \cdot \sigma\}$ do not all take the same value on any point $p \in U$. Taking the intersection over the finitely many choices of $x,y \in \Gal(\tilde C/C)$, as well as over a finite cover $C = \cup_i U_i$, we conclude that for general $\sigma$, the union $\cup_{x \in \Gal(\tilde C/C)} V(t - x \cdot \sigma)$ consists of points where no more than two branches meet.

    It remains to check that for general $\sigma$ and $x \in \Gal(\tilde C/C)$, the intersection $V(t - \sigma) \cap V(t - x \cdot \sigma)$ is transverse, or in other words, the zeroes of $\sigma - x \cdot \sigma$ are simple. Let $G' := \langle x \rangle \subset \Gal(\tilde C/C)$, and consider $\varpi: \tilde C \to \tilde C/G'$. By definition $(1-x)$ acts invertibly on $H^0(\tilde C, \ms O(\tilde D))_{var}$, where now we mean the variant subspace with respect to the action of $G'$ on $\tilde C$ over $\tilde C/G'$. Then it suffices to check that the general element of $H^0(\tilde C, \ms O(\tilde D))_{var}$ has simple zeroes. By Bertini's theorem \cite[Example 5.2.14]{PosinAG}, the general element of
    $H^0(\tilde C, \ms O(\tilde D))_{var}$ has only simple zeroes away from the basepoints, and since the finitely many basepoints are simple by Lemma \ref{lem: bpf}(b), the general element has only simple zeroes on them as well.
\end{proof}

\begin{lem} \label{lem: bpf}
    Fix a divisor $D$ on $C$ and an \'etale $\BZ/m\BZ$-cover $\varpi: C' \to C$. Let $D'$ be the pullback of $D$ to $C'$.\\
    (a) If $\deg D \ge 2g$, then for any character $\chi$ of $\BZ/m\BZ$, the component $H^0(C', \ms O_{C'}(D'))_\chi$ is basepoint free.\\
    (b) If $D = K_C$, then $p \in C'$ is not a basepoint of $H^0(C', \ms O_{C'}(D'))_{var}$ unless $m = 2$ and $C'$ is hyperelliptic with hyperelliptic divisor given by the $\BZ/2\BZ$-orbit of $p$. In this case $p$ is a simple basepoint.\\
    % (c) If $m \ne 1,2,4$, then $C'$ is not hyperelliptic.
\end{lem}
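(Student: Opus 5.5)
The plan is to decompose the pushforward $\varpi_* \ms O_{C'}$ into isotypic line bundles and reduce both statements to Riemann--Roch on $C$. Since $\varpi$ is an \'etale cyclic cover, we have $\varpi_* \ms O_{C'} = \oplus_{\chi} L_\chi$, where $L_\chi \in \J^0(C)[m]$ is the torsion line bundle attached to $\chi$ and $L_1 = \ms O_C$. The projection formula then gives
$$H^0(C', \ms O_{C'}(D'))_\chi = H^0(C, \ms O_C(D) \otimes L_\chi).$$
A point $p \in C'$ is a basepoint of the $\chi$-component exactly when every section in $H^0(C, \ms O_C(D)\otimes L_\chi)$ vanishes at $q := \varpi(p)$. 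Indeed, the $\chi$-isotypic sections are the pullbacks of these sections, and $\varpi$ is \'etale, so vanishing at $p$ is equivalent to vanishing of the downstairs section at $q$; this also shows that vanishing orders agree.

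For (a), we use that $\deg(\ms O_C(D)\otimes L_\chi) = \deg D \ge 2g$. A line bundle of degree $\ge 2g$ on $C$ is basepoint free, since $h^1(\ms O(D) \otimes L_\chi(-q)) = 0$ for degree reasons ($\deg \ge 2g-1 > 2g-2$). Riemann--Roch then gives $h^0(\ms O(D) \otimes L_\chi(-q)) = h^0(\ms O(D) \otimes L_\chi) - 1$, which is exactly basepoint freeness at $q$.

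For (b), the variant part is $\oplus_{\chi \ne 1} H^0(C, K_C \otimes L_\chi)$, and $p$ is a basepoint of the sum iff $q$ is a basepoint of every $K_C \otimes L_\chi$ with $\chi \ne 1$. For nontrivial $L := L_\chi$, Riemann--Roch gives $h^0(K_C \otimes L) = g-1$, because $h^0(L^{-1}) = 0$. Similarly $h^0(K_C \otimes L(-q)) = g-2 + h^0(L^{-1}(q))$. So $q$ is a basepoint iff $h^0(L^{-1}(q)) = 1$, i.e.\ iff $L^{-1}(q) \cong \ms O(q')$ for some point $q'$, necessarily $q' \ne q$. That condition says $L \cong \ms O(q - q')$.

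The main obstacle is to show that this can happen for all nontrivial $\chi$ simultaneously only when $m=2$ with the stated hyperelliptic structure. I would argue as follows. Taking $\chi$ a generator, $L^m \cong \ms O$ gives $mq \sim mq'$, so $C$ carries a degree $m$ map to $\BP^1$ totally ramified at $q$ and $q'$. Applying the same condition to $L^2$ (when $m > 2$) gives $L^2 \cong \ms O(q - q'')$, hence $q' + q' \sim q + q''$ with $q'' \ne q$. For $g \ge 2$ this forces either $q'' = q'$ and $q = q'$, a contradiction, or a $g^1_2$ containing $2q'$ and $q + q''$. A further comparison with $L^{-1}$, which must also be of the form $\ms O(q - q''')$, should then show $q \sim q'$, a contradiction. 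I expect the case bookkeeping here to be the delicate step; alternatively, one could translate the condition to $C'$ and use that $\varpi^* K_C = K_{C'}$, so that $p$ is a basepoint of a codimension-$g$ subspace of $H^0(K_{C'})$. For $m=2$ we get $L \cong \ms O(q - q')$ with $2q \sim 2q'$, making $C$ hyperelliptic with $q, q'$ Weierstrass points. We must then check that this corresponds to $C'$ being hyperelliptic with $\varpi^{-1}(q)$ as a hyperelliptic divisor; by Riemann--Hurwitz, $g(C') = 2g-1$.

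Finally, for simplicity of the basepoint: the base locus of $|K_C \otimes L|$ at $q$ has multiplicity one, since $h^0(K_C \otimes L(-2q)) = g-3 + h^0(L^{-1}(2q))$, and $L^{-1}(2q) = \ms O(q + q')$ has $h^0 = 1$ when $q' \ne q$ and $q + q'$ is not a $g^1_2$. Because $\varpi$ is \'etale, multiplicity one at $q$ gives simplicity of $p$ upstairs.
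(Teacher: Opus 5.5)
Your part (a) is the paper's argument. For (b) you take a genuinely different route.

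\textbf{The paper's route.} The paper stays on $C'$. If $p$ is a basepoint of the variant part, comparing invariant and variant dimensions gives $h^0(C',\omega_{C'})-h^0(C',\omega_{C'}(-O_p))=1$. Equivalently, by Riemann--Roch, $h^0(C',\ms O(O_p))=m$. Clifford's theorem then forces $m=2$, with $C'$ hyperelliptic and $O_p$ in the $g^1_2$, in a single step. Simplicity is obtained by descending the hyperelliptic involution to $C$ and comparing $h^0$ of $\omega_{C'}(-2O_p)$ and $\omega_C(-2\varpi(p))$.

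\textbf{Your route.} You work on $C$ one eigen-piece at a time, obtain the criterion $L_\chi\cong\ms O(q-q'_\chi)$, and then use the group structure of the $L_\chi$. The two arguments are closely tied. Your criterion says exactly that every nontrivial eigen-piece of $H^0(C',\ms O(O_p))=\bigoplus_\chi H^0(C,\ms O(q)\otimes L_\chi)$ is one-dimensional, i.e.\ $h^0(O_p)=m$. So you could also finish by citing Clifford. Your route gives an explicit description of the basepoints: $q$ is a Weierstrass point and $L\cong\ms O(q-q')$ with $q'$ another Weierstrass point. It also gives a clean simplicity check. The paper's route is shorter and lands directly on $C'$, which is where the conclusion is stated.

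\textbf{Unfinished steps.} Three steps are left open; all close easily.

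First, for $m>2$ you stop at ``should then show'', but the argument does go through. The relation $q+q''\sim 2q'$ with $q\neq q'$ makes $|2q'|$ the $g^1_2$. So $q'$ is a Weierstrass point and $q''=\sigma_C(q)$, where $\sigma_C$ is the hyperelliptic involution. Since $q''\neq q$ (because $L^2\not\cong\ms O$), $q$ is not Weierstrass, so $h^0(\ms O_C(2q))=1$. The condition for $L^{-1}$ then gives $2q\sim q'+q'''$, hence $2q=q'+q'''$ as divisors and $q'=q$, a contradiction.

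Second, for $m=2$ you have only shown that $C$ is hyperelliptic, whereas the lemma asserts that $C'$ is hyperelliptic with $O_p$ a hyperelliptic divisor. You leave this as ``must then check''. The fix is one line: $\varpi^*L\cong\ms O_{C'}$, so $\varpi^{-1}(q)\sim\varpi^{-1}(q')$. These are distinct, disjoint effective divisors of degree $2$, so $h^0(C',\ms O(O_p))\ge 2$. Since $g(C')=2g-1\ge 3$, $C'$ is hyperelliptic with $O_p$ in its $g^1_2$.

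Third, in the simplicity step you should state why $q+q'$ is not a $g^1_2$. This holds because $q$ is a Weierstrass point, so $\sigma_C(q)=q\neq q'$.
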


\begin{proof}
    (a) Suppose $p$ is a basepoint of $H^0(C', \ms O_{C'}( D'))_\chi$. Then the same is true of the $\BZ/m\BZ$-orbit $O_p$, so $H^0(C', \ms O(D'))_\chi = H^0(C', \ms O(D' - O_p))_\chi$. Using the identifications of \cite[Remark 1.6]{MS}, this means that $H^0(C, \ms O(D) \otimes L_\chi) = H^0(C, \ms O(D-\varpi(p))\otimes L_\chi)$ for a suitable torsion line bundle $L_\chi \in \J^0(C)$. However, this is impossible by Riemann-Roch (since by the condition on $\deg D$, there are no nontrivial $h^1$ terms).
    \\
    (b) Suppose $p \in C'$ is a basepoint of $H^0(C', \omega_{C'})_{var}$. In this case
    \begin{multline*}
        h^0(C',\omega_{C'}) - h^0(C, \omega_C) = h^0(C', \omega_{C'})_{var} = \\ h^0(C', \omega_{C'}(-O_p))_{var}  = h^0(C', \omega_{C'}(-O_p)) - h^0(C, \omega_C(-\varpi(p)))
    \end{multline*}
    $$\implies 1 = h^0(C, \omega_C) - h^0(C, \omega_C(-\varpi(p))) = h^0(C', \omega_{C'}) - h^0(C', \omega_{C'}(-O_p)).$$
    By Clifford's theorem, this is impossible unless $\deg O_p = 2$ (i.e., $m=2$), $C'$ is hyperelliptic, and $O_p$ is the hyperelliptic divisor.

    Suppose this is the case, and let $\sigma$ be the hyperelliptic involution. Then $\sigma$ commutes with all automorphisms of $C'$, in particular $\Gal(C'/C)$, and so defines an involution $\sigma_C$ of $C$. From the induced map $C'/\sigma = \BP^1 \to C/\sigma_C$ we see that $C/\sigma_C \cong \BP^1$, and $\sigma_C$ is a hyperelliptic involution on $C$. If $O_p$ is a hyperelliptic divisor, then we must have $O_p = p + \sigma(p)$, thus $\varpi(p)$ is a fixed point of $\sigma_C$, and $2\varpi(p)$ is a hyperelliptic divisor on $C$. Then
    $$h^0(C, \omega_C) - h^0(C, \omega_C(-2\varpi(p)) = 1$$
    whereas
    $$h^0(C', \omega_{C'}) - h^0(C', \omega_{C'}(-2O_p)) = 2$$
    and so $h^0(C', \omega_{C'})_{var} > h^0(C', \omega_{C'}(-2O_p))_{var} = h^0(C', \omega_{C'}(-2p))_{var}$, i.e., $p$ is a basepoint of order 1.
    % \\
    % (c) If $C'$ is hyperelliptic, the hyperelliptic involution $\sigma$ commutes with $\Gal(C'/C) = \BZ/m\BZ$. Then the $2g_{\tilde C} + 2 = 2m(g-1)+4$ fixed points of $\sigma$ break into a union of (free) $\BZ/m\BZ$-orbits, thus $4/m \in \BZ$.
\end{proof}

\begin{lem} \label{lem: alpha rk}
    In the situation of Proposition \ref{prop: SL Higgs is good}, if $n \ge 3$, then the morphism $\alpha$ of (\ref{eq: bpf map}) satisfies
    $$\dim \alpha^{-1}(0,0) = h^0(\tilde C, \ms O(\tilde D))_{var} - 1.$$
\end{lem}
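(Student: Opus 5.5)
The plan is to compute $\alpha^{-1}(0,0)$ fiberwise over $U$. For fixed $p \in U$ the map
$$\alpha_p: H^0(\tilde C, \ms O(\tilde D))_{var} \to \BA^2,\quad \sigma \mapsto \bigl((1-x)\sigma(p), (1-y)\sigma(p)\bigr)$$
is linear. So $\alpha^{-1}(0,0) \cap (\{p\} \times H^0_{var})$ is the linear subspace $\ker \alpha_p$. If $\alpha_p$ is surjective for every $p \in U$, then $\alpha^{-1}(0,0)$ is a vector bundle of rank $h^0_{var} - 2$ over the curve $U$. This gives dimension $h^0_{var} - 1$. Upper semicontinuity gives the bound in any case, so the real content is showing that $\alpha_p$ has rank $2$ for every $p$ (or at least off finitely many points, with rank $\ge 1$ at those).

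\textbf{Rewriting in terms of evaluation.} Write $(1-x)\sigma(p) = \sigma(p) - \sigma(x^{-1}p)$, interpreting $x\cdot\sigma$ via the Galois action and the trivialization. Then $\alpha_p$ factors through the evaluation map
$$\mathrm{ev}: H^0(\tilde C, \ms O(\tilde D))_{var} \to \bigoplus_{q \in O_p} \ms O(\tilde D)|_q \cong \BC^{n},$$
where $O_p$ is the $\Gal(\tilde C/C)$-orbit of $p$ (free of size $n$, since the cover is \'etale). The target splits into character spaces: $\BC^n = \oplus_\chi \BC_\chi$, and $\mathrm{ev}$ is equivariant. The variant part therefore maps into $\oplus_{\chi \ne 1} \BC_\chi$, of dimension $n-1$. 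The two functionals $\sigma \mapsto \sigma(p) - \sigma(x^{-1}p)$ and $\sigma \mapsto \sigma(p) - \sigma(y^{-1}p)$ are the vectors $e_p - e_{x^{-1}p}$ and $e_p - e_{y^{-1}p}$. These are linearly independent, since $1, x^{-1}, y^{-1}$ are distinct, and both lie in the sum-zero hyperplane, which is exactly $\oplus_{\chi\ne1}\BC_\chi$. Here I use $n \ge 3$ so that distinct $x,y \ne 1$ exist. So it suffices that $\mathrm{ev}$ restricted to $H^0_{var}$ surjects onto $\oplus_{\chi \neq 1}\BC_\chi$, i.e.\ that each $H^0(\tilde C,\ms O(\tilde D))_\chi$, $\chi \ne 1$, does not have $p$ as a basepoint.

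\textbf{Basepoint freeness.} For $\deg D \ge 2g$ this is exactly Lemma \ref{lem: bpf}(a). For $D = K_C$, I would run the Riemann–Roch argument of Lemma \ref{lem: bpf}(a) character by character. Under the identification $H^0(\tilde C, \omega_{\tilde C})_\chi = H^0(C, \omega_C \otimes L_\chi)$ with $L_\chi$ a nontrivial torsion line bundle, the claim is $h^0(\omega_C\otimes L_\chi(-q)) = g-2 = h^0(\omega_C \otimes L_\chi) - 1$. By Serre duality this reduces to $h^0(L_\chi^{-1}(q)) = 0$. That holds unless $L_\chi^{-1}(q)$ is effective of degree $1$, i.e.\ $L_\chi \cong \ms O(q - q')$ with $q' \ne q$, which is possible only if $C$ is hyperelliptic-type (a degree-$m$ map to $\BP^1$ with special fibers). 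This step is the main obstacle: in the exceptional hyperelliptic/$m=2$ situation flagged in Lemma \ref{lem: bpf}(b), $p$ may be a basepoint.

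\textbf{Handling the exceptional points.} Exceptions occur only at finitely many $p$: such $q$ satisfy $mq \sim mq'$, which cuts out finitely many points on $C$ for fixed $L_\chi$. At those points I would show only that $\alpha_p$ has rank $\ge 1$, using that at most one character fails there. Over these finitely many $p$ the fiber then has dimension $\le h^0_{var}-1$, which does not raise the total dimension. The conclusion is $\dim\alpha^{-1}(0,0) = h^0_{var} - 1$. The lower bound is automatic, since the generic fiber over $U$ is exactly $h^0_{var}-2$ dimensional and $U$ is a curve.
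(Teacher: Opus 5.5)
Your reduction matches the paper in substance. Factoring $\alpha_p$ through evaluation on the orbit $O_p$, and noting that $e_p-e_{x^{-1}p}$ and $e_p-e_{y^{-1}p}$ stay independent on $\bigoplus_{\chi\neq 1}\BC_\chi$, is the same as the paper's observation that $1-x$ and $1-y$ are linearly independent functions on $\Gal(\tilde C/C)^\vee$. The case $\deg D\ge 2g$ then follows from Lemma~\ref{lem: bpf}(a), exactly as in the paper. Your treatment of general $p$ when $D=K_C$ is also fine. The clean reason only finitely many $q$ are bad is that each $|\omega_C\otimes L_\chi|$ with $\chi\neq 1$ is a nonempty linear system ($h^0=g-1\ge 1$), hence has finitely many base points. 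The condition ``$mq\sim mq'$'' does not by itself cut out a finite set.

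The gap is at the exceptional points. Rank $\ge 1$ there is not optional: if $\alpha_p=0$ for a single $p$, then $\{p\}\times H^0(\tilde C,\ms O(\tilde D))_{var}\subset\alpha^{-1}(0,0)$, and the dimension jumps to $h^0(\tilde C,\ms O(\tilde D))_{var}$. Your justification, ``at most one character fails there,'' is unproved and false. By your own Riemann--Roch computation, $p$ is a base point of the $\chi$-piece whenever $L_\chi\cong\ms O(q-q')$ for some $q'$. This is not confined to the hyperelliptic/$m=2$ case of Lemma~\ref{lem: bpf}(b), which concerns base points of the whole variant part, not of individual eigenspaces.

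Here is a counterexample within the lemma's hypotheses ($D=K_C$, $n=5$). Take $C\colon w^2=z^5+1$ of genus $2$ and $q=(0,1)$. Then $\mathrm{div}(w-1)=5q-5\infty$, so $\gamma=\ms O(q-\infty)$ has order $5$. Since $\omega_C\cong\ms O(2\infty)$, we get $\omega_C\otimes\gamma\cong\ms O(q+\infty)$ and $\omega_C\otimes\gamma^2\cong\ms O(2q)$, each with $h^0=1$ because $q$ is not a Weierstrass point. So two of the four nontrivial characters fail at every point over $q$.

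There is a second point you did not address. Even a surviving character only helps if it is nontrivial on $G'=\langle x,y\rangle$; otherwise $(1-\chi(x),1-\chi(y))=(0,0)$. What you actually need is that $p$ is not a base point of the $G'$-variant part of $H^0(\tilde C,\omega_{\tilde C})$. The paper obtains exactly this by applying Lemma~\ref{lem: bpf}(b), i.e.\ Clifford's theorem, to the cyclic \'etale cover $\tilde C\to\tilde C/G'$, whose degree is $|G'|\ge 3$.
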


\begin{proof}
    We begin with the more straightforward case $\deg D \ge 2g$. We claim that for each $p \in U$, the linear map $\alpha(p,-)$ is of full rank. Indeed, since $\{x,y,1\}$ are distinct, the characters $1-x, 1-y$ of $\Gal(\tilde C/C)^\vee$ are linearly independent, i.e., for any $(\alpha_1, \alpha_2) \ne (0,0) \in \BC^2$ there is some $\chi \in \Gal(\tilde C/C)^\vee$ with $\alpha_1(1-x)(\chi) + \alpha_2(1-y)(\chi) \ne 0$. By Lemma \ref{lem: bpf}(a) there is some $\sigma \in H^0(\tilde C, \ms O(\tilde D))_\chi$ with $\sigma(p) \ne 0$ and so $\alpha(p, \sigma) \notin \langle (-\alpha_2, \alpha_1) \rangle.$ Since this holds for any choice of $(\alpha_1,\alpha_2) \ne (0,0)$, we conclude that $\textrm{im}(\alpha(p,-))$ is not contained in any line, i.e., $\alpha(p,-)$ is of full rank 2. Thus in particular $\alpha^{-1}(0,0) \subset U \times H^0(\tilde C, \ms O(\tilde D))_{var}$ is of codimension 2.

    Now, suppose that $D = K_C$. We claim that $\alpha(p,-)$ is of rank 2 for general $p \in U$ and of rank $\ge 1$ for all $p$; this is still enough to conclude that $\alpha^{-1}(0,0)$ is of codimension 2. For general $p \in U$, the image $\varpi(p) \in C$ is not a basepoint of $\omega_C \otimes L_\chi$ for any $L_\chi \in \J^0(C)[n]$, thus as in \ref{lem: bpf}(a) we see that no $H^0(\tilde C, \ms O(\tilde D))_\chi$ has a basepoint at $p$, and the argument of the previous paragraph shows that $\alpha(p,-)$ is of rank 2. To show that $\alpha(p,-)$ is of rank $\ge 1$, i.e., nonzero, for any $p \in U$: let $G' := \langle x,y \rangle \subset \Gal(\tilde C/C) = \BZ/n\BZ$, and consider $\varpi: \tilde C \to \tilde C/G' $. As $\{1,x,y\}$ are distinct, we have $\deg \varpi \ge 3$, and so by Lemma \ref{lem: bpf}(b), there is $\sigma \in H^0(\tilde C, \omega_{\tilde C})_{var}$ (variant with respect to $G'$) with $\sigma(p) \ne 0$; by definition of $G'$, at least one of $(1-x) \sigma(p), (1-y)\sigma(p) \ne 0$, and so $\alpha(p,\sigma) \ne 0$, as required.
\end{proof}

\subsection{Poincar\'e sheaves} \label{subsec: Poincare}
Classically, given an abelian scheme $A \to B$, there is a dual abelian scheme $A^\vee \to B$ and a Poincar\'e line bundle $\mc P$ on $A \times_B A^\vee$ such that Fourier-Mukai transform by $\mc P$ defines a derived equivalence $D^b\Coh(A) \cong D^b\Coh(A^\vee)$. Less classically, it was known that the Leray (or equivalently, perverse) filtration of $H^*(A, \BC)$ could be recovered from cohomological Fourier-Mukai transform by $\ch(\mc P)$ and $\ch(\mc P^{-1})$. (See \cite[\S 1.2]{MSY} for a more detailed discussion and references.)

Going beyond the smooth case, Arinkin \cite{Arinkin} developed a version of this theory for the compactified Jacobian $\oJ^0(X/B) \to B$ for a family of curves $X/B$ as in \S \ref{subsec: compactified Prym}. In particular, he defined a Poincar\'e sheaf $\mc P_{J/J}$ on $\oJ^0(X/B) \times_B \oJ^0(X/B)$ as the pushforward of a naturally defined (normalized) Poincar\'e line bundle on the locus $\J^0(X/B) \times_B \oJ^0(X/B) \cup \oJ^0(X/B) \times_B \J^0(X/B)$ whose complement is of codimension $\ge 2$. The resulting sheaf is maximal Cohen-Macaulay, flat over each factor $\oJ^0(X/B)$, and symmetric in the two factors; moreover, it defines a Fourier-Mukai autoequivalence of $D^b\Coh(\oJ^0(X/B))$.

The following is similar but not identical to \cite[Theorem 4.7]{GS} (as we stay in the degree 0 case but allow more general compactified Prym fibrations):

\begin{prop} \label{prop: GS}
    Let $X \to C \times B$ be as described at the beginning of \S \ref{subsec: compactified Prym}, and assume that $\oPrym(X/B)$ is smooth.
    The restriction of the Arinkin Poincar\'e sheaf $\mc P_{J/J}$ to $\oJ^0(X/B) \times_B \oPrym(X/B)$ can be uniquely descended to a sheaf $\mc P$, flat over each factor of $[\oJ^0(X/B)/\J^0(C)] \times_B \oPrym(X/B) = M^\vee \times_B M$.
    Taking $\mc P^{-1} := R\mc Hom(\mc P^T, \pi_B^* \ms A)[\dim \pi]$, where $\ms A \in \Pic(B)$ is as defined in Lemma \ref{lem: oPrym is flat}, and $(-)^T$ denotes the pullback under the involution $M \times_B M^\vee \to M^\vee \times_B M$ switching the two factors, we have
    $$\mc P \circ \mc P^{-1} \cong \ms O_{\Delta_M}, \,\, \mc P^{-1} \circ \mc P \cong \ms O_{\Delta_{M^\vee}}.$$
\end{prop}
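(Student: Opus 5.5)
The plan is to construct $\mc P$ by exhibiting a $\J^0(C)$-equivariant structure on $\mc P_{J/J}|_{\oJ^0(X/B) \times_B M}$ for the action on the first factor. The inversion formulas should then follow from Arinkin's autoduality on $\oJ^0(X/B)$, transported through the étale cover of Remark \ref{rmk: etale cover}.

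\textbf{Step 1 (equivariance on the line bundle locus).} Let $a: \J^0(C) \times \oJ^0(X/B) \to \oJ^0(X/B)$ be the action $(\eta, \ms L) \mapsto c^*\eta \otimes \ms L$. I will first produce an isomorphism
$$(a \times \mathrm{id})^* \mc P_{J/J} \cong p_{23}^* \mc P_{J/J} \otimes (\mathrm{id} \times \Nm_{X/C})^* \mc P_C$$
on $\J^0(C) \times \oJ^0(X/B) \times_B \oJ^0(X/B)$, where $\mc P_C$ is the Poincaré bundle of $\J^0(C)$. This should first be done over the open locus $W$ where one of the two $\oJ^0$-factors is a line bundle, so that the Poincaré sheaf is the normalized line bundle $\det R\Gamma$. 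There the isomorphism is the usual biextension identity, namely the pairing of $c^*\eta$ with $\ms F$ equals the pairing of $\eta$ with $\Nm\ms F$, obtained from $\det(c_*(-))$ and the projection formula. Both sides are maximal Cohen--Macaulay, hence $S_2$, and the complement of $W$ has codimension $\ge 2$. So the isomorphism extends uniquely by $j_*$.

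Restricting to the second factor in $M$ kills the last factor, since $\Nm_{X/C}$ is trivial there. The cocycle condition for $\J^0(C) \times \J^0(C)$ can be checked on $W$, where it is the usual biextension compatibility, and then it extends. Normalizing along the zero section makes the equivariant structure canonical.

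\textbf{Step 2 (descent, uniqueness, flatness).} By Step 1, $\mc P_{J/J}|_{\oJ^0 \times_B M}$ descends to $\mc P$ on $M^\vee \times_B M$. Uniqueness of the descended sheaf amounts to uniqueness of the equivariant structure. Any two equivariant structures differ by a map $\J^0(C) \times \oJ^0 \times_B M \to \mathrm{Aut}(\mc P_{J/J}) = \ms O^*$; here the automorphism sheaf is $\ms O^*$ because $\mc P_{J/J}$ is a line bundle on $W$ and $S_2$. Since $\J^0(C)$ is proper and connected, this map is pulled back from $\oJ^0 \times_B M$, and it is trivial by normalization at $\eta = 0$. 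Flatness over each factor can be checked after the smooth cover $\oJ^0(X/B) \to M^\vee$, where it is Arinkin's flatness.

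\textbf{Step 3 (inversion).} To prove $\mc P \circ \mc P^{-1} \cong \ms O_{\Delta_M}$ and $\mc P^{-1}\circ\mc P \cong \ms O_{\Delta_{M^\vee}}$, I will pull everything back along the $\Gamma$-torsor $q: \J^0(C) \times M \to \oJ^0(X/B)$. On $(\J^0(C) \times M)^2$, the identity of Step 1 gives
$$(q \times q)^* \mc P_{J/J} \cong \mc P_{J/J}|_{M \times_B M} \boxtimes \mc P_C^{(n)},$$
where $\mc P_C^{(n)}$ is the pullback of $\mc P_C$ under multiplication by $n$ on one factor. Thus Arinkin's equivalence $\mc P_{J/J} \circ \mc P_{J/J}^{-1} \cong \ms O_\Delta$ factors as the tensor product of the $M$-part and the Fourier--Mukai equivalence on $\J^0(C)$ twisted by $[n]$. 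Here $\mc P_{J/J}^{-1}$ is Arinkin's $R\mc Hom(\mc P_{J/J}, \omega)[\dim]$. Quotienting by $\J^0(C)$ in the $M^\vee$ factor exactly absorbs the $\J^0(C)$-part, which leaves a $\Gamma$-gerbe. This yields the two identities after matching the dualizing sheaf of the relative Prym with $\pi_B^*\ms A[\dim \pi]$ via Lemma \ref{lem: oPrym is flat}.

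I expect Step 3 to be the main obstacle. The difficulty is in making the bookkeeping of the $\Gamma$-action precise: the $\J^0(C)[n]$-torsion of $\mc P_C^{(n)}$ has to match the $\Gamma$-stabilizers of $M^\vee$, and the twist by $\ms A$ has to match the relative dualizing sheaf correctly. If this bookkeeping becomes unwieldy, I would instead first verify the identities over the smooth locus $U$ using Brochard's duality for commutative group stacks. I would then extend using that both sides are Cohen--Macaulay of the expected support and agree away from codimension $\ge 2$, following Arinkin's own argument.
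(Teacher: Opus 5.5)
Your Steps 1--2 are sound, and they take a cleaner route to the descent than the paper's Proposition \ref{prop: GS descent}. The paper works one $L\in\J^0(C)$ at a time via \cite[Lemma 6.5]{Arinkin} and \cite[Proposition 4.4]{GS}. It then globalizes with a relative-stability/moduli argument and computes the leftover line bundle along a section. Your family identity on $W$, extended by the $S_2$ property and rigidified using properness and connectedness of $\J^0(C)$, does all of this at once.

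\textbf{The gap is Step 3.} This is where the actual content of the proposition lies, and the factorization you invoke is not available in the form you need. On $(\J^0(C)\times M)\times_B(\J^0(C)\times M)$, neither tensor factor is an invertible kernel. With $[n]$ on the target factor, $\Phi_{(1\times[n])^*\mc P_C}=[n]^*\circ\Phi_{\mc P_C}$, and its composite with its adjoint is $[n]^*[n]_*=\bigoplus_{\gamma\in\J^0(C)[n]}t_\gamma^*$. Likewise, $\mc P_{J/J}|_{M\times_B M}$ is not an autoequivalence kernel on $M$, since $M$ is not self-dual.

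Nor does the $\Gamma\times\Gamma$-linearization of the pulled-back kernel split as a product of linearizations on the two factors. On $(1\times[n])^*\mc P_C$ the two translation actions of $\J^0(C)[n]$ commute only up to the Weil pairing. This is mirrored by the fact that $\mc P$ is not $\Gamma$-equivariant in its $M$-variable (see (\ref{eq: pullback of P}) and Corollary \ref{cor: eigenvalue of P}). So the $\J^0(C)$-part cannot be ``absorbed'' factor by factor.

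Moreover, the identities to prove are compositions over $M^\vee$ and over $M$, not over $\oJ^0(X/B)$. Restricting Arinkin's inversion formula to $M\times_B M$ creates an excess Koszul factor, an exterior algebra on a rank $g$ bundle along $\Delta_M$. Integrating over the $\J^0(C)$-torsor $\oJ^0(X/B)\to M^\vee$ creates a copy of $R\Gamma(\J^0(C),\ms O)$. Nothing in your sketch shows that these cancel.

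More fundamentally, even an isomorphism of kernels after pullback to $\J^0(C)\times M$ would not identify them on $M^\vee$. The action of the stabilizers must also be matched: generically this is the subgroup $\Gamma'\subset\Gamma$ acting trivially on $M$, and it is larger over the endoscopic loci. This is the same kind of ambiguity the paper flags in the remark after Proposition \ref{prop: smooth and singular Poincare}.

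The paper avoids this by arguing with functors rather than kernels:
\begin{enumerate}
\item $\Phi_{\mc P^T}$ is fully faithful by \cite[Proposition 4.5]{GS}.
\item Its left and right adjoints both have kernel $(\mc P^{-1})^T$, by Lemma \ref{lem: oPrym is flat}.
\item $D^b\Coh(M^\vee)$ is an orthogonal sum of indecomposable blocks indexed by characters of $\Gamma'$ (Lemma \ref{lem: indecomposability}).
\item The image of $\Phi_{\mc P^T}$ meets every block, by the stabilizer-eigenvalue computation (Corollary \ref{cor: eigenvalue of P} together with Proposition \ref{prop: smooth and singular Poincare}). This forces $\Phi_{\mc P^T}$ to be an equivalence.
\item Lemma \ref{lem: FM id} then converts the functor identities into the kernel identities.
\end{enumerate}
Your proposal has no substitute for this essential-surjectivity step.

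Your fallback does not repair the gap. Both sides are supported on a diagonal, and the discriminant $B\setminus U$ is typically a divisor. So agreement over $U$ holds only away from a codimension-one part of the support. In addition, $\mc P\circ\mc P^{-1}$ is not known a priori to be a Cohen--Macaulay sheaf, so no extension-from-codimension-two argument applies.
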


\begin{proof}
    For the first part, see \cite[\S 4.1]{GS} or Proposition \ref{prop: GS descent}.

    Now, we note that the left and right adjoints to the Fourier-Mukai transform $\Phi_{\mc P^T}: D^b\Coh(M) \to D^b\Coh(M^\vee)$ are Fourier-Mukai transforms with kernels given by the standard formulas $\mc P^T_L := R\mc Hom(\mc P, \pi_2^! \ms O_M)$ and $\mc P_R^T := R \mc Hom(\mc P, \pi_1^! \ms O_{M^\vee})$ in $D^b\Coh(M^\vee \times_B M)$. (This follows from the usual proof, see, e.g., \cite[Theorem 1.1]{Rizzardo}, using the theory of Grothendieck duality on Deligne-Mumford stacks, see \cite{Neeman}.) In particular, from Lemma \ref{lem: oPrym is flat}, we see that $\mc P_L^T \cong \mc P_R^T \cong (\mc P^{-1})^T$.

    From \cite[Proposition 4.5]{GS} we have $(\mc P^{-1})^T \circ \mc P^T \cong \ms O_{\Delta_M}$. Thus $\Phi_{\mc P^T}$ is fully faithful \cite[Corollary 1.23]{Huybrechts-FM}. Without loss of generality, we may assume that $B$ is connected; otherwise apply this argument to each component separately. Then, as described in Lemma \ref{lem: indecomposability}, we have an orthogonal direct sum decomposition of $D^b(M^\vee)$ into indecomposable components $D^b(M^\vee)_{\chi'}$ indexed by characters of the subgroup $\Gamma' \subset \Gamma$ that acts trivially on $M$. Choosing any point $b\in B$, since any $\gamma \in \Gamma'$ acts trivially on $\Prym(X_b)$, we see that $\gamma$ must pull back to the trivial line bundle on $X_b$, thus also the normalization $\tilde X_b$; in particular, from \cite[Theorem 1.1]{HP} we obtain a surjection $\pi_0(\Prym(X_b)) = \ker(\J^0(C) \to \J^0(\tilde X_b))^\vee \twoheadrightarrow (\Gamma')^\vee$. Then for each $\chi' \in (\Gamma')^\vee$, picking a point $x$ in the corresponding component of $\pi_0(\Prym(X_b))$, it follows from Corollary \ref{cor: eigenvalue of P} and Proposition \ref{prop: smooth and singular Poincare} that $\Phi_{\mc P^T}(\ms O_x) \ne 0 \in D^b\Coh(M^\vee)_{\chi'}$. Thus the image of $\Phi_{\mc P^T}$ meets all the subcategories $D^b\Coh(M^\vee)_{\chi'}$. Then, since the left and right adjoints of $\Phi_{\mc P^T}$ agree, it follows from the proof of \cite[Proposition 1.54]{Huybrechts-FM} that $\Phi_{\mc P^T}$ is an equivalence (since $\mc D_1'$ and $\mc D_2'$ give a decomposition of each $D^b\Coh(M^\vee)_{\chi'}$ with $\mc D_1' \cap D^b\Coh(M^\vee)_{\chi'} = D^b\Coh(M^\vee)_{\chi'}$ for each $\chi'$).
    Then the left and right adjoint of $\Phi_{\mc P^T}$ is the inverse, and so we have
    $$(\mc P^{-1})^T \circ \mc P^T \cong \ms O_{\Delta_M}, \,\,\, \mc P^T \circ (\mc P^{-1})^T \cong \ms O_{\Delta_{M^\vee}}$$
    by Lemma \ref{lem: FM id}, giving the transposes of the needed identities.
\end{proof}

\begin{lem} \label{lem: oPrym is flat}
    The fibrations $\pi: M \to B$ and $\pi^\vee: M^\vee \to B$ are syntomic (so in particular, flat and Gorenstein), and the relative canonical bundles $\omega_\pi$ and $\omega_{\pi^\vee}$ are the pullbacks of the same line bundle $\ms A := \det \pi_{X/B*} \omega_{X/B} \in \Pic(B)$ under $\pi$ and $\pi^\vee$, respectively.
\end{lem}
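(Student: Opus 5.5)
We deduce everything from the corresponding facts for the compactified Jacobian $\oJ^0(X/B) \to B$, using the cartesian square of Remark \ref{rmk: i is reg}. By \cite{AIK} (see Remark \ref{rmk: i is reg}), $\oJ^0(X/B)$ is Cohen--Macaulay. Since $X \to B$ is a family of integral locally planar curves, the fibers of $\oJ^0(X/B) \to B$ are l.c.i. (they are integral, locally complete intersections of dimension $g(X_b)$ by \cite{AIK}). The map is flat by miracle flatness, since $B$ is smooth, the total space is Cohen--Macaulay, and the fibers are equidimensional. Hence $\oJ^0(X/B) \to B$ is syntomic.

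By Remark \ref{rmk: i is reg}, the norm map $\Nm_{X/C}: \oJ^0(X/B) \to \J^0(C) \times B$ is flat. Its fiber $M = \oPrym(X/B)$ over the section $\{\ms O_C\} \times B$ is therefore cut out by a regular sequence of length $g$ in $\oJ^0(X/B)$, with normal bundle pulled back from the trivial bundle $T_{\ms O_C}\J^0(C) \otimes \ms O_B = H^1(C, \ms O_C) \otimes \ms O_B$.
\begin{itemize}
\item Flatness of $\pi$ follows from the flatness of the composite $M \to \J^0(C) \times B \to B$ restricted to the section. Equivalently, $M \to B$ is the base change of the flat map $\Nm$ along $B \to \J^0(C) \times B$, and $\Nm$ is a morphism over $B$ whose source is flat over $B$. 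So one checks flatness fiberwise via the local criterion: the fibers $M_b$ are regular codimension-$g$ slices of the l.c.i. fibers $\oJ^0(X_b)$, by miracle flatness applied to $\Nm_b$.
\item The fibers $M_b$ are then l.c.i., so $\pi$ is syntomic.
\item For $\pi^\vee$, note that $M \to M^\vee$ is finite étale, being the quotient by the finite group $\Gamma$. Syntomicity is étale local on the source, so $\pi^\vee$ is syntomic as well. In particular both maps are flat and Gorenstein.
\end{itemize}

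For the canonical bundles, the Arinkin/Esteves computation gives $\omega_{\oJ^0(X/B)/B} \cong p^*\det(\pi_{X/B*}\omega_{X/B}) = p^*\ms A$. The argument is as follows. The relative tangent bundle of $\J^0$ over the smooth locus is $R^1\pi_*\ms O_X \cong (\pi_*\omega_{X/B})^\vee$. Over $B$ the dualizing sheaf of the Cohen--Macaulay fibration is a line bundle that agrees with $p^*\ms A$ away from codimension $\geq 2$; this is the locus where the sheaf is locally free, which contains $\J^0$ plus the generic points of the boundary, since the boundary has codimension $\ge 2$ fiberwise by \cite{AIK}. By $S_2$ the two bundles agree everywhere. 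We can cite this from Arinkin \cite{Arinkin}.

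Next apply adjunction to the regular embedding $i: M \hookrightarrow \oJ^0(X/B)$, whose normal bundle is trivialized by the constant bundle $H^1(C,\ms O_C)$. This gives
\[
\omega_\pi \cong i^*\omega_{\oJ^0/B} \otimes \det N \cong \pi^*\ms A \otimes \det\bigl(H^1(C,\ms O_C)\otimes \ms O_M\bigr) \cong \pi^*\ms A,
\]
because $\det N$ is a trivial line bundle.

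Finally, for $\pi^\vee$: the dualizing sheaf is compatible with the étale cover $q: M \to M^\vee$, so $q^*\omega_{\pi^\vee} \cong \omega_\pi \cong \pi^*\ms A$. We must check that the $\Gamma$-linearization on $\pi^*\ms A$ induced from $\omega_{\pi^\vee}$ is the trivial one, that is, the one pulled back from $B$. The canonical linearization on $\omega_\pi$ comes from the $\Gamma$-action. Since $\Gamma$ acts by translations, this linearization is given by a character-valued function on $B$, which we expect to be trivial because translation by $\gamma$ acts trivially on top-degree relative differentials (it preserves the invariant translation-invariant forms on the smooth locus $\Prym$). Equivalently, we can work with $M^\vee = [\oJ^0(X/B)/\J^0(C)]$: the action of the connected group $\J^0(C)$ on $\omega_{\oJ^0/B} = p^*\ms A$ is necessarily trivial, and quotienting by $\J^0(C)$ changes $\omega$ by the trivial Lie-algebra determinant. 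This last compatibility of linearizations is the step I expect to be the main subtlety; the $\J^0(C)$-quotient description, exploiting connectedness of $\J^0(C)$, is the cleanest route.
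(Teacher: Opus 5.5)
Your proposal is correct in substance. For syntomicity it matches the paper up to packaging: you use base change of the flat norm map plus lci fibers, where the paper uses the \'etale cover $\J^0(C)\times M\to\oJ^0(X/B)$.

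For the canonical bundles you take a different route. The paper transfers Arinkin's $\omega_{\oJ^0(X/B)/B}\cong\pi_J^*\ms A$ to $M$ via that \'etale cover and the triviality of $\omega_{\J^0(C)}$. For $\pi^\vee$ it then argues fiber by fiber with the finite group $\Gamma$: $\ker(c_b^*)$ acts trivially, and on the dense open $\Prym(X_b)$ the quotient by $\Gamma/\ker(c_b^*)$ is an algebraic group. You instead get $\omega_\pi\cong\pi^*\ms A$ by adjunction along the regular embedding $M\hookrightarrow\oJ^0(X/B)$. Its normal bundle is pulled back via $\Nm_{X/C}$ from the trivial bundle $H^1(C,\ms O_C)\otimes\ms O_B$. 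You get $\omega_{\pi^\vee}$ by descent along the $\J^0(C)$-torsor $\oJ^0(X/B)\to M^\vee$, whose relative canonical bundle is equivariantly trivial because $\J^0(C)$ is commutative.

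Your route is global over $B$ and needs no fiberwise analysis. With a finite group, a linearization of a trivial bundle is a priori ambiguous up to locally constant character-type twists, and the paper must rule this out on the locus of line bundles. For $\J^0(C)$ the linearization is unique outright. The reason, however, is properness rather than connectedness; a torus is connected and has nontrivial characters. Two $\J^0(C)$-linearizations of $\pi_J^*\ms A$ differ by a unit on $\J^0(C)\times\oJ^0(X/B)$ that equals $1$ on $\{\ms O_C\}\times\oJ^0(X/B)$. Since $\J^0(C)$ is proper and geometrically integral, this unit is pulled back from $\oJ^0(X/B)$, hence is $1$. You should state this argument instead of ``we expect''.

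Separately, your sketch of $\omega_{\oJ^0(X/B)/B}\cong\pi_J^*\ms A$ is wrong as written. Already for a nodal curve, the non-locally-free locus is a divisor in the fiber. Over a base on which every curve is singular (a point, or the $U\to B_\gamma$ used later in the paper), it is a divisor in the total space, so no $S_2$-extension from $\J^0(X/B)$ is available. One has to work on a family where that locus has codimension $\ge 2$ in the total space (e.g.\ a versal one) and then use base change of the relative dualizing sheaf. Since you ultimately cite Arinkin for this, exactly as the paper does, it does not affect the proof.
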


\begin{proof}
    By \cite[Theorem 9]{AIK} the compactified Jacobian fibration $\pi_J: \oJ^0(X/B) \to B$ is flat, and the fibers are local complete intersections; in other words $\pi_J$ is syntomic \cite[Tag 01UF]{stacks-project}. Recall from Remark \ref{rmk: etale cover} the \'etale map
    $\J^0(C) \times \oPrym(X/B) \to \oJ^0(X/B)$ over $B$. It follows that $M = \oPrym(X/B)$ and $M^\vee = [M/\Gamma]$ are syntomic over $B$.

    As for the relative canonical bundles, recall from \cite[Corollary 9]{Arinkin-cohomology} that $\omega_{\oJ^0(X/B)/B}$ is pulled back from $\ms A \in \Pic(B)$. Using the \'etale map above and triviality of $\omega_{\J^0(C)}$, we see that the same is true of $\omega_\pi$. To check this for $\pi^\vee$, choose any point $b \in B$. Note that any element in the kernel of the pullback map $c_b^*: \Gamma \to \J^0(X_b)$ acts trivially on $\oPrym(X_b)$, hence on $\omega_{\oPrym(X_b)}$. Thus the $\Gamma$-equivariant structure is determined by the $\Gamma/\ker(c^*)$-equivariant structure on $\omega_{\oPrym(X_b)} \cong \ms O_{\oPrym(X_b)}$. We note that restricting to the locus of line bundles $\Prym(X_b)$, the quotient $[\Prym(X_b)/(\Gamma/\ker(c^*))]$ is an algebraic group, therefore has trivial canonical bundle. Since $\J^0(X_b) \subset \oJ^0(X_b)$ is a dense open subset by \cite[Theorem 9]{AIK}, via the \'etale map above the same is true for $\Prym(X_b) \subset \oPrym(X_b)$.
    Then since the $\Gamma$-equivariant structure on $\ms O_{\oPrym(X_b)}$ for $\omega_{[\oPrym(X_b)/\Gamma]}$ agrees with that for $\ms O_{[\oPrym(X_b)/\Gamma]}$ on a dense open subset, we conclude that the two agree, i.e., $\omega_{[\oPrym(X_b)/\Gamma]} \cong \ms O_{[\oPrym(X_b)/\Gamma]}$. Returning to the family over $B$, this suffices to show that the descent data for $(\pi^\vee)^* \ms A$ and $\omega_{\pi^\vee}$ are the same.
\end{proof}

\begin{lem} \label{lem: indecomposability}
    If $B$ is connected, and $\Gamma' \subset \Gamma$ is the subgroup acting trivially on $M$, then $D^b\Coh(M^\vee)$ splits as an orthogonal direct sum of indecomposable categories $D^b\Coh(M^\vee)_{\chi'}$ running over the characters $\chi'$ of $\Gamma'$, where $D^b\Coh(M^\vee)_{\chi'}$ is the bounded derived category of coherent sheaves on which $\Gamma'$ acts by $\chi'$.
\end{lem}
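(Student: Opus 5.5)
The proof has two parts: first produce the splitting from the group action, then show that each summand is indecomposable.

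\textbf{Splitting.} Since $\Gamma' \subset \Gamma$ acts trivially on $M$, every $\Gamma$-equivariant coherent sheaf $\mc F$ on $M$ carries a fibrewise action of $\Gamma'$ by automorphisms of $\mc F$. Concretely, for $\gamma \in \Gamma'$ the equivariant structure gives an isomorphism $\gamma^*\mc F = \mc F \xrightarrow{\sim} \mc F$. These automorphisms commute with all $\Gamma$-equivariant morphisms, because $\Gamma$ is abelian. Since $\Gamma'$ is finite abelian and we work over $\BC$, each such sheaf decomposes canonically into isotypic pieces $\mc F = \oplus_{\chi'} \mc F_{\chi'}$, using the projectors $e_{\chi'} = |\Gamma'|^{-1}\sum_{\gamma} \chi'(\gamma)^{-1}\gamma$. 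These projectors are $\Gamma$-equivariant and natural in $\mc F$.

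It follows that the abelian category $\Coh(M^\vee)$ is a product $\prod_{\chi'} \Coh(M^\vee)_{\chi'}$. Passing to bounded derived categories gives the orthogonal decomposition. Orthogonality holds because $\Hom$ and $\mathrm{Ext}$ between pieces with different characters vanish: the projectors are exact and natural, so they act on Ext groups compatibly. This settles the splitting.

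\textbf{Indecomposability.} We must show that each $D^b\Coh(M^\vee)_{\chi'}$ admits no nontrivial orthogonal decomposition. I would follow the standard argument for connected schemes, as in \cite[Proposition 3.10]{Huybrechts-FM}.

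First observe that $\Coh(M^\vee)_{\chi'}$ is equivalent to $\Coh([M/(\Gamma/\Gamma')])$ twisted by a $\Gamma'$-gerbe class. More simply, it suffices to use a single object with full support: the structure sheaf twisted by $\chi'$. To construct it, extend $\chi'$ to a character $\chi$ of $\Gamma$ (always possible for finite abelian groups) and take $\ms O_M \otimes \chi$, with the equivariant structure twisted by $\chi$. This object lies in the $\chi'$-piece. Tensoring with $\chi$ then gives an equivalence $D^b\Coh(M^\vee)_1 \cong D^b\Coh(M^\vee)_{\chi'}$, so it is enough to treat the trivial character.

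Now suppose $D^b\Coh(M^\vee)_1 = \mc D_1 \oplus \mc D_2$ orthogonally. The structure sheaf $\ms O_{M^\vee}$ lies in the $1$-piece. Its endomorphism ring is $H^0(M, \ms O)^\Gamma = H^0(B, \ms O_B)$, by properness and geometric connectedness of the fibres, or at least it has no nontrivial idempotents once $M$ is connected. Hence $\ms O_{M^\vee}$ is indecomposable and lies in, say, $\mc D_1$.

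For any closed point $x$ of $M$, consider the skyscraper sheaf $\ms O_{\Gamma x}$, equivariantized and projected to the $1$-piece. It receives a nonzero map from $\ms O_{M^\vee}$ and is indecomposable, since it is an irreducible equivariant sheaf on a single orbit. It therefore also lies in $\mc D_1$. Every object of the $1$-piece has nonzero morphisms to some such point sheaf, or is zero, so $\mc D_2 = 0$.

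\textbf{Main obstacle.} The delicate input is connectedness of $M$, or more precisely that $\Gamma$ permutes the connected components of $M$ transitively, so that $M^\vee$ is connected. I expect this to be the main obstacle. I would deduce it from connectedness of $B$, flatness of $\pi$, and the fact that $\Gamma$ acts transitively on $\pi_0$ of a general fibre, which holds because $\pi_0(\Prym(X_b))$ is dual to $\ker(\J^0(C)\to \J^0(\tilde X_b))$ by \cite[Theorem 1.1]{HP}. Connectedness of $M^\vee$ then yields the absence of idempotents used above.
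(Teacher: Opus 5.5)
Your splitting argument and your reduction to the trivial character, by tensoring with an extension $\chi$ of $\chi'$, are both fine. The input you flag as the main obstacle (transitivity of $\Gamma$ on $\pi_0(M)$, which makes $\ms O_M\otimes\chi$ indecomposable) is exactly the one extra point the paper adds to \cite[Lemma 4.2]{BKR}.

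\textbf{The gap is in your last step.} Points of $M^\gamma$ with $\gamma\notin\Gamma'$ have stabilizer $S\supsetneq\Gamma'$. At such points, a nonzero object need not have a nonzero map, in any degree, to an orbit sheaf $\ms O_{\Gamma x}$ carrying the \emph{trivial} stabilizer character.
\begin{itemize}
\item The Nakayama argument only produces a nonzero map $\mc H^q(E)\to\ms O_{\Gamma x}\otimes\rho$ for some character $\rho$ of $S$ occurring in the fibre, and $\rho$ need not be trivial.
\item Shifts do not rescue this. Take $\BZ/3$ acting on $\BA^1$ by a primitive cube root of unity and $E=\ms O_0\otimes\rho$. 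Then $\mathrm{Ext}^k_{\BZ/3}(E,\ms O_0)=(\mathrm{Ext}^k_{\BA^1}(\ms O_0,\ms O_0)\otimes\rho^{-1})^{\BZ/3}$. Since $\mathrm{Ext}^\bullet_{\BA^1}(\ms O_0,\ms O_0)$ involves only two of the three characters, choosing $\rho$ to be the third makes all these groups vanish. Also $E$ has no Ext to the free-orbit sheaves, by disjointness of supports.
\item Conversely, the twisted sheaves $\ms O_{\Gamma x}\otimes\rho$ with $\rho|_S\neq 1$ receive no nonzero map from $\ms O_{M^\vee}$, since that Hom is $(\rho|_S)^S=0$. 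So your hub $\ms O_{M^\vee}$ cannot place them in $\mc D_1$.
\end{itemize}

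\textbf{The missing idea is BKR's free orbit.}
\begin{itemize}
\item Since $\Gamma$ is abelian and transitive on $\pi_0(M)$, an element acting trivially on one component acts trivially on all of $M$. Hence every element outside $\Gamma'$ has nowhere dense fixed locus, and there is a free $\Gamma/\Gamma'$-orbit $D$.
\item The sheaf $\ms O_D\otimes\chi$ is simple in the $\chi'$-piece. It receives a nonzero map from $\ms O_M\otimes\chi''$ for \emph{every} character $\chi''$ of $\Gamma$ restricting to $\chi'$ on $\Gamma'$; the Hom is $(\BC[\Gamma/\Gamma']\otimes\chi\chi''^{-1})^\Gamma\cong\BC$. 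So all the indecomposable objects $\ms O_M\otimes\chi''$ lie in one summand.
\item Each twisted orbit sheaf $\ms O_{\Gamma x}\otimes\chi''$ is indecomposable and receives a nonzero map from $\ms O_M\otimes\chi''$, so it lies in that summand too.
\item These twisted orbit sheaves do detect every nonzero $E$ by your Nakayama argument. Extend the character $\rho$ of $S$ occurring in the fibre of $\mc H^q(E)$ at $x$ to a character $\chi''$ of $\Gamma$, so that $\Hom(E,\ms O_{\Gamma x}\otimes\chi''[-q])=\Hom_S(\mc H^q(E)\otimes k(x),\rho)\neq 0$.
\end{itemize}
This is what the paper does, citing the proof of \cite[Lemma 4.2]{BKR} with $\ms O_D$ replaced by $\ms O_D\otimes\chi$ and $\rho_i$ running over the extensions of $\chi'$. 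BKR conclude with an ample line bundle rather than point sheaves, but the linking via the free orbit is the essential step.
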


\begin{proof}
    Clearly there is such a direct sum decomposition, so the only point to check is indecomposability of the factors.
    This follows from the proof of \cite[Lemma 4.2]{BKR} applied to each $D^b\Coh(M^\vee)_{\chi'}$, letting $D$ be a free $\Gamma/\Gamma'$-orbit, replacing $\ms O_D$ with $\ms O_D \otimes \chi$ for any character $\chi$ of $\Gamma$ extending $\chi'$, and letting $\rho_i$ run over the set of such characters. The one additional point to note is that $\ms O_M \otimes \rho_i$ is an indecomposable $\Gamma$-sheaf because $\Gamma$ acts transitively on the set of connected components of $M$: to see this, since $B$ is connected, it suffices to note for each $b \in B$ that $\Gamma$ acts transitively on the set of connected components of $\Prym(X_b)$ \cite[Theorem 1.1(2)]{HP}, which is a dense open subset of $\oPrym(X_b)$ by \cite[Theorem 9]{AIK} and Remark \ref{rmk: etale cover}.
\end{proof}

The following is probably well-known, but for lack of a reference in this generality,
% though see \cite[Lemma 2.11]{BK-FM} for the relative non-stacky case and \cite{Kawamata} for the non-relative orbifold case)
we include a short proof:

\begin{lem} \label{lem: FM id}
    Let $\mc X$ be a smooth nice quotient, flat and proper over a smooth variety $B$. If Fourier-Mukai transform $\Phi_{\mc F}$ by $\mc F \in D^b\Coh(\mc X \times_B \mc X)$ induces the identity morphism on $D^b\Coh(\mc X)$, then $\mc F \cong \ms O_{\Delta_{\mc X/B}}.$
\end{lem}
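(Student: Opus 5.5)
Since $\Phi_{\mc F}$ induces the identity, it is isomorphic to $\Phi_{\ms O_{\Delta_{\mc X/B}}}$ as an exact functor. The standard route is Orlov/Huybrechts-type reasoning adapted to the relative stacky setting. Unlike the absolute smooth projective case, however, we cannot quote uniqueness of kernels (which is false in general without enhancements). So I would argue directly using flatness over $\mc X$ and points.

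\textbf{Step 1 (kernel is a sheaf flat over the first factor).} For each closed point $x \in \mc X$ (say a $G$-orbit with residual gerbe $\mc G_x$), consider $\Phi_{\mc F}(k(x)) \cong k(x)$, interpreted appropriately equivariantly; more cleanly, work on the cover $X \to \mc X$. By the restriction-to-fibres lemma (flatness criterion: an object of $D^b\Coh(\mc X \times_B \mc X)$ whose derived restrictions to $\{x\} \times_B \mc X$ are sheaves concentrated in degree $0$ is a sheaf flat over the first factor), $\mc F$ is a coherent sheaf on $\mc X \times_B \mc X$, flat over the first factor. Its restriction to each fibre over $x$ is the skyscraper $k(x)$. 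Note that $\Phi_{\mc F}(k(x))$ equals $Li_x^*\mc F$ pushed forward, since $B$ is flat, so no base-change issue arises. Consequently $\mc F$ is supported on the relative diagonal, with scheme-theoretic support finite and flat of degree one over the first factor. Hence $\mc F \cong \Delta_* \ms L$ for a line bundle $\ms L$ on $\mc X$: here $\Delta : \mc X \to \mc X \times_B \mc X$ is the diagonal, which is a closed immersion only up to the stacky subtlety, so I would phrase the conclusion as $\mc F$ being the graph of an automorphism $\phi$ of $\mc X$ over $B$ twisted by a line bundle. Since $\Phi_{\mc F}$ fixes every point, $\phi$ is the identity on closed points, and then on $\mc X$ by reducedness/smoothness together with separatedness.

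\textbf{Step 2 (identify the twist).} Now $\Phi_{\mc F}(\mc E) \cong \phi_*(\mc E) \otimes \ms L$. Taking $\mc E = \ms O_{\mc X}$ gives $\phi_*\ms O \otimes \ms L \cong \ms O$, so $\ms L$ is trivial. The remaining point is that $\phi$ may be a nontrivial $2$-automorphism of the identity, i.e.\ multiplication by some $g \in G$ acting trivially on $X$ (central generic stabiliser), combined with a character twist. The condition $\Phi_{\mc F} \cong \mathrm{id}$ on objects $\ms O_{\mc X} \otimes \chi$ for characters $\chi$ forces this twist to be trivial. This yields $\mc F \cong \ms O_{\Delta_{\mc X/B}}$.

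I expect the main obstacle to be Step 2's stacky bookkeeping. The kernel on the coarse level is determined, but one must rule out kernels of the form $\ms O_{\Delta}$ with a nontrivial equivariant structure twisted by a character of a generically trivially acting subgroup (as in Lemma \ref{lem: indecomposability}). I would handle this by testing against $\ms O_{\mc X}\otimes\chi$ for all characters $\chi$ of $G$ and comparing isotypic components on the cover $X \times_B X$.
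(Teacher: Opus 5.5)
There is a genuine gap in Step~1. For a scheme, your Step~1 is the standard argument for kernels that send points to points (as in \cite{Huybrechts-FM}) and would be fine. It breaks exactly where the lemma is stacky. The passage from ``$\mc F$ is a sheaf, flat over the first factor, with point-like fibres'' to ``$\mc F$ is a line bundle on the graph of an automorphism'' needs each fibre to be a length-one skyscraper with endomorphisms $\BC$. That is what makes $p_{1*}\mc F$ a line bundle and the support map isomorphically onto the first factor.

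On $\mc X=[X/G]$ with nontrivial stabilizers this fails for either reading of $k(x)$:
\begin{enumerate}
\item Testing on the residual-gerbe skyscraper $\ms O_{\mc G_x}$ computes only the $G_x$-invariant part of the derived fibre of $\mc F$. So it does not give flatness.
\item Testing on $x_*\BC$ for $x\colon \Spec\BC\to\mc X$ does give flatness. But $\dim \mathrm{End}(x_*\BC)=|G_x|$, and $\Delta_{\mc X/B}$ is not a closed immersion, so there is no ``degree one'' support to exploit.
\end{enumerate}

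In fact the intermediate conclusion is false. Take $\mc X=BG$ with $G\neq 1$ and $\mc F=p_2^*(x_*\BC)$, i.e.\ the $G\times G$-representation $\mathbf 1\boxtimes\BC[G]$. It is flat over the first factor with fibre $x_*\BC$, and $\Phi_{\mc F}(x_*\BC)\cong x_*\BC$. Yet $\Phi_{\mc F}(E)=x_*\BC\otimes E^G$ kills every nontrivial character, so $\mc F$ is not $(1,\phi)_*\ms L$ for any $\phi,\ms L$. Likewise, an automorphism of a stack that is the identity on closed points need not be the identity; automorphisms of $BG$ coming from $\Aut(G)$ are examples.

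Step~2 does not repair this. It presupposes the graph description, and the global test objects $\ms O_{\mc X}\otimes\chi$ cannot see what happens where stabilizers jump. That is precisely the situation for $M^\vee=[M/\Gamma]$ along the loci $M^\gamma$, where the lemma is actually used. To rescue the pointwise route you would need $\Phi_{\mc F}(i_{x*}\chi)\cong i_{x*}\chi$ for every character $\chi$ of every stabilizer $G_x$, to pin down the isotypic pieces of each fibre. You would then still need a globalization argument, which your proposal does not supply.

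The paper sidesteps the pointwise analysis, and your claim that uniqueness of kernels cannot be quoted is too pessimistic. Pull back along the atlas $\alpha\colon X\to\mc X$. The kernel $(\alpha\times\alpha)^*\mc F$ on the quasi-projective scheme $X\times_B X$ induces $\alpha^*\circ\Phi_{\mc F}\circ\alpha_*\cong\alpha^*\alpha_*$. This functor is also induced by $(\alpha\times\alpha)^*\Delta_{\mc X*}\ms O_{\mc X}$, and it sends $\Coh(X)$ into $\Coh(X)$. For such functors the kernel is unique \cite[Theorem 5.7]{Genovese}. Hence $(\alpha\times\alpha)^*\mc F\cong(\alpha\times\alpha)^*\Delta_{\mc X*}\ms O_{\mc X}$, descent gives $\mc F\cong\Delta_{\mc X*}\ms L$, and $\Phi_{\mc F}(\ms O_{\mc X})\cong\ms O_{\mc X}$ forces $\ms L\cong\ms O_{\mc X}$.

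Your instinct that equivariant bookkeeping is the real issue is nonetheless sound. When $G$ has a nontrivial generic stabilizer, the non-equivariant isomorphism class of $(\alpha\times\alpha)^*\mc F$ does not determine its $G\times G$-structure; the $BG$ example shows this again. So on the paper's route too, the descent step has to invoke $\Phi_{\mc F}\cong\mathrm{id}$ once more, not just the isomorphism on $X\times_B X$.
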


\begin{proof}
    Let $\mc X = [X/G]$ be a nice presentation, with quotient map $\alpha: X \to \mc X$. Then $\mc G := (\alpha \times \alpha)^* \mc F$ induces the Fourier-Mukai transform
    $$\Phi_{\mc G} = \alpha^* \circ \alpha_* = \Phi_{(\alpha \times \alpha)^* \Delta_{\mc X *} \ms O_{\mc X}}$$ on $D^b\Coh(X)$. 
    Since $X$ is quasiprojective and $\alpha^* \circ \alpha_*$ preserves $\Coh(X)$, it follows from \cite[Theorem 5.7]{Genovese} that $\mc G \cong (\alpha \times \alpha)^* \Delta_{\mc X*} \ms O_{\mc X}$. Then by descent $\mc F = \Delta_{\mc X*} \ms L$ for some $\ms L \in \Pic(\mc X)$, and $\Phi_{\mc F}(\ms O_{\mc X}) = \ms O_{\mc X}$ implies $\ms L \cong \ms O_{\mc X}$.
\end{proof}

\subsection{Statement of the main theorem} \label{subsec: main theorem}
We are now in a position to define the projectors that will recover the perverse filtration of a good compactified Prym fibration, so that we can state a sharpened form of Theorem \ref{thm: main intro}.

Let $M \to B$ be a good compactified Prym fibration, and let $\mc P$ (and $\mc P^{-1}$) be the Arinkin Poincar\'e sheaf (and its inverse) described in Proposition \ref{prop: GS}.

\begin{defn}
    Define $b_\gamma := \dim B_\gamma$, $h_\gamma := \dim M^\gamma - \dim B_\gamma$, and $d_\gamma := h-h_\gamma$. We also write $b_1 =: b$ and $h_1 =: h$.
\end{defn}

\begin{rmk} \label{rmk: h-hgamma}
    We will show in Corollary \ref{cor: h-hgamma} that $h-h_\gamma = b-b_\gamma$. We use this fact freely throughout the paper.
\end{rmk}

\begin{rmk} \label{rmk: fermionic shift}
    When $M$ is an open subset of a moduli space of ($D=K_C$) $\SL_n$-Higgs bundles (see Proposition \ref{prop: SL Higgs is good}), then $d_\gamma$ is the ``fermionic shift" of \cite{HT}, as noted in \cite[Remark 0.6]{MS}. As in the formula for the stringy mixed Hodge polynomial in \cite{HT}, a class of codimension $i$ on $M^{\gamma,\vee}$ should be considered to be of codimension $i + d_\gamma$ on the inertia stack $IM^\vee$, even though in fact $\dim IM^\vee - \dim M^{\gamma,\vee} = \dim M - \dim M^\gamma = 2d_\gamma$.
\end{rmk}

Recall the notations $IB, \iota_B$ from Definition \ref{defn: Gamma}.
We will use $\pi_{IB}$ to denote all natural morphisms to $IB$, including in particular $IM^\vee \times_B M, M \times_B IM^\vee \to IM^\vee \to IB$. Recall also the stacky functors $\td^I$ and $\tau$ from \S \ref{subsubsec: stacky tau}.

\begin{defn} \label{def: G and p}
    Define Chow correspondences
    $$\mf G := \td^I(\pi_{IB}^* T_{\iota_B}-T_{IM^\vee \times_B M}) \cap \tau(\mc P) \in \Corr^*_B(IM^\vee, M)$$
    $$\mf G^{-1} := \td^I(-\pi_{IB}^* T_{IB} +T_{\iota_{M \times_B M^\vee}}) \cap \tau(\mc P^{-1}) \in \Corr^*_B(M, IM^\vee).$$
    We can decompose $\mf G$ and $\mf G^{-1}$ by degree and component of $IM^\vee$
    $$\mf G = \sum_{\gamma \in \Gamma} \sum_{i \ge 0} \mf G_{\gamma,i}, \,\, \mf G_{\gamma,i} \in \CH_{\dim M^\gamma \times_B M + d_\gamma - i}(M^{\gamma, \vee} \times_B M) = \Corr_B^{i-h_\gamma}(M^{\gamma, \vee}, M)$$
    $$\mf G^{-1} = \sum_{\gamma \in \Gamma} \sum_{i \ge 0} \mf G_{\gamma,i}^{-1}, \,\, \mf G_{\gamma,i}^{-1} \in \CH_{\dim M \times_B M^\gamma + d_\gamma - i}(M \times_B M^{\gamma, \vee}) = \Corr_B^{i-2h + h_\gamma}(M, M^{\gamma, \vee}).$$
    Furthermore, we define correspondences
    \begin{equation} \label{eq: pGgk def}
    \mf p_{\gamma,k}^{\mf G} := \sum_{i \le k} \mf G_{\gamma, i} \circ \mf G_{\gamma, 2h-i}^{-1} ,\,\,\,\,\,\,\,\, \mf p^{\mf G}_{(k_\gamma)_\gamma} := \sum_\gamma \mf p^{\mf G}_{\gamma, k_\gamma}, \,\,\,\,\,\,\,\, \mf p^{\mf G}_k := \mf p^{\mf G}_{(k_\gamma=k)_\gamma}
    \end{equation}
    and
    \begin{equation} \label{eq: qGgk def}
     \mf q_{\gamma,k}^{\mf G} := \sum_{i \ge k} \mf G_{\gamma, i} \circ \mf G_{\gamma, 2h-i}^{-1} ,\,\,\,\,\,\,\,\, \mf q^{\mf G}_{(k_\gamma)_\gamma } := \sum_\gamma \mf q^{\mf G}_{\gamma, k_\gamma}, \,\,\,\,\,\,\,\, \mf q^{\mf G}_k := \mf q^{\mf G}_{(k_\gamma = k)_\gamma}  
    \end{equation}
    in $\Corr^0_B(M,M)$.
\end{defn}

The reason for the complicated Todd class convention is, first, to ensure that $\mf G$ and $\mf G^{-1}$ are inverses and, second, to match $\mf G$ with the (non-Todd-twisted) Chern class of the Poincar\'e bundle of a corresponding smooth Prym fibration in \S \ref{subsec: proof of realization}.

\begin{thm} \label{thm: main sec 2}
    Let $M \to B$ be a good compactified Prym fibration.\\
    (i) For any tuple $(k_\gamma) \in \BN^\Gamma$, the correspondences $\pg_{(k_\gamma)_\gamma}$ and $\qg_{(k_\gamma)_\gamma}$ are projectors.
    Moreover, $\pg_{(k_\gamma)_\gamma}$ and $\qg_{(k_\gamma+1)_\gamma}$ are orthogonal and sum to $[\Delta_{M}]$.\\
    (ii) Fix $\gamma \in \Gamma$, with corresponding character $\kappa \in \Gamma^\vee$, as well as $k \in \BN$. By (i) we may define Chow motives $P_{\gamma,k}h(M) := (M, \pg_{\gamma,k},0)$ and $Q_{\gamma,k}h(M) := (M, \qg_{\gamma,k},0)$, equipped with the preferred morphisms to and from $h(M)$. The homological realization of $P_{\gamma, k}h(M)\to h(M)$ is given by the inclusion of the $\kappa$ component and truncation morphism
    $$(\ptau_{\le k + b} \pi_* \BQ_M)_\kappa \to \ptau_{\le k + b} \pi_* \BQ_M \to \pi_* \BQ_M.$$
    Similarly, the homological realization of $h(M) \to Q_{\gamma, k}h(M)$ is given by 
    $$\pi_* \BQ_M \to \ptau_{\ge k+b} \pi_* \BQ_M \to (\ptau_{\ge k+b} \pi_* \BQ_M)_\kappa.$$
    (iii) For any degrees $k,l \in \BN$, the composition
    $$\qg_{k+l+1} \circ [\Delta_{M}^{s}] \circ (\pg_k \times \pg_l) = 0.$$
    In particular, after taking the homological realization, we see that the perverse filtration $P_\bullet H^*(M)$ is multiplicative.
\end{thm}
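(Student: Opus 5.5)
The plan follows the Maulik--Shen--Yin strategy, with stacky inputs supplied by Theorem~\ref{thm: tau properties}. The three parts are handled in the order (ii), (i), (iii).

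\textbf{Step 1: the Fourier correspondences are inverse.} First I show that $\mf G \circ \mf G^{-1} = [\Delta_M]$ and $\mf G^{-1}\circ \mf G = [\Delta_{IM^\vee}]$ as Chow correspondences. This comes from Proposition~\ref{prop: GS}, $\mc P\circ\mc P^{-1}\cong \ms O_{\Delta_M}$, by applying $\tau$ to the composition formula (\ref{eq: composition on Db}). Then:
\begin{itemize}
\item Theorem~\ref{thm: tau properties}(c) moves $\tau$ past $p_{13*}$.
\item Theorem~\ref{thm: tau properties}(b) handles the exterior product.
\item Corollary~\ref{cor: part e} handles $\delta_{M^\vee}^*$, producing $(I\Delta)^!$ and a Todd factor $\ttd(-p_2^*T)$.
\end{itemize}
The Todd twists built into Definition~\ref{def: G and p} are chosen so that these factors cancel, with the $T_{\iota_B}$ and $T_{IB}$ corrections accounting for the fibre product over $B$ versus $IB$. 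Finally, $\tau(\ms O_{\Delta_M})$ is computed by Theorem~\ref{thm: tau properties}(d) together with (c).

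\textbf{Step 2: homological realization, part (ii).} Here I show that the image of $\mf G_{\gamma,i}\circ\mf G^{-1}_{\gamma,2h-i}$ under the cycle class map is the projector onto $\pH^{i+b}$ of the $\kappa$-isotypic part. By the Ng\^o conditions, the summand $(R\pi_*\BC_M)_\kappa$ has full support on $B_\gamma$. A splitting of perverse sheaves with full support is determined over a dense open subset of the support, so it suffices to check the claim over a general open $U_\gamma\subset B_\gamma$.

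Over $U_\gamma$ the curves are nodal by condition (3). I then proceed as follows:
\begin{itemize}
\item Use an FHHO-type comparison to identify $\mc P$ with the Poincar\'e sheaf of the Prym of the normalized smooth family.
\item The Todd twist is designed to turn $\mf G$ into $\tch$ of the smooth Poincar\'e bundle.
\item This reduces the claim to a Deninger--Murre statement for an extension of a finite group by an abelian scheme. I prove that via Brochard duality and the $\Gamma$-character decomposition: the $\gamma$-sector of the inertia stack corresponds to the $\kappa$-eigenpart, using the Weil pairing of Definition~\ref{def: Weil pairing}.
\end{itemize}
Degree bookkeeping uses Remark~\ref{rmk: h-hgamma}.

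\textbf{Step 3: projectors, part (i).} Orthogonality and idempotence reduce, as in MSY, to a Fourier vanishing statement: $\mf G^{-1}_{\gamma,j}\circ\mf G_{\gamma',i}=0$ unless $\gamma=\gamma'$ and $i+j=2h$. This is proved from a dimension bound on the support of the convolution kernel $\mc P^{-1}\circ\mc P$-type objects (Arinkin-style bounds for Pryms), combined with $\delta(B_\gamma)=\codim B_\gamma$ from Remark~\ref{rmk: Severi/Ngo}. These bounds force the cycles in the offending degrees to vanish for dimension reasons.

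Given vanishing, Step 1 yields
\[
\textstyle\sum_{\gamma,i}\mf G_{\gamma,i}\circ\mf G^{-1}_{\gamma,2h-i}=[\Delta_M],
\]
and the remaining orthogonality relations follow formally.

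\textbf{Step 4: multiplicativity, part (iii).} I transport the small diagonal through $\mf G$. Following MSY~\S3, I express $[\Delta^s_M]\circ(\mf G\times\mf G)$ via $\tau$ of the convolution $\mc P\boxtimes\mc P$ restricted along the group-law/addition. Using Theorem~\ref{thm: tau properties}(e) for the non-flat diagonal base change, I then show it is supported in degrees forcing $\qg_{k+l+1}\circ[\Delta^s]\circ(\pg_k\times\pg_l)=0$, again by the support dimension bounds. Multiplicativity of $P_\bullet$ follows by realization, using (ii).

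The main obstacle I expect is the Fourier vanishing with non-full supports in Steps 3 and 4. The convolution kernel dimension bounds must be established component by component over each $B_\gamma$, and they must mesh with the shifted degrees $d_\gamma$.
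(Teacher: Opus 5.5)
Steps 1, 2 and 4 follow the paper's outline. Step 3 (Fourier vanishing) has a genuine gap, in two places.

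\textbf{Cross-sector vanishing.} The vanishing you aim for is misstated, and in the cross-sector case your method cannot reach it. Support bounds only kill classes of large dimension, i.e.\ compositions $\mf G^{-1}_{\gamma,i}\circ\mf G_{\gamma',j}$ with $i+j$ small. They never give vanishing for $i+j>2h$; that is also not needed, since within a sector (i) and (iii) use only $i+j<2h$. For $\gamma\neq\gamma'$, however, you do need $\mf G^{-1}_{\gamma,i}\circ\mf G_{\gamma',j}=0$ in \emph{all} bidegrees. Expanding $\pg_{\gamma,k}=[\Delta_M]\circ\pg_{\gamma,k}=\sum_{\beta,i}\mf G_{\beta,i}\circ\mf G^{-1}_{\beta,2h-i}\circ\pg_{\gamma,k}$, the terms with $\beta\neq\gamma$ contain $\mf G^{-1}_{\beta,2h-i}\circ\mf G_{\gamma,j}$ with $2h-i+j\ge 2h$ whenever $j\ge i$. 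The same terms appear in the orthogonality of $\pg_{\beta,j}$ and $\pg_{\gamma,k}$, and in (iii) when you replace $\mf G^{-1}_\gamma\circ\pg_k$ by $\mf G^{-1}_\gamma\circ\pg_{\gamma,k}$. Step 1 only shows that the total $\mf G^{-1}_\gamma\circ\mf G_{\gamma'}$ vanishes, not its graded pieces. The paper gets the graded cross vanishing at the Chow level by a separate equivariance argument (Proposition \ref{prop: FV for different chars}):
\begin{itemize}
\item translating by $\alpha\in\Gamma$ twists $\mf G_\beta$ by $\tch(p_1^*\mc P|_{M^\vee\times\{\alpha\}})_\beta$ (Lemma \ref{lem: theorem of square});
\item the degree-zero part of this twist on the $\beta$-sector is the scalar $\langle\beta,\alpha\rangle$;
\item by induction on degree, $\mf G^{-1}_{\gamma,i}\circ\mf G_{\beta,j}$ is therefore multiplied by $\langle\gamma^{-1}\beta,\alpha\rangle$;
\item nondegeneracy of the Weil pairing lets one choose $\alpha$ with $\langle\gamma^{-1}\beta,\alpha\rangle\neq 1$, forcing the composition to vanish.
\end{itemize}

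\textbf{Same-sector vanishing.} Within one sector, a support bound on a single kernel cannot isolate the individual terms $\mf G^{-1}_{\gamma,i}\circ\mf G_{\gamma,j}$. The kernel $\mc P^{-1}\circ\mc P=\ms O_{\Delta_{M^\vee}}$ carries no information, and any fixed kernel only controls the sums $\sum_{i+j=c}$, which already vanish for $c\neq 2h$ by Step 1. The missing idea is the Adams-operation argument of \cite[\S 3.5]{MSY}:
\begin{itemize}
\item bound, uniformly in $n$, the support of $\mc F^n=\mc P^{-1}\circ(j_*\mc P)^{\otimes n}$ on each $M^{\gamma,\vee}\times M^{\beta,\vee}$ (Proposition \ref{prop: Fn supp ineq}): the support lies over $\{\mc G_1^{\otimes n}\cong\mc G_2\text{ on }X_u^{\sm}\}$, Arinkin's codimension estimate survives on the fixed loci, and one then applies the Severi inequality;
\item pass to the $\gamma$-sector through the localization and $\rho_\gamma$ on an approximation scheme, and replace the tensor power by $\psi^n$;
\item use $\ch^{loc,i}\circ\psi^n=n^i\ch^{loc,i}$ \cite{Kurano-Roberts}, so that vanishing for all $n$ separates the graded pieces.
\end{itemize}

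\textbf{A smaller point in Step 2.} $\mc P$ is not identified with the smooth Poincar\'e sheaf, since the fibres have dimensions $h$ and $h_\gamma$. Rather, after identifying $M^{\gamma,\vee}$ over $U$ with the dual of $\Prym(\tilde X/U)$, one only gets $cl(\mf G^{-1}_\gamma)|_V=\pm cl(\tilde{\mf G}^{-1}_\gamma)|_V\circ cl(\Sigma)|_V$, with $\Sigma$ the endoscopic correspondence. To transport the truncation you still need $cl(\Sigma)$ to induce $(\pi_*\BC_M)_\kappa\cong(\pi_*\BC_{\Prym(\tilde X)})_\kappa[-2d_\gamma]$; the paper cites Ng\^o--Yun for this. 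This is where $b=b_\gamma+d_\gamma$ enters.
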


\subsection{Proof of Theorem \ref{thm: main intro}(a)} \label{subsec: proof of (a)}
We start with the following basic relation:

\begin{prop} \label{prop: G and G-1 inverse}
    Given a smooth (but not necessarily good) compactified Prym fibration such that $IB$ is smooth and $IM^\vee \to IB$ is flat, we have
    $$\mf G \circ \mf G^{-1} = [\Delta_M] \in \CH_*(M \times_B M)$$
    $$\mf G^{-1} \circ \mf G = [\Delta_{IM^\vee}] \in \CH_*(IM^\vee \times_B IM^\vee).$$
\end{prop}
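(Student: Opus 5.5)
The plan is to lift the coherent identities $\mc P \circ \mc P^{-1} \cong \ms O_{\Delta_M}$ and $\mc P^{-1} \circ \mc P \cong \ms O_{\Delta_{M^\vee}}$ of Proposition \ref{prop: GS} to Chow groups of inertia stacks by applying the stacky $\tau$ functor. Then we check that the Todd twists in Definition \ref{def: G and p} are exactly what $\tau$ produces. Recall
$$\mc P \circ \mc P^{-1} = p_{13*}\delta_{M^\vee}^*(p_{12}^*\mc P^{-1} \otimes p_{34}^*\mc P),$$
with $\delta_{M^\vee}$ the flat base change of $\Delta_{M^\vee}$, which is representable. We apply $\tau$ to both sides. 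On the right, Theorem \ref{thm: tau properties}(d) gives
$$\tau(\ms O_{\Delta_M}) = \Delta_*(\ttd(T_M)\cap[IM]),$$
and since $M$ is a variety, $IM = M$, so this is $\td(T_M)\cap[\Delta_M]$. The second identity is handled symmetrically; there the inertia is genuinely stacky and $\tau(\ms O_{\Delta_{M^\vee}}) = I\Delta_*(\ttd(T_{M^\vee})\cap [IM^\vee])$.

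For the left side, we push $\tau$ through each operation in turn. By Theorem \ref{thm: tau properties}(c), $\tau$ commutes with $p_{13*}$, giving $Ip_{13*}$. For the diagonal pullback we use Corollary \ref{cor: part e}, taking $\mc X = M^\vee$ and letting $\mc Y_i$ be the relevant fiber products over $B$. These embed in smooth nice quotients, for instance $M^\vee\times M$, which is smooth because $M$ is. The corollary gives
$$\tau(\delta^*\mc F) = \ttd(-p_2^*T_{M^\vee})\cap (I\Delta_{M^\vee})^!\tau(\mc F).$$
The tensor product $p_{12}^*\mc P^{-1}\otimes p_{34}^*\mc P$ equals $\mc P^{-1}\boxtimes\mc P$ restricted to a product over $B$. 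Its $\tau$ is $\tau(\mc P^{-1})\times\tau(\mc P)$ by Theorem \ref{thm: tau properties}(b), after rewriting the fiber products over $B$ as a Gysin pullback along $\Delta_B$ of an absolute product.

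We then compare the output with $\mf G\circ\mf G^{-1} = p_{13*}\Delta^!(\mf G^{-1}\times\mf G)$. Using the projection formula (Proposition \ref{prop: ttd properties}) to move the Todd classes outside the Gysin maps and pushforwards, and using multiplicativity of $\td^I$ in $K^0$, we expect the twists to cancel. Concretely:
\begin{itemize}
\item the twists $\td^I(-\pi_{IB}^*T_{IB}+T_{\iota})$ and $\td^I(\pi_{IB}^*T_{\iota_B}-T_{IM^\vee\times_BM})$ coming from $\mf G^{-1}$ and $\mf G$,
\item the factor $\ttd(-T_{M^\vee})$ from the diagonal,
\item the base factors $\td(T_B)$ from $\Delta_B$,
\end{itemize}
should combine to leave exactly $\td(T_M)$ on $M\times_BM$. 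We then cancel this against $\tau(\ms O_{\Delta_M})$. On a given $\gamma$ component, the $\iota_B$ terms, $T_{IB}-\iota_B^*T_B$, are what reconcile $\Delta_B$ with $\Delta_{B_\gamma}$. Flatness of $IM^\vee\to IB$ and smoothness of $IB$ are used so that each $M^{\gamma,\vee}\times_{B}M$ has the expected dimension and the Gysin maps are compatible.

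The main obstacle is the second identity, $\mf G^{-1}\circ\mf G$. Here the composition passes through $M$, not $M^\vee$, so the relevant fiber product is $IM^\vee\times_B M\times_B IM^\vee$. The embedding $M\times_BM^\vee\hookrightarrow M\times M^\vee$ induces on inertia the map $IM^\vee\times_B M\to IM^\vee\times M$, which need not be regular; this is the point of Remark \ref{rmk: not reg}. To handle it, we apply Theorem \ref{thm: tau properties}(e) with the smooth ambient square given by $\Delta_B$ pulled back to smooth products, with $\mc Z\to\mc W$ a base change of $\Delta_B$ and the required codimensions equal. This yields the Gysin map $(Ie)^!$ and the twist $\ttd(T_f)$. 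Decomposing $\ttd(T_f)$ into fixed and moving parts over each $\gamma$, the moving part is $\iota^*T_B$ minus its fixed part, and this produces exactly the normal term $T_{\iota_B}$ appearing in $\mf G$. Bookkeeping this identity of normal bundles along $B_\gamma\subset B$ is where we expect the real work to be.
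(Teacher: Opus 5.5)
Your overall route is the paper's: apply $\tau$ to $\mc P\circ\mc P^{-1}\cong\ms O_{\Delta_M}$ and $\mc P^{-1}\circ\mc P\cong\ms O_{\Delta_{M^\vee}}$, pass it through Theorem~\ref{thm: tau properties}(b),(c) and Corollary~\ref{cor: part e}, and cancel Todd classes. The first identity works as you sketch, except that no $\Delta_B$ step is needed. The sheaf $p_{12}^*\mc P^{-1}\otimes p_{34}^*\mc P$ already lives on the absolute product $(M\times_B M^\vee)\times(M^\vee\times_B M)$, so Theorem~\ref{thm: tau properties}(b) applies directly. The $T_B$ that appears in the bookkeeping comes from the virtual tangent bundle $T_{IM^\vee\times_B M}=T_{IM^\vee}+T_M-\pi_B^*T_B$, which is where flatness of $IM^\vee\to IB$ enters, not from a Gysin map.

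The genuine gap is in the second identity, which you single out as the main obstacle but misdiagnose. The map $IM^\vee\times_B M\to IM^\vee\times M$ is in fact regular. On the $\gamma$-component it is the preimage of the graph of $B_\gamma\to B$ under the flat map $M^{\gamma,\vee}\times M\to B_\gamma\times B$, and that graph is a section of the smooth projection $B_\gamma\times B\to B_\gamma$, hence regular. The non-regular map of Remark~\ref{rmk: not reg} is instead $I\delta_M$, where $\delta_M\colon M^\vee\times_B M\times_B M^\vee\to M^\vee\times_B M\times M\times_B M^\vee$ is the base change of $\Delta_M$. Its $(\gamma,\gamma)$-component is a flat base change of the diagonal of the singular $M|_{B_\gamma}$.

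So the smooth ambient square must have $e$ a base change of $\Delta_M$, not of $\Delta_B$. This is exactly Corollary~\ref{cor: part e} with $\mc X=M$ and $\mc Y_i=\mc Z_i=M^\vee$. There $Ie\colon IM^\vee\times M\times IM^\vee\to IM^\vee\times M\times M\times IM^\vee$ is a flat base change of the regular embedding $\Delta_M$, giving $\tau(\delta_M^*(\mc P\boxtimes\mc P^{-1}))=\td(-p_2^*T_M)\cap\Delta_M^!\tau(\mc P\boxtimes\mc P^{-1})$. A square built on $\Delta_B$ only controls pullbacks along base changes of $\Delta_B$, such as $M^\vee\times_B M\hookrightarrow M^\vee\times M$. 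The pullback actually occurring in $\mc P^{-1}\circ\mc P$ is $\delta_M^*$, and factoring $\delta_M$ through $\Delta_B$ leaves the non-regular relative diagonal of $M/B$.

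Your mechanism for producing $T_{\iota_B}$ also cannot work. $\Gamma$ acts trivially on $B$, so every pullback of $T_B$ to a component of an inertia stack is $\gamma$-fixed and has zero moving part. $B_\gamma$ is the image of a fixed locus, not a fixed locus, and $T_{\iota_B}$ is never the moving part of anything. In the paper the twists $\pi_{IB}^*T_{\iota_B}$ (from $\mf G$) and $-\pi_{IB}^*T_{IB}$ (from $\mf G^{-1}$) simply cancel against the $+\pi_B^*T_B$ in $-T_{IM^\vee\times_B M}$, via $T_{\iota_B}=T_{IB}-\iota_B^*T_B$, in both identities.

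For $\mf G^{-1}\circ\mf G$, what remains on $IM^\vee\times_B M\times_B IM^\vee$ is $\td^I(-p_1^*T_{IM^\vee}-p_2^*T_M+p_3^*T_{\iota_{M^\vee}})$. The $T_M$ factor is absorbed by Corollary~\ref{cor: part e}. The rest is pulled out of $Ip_{13*}$ by the projection formula and cancels against $\tau(\ms O_{\Delta_{M^\vee}})=I\Delta_{M^\vee*}(\ttd(T_{M^\vee})\cap[IM^\vee])$, since $T_{\iota_{M^\vee}}=T_{IM^\vee}-\iota^*T_{M^\vee}$. The only moving parts in the whole computation are those of $\iota^*T_{M^\vee}$; none come from $B$.
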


\begin{proof}
    We start with the first identity: by definition
    $$\mf G \circ \mf G^{-1} = Ip_{13*}(\Delta_{IM^\vee}^!(\mf G^{-1} \times \mf G))$$
    $$= Ip_{13*}(\td^I(\pi_{IB}^*(T_{\iota_B} - T_{IB}) + p_{12}^* T_{\iota_{M \times_B M^\vee}} - p_{23}^* T_{IM^\vee \times_B M}) \cap \Delta_{IM^\vee}^!(\tau(\mc P) \times \tau(\mc P^{-1})))$$
    $$= Ip_{13*}(\td^I(\pi_{IB}^*(T_{\iota_B} - T_{IB})
    + p_2^* T_{\iota_{M^\vee}} - p_2^* T_{IM^\vee} + \pi_B^* T_B - p_3^* T_M) \cap \delta_{IM^\vee}^!(\tau(\mc P) \times \tau(\mc P^{-1})))$$
    (where to compute $T_{IM^\vee \times_B M}$ we used that $IM^\vee \to IB$ is flat, and that $IM^\vee \times_B M = (IM^\vee \times M) \times_{(IB \times B)} IB$)
    \begin{equation*}
        \begin{aligned}[c]
        = Ip_{13*}(\td^I(-p_2^* \iota_{M^\vee}^*T_{M^\vee} - p_3^* T_M) \cap \Delta_{IM^\vee}^!(\tau(\mc P) \times \tau(\mc P^{-1}))) \\
        = \td(-p_2^*T_M) \cap Ip_{13*}(\ttd(-p_2^* T_{M^\vee}) \cap \Delta_{IM^\vee}^!(\tau(\mc P) \times \tau(\mc P^{-1}))) \\
        = \td(-p_2^*T_M) \cap Ip_{13*}(\ttd(-p_2^* T_{M^\vee}) \cap \Delta_{IM^\vee}^!(\tau(\mc P \boxtimes \mc P^{-1}))) \\
        = \td(-p_2^*T_M) \cap Ip_{13*}(\tau( \delta_{IM^\vee}^*(\mc P \boxtimes \mc P^{-1})))\\
        = \td(-p_2^*T_M) \cap \tau(p_{13*}( \delta_{IM^\vee}^*(\mc P \boxtimes \mc P^{-1})))\\
        = \td(-p_2^* T_M) \cap \tau(\mc P^{-1} \circ \mc P)\\
        = \td(-p_2^* T_M) \cap \tau(\ms O_{\Delta_M})
        \end{aligned}
        \qquad
        \begin{aligned}[c]
        \hphantom{x}\\
            \textrm{(Proposition \ref{prop: ttd properties}(b))}\\
            \textrm{(Theorem \ref{thm: tau properties}(b))}\\
            \textrm{(Corollary \ref{cor: part e})}\\
            \textrm{(Theorem \ref{thm: tau properties}(c))}\\
            \hphantom{x}\\
            \textrm{(Proposition \ref{prop: GS})}
        \end{aligned}
    \end{equation*}
    $$= \td(-p_2^* T_M) \cap \Delta_{M*} \tau(\ms O_M) = \td(-p_2^* T_M) \cap \Delta_{M*} (\td(T_M) \cap [M])$$
    $$= \Delta_{M*}(\td(-T_M+T_M) \cap [M]) = [\Delta_M].$$
\\
    The proof of the second identity is similar, though not identical: we have
    $$\mf G^{-1} \circ \mf G = Ip_{13*}(\Delta^!_M(\mf G \times \mf G^{-1}))$$
    $$= Ip_{13*}(\td^I(\pi_{IB}^*(T_{\iota_B} - T_{IB}) - p_{12}^* T_{IM^\vee \times_B M} + p_{23}^* T_{\iota_{M \times_B M^\vee}}) \cap \Delta_M^!(\tau(\mc P) \times \tau(\mc P^{-1})))$$
    $$ = Ip_{13*}(\td^I(-p_1^* T_{IM^\vee} - p_2^* T_M + p_3^* T_{\iota_{M^\vee}}) \cap \Delta_M^!(\tau(\mc P) \times \tau(\mc P^{-1})))$$
    $$ = \td^I(-p_1^* T_{IM^\vee} + p_2^* T_{\iota_{M^\vee}}) \cap  Ip_{13*}(\td^I(- p_2^* T_M) \cap \Delta_M^!(\tau(\mc P) \times \tau(\mc P^{-1})))$$
    $$ = \td^I(-p_1^* T_{IM^\vee} + p_2^* T_{\iota_{M^\vee}}) \cap  Ip_{13*}(\td^I(- p_2^* T_M) \cap \Delta_M^!(\tau(\mc P \boxtimes \mc P^{-1})))$$
    $$ = \td^I(-p_1^* T_{IM^\vee} + p_2^* T_{\iota_{M^\vee}}) \cap  Ip_{13*}(\tau(\delta_M^*(\mc P \boxtimes \mc P^{-1})))$$
    $$ = \td^I(-p_1^* T_{IM^\vee} + p_2^* T_{\iota_{M^\vee}}) \cap  \tau(p_{13*}(\delta_M^*(\mc P \boxtimes \mc P^{-1})))$$
    $$ = \td^I(-p_1^* T_{IM^\vee} + p_2^* T_{\iota_{M^\vee}}) \cap  \tau(\mc P^{-1} \circ \mc P)$$
    $$ = \td^I(-p_1^* T_{IM^\vee} + p_2^* T_{\iota_{M^\vee}}) \cap  \tau(\ms O_{\Delta_{M^\vee}}) = \td^I(-p_1^* T_{IM^\vee} + p_2^* T_{\iota_{M^\vee}}) \cap  I\Delta_{M^\vee *}\tau(\ms O_{M^\vee})$$
    $$ = \td^I(-p_1^* T_{IM^\vee} + p_2^* T_{\iota_{M^\vee}}) \cap  I\Delta_{M^\vee *}(\ttd(T_{M^\vee}) \cap [IM^\vee]) \qquad \textrm{(Theorem \ref{thm: tau properties}(d))}$$
    $$ = I\Delta_{M^\vee *}(\td^I(-T_{IM^\vee} + T_{\iota_{M^\vee}} + \iota_{M^\vee}^*T_{M^\vee}) \cap [IM^\vee]) = [\Delta_{IM^\vee}].$$
\end{proof}

The following is then an easy corollary:
\begin{proof}[Proof of Theorem \ref{thm: main intro}(a)]
    Consider the morphisms $\pi_* \BC_M \to \oplus \pi_*^\vee \BC_{M^\vee}[i]$ and $\pi_*^\vee \BC_{M^\vee} \to \oplus \pi_* \BC_M[i]$ induced by $cl(\mf G^{-1})$ and $cl(\mf G^{-1})$, as described in \S \ref{subsubsec: constructible sheaves}. By Proposition \ref{prop: G and G-1 inverse} and the compatibilities of the cycle class map, composing these morphisms in either direction yields the identity. In particular, after passing to global cohomology we obtain inverse isomorphisms $H^*(M, \BC) \leftrightarrow H^*(IM^\vee, \BC)$ induced by multi-degree algebraic correspondences. These may not be isomorphisms of mixed Hodge structures (and in fact, e.g., in the setting of \cite{HT} there is no such isomorphism without the proper degree/weight convention for the various components $H^*(M^{\gamma, \vee}, \BC) \subset H^*(IM^\vee, \BC)$, see Remark \ref{rmk: fermionic shift}). However, since each fixed-degree component $H^*(M, \BC) \to H^{*+k}(IM^\vee, \BC)$ (and vice versa) gives a morphism of mixed Hodge structures $H^*(M, \BC) \to H^{*+k}(IM^\vee, \BC)(k)$, we obtain an isomorphism of mixed Hodge structures
    $$\oplus_k H^k(M, \BC)(k) \cong \oplus_k H^k(IM^\vee, \BC)(k)$$
    (where for the inertia stack it is equivalent to use the standard topological notion of degree or the fermionic shift, which comes with a corresponding shift in weight and is thus canceled out by the Tate twist).
\end{proof}

\begin{rmk}
    A similar result is proved in \cite[Theorem A]{Popa}, starting with an arbitrary derived equivalence of smooth orbifolds with projective coarse spaces. (The difference for us is that $M, M^\vee$ are only projective over $B$, over which they are not smooth, necessitating the result on compatibility of $\tau$ with pullback in the singular case in Theorem \ref{thm: tau properties}(e).) As discussed in \cite[\S 3]{Popa}, this result should be understood as an example of the derived invariance of (orbifold) Hochschild homology, rather than part of the \cite{MSY} package.
\end{rmk}

\begin{rmk} \label{rmk: not reg}
    Note that the morphism $\delta_M: M^\vee \times_B M \times_B M^\vee \hookrightarrow M^\vee \times_B M \times M \times_B M^\vee$ used in Proposition \ref{prop: G and G-1 inverse} is a representable regular embedding such that the corresponding map of inertia stacks $I\delta_M$ is \textit{not} a regular embedding. Indeed, given $\gamma \in \Gamma$ such that $b-b_\gamma= \delta(B_\gamma) \ne 0$, we note that $(I\delta_M)_{(\gamma,\gamma)}$ is a flat base change of the inclusion
    $$M|_{B_\gamma} \hookrightarrow M|_{B_\gamma} \times M|_{B_\gamma}.$$
    Here $M|_{B_\gamma}$ is the compactified Jacobian of a family of curves whose general member is (nontrivially) nodal, thus singular (see \S \ref{subsubsec: FHHO setup} for a description of the normalization of the compactified Jacobian of a nodal curve), and so the diagonal map is not a regular embedding. 
\end{rmk}

\section{Homological realization} \label{sec: homological realization}
The goal of this section is to show that the projectors $\pg_{\gamma,k}, \qg_{\gamma,k}$ recover the perverse filtration on $(\pi_* \BC_M)_\kappa$ as described in Theorem \ref{thm: main sec 2}(ii). The smooth (but possibly disconnected) case is treated in \S \ref{subsec: hom realization smooth}. Following a comparison of Poincar\'e sheaves on the compactified Prym varieties of smooth and nodal curves in \S \ref{subsec: FHHO Pryms in families}, we deduce the general case from the smooth case in \S \ref{subsec: proof of realization}.

\subsection{The smooth case} \label{subsec: hom realization smooth}
Suppose $Y \to U$ is a family of \textit{smooth} projective curves over a smooth base $U$, equipped with a finite degree $m$ morphism to the smooth base curve $C$. Then $\J^0(Y/U) \to U$ is an abelian scheme, and the relative Prym variety $\Prym(Y/U) \to U$ is an extension of the abelian scheme $\Prym(Y/U)^\circ$ (given fiberwise by the component of $\Prym(Y/U)$ containing the trivial line bundle) by the finite abelian group scheme $\pi_0(\Prym(Y/U))$. The goal of this section is to understand the cohomological Fourier-Mukai transform by the Poincar\'e sheaf of $\Prym(Y/U)$ and its dual $[\Prym(Y/U)/\J^0(C)[m] \times U]$ in this smooth but stacky case.

\subsubsection{Duality for abelian stacks} \label{subsubsec: duality for ab stacks}
We start by recalling the theory of duality between extensions of abelian schemes by finite abelian groups and finite abelian group gerbes on abelian schemes, as described in \cite{Brochard}.

First, a \textit{commutative group stack} over $U$ is defined to be a stack $\mc X$ over $U$ with an addition morphism $\mu: \mc X \times_U \mc X \to \mc X$, together with associativity and commutativity morphisms satisfying certain compatibilities \cite[Definition 2.2]{Brochard}. The homotopy category $\mathrm{Ho}(CGS(U))$ of the 2-category of commutative group stacks over $U$ (i.e., objects are as before, and morphisms are isomorphism classes of morphisms in the 2-category) has a concrete description as follows: let $D^{[-1,0]}(U, \BZ)$ be the full subcategory of the derived category of sheaves of abelian groups on $U$ with cohomology in concentrated in degrees $-1$ and 0. Then there is a functor
$$\alpha: D^{[-1,0]}(U, \BZ) \to \mathrm{Ho}(CGS(U)), [G^{-1} \to G^0] \mapsto [G^0/G^{-1}]$$
inducing an equivalence of categories \cite[Theorem 2.5(1)]{Brochard}. For example, given an abelian group $A$ over $U$, we see that $A$ is the commutative group stack $\alpha([0 \to A])$ and $BA$ the commutative group stack $\alpha([A \to 0])$.
We define the \textit{dual} of a commutative group stack to be the stack of morphisms of commutative group stacks $D(\mc X) := \Hom(\mc X, B\BG_m).$ (In this section, all abelian groups and gerbes, e.g. $B\BG_m$ and $\J^0(C)$, are understood to be over $U$.)

\begin{example} \cite[Example 3.7]{Brochard}
    The dual of a constant finite abelian group $G$ over $U$ is $D(G) = BG^\vee$, where $G^\vee = \Hom_{\mathrm{Ab}}(G, \BC^*)$ is the standard dual group. The dual of an abelian scheme $A$ over $U$ is the dual abelian scheme $A^\vee$.
\end{example}

\begin{prop} \label{prop: Prym has correct dual}
    (i) The dual of the commutative group stack $\Prym(Y/U)$ is $[\J^0(Y/U)/\J^0(C)]$, and vice versa. \\
    (ii) The dual of the abelian scheme $\Prym(Y/U)^\circ$ is $[\J^0(Y/U)/(\J^0(C)/K)]$ for $K := \ker(\J^0(C) \to \J^0(Y/U)$.
\end{prop}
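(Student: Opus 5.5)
Throughout, Brochard's dictionary \cite[Theorem 2.5]{Brochard} lets us represent each commutative group stack by a two-term complex in $D^{[-1,0]}(U,\BZ)$. Dualization $D = \Hom(-, B\BG_m)$ is then computed as $R\mc Hom(-,\BG_m[1])$, truncated to degrees $[-1,0]$. By \cite{Brochard}, $D$ is exact on (the relevant) short exact sequences and $D \circ D \cong \mathrm{id}$ on stacks built from abelian schemes and finite groups. Granting this, both statements reduce to dualizing the right exact sequences.

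For (i), I would start from the exact sequence of commutative group schemes over $U$
$$0 \to \Prym(Y/U) \to \J^0(Y/U) \xrightarrow{\Nm} \J^0(C) \to 0,$$
which is surjective because the norm is surjective on Jacobians. In the derived category this says $\Prym(Y/U) \simeq [\J^0(Y/U) \xrightarrow{\Nm} \J^0(C)]$, placed in degrees $0,1$. Dualizing termwise, and using that Jacobians are self-dual via the autoduality $\J^0 \cong (\J^0)^\vee$ given by the theta divisor, the dual of $\Nm$ is the pullback $c^*: \J^0(C) \to \J^0(Y/U)$. This is the standard compatibility of norm and pullback under the principal polarizations. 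Hence
$$D(\Prym(Y/U)) \simeq [\J^0(C) \xrightarrow{c^*} \J^0(Y/U)]$$
in degrees $-1,0$, and under $\alpha$ this complex is exactly the quotient stack $[\J^0(Y/U)/\J^0(C)]$. The "vice versa" then follows from biduality $D \circ D \cong \mathrm{id}$ \cite{Brochard}.

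The key step, and the main obstacle, is making the dual of a two-term complex of abelian schemes honestly equal to the complex of dual abelian schemes with the dual maps. Concretely, one must check that $\mc Ext^1(A,\BG_m) = A^\vee$, that $\mc Hom(A,\BG_m) = 0$, and that there are no higher contributions. I would try to extract this from the computation of $D(A)$ in \cite[Example 3.7]{Brochard} together with exactness of $D$ on distinguished triangles. I would also verify the sign convention identifying $\Nm^\vee$ with $c^*$.

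For (ii), I would use the sequence $0 \to \Prym^\circ \to \Prym \to \pi_0(\Prym) \to 0$, or more directly present $\Prym^\circ$ via the norm. Dualizing the inclusion $\Prym^\circ \hookrightarrow \Prym$ gives a quotient of $[\J^0(Y/U)/\J^0(C)]$ by $D(\pi_0(\Prym)) = B(\pi_0(\Prym)^\vee)$. By \cite[Theorem 1.1]{HP}, $\pi_0(\Prym)^\vee = K$. In the complex picture, this amounts to replacing $\J^0(C) \xrightarrow{c^*} \J^0(Y/U)$ by the injective map $\J^0(C)/K \to \J^0(Y/U)$. Since the complex $[\J^0(C)/K \to \J^0(Y/U)]$ has no $H^{-1}$, the resulting stack $[\J^0(Y/U)/(\J^0(C)/K)]$ is an honest abelian scheme. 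It remains to identify this abelian scheme with $(\Prym^\circ)^\vee$, which follows from (i) and exactness.
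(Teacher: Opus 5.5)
The gap is at the step you flag yourself, and the justification you propose for it does not work. Because $D(\mc X)=\tau_{\le 0}R\mc Hom(\mc X,\BG_m[1])$, $D$ does not in general carry exact sequences of commutative group stacks to exact sequences. After truncation, the dualized triangle is again a triangle only when $H^0$ of the dual of the middle term surjects onto $H^0$ of the dual of the first term. For $0\to\Prym(Y/U)\to\J^0(Y/U)\xrightarrow{\Nm}\J^0(C)\to 0$, identifying $D(\Prym(Y/U))$ with $[\J^0(C)\xrightarrow{c^*}\J^0(Y/U)]$ therefore requires $\J^0(Y/U)\to D(\Prym(Y/U))$ to be an epimorphism. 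Equivalently, the boundary map $\mc Ext^1(\Prym(Y/U),\BG_m)\to\mc Ext^2(\J^0(C),\BG_m)$ must vanish. ``Exactness of $D$ on distinguished triangles'' is exactly what the truncation can destroy, so appealing to it is circular, and nothing in your outline controls this boundary map. The surjectivity criterion the paper uses, \cite[Proposition 3.18.a(iii)]{Brochard}, would here require already knowing $D(\Prym(Y/U))$, the object you are trying to compute.

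The paper avoids this by dualizing the other sequence, $0\to\J^0(C)\xrightarrow{c^*}\J^0(Y/U)\to[\J^0(Y/U)/\J^0(C)]\to 0$. There the map that must be surjective is $D(c^*)=\Nm$, which visibly is, so one gets $D([\J^0(Y/U)/\J^0(C)])=\ker\Nm=\Prym(Y/U)$ directly. The direction you start with, $D(\Prym(Y/U))=[\J^0(Y/U)/\J^0(C)]$, then follows from biduality of $[\J^0(Y/U)/\J^0(C)]$ \cite[Example 4.10]{Brochard}. This needs an input absent from your proposal: $K$ is a constant finite group. It lies in $\J^0(C)[m]$, and it is locally constant by Ehresmann's theorem together with $K^\vee=\pi_0(\Prym(Y/U))$ \cite[Theorem 1.1]{HP}. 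Hence the stack is a $BK$-gerbe over the abelian scheme $\J^0(Y/U)/(\J^0(C)/K)$. The same flatness of $K$ is what makes $[\J^0(Y/U)/(\J^0(C)/K)]$ an abelian scheme in (ii); the vanishing of $H^{-1}$ alone does not give this.

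Your own order of steps can be repaired, but only by first doing the computation you postpone to (ii). From $0\to\Prym(Y/U)^\circ\to\Prym(Y/U)\to K^\vee\to 0$ and $\mc Ext^i(K^\vee,\BG_m)=0$ for $i\ge 1$, one gets $\mc Ext^1(\Prym(Y/U),\BG_m)\cong(\Prym(Y/U)^\circ)^\vee$. The map from $\J^0(Y/U)^\vee$ is then dual to the closed immersion $\Prym(Y/U)^\circ\hookrightarrow\J^0(Y/U)$, hence surjective. That same vanishing is what underlies the ``exactness'' you invoke in (ii), which the paper packages as \cite[Theorem 4.11]{Brochard}: $(\Prym(Y/U)^\circ)^\vee=H^0(D(\Prym(Y/U)))$.
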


\begin{proof}
    Consider the ``exact sequence" (see \cite[Definition 2.11]{Brochard}) of commutative group stacks
    \begin{equation} \label{eq: duality 1}
        0 \to \J^0(C) \to \J^0(Y/U) \to [\J^0(Y/U)/\J^0(C)] \to 0.
    \end{equation}
    Applying the duality functor, we obtain a short exact sequence
    \begin{equation} \label{eq: duality 2}
            0 \to D([\J^0(Y/U)/\J^0(C)]) \to \J^0(Y/U) \to \J^0(C) \to 0
    \end{equation}
    by \cite[Example 3.7]{Brochard} (since Jacobians are self-dual abelian varieties) and \cite[Proposition 3.18.a(iii)]{Brochard} (since the norm map $\J^0(Y/U) \to \J^0(C)$ is surjective). Thus $D([\J^0(Y/U)/\J^0(C)]) = \ker(\J^0(Y/U) \to \J^0(C)) = \Prym(Y/U)$.

    Next, we claim that $K := \ker(\J^0(C) \to \J^0(Y/U))$ is a constant group scheme: indeed, by the projection formula it is a subobject of the constant finite group scheme $\J^0(C)[m]$, and on the other hand it is (analytically) locally constant since $\pi_0(\Prym(Y/U))$ is so by Ehresmann's theorem, and we have a canonical identification $\ker(\J^0(C) \to \J^0(Y/U))^\vee = \pi_0(\Prym(Y/U))$ by \cite[Theorem 1.1(1)]{HP}. Note also that $[\J^0(Y/U)/(\J^0(C)/K)]$ is an abelian scheme. Then 
    $$D(\Prym(Y/U)) = D(D([\J^0(Y/U)/\J^0(C)])) = [\J^0(Y/U)/\J^0(C)]$$
    by \cite[Example 4.10]{Brochard}.

    Finally, it follows from \cite[Theorem 4.11]{Brochard} applied to $\Prym(Y/U)$ that
    $$(\Prym(Y/U)^\circ)^\vee = H^0([\J^0(C) \to \J^0(Y/U)]) = [\J^0(Y/U)/(\J^0(C)/K)].$$
\end{proof}

Now, for any commutative group stack $\mc X$, we define the Poincar\'e bundle $\mc P_{\mc X/D\mc X}$ to be the pullback of the universal line bundle on $B\BG_m$ under the evaluation map $\mc X \times_U D(\mc X) \to B\BG_m$. Note that this recovers the usual definition of normalized Poincar\'e bundle when $\mc X$ is an abelian scheme (where normalized means trivial on $\{id\} \times_U A^\vee$ and $A \times_U \{id\}$).

\begin{lem} \label{lem: compare smooth Poincare sheaves}
    Let $f: \mc X \to \mc Y$ be a morphism of commutative group stacks. Then
    $$(1 \times Df)^* \mc P_{\mc X/D\mc X} = (f \times 1)^* \mc P_{\mc Y/D \mc Y}.$$
\end{lem}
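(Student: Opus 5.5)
The plan is to unwind both sides as pullbacks of the universal line bundle on $B\BG_m$ along evaluation maps, and to show that the two evaluation maps $\mc X \times_U D\mc Y \to B\BG_m$ are $2$-isomorphic. By definition, $\mc P_{\mc X/D\mc X} = \mathrm{ev}_{\mc X}^* \mc L_{univ}$, where $\mathrm{ev}_{\mc X}: \mc X \times_U D(\mc X) \to B\BG_m$ sends $(x, \phi)$ to $\phi(x)$, and similarly for $\mc Y$. Since $Df: D\mc Y \to D\mc X$ is precomposition, $\phi \mapsto \phi \circ f$, the statement is a functoriality of evaluation.

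First, compute $(1 \times Df)^* \mc P_{\mc X/D\mc X} = (\mathrm{ev}_{\mc X} \circ (1 \times Df))^* \mc L_{univ}$. On $T$-points this composite sends $(x, \psi)$, with $x \in \mc X(T)$ and $\psi \in D\mc Y(T) = \Hom(\mc Y_T, B\BG_{m,T})$, to $(\psi \circ f)(x)$. Second, $(f \times 1)^* \mc P_{\mc Y/D\mc Y} = (\mathrm{ev}_{\mc Y} \circ (f \times 1))^* \mc L_{univ}$, and this composite sends $(x,\psi)$ to $\psi(f(x))$. These agree on objects. The morphisms of stacks are identified by the canonical associativity $2$-isomorphism of composition of $1$-morphisms of stacks, $\psi \circ (f \circ x) \cong (\psi \circ f)\circ x$. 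This identification is natural in $T$ and compatible with pullback, so it gives a $2$-isomorphism of morphisms $\mc X \times_U D\mc Y \to B\BG_m$. Pulling back $\mc L_{univ}$ along $2$-isomorphic maps then yields isomorphic line bundles.

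To make this rigorous, I would express evaluation as a composite. Let $\mathrm{ev}_{\mc X}$ be the counit of the internal Hom adjunction, namely the composite $\mc X \times_U \underline{\Hom}(\mc X, B\BG_m) \to B\BG_m$. The required identity is then the standard dinaturality of the counit: for $f: \mc X \to \mc Y$, the diagram built from $1 \times f^*$ and $f \times 1$ commutes up to canonical $2$-isomorphism. Here $Df$ is by definition the restriction of $f^*: \underline{\Hom}(\mc Y, B\BG_m) \to \underline{\Hom}(\mc X, B\BG_m)$ to additive morphisms. I would also check that $Df$ really lands in morphisms of commutative group stacks, which holds because $f$ is additive.

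The main subtlety is purely $2$-categorical bookkeeping. One must make sure that the isomorphism of line bundles is canonical, and that no twist arises from the additive structures: the Poincaré bundle uses only the underlying evaluation, not the group structure. I would therefore verify the identity on $T$-points, carefully tracking that the line bundle corresponding to $\psi(f(x))$ is literally the same object in both descriptions. No further input from \cite{Brochard} beyond the definitions of $D$ and $Df$ should be needed.
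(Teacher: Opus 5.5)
Your proposal is correct and follows the paper's proof: both reduce the identity to the $2$-commutativity of the square $\mathrm{ev}_{\mc X} \circ (1 \times Df) \cong \mathrm{ev}_{\mc Y} \circ (f \times 1)$ as maps $\mc X \times_U D(\mc Y) \to B\BG_m$, and then pull back the universal line bundle. The paper simply calls this commutativity a tautology from the definition of $Df$ as precomposition with $f$; your $T$-point and counit-dinaturality discussion spells out that same tautology.
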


\begin{proof}
    This follows from the statement that the following (non-Cartesian) diagram of commutative group stacks commutes
    \[
    \begin{tikzcd}
        \mc X \times_U D(\mc Y) \arrow[r, "f \times 1"] \arrow[d, "1 \times Df"] & \mc Y \times_U D(\mc Y) \arrow[d, "ev_{\mc Y}"] \\
        \mc X \times_U D(\mc X) \arrow[r, "ev_{\mc X}"] & B \BG_m
    \end{tikzcd}
    \]
    which is a tautology from the definition of $Df$.
\end{proof}

\begin{cor} \label{cor: multiplicativity of P}
    If $\mc X$ is a commutative group stack with addition map $\mu: \mc X \times_U \mc X \to \mc X$, then the Poincar\'e sheaf satisfies
    $$(\mu \times 1)^* \mc P_{\mc X/D\mc X} \cong p_{13}^* \mc P_{\mc X/D\mc X} \otimes p_{23}^* \mc P_{\mc X/D\mc X}.$$
    If the natural map $\mc X \to D(D(\mc X))$ is an isomorphism, then also
    $$(1 \times \mu)^* \mc P_{D\mc X/\mc X} \cong p_{12}^* \mc P_{D\mc X/\mc X} \otimes p_{13}^* \mc P_{D \mc X/\mc X}.$$
\end{cor}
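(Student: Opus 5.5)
The plan is to derive both identities from Lemma \ref{lem: compare smooth Poincare sheaves}, applied to suitable morphisms of commutative group stacks together with the identification of the dual of a product. For the first identity, I take $f = \mu: \mc X \times_U \mc X \to \mc X$, which is a morphism of commutative group stacks because $\mc X$ is commutative. The lemma then gives
$$(\mu \times 1)^* \mc P_{\mc X/D\mc X} \cong (1 \times D\mu)^* \mc P_{\mc X \times_U \mc X/D(\mc X \times_U \mc X)}$$
on $\mc X \times_U \mc X \times_U D\mc X$.

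The next step is to identify $D(\mc X \times_U \mc X)$ with $D\mc X \times_U D\mc X$. This holds because $D = \Hom(-, B\BG_m)$ turns finite products (which are also coproducts in the additive 2-category of commutative group stacks) into products. Under this identification, $D\mu$ becomes the diagonal $D\mc X \to D\mc X \times_U D\mc X$: a homomorphism $\phi$ pulls back along $\mu$ to $\phi \circ \mu = (\phi \circ p_1) \cdot (\phi \circ p_2)$, which corresponds to the pair $(\phi,\phi)$. Likewise, the evaluation map on $(\mc X \times_U \mc X) \times_U (D\mc X \times_U D\mc X)$ sends $(x_1,x_2,\phi_1,\phi_2)$ to $\phi_1(x_1) \otimes \phi_2(x_2)$, using the group structure of $B\BG_m$. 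So the Poincaré bundle of the product is $p_{13}^*\mc P \otimes p_{24}^*\mc P$. Pulling this back along $1 \times \Delta$ gives $p_{13}^* \mc P_{\mc X/D\mc X} \otimes p_{23}^* \mc P_{\mc X/D\mc X}$, which is the first identity. Equivalently and more directly, since $ev(\mu(x_1,x_2),\phi) = \phi(x_1+x_2) \cong \phi(x_1)\otimes\phi(x_2)$ by the fact that $\phi$ is a homomorphism, the two maps $\mc X\times_U\mc X\times_U D\mc X \to B\BG_m$ are 2-isomorphic, and pulling back the universal line bundle gives the claim.

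For the second identity, I would apply the first identity to the commutative group stack $D\mc X$, with addition map $\mu_{D\mc X}$. This gives
$$(\mu_{D\mc X}\times 1)^*\mc P_{D\mc X/DD\mc X} \cong p_{13}^*\mc P_{D\mc X/DD\mc X}\otimes p_{23}^*\mc P_{D\mc X/DD\mc X}.$$
Using the hypothesis $\mc X \cong DD\mc X$, I identify $\mc P_{D\mc X/DD\mc X}$ with the transpose $\mc P_{D\mc X/\mc X} := \mc P_{\mc X/D\mc X}^T$. Reordering the factors so that $D\mc X$ comes second then gives the displayed formula; I read the indices $p_{12}, p_{13}$ in the statement with that ordering. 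I will also need to check that the biduality map is compatible with evaluation, i.e. that $ev_{D\mc X}(\phi, \eta(x)) = \phi(x)$, where $\eta: \mc X \to DD\mc X$ is the natural map. This holds by the definition of $\eta$.

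The main obstacle is bookkeeping rather than content. One has to check that the 2-isomorphisms involved (the homomorphism structure of $\phi$, and the identification of the dual of a product) are coherent enough to give isomorphisms of line bundles on stacks. Since we only assert isomorphisms, not canonical ones satisfying further compatibilities, the existence of a 2-morphism between the two maps to $B\BG_m$ is enough.
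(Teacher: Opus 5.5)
Your proof of the first identity is fine and is essentially the paper's. Both apply Lemma \ref{lem: compare smooth Poincare sheaves} to $\mu$, identify $D(\mc X\times_U\mc X)\cong D\mc X\times_U D\mc X$ with $D\mu=\Delta_{D\mc X}$ and $\mc P_{\mc X\times\mc X/D(\mc X\times\mc X)}\cong p_{13}^*\mc P_{\mc X/D\mc X}\otimes p_{24}^*\mc P_{\mc X/D\mc X}$, and pull back along $1\times\Delta_{D\mc X}$. The paper obtains the product decomposition by dualizing a split exact sequence, and you obtain it from additivity of $\Hom(-,B\BG_m)$; these amount to the same thing.

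\textbf{The gap: your second identity is linear in the wrong slot.} In the statement, $\mc P_{D\mc X/\mc X}$ is a line bundle on $D\mc X\times_U\mc X$, namely $\mc P_{D\mc X/DD\mc X}$ with $DD\mc X\cong\mc X$. The map $1\times\mu\colon D\mc X\times_U\mc X\times_U\mc X\to D\mc X\times_U\mc X$ uses the addition of $\mc X$ on the second slot. Applying the first identity to the group stack $D\mc X$ instead gives
\[
(\mu_{D\mc X}\times 1)^*\mc P_{D\mc X/\mc X}\cong p_{13}^*\mc P_{D\mc X/\mc X}\otimes p_{23}^*\mc P_{D\mc X/\mc X}
\]
on $D\mc X\times_U D\mc X\times_U\mc X$. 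That is linearity in the $D\mc X$-slot. No reordering of factors turns $\mu_{D\mc X}$ into $\mu$. After your reordering you get $(1\times\mu_{D\mc X})^*\mc P_{\mc X/D\mc X}\cong p_{12}^*\mc P_{\mc X/D\mc X}\otimes p_{13}^*\mc P_{\mc X/D\mc X}$. This has the same shape as the displayed formula but different content. Your remark about reading $p_{12},p_{13}$ ``with that ordering'' is where the statement gets silently changed.

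This matters downstream. In Proposition \ref{prop: components of smooth chP}, the corollary is used for translation by $L_\alpha$ in the $\Prym$-factor of $\mc P_{D(\Prym)/\Prym}$. That is, $\mc X=\Prym$ and $\mu$ acts on the $\mc X$-slot, not the dual one.

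\textbf{The fix is short, and you already have the ingredient.} You noted that $ev_{D\mc X}(\phi,\eta(x))\cong\phi(x)=ev_{\mc X}(x,\phi)$. This gives $(1\times\eta)^*\mc P_{D\mc X/DD\mc X}\cong\mc P_{\mc X/D\mc X}^T$. The second identity is then just the transpose of the first: move the $D\mc X$ factor to the front.

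Alternatively, follow the paper and apply Lemma \ref{lem: compare smooth Poincare sheaves} to the homomorphism $D\mu=\Delta_{D\mc X}\colon D\mc X\to D\mc X\times_U D\mc X$. Biduality identifies $D\Delta_{D\mc X}=DD\mu$ with $\mu$, and $D(D\mc X\times_U D\mc X)$ with $\mc X\times_U\mc X$. The lemma then yields
\[
(1\times\mu)^*\mc P_{D\mc X/\mc X}\cong(\Delta_{D\mc X}\times 1)^*\bigl(p_{13}^*\mc P_{D\mc X/\mc X}\otimes p_{24}^*\mc P_{D\mc X/\mc X}\bigr)=p_{12}^*\mc P_{D\mc X/\mc X}\otimes p_{13}^*\mc P_{D\mc X/\mc X}.
\]
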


\begin{proof}
    We apply Lemma \ref{lem: compare smooth Poincare sheaves} to $\mu$. Note that applying $D$ to the split exact sequence
    $$0 \to \mc X \to \mc X \times_U \mc X \to \mc X \to 0 = \alpha(0 \to \alpha^{-1}(\mc X) \to \alpha^{-1}(\mc X)^{\oplus 2} \to \alpha^{-1}(\mc X) \to 0),$$
    we obtain another split exact sequence, so $D(\mc X \times_U \mc X) \cong D(\mc X) \times_U D(\mc X)$. We then have $ev_{\mc X \times \mc X} \cong ev_{\mc X} \times_U ev_{\mc X}$, so
    $$\mc P_{\mc X \times \mc X/D(\mc X \times \mc X)} \cong p_{13}^* \mc P_{\mc X/D\mc X} \otimes p_{24}^* \mc P_{\mc X/D\mc X}.$$
    Since the two zero-section maps $\mc X \to \mc X \times_U \mc X$ compose with $\mu$ to give the identity, we see that $D\mu = \Delta_{D(\mc X)}: D(\mc X) \to D(\mc X \times_U \mc X)$ and thus
    $$(\mu \times 1)^* \mc P_{\mc X/D\mc X} \cong  (1 \times \Delta_{D(\mc X)})^* (p_{13}^* \mc P_{\mc X/D\mc X} \otimes p_{24}^* \mc P_{\mc X/D\mc X}) = p_{13}^* \mc P_{\mc X/D\mc X} \otimes p_{23}^* \mc P_{\mc X/D\mc X}.$$

    For the second statement, one applies Lemma \ref{lem: compare smooth Poincare sheaves} to $\Delta_{D(\mc X)} = D\mu$, noting that
    $$(1 \times DD\mu)^* \mc P_{D\mc X/DD\mc X} = (1 \times \mu)^* \mc P_{D \mc X/\mc X}$$
    and
    $$(\Delta_{D(\mc X)} \times 1)^* \mc P_{D\mc X \times D\mc X/ DD\mc X \times DD\mc X}  = (\Delta_{D(\mc X)} \times 1)^* \mc P_{D\mc X \times D\mc X/ \mc X \times \mc X} $$
    $$\cong (\Delta_{D \mc X} \times 1)^*(p_{13}^*\mc P_{D\mc X/\mc X} \otimes p_{24}^* \mc P_{D \mc X/\mc X}) = p_{12}^* \mc P_{D \mc X/\mc X} \otimes p_{13}^* \mc P_{D \mc X/\mc X}.$$
    
\end{proof}

\begin{cor} \label{cor: smooth Prym P/P vs J/J}
    The Poincar\'e bundle $\mc P_{D(\Prym)/\Prym}$ is obtained from $\mc P_{J/J}$ on $\J^0(Y/U) \times_U \J^0(Y/U)$ as in Proposition \ref{prop: GS}, by restricting to $\Prym(Y/U)$ in the second factor and descending to $[\J^0(Y/U)/\J^0(C)]$ in the first.
\end{cor}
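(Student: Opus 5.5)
The plan is to apply Lemma \ref{lem: compare smooth Poincare sheaves} to the two natural morphisms of commutative group stacks relating $\J^0(Y/U)$ with the Prym data. In the smooth case $\J^0(Y/U)$ is an abelian scheme, self-dual via the principal polarization, and $\mc P_{J/J}$ is the normalized Poincar\'e bundle $\mc P_{\J/D\J}$ under the identification $D(\J^0(Y/U)) = \J^0(Y/U)$. (Arinkin's sheaf restricted to the smooth locus is the normalized Poincar\'e line bundle.) We use two maps. The first is the inclusion $i: \Prym(Y/U) \hookrightarrow \J^0(Y/U)$. The second is the quotient $q: \J^0(Y/U) \to [\J^0(Y/U)/\J^0(C)]$. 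By the exact sequences (\ref{eq: duality 1}) and (\ref{eq: duality 2}) in the proof of Proposition \ref{prop: Prym has correct dual}, these two maps are dual to each other: $Dq = i$ and $Di = q$, once we identify $D([\J^0(Y/U)/\J^0(C)]) = \Prym(Y/U)$ and $D(\Prym(Y/U)) = [\J^0(Y/U)/\J^0(C)]$ (the latter via biduality, \cite[Example 4.10]{Brochard}).

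First apply Lemma \ref{lem: compare smooth Poincare sheaves} with $f = i$, using $\mc X = \Prym$ and $\mc Y = \J^0(Y/U)$. This gives
$$(1 \times Di)^* \mc P_{\Prym/D\Prym} = (i \times 1)^* \mc P_{\J/\J}$$
on $\Prym(Y/U) \times_U \J^0(Y/U)$. Here $Di = q$, so the left side is the pullback of $\mc P_{\Prym/D\Prym}$ along $1 \times q$. The right side is the restriction of $\mc P_{J/J}$ to $\Prym$ in the first factor. Transposing (with $\mc P_{J/J}$ symmetric), this says: the restriction of $\mc P_{J/J}$ to $\J^0(Y/U) \times_U \Prym(Y/U)$ equals the pullback of $\mc P_{D(\Prym)/\Prym}$ along the quotient map in the first factor. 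Here $\mc P_{D(\Prym)/\Prym}$ is the transpose of $\mc P_{\Prym/D\Prym}$, matching the biduality convention of Corollary \ref{cor: multiplicativity of P}. Since $q \times 1$ is a $\J^0(C)$-torsor, this exhibits $\mc P_{D(\Prym)/\Prym}$ as a descent of that restriction. By the uniqueness in Proposition \ref{prop: GS} (or Proposition \ref{prop: GS descent}), it is the descended sheaf described there. As a consistency check, one can also apply the lemma with $f = q$; the result should be the same identity viewed from the other factor.

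The main obstacle is bookkeeping rather than substance, in two places. The first is checking that the identifications $Di = q$ and $DDX \cong X$ are compatible with the self-duality of $\J^0(Y/U)$ used to define $\mc P_{J/J}$. The concern is a possible sign or inverse from the principal polarization, or from the identification of $\J^0(C)$ with its dual used for the norm map. I would verify this by checking that the norm map $\J^0(Y/U) \to \J^0(C)$ is dual to pullback $c^*$ under the principal polarizations, a standard fact. The second is that descent data is a structure, not a property. So I would also check that the $\J^0(C)$-equivariant structure from the lemma matches the one in Proposition \ref{prop: GS}. Uniqueness of descent there settles this, since $\Prym$ is connected-fiberwise-dense enough, or because both are normalized along the identity section.
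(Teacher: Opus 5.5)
Your proposal is correct and is essentially the paper's argument. The paper applies Lemma \ref{lem: compare smooth Poincare sheaves} to the inclusion $i\colon \Prym(Y/U)\hookrightarrow \J^0(Y/U)$, and uses the duality of the sequences (\ref{eq: duality 1}) and (\ref{eq: duality 2}) to identify $Di$ with the quotient $\J^0(Y/U)\to[\J^0(Y/U)/\J^0(C)]$, exactly as you do; your remarks on transposition, the self-duality convention for $\mc P_{J/J}$, and uniqueness of descent (Proposition \ref{prop: GS descent}) are bookkeeping the paper leaves implicit, and your ``consistency check'' (applying the lemma to the quotient map, whose dual is $i$) already gives the identity in the stated orientation, so the transposition step can be skipped.
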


\begin{proof}
    This follows directly from Lemma \ref{lem: compare smooth Poincare sheaves} applied to $i: \Prym(Y/U) \hookrightarrow \J^0(Y/U)$. Indeed, since (\ref{eq: duality 1}) and (\ref{eq: duality 2}) were shown to be dual in the proof of Proposition \ref{prop: G and G-1 inverse}, we see that $Di$ is the quotient $\J^0(Y/U) \to [\J^0(Y/U)/\J^0(C)]$.
\end{proof}

\begin{cor} \label{cor: eigenvalue of P}
    An element $\gamma \in K := \ker(\J^0(C) \to \J^0(Y/U))$ acts on $\mc P_{D(\Prym)/\Prym}$ such that the eigenvalue on $D(\Prym(Y/U)) \times_U \Prym^\alpha(Y/U)$ for any $\alpha \in \pi_0(\Prym(Y/U)) = K^\vee$ is given by $\alpha(\gamma) \in \BC^*$.
\end{cor}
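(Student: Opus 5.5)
The plan is to compute the eigenvalue pointwise, using the descriptions from Corollary \ref{cor: smooth Prym P/P vs J/J} and Proposition \ref{prop: Prym has correct dual}. The element $\gamma \in K \subset \J^0(C)$ acts trivially on $\J^0(Y/U)$, since $c^*\gamma$ is trivial. It therefore lies in the automorphism (stabilizer) group of every point of $D(\Prym(Y/U)) = [\J^0(Y/U)/\J^0(C)]$. The claim concerns how this stabilizer acts on the fibers of the descended line bundle $\mc P_{D(\Prym)/\Prym}$. This bundle is, by Corollary \ref{cor: smooth Prym P/P vs J/J}, obtained from $\mc P_{J/J}|_{\J^0(Y/U) \times_U \Prym(Y/U)}$ via the $\J^0(C)$-equivariant structure coming from Lemma \ref{lem: compare smooth Poincare sheaves}.

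First I would identify the equivariant structure concretely. For $\xi \in \J^0(C)$ acting on the first factor by $L \mapsto c^*\xi \otimes L$, Corollary \ref{cor: multiplicativity of P} (applied to $\J^0(Y/U)$, which is self-dual) gives an isomorphism
$$\mc P_{J/J}|_{(c^*\xi \otimes L, F)} \cong \mc P_{J/J}|_{(c^*\xi, F)} \otimes \mc P_{J/J}|_{(L,F)}.$$
By symmetry, $\mc P_{J/J}|_{(c^*\xi,F)}$ is the fiber at $c^*\xi$ of the line bundle $F$, viewed via $\J^0(Y)^\vee = \J^0(Y)$. Equivalently, it is the fiber at $\xi$ of $\Nm(F) \in \J^0(C)$, and $\Nm(F)$ is canonically trivialized for $F \in \Prym$. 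This trivialization gives the descent datum. For $\xi = \gamma \in K$ we have $c^*\gamma \cong \ms O_Y$, so $\gamma$ acts on the fiber over $(L,F)$ by a scalar. That scalar is the discrepancy between two trivializations of the fiber of $F$ at the identity-type point $c^*\gamma \cong \ms O$. The first trivialization comes from rigidification at $\ms O_Y$, transported along the isomorphism $c^*\gamma \cong \ms O_Y$. The second comes from $\Nm(F) \cong \ms O_C$ evaluated at $\gamma$.

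Next I would show this scalar is locally constant in $F$ and multiplicative in $F$, using Corollary \ref{cor: multiplicativity of P} again. It is then a character of $\pi_0(\Prym(Y/U))$, and it is bilinear in $(\gamma, [F])$, so it gives a pairing $K \times \pi_0(\Prym) \to \BC^*$. The last step is to identify this pairing with the duality $\pi_0(\Prym(Y/U)) = K^\vee$ of \cite[Theorem 1.1(1)]{HP}, as used in the proof of Proposition \ref{prop: Prym has correct dual}. The most conceptual route is through Brochard's duality: $\pi_0(\Prym) = H^0$ of the finite part and $K = H^{-1}$ of $D(\Prym) = [\J^0(C) \to \J^0(Y/U)]$. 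For a commutative group stack, the evaluation pairing $\mc X \times D\mc X \to B\BG_m$, restricted to $\pi_0(\mc X) \times \pi_{-1}(D\mc X) = \pi_0(\mc X) \times \Hom(\pi_0(\mc X),\BG_m)$, is the tautological pairing, because automorphisms of a point of $D\mc X$ act on the pulled-back universal bundle through evaluation. This is the content of \cite[Theorem 4.11]{Brochard}.

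The main obstacle is checking that the identification $K^\vee \cong \pi_0(\Prym)$ of \cite{HP}, which is used in the statement, agrees with the one produced by Brochard's duality, possibly up to sign. I would first try to verify this on a single fiber $U = \mathrm{pt}$ by direct computation. Take $F$ in a component $\alpha$, and use the Weil-pairing-type description of \cite{HP}: a norm-trivial $F$ and $\gamma$ with $c^*\gamma$ trivial give a scalar through $\Nm(F) \cong \ms O$ and the trivialization of $c^*\gamma$. This is exactly the scalar computed above, and local constancy then extends the result over $U$.
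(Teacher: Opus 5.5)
Your proposal is correct, and its decisive step is the paper's own argument. A point of $D(\Prym)=[\J^0(C)\to\J^0(Y/U)]$ has automorphism group $H^{-1}=K$. These automorphisms act on the pulled-back universal bundle by evaluating characters of $\Prym$, and such characters factor through $\pi_0(\Prym)$.

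The paper implements this with the map $f\colon BK\to D(\Prym)$ coming from $[K\to 0]\to[\J^0(C)\to\J^0(Y/U)]$. Dualizing the truncation sequence shows that $Df\colon\Prym\to K^\vee$ is the component map. Brochard's description of $D(BK)$ shows that $K$ acts on $\mc P_{BK/K^\vee}|_{BK\times\{\alpha\}}$ by $\alpha$. Lemma \ref{lem: compare smooth Poincare sheaves} then gives $(f\times 1)^*\mc P_{D(\Prym)/\Prym}\cong(1\times Df)^*\mc P_{BK/K^\vee}$.

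What you add is the explicit norm-map description of the descent datum and the comparison with \cite{HP}. The paper does neither. In its proof it takes $\pi_0(\Prym)=K^\vee$ to \emph{be} the identification produced by the dualized truncation sequence. With that convention the statement is tautological, and your ``main obstacle'' does not arise.

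Your concrete layer does buy something. It gives a fibrewise formula for the scalar as the discrepancy of two trivializations of $\mc P_{J/J}$ at $(\ms O_Y,F)$, i.e.\ the Mumford-type pairing between $\ker(\Nm^\vee)=K$ and $\pi_0(\ker\Nm)$. That is what you would need to fix signs against \cite{HP} or the Weil pairing. If you keep it, you owe two things:
\begin{itemize}
\item You must justify that your explicit datum is the one defining $\mc P_{D(\Prym)/\Prym}$. It is, because $\J^0(C)$-equivariant structures here are unique: units on $\J^0(C)\times\J^0(Y/U)\times_U\Prym$ come from $\Prym$, and the cocycle condition forces them to be $1$.
\item Your claim that the identification of \cite{HP} is given by this scalar remains unproved in your write-up; the paper does not prove it either.
\end{itemize}
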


\begin{proof}
    First, note that $D(BK)$ can be described as follows \cite[Corollary 3.5]{Brochard}: isomorphism classes of morphisms $BK \to B\BG_m$ are given by morphisms $[K \to 0] \to [\BG_m \to 0]$ in $D^{[-1,0]}(U, \BZ)$, i.e., $\Hom_{\mathrm{Ab}}(K, \BG_m) = K^\vee$. It follows from this description of  $D(BK)$ and the evaluation map that for each $\alpha \in K^\vee$, $K$ acts on $\mc P_{BK/K^\vee}|_{BK \times \alpha}$ by the character $\alpha$.

    Next, note that the morphism of complexes $[K \to 0] \to [\J^0(C) \to \J^0(Y/U)]$ gives a morphism of commutative group stacks $f: BK \to D(\Prym(Y/U))$ fitting into the canonical truncation exact sequence \cite[Example 2.13]{Brochard}
    \begin{equation} \label{eq: trunc}
    0 \to BK \to D(\Prym(Y/U)) \to [\Prym(Y/U)/(\J^0(C)/K)] \to 0.        
    \end{equation}
    Dualizing, by \cite[Remark 3.19, Corollary 11.5(ii)]{Brochard}, we obtain the exact sequence
    \begin{equation} \label{eq: trunc D}
     0 \to \Prym(Y/U)^\circ \to \Prym(Y/U) \to K^\vee \to 0. 
    \end{equation}
    Using this to identify $K^\vee = \pi_0(\Prym(Y/U))$, we see that $Df: \Prym(Y/U) \to \pi_0(\Prym(Y/U))$ is given by $\Prym(Y/U)^{\alpha} \mapsto \alpha$.    
    
    Finally, viewing $\mc P_{D(\Prym)/\Prym}$ as a $\J^0(C)[m]$-equivariant line bundle on $\Prym(Y/U) \times_U \Prym(Y/U)$, we note that $\J^0(C)[m]$ acts transitively on $\pi_0(\Prym(Y/U))$ \cite[Theorem 1.1(2)]{HP}, and so it suffices to calculate the eigenvalue on the zero section $\{\ms O\} \times_U \Prym(Y/U)$, i.e., identify the $K$-action on $(f \times 1)^* \mc P_{D(\Prym)/\Prym}$ on $BK \times_U \Prym(Y/U)$. Combining the isomorphism $(f \times 1)^* \mc P_{D(\Prym)/ \Prym} \cong (1 \times Df)^* \mc P_{BK/K^\vee}$ of Lemma \ref{lem: compare smooth Poincare sheaves} with the description of $\mc P_{BK/K^\vee}$ in the first paragraph and the description of $Df$ in the second paragraph, we see that $K$ acts on $(f \times 1)^* \mc P_{D(\Prym)/\Prym}|_{D(\Prym) \times \Prym^\alpha}$ by the character $\alpha \in K^\vee = \pi_0(Y/U)$.
\end{proof}

\subsubsection{Comparison of correspondences} \label{subsubsec: hom realization Fourier smooth}
With these preliminaries on the Poincar\'e bundle, we are ready to compare the cohomological correspondences defined in \S \ref{subsec: main theorem} with the correspondences $\mf F, \mf F^{-1}$ used in \cite{MSY}, still in the smooth case of $Y/U$.
To ease notation, we write $\Prym$ instead of $\Prym(Y/U)$ for the rest of \S \ref{subsec: hom realization smooth}.

Recall the constant group scheme $K := \ker(\J^0(C) \to \J^0(Y/U)) \subset \J^0(C)[m]$, and recall that $\Prym = \coprod_{\alpha \in K^\vee} \Prym^\alpha$. Also note that
$$I D(\Prym(Y/U)) =  I([\Prym(Y/U)/\J^0(C)[m]]) = \coprod_{\gamma \in K} D(\Prym(Y/U)).$$
On a nice quotient $\mc X$, define $\tCh(-) := \tch(-) \cap [I \mc X] \in \CH_*(I \mc X)$.
Then we consider
$$\tilde{\mf G} = (\tilde{\mf G}_{\gamma, \alpha})_{\gamma \in K, \alpha \in K^\vee} := \tCh(\mc P_{D(\Prym)/\Prym}) \in \oplus_{\gamma \in K, \alpha \in K^\vee} \CH_*(D(\Prym) \times_U \Prym^\alpha)$$
and
$$\mf F := \Ch(\mc P_{(\Prym^\circ)^\vee/\Prym^\circ}) \in \CH_*((\Prym^\circ)^\vee \times_U \Prym^\circ).$$
Note that $\mf F$ is now as defined in \cite{MSY}, since $\Prym^\circ$ is simply an abelian scheme, and $\tilde{\mf G} $ is as defined in Definition \ref{def: G and p}, by Theorem \ref{thm: tau properties}(d) and the fact that $K$ acts trivially and so $IB = \coprod_{k \in K} B$. (We save the notation $\mf G$ for the singular case, see \S \ref{subsec: proof of realization}.)

To compare $\mf F$ and $\tilde{\mf G}$ we need the following two pieces of notation: first, we have the (flat) $BK$-gerbe and coarse moduli space map
\begin{equation} \label{eq: def of g}
  g: D(\Prym) = [\Prym/\J^0(C)[m]] \to [\Prym/(\J^0(C)[m]/K)] = (\Prym^\circ)^\vee.  
\end{equation}
Second, for each $\alpha \in \pi_0(\Prym)$, by \cite[Theorem 1.1(2)]{HP} we may choose some $L_\alpha \in \J^0(C)[m]$ pulling back to a section in component $\alpha$. The multiplication by the pullback of $L_\alpha$ induces an isomorphism
$$f_\alpha: \Prym(Y/U)^\circ \to \Prym(Y/U)^\alpha.$$
We also obtain a line bundle $\mc P_\alpha := \mc P_{D(\Prym)/\Prym}|_{D(\Prym) \times \{L_\alpha\}}$ on $D(\Prym)$.

\begin{prop} \label{prop: components of smooth chP}
    For any $\gamma \in K$ and $\alpha \in K^\vee$, we have
    $$cl((1 \times f_\alpha)^*\tilde{\mf G}_{\gamma, \alpha}) = (\alpha(\gamma) \cdot cl((g \times 1)^* \mf F))_{\gamma \in K} \in \oplus_{\gamma \in K} H^{BM}_*(D(\Prym) \times_U \Prym^\circ).$$
\end{prop}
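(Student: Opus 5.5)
The plan is to compute $\tilde{\mf G}_{\gamma,\alpha}$ directly from Definition \ref{defn: stacky ch} and then compare the resulting line bundle with $(g \times 1)^*\mc P_{(\Prym^\circ)^\vee/\Prym^\circ}$ using the functoriality of Poincar\'e bundles from \S\ref{subsubsec: duality for ab stacks}. The argument has three steps: identify the $\gamma$-component, compare line bundles, and show that the leftover factor is cohomologically trivial.

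\textbf{Step 1: the $\gamma$-component.} Since $K$ acts trivially on $\Prym$, the $\gamma$-component of $I(D(\Prym) \times_U \Prym)$ is all of $D(\Prym) \times_U \Prym$, and $\iota^*$ restricted to it is the identity on underlying sheaves. By definition,
$$\tilde{\mf G}_{\gamma,\alpha} = \ch^{EG}\bigl(\rho_\gamma(\mc P_{D(\Prym)/\Prym}|_{D(\Prym)\times_U \Prym^\alpha})\bigr) \cap [D(\Prym)\times_U\Prym^\alpha].$$
By Corollary \ref{cor: eigenvalue of P}, $\gamma$ acts on this restriction by the scalar $\alpha(\gamma)$. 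Hence $\rho_\gamma$ multiplies the class by $\alpha(\gamma)$, and
$$\tilde{\mf G}_{\gamma,\alpha} = \alpha(\gamma)\,\ch^{EG}\bigl(\mc P|_{D(\Prym)\times\Prym^\alpha}\bigr)\cap[\,\cdot\,].$$
Here $\ch^{EG}$ only sees the underlying equivariant line bundle (Proposition \ref{prop: EG factors through m1}), so the scalar is the only effect of $\rho$. By Proposition \ref{prop: ttd properties}(a), pulling back along $1\times f_\alpha$ commutes with $\ch^{EG}$. It therefore suffices to compare $(1\times f_\alpha)^*\mc P_{D(\Prym)/\Prym}$ with $(g\times 1)^*\mc P_{(\Prym^\circ)^\vee/\Prym^\circ}$.

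\textbf{Step 2: comparing line bundles.} Write $f_\alpha = \mu\circ(L_\alpha, i^\circ)$, where $i^\circ:\Prym^\circ\hookrightarrow\Prym$ is the inclusion. The second statement of Corollary \ref{cor: multiplicativity of P} applies, because $\Prym \to DD(\Prym)$ is an isomorphism by Proposition \ref{prop: Prym has correct dual}. It gives
$$(1\times f_\alpha)^*\mc P_{D(\Prym)/\Prym}\cong p_1^*\mc P_\alpha\otimes (1\times i^\circ)^*\mc P_{D(\Prym)/\Prym}.$$
Next apply Lemma \ref{lem: compare smooth Poincare sheaves} to $i^\circ$, in the symmetric form with the roles of a stack and its dual swapped. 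Using the truncation sequences (\ref{eq: trunc}) and (\ref{eq: trunc D}), the dual $D(i^\circ)$ is identified with the coarse map $g$ of (\ref{eq: def of g}). This yields
$$(1\times i^\circ)^*\mc P_{D(\Prym)/\Prym}\cong (g\times1)^*\mc P_{(\Prym^\circ)^\vee/\Prym^\circ}.$$
Since $\ch^{EG}$ is multiplicative, we get
$$(1\times f_\alpha)^*\tilde{\mf G}_{\gamma,\alpha}=\alpha(\gamma)\,p_1^*\ch^{EG}(\mc P_\alpha)\cap (g\times1)^*\mf F.$$

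\textbf{Step 3: the factor $\mc P_\alpha$.} It remains to show $cl(\ch^{EG}(\mc P_\alpha))=1$, which is the main obstacle. Since $L_\alpha\in\J^0(C)[m]$, its image in $\Prym$ is an $m$-torsion section. Applying the first statement of Corollary \ref{cor: multiplicativity of P} (in the variable $\Prym$) $m$ times gives $\mc P_\alpha^{\otimes m}\cong \mc P|_{D(\Prym)\times\{\ms O\}}$. The latter is trivial by the normalization of the Poincar\'e bundle; the point to verify carefully is that this triviality holds together with its $\J^0(C)[m]$-equivariant structure, i.e.\ that the evaluation map at the zero section is the trivial map to $B\BG_m$. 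Granting this, $c_1^{EG}(\mc P_\alpha)$ is $m$-torsion, hence zero rationally, so $\ch^{EG}(\mc P_\alpha)=1$ in $\CH^*(D(\Prym))_\BC$ and a fortiori after applying $cl$.

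If the equivariant triviality fails by a character of the gerbe, then $\mc P_\alpha^{\otimes m}$ is still a line bundle twisted by a finite-order character, whose $c_1^{EG}$ vanishes rationally. So the conclusion persists in either case. Combining Steps 1–3 and applying $cl$ yields the stated formula for each $\gamma\in K$.
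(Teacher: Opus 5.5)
Your proof is correct. Steps 1 and 2 are essentially the paper's argument. The paper also splits off $p_1^*\mc P_\alpha$ using the second half of Corollary \ref{cor: multiplicativity of P}. It then identifies $(1\times j)^*\mc P_{D(\Prym)/\Prym}$ with $(g\times 1)^*\mc P_{(\Prym^\circ)^\vee/\Prym^\circ}$ by applying Lemma \ref{lem: compare smooth Poincare sheaves} to $g$, using $Dg=j$. You apply the lemma to $i^\circ$ with $D(i^\circ)=g$ and transpose, which is equivalent by biduality. The only cosmetic difference is that the paper attributes the scalar $\alpha(\gamma)$ to the $K$-action on $\mc P_\alpha$, whereas you attribute it to the action on $\mc P|_{D(\Prym)\times_U\Prym^\alpha}$.

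Where you genuinely differ is Step 3.

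\textbf{The paper's route.} Lemma \ref{lem: Palpha top trivial} pulls back along the atlas $k\colon \Prym\to D(\Prym)$, which is injective on cohomology. It identifies $k^*\mc P_\alpha$ with $\mc P_{J/J}|_{\Prym\times\{L_\alpha\}}$ and deforms $L_\alpha$ to $\ms O_C$ inside the connected $\J^0(C)$. So $c_1(\mc P_\alpha)$ vanishes by algebraic, hence homological, equivalence. This gives vanishing only in $H^*$, which is why the proposition is stated after applying $cl$.

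\textbf{Your route.} Your torsion argument uses only the group law: $\mc P_\alpha^{\otimes m}\cong\mc P|_{D(\Prym)\times\{\ms O\}}$. The latter is trivial as a line bundle on the stack directly from the evaluation-map definition, since $\ms O$ corresponds to the trivial homomorphism $D(\Prym)\to B\BG_m$. So your hedge about a possible character twist is not needed, though it is harmless, since such a twist has rationally vanishing $c_1^{EG}$.

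\textbf{What your route buys.} You get $c_1^{EG}(\mc P_\alpha)=0$ in $\CH^1(D(\Prym))_\BQ$. Hence the identity $(1\times f_\alpha)^*\tilde{\mf G}_{\gamma,\alpha}=\alpha(\gamma)\,(g\times 1)^*\mf F$ already holds in rational Chow groups, before taking cycle classes. You also avoid the auxiliary family over $\J^0(C)$ and the comparison with $\mc P_{J/J}$.
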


\begin{proof}
    First, note that
    $$(1 \times f_\alpha)^* \mc P_{D(\Prym)/\Prym} = (1 \times \mu(L_\alpha, -))^* \mc P_{D(\Prym)/\Prym} \cong p_1^* \mc P_\alpha \otimes \mc P_{D(\Prym)/\Prym}$$
    by Corollary \ref{cor: multiplicativity of P}, using the fact that $\Prym = D(D(\Prym))$ \cite[Theorem 4.13(i)]{Brochard}. In particular, if $j: \Prym^\circ \hookrightarrow \Prym$ is the inclusion, then
    \begin{equation} \label{eq: smooth comparison 1}
    (1 \times f_\alpha)^* \tilde{\mf G}_{\gamma,\alpha} = \tCh(p_1^* \mc P_\alpha \otimes (1 \times j)^*\mc P_{D(\Prym)/\Prym})_\gamma.   
    \end{equation}
    Now, from the pair of dual exact sequences (\ref{eq: trunc}) and (\ref{eq: trunc D}) we see that $j = Dg$. Thus, by Lemma \ref{lem: compare smooth Poincare sheaves} we have
    $$(1 \times j)^* \mc P_{D(\Prym)/\Prym} = (g \times 1)^* \mc P_{(\Prym^\circ)^\vee/\Prym^\circ}.$$
    Using the compatibility of the operational class $\tch$ with pullback (Proposition \ref{prop: ttd properties}(a)), we see that
    $$\tCh((1 \times j)^* \mc P_{D(\Prym)/\Prym}) = (g \times 1)^* \Ch(\mc P_{(\Prym^\circ)^\vee/\Prym^\circ}) \in \CH_*(D(\Prym) \times_U \Prym^\circ).$$
    Combining this with (\ref{eq: smooth comparison 1}), we have
    $$(1 \times f_\alpha)^* \tilde{\mf G}_{\gamma,\alpha} = \tch(p_1^* \mc P_\alpha) \cap (g \times 1)^*\mf F.$$
    Applying the cycle class map, we obtain the needed result from Lemma \ref{lem: Palpha top trivial}.
\end{proof}

\begin{lem} \label{lem: Palpha top trivial}
    We have $c_1(\mc P_\alpha) = 0 \in H^*(D(\Prym), \BC)$, and $K$ acts on $\mc P_\alpha$ by the character $\alpha \in K^\vee$. In particular, for any $(C_\gamma)_{\gamma \in K} \in \CH_*(D(\Prym) \times_U \Prym^\circ)$ we have
    $$cl(\tch(p_1^* \mc P_\alpha) \cap (C_\gamma)_{\gamma \in K}) = (\alpha(\gamma) \cdot cl(C_\gamma))_{\gamma \in K}.$$
\end{lem}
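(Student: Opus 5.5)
The plan is to split the lemma into its two assertions, the $K$-character and the vanishing of $c_1$, and then read off the cycle-class formula from the definition of $\tch$.

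\textbf{Step 1: the $K$-character.} The statement about the character follows directly from Corollary \ref{cor: eigenvalue of P}. Indeed, $\mc P_\alpha$ is the restriction of $\mc P_{D(\Prym)/\Prym}$ to $D(\Prym) \times \{L_\alpha\}$. By construction the point $L_\alpha$ lies in the component $\Prym^\alpha$, so $K$ acts on $\mc P_\alpha$ by $\alpha(\gamma)$.

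\textbf{Step 2: vanishing of $c_1$.} I would argue via multiplicativity, using Corollary \ref{cor: multiplicativity of P} in the second factor:
$$(1 \times \mu)^* \mc P_{D(\Prym)/\Prym} \cong p_{12}^* \mc P_{D(\Prym)/\Prym} \otimes p_{13}^*\mc P_{D(\Prym)/\Prym}.$$
Choose $N$ with $L_\alpha^{\otimes N}$ trivial; for instance $N = m$, since $L_\alpha \in \J^0(C)[m]$. Then the pullback of $L_\alpha$ to $Y$ is an $N$-torsion section of $\Prym$, and iterating the displayed isomorphism gives
$$\mc P_\alpha^{\otimes N} \cong \mc P_{D(\Prym)/\Prym}|_{D(\Prym)\times\{\ms O\}}.$$
The right-hand side is trivial by normalization of the Poincaré bundle, since the evaluation map is trivial at the identity. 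Hence $\mc P_\alpha$ is a torsion line bundle, so $c_1(\mc P_\alpha)$ is torsion in $H^2(D(\Prym),\BZ)$ and therefore vanishes with $\BC$-coefficients. This is the non-equivariant first Chern class, that is, the class on the untwisted component.

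\textbf{Step 3: the formula.} By Definition \ref{defn: stacky ch}, $\tch(p_1^*\mc P_\alpha)$ on the $\gamma$-component of the inertia stack is $\ch^{EG}(\rho_\gamma(\iota^*p_1^*\mc P_\alpha))$. Since $\gamma \in K$ acts trivially on the space, the component of the inertia stack is the whole stack, and $\iota^*\mc P_\alpha$ is the line bundle $\mc P_\alpha$ on which $\gamma$ acts by the scalar $\alpha(\gamma)$ by Step 1. Therefore $\rho_\gamma$ multiplies its class by $\alpha(\gamma)$, giving
$$\tch(p_1^*\mc P_\alpha)_\gamma = \alpha(\gamma)\, \ch^{EG}(p_1^*\mc P_\alpha) = \alpha(\gamma)\exp(c_1(p_1^*\mc P_\alpha)).$$
Applying $cl$, which commutes with Chern classes and cap products as recalled in \S\ref{subsubsec: constructible sheaves}, and using Step 2, the exponential becomes $1$ in cohomology. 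This yields $(\alpha(\gamma)\, cl(C_\gamma))_\gamma$.

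\textbf{Main obstacle.} The delicate point is Step 2 on the stack $D(\Prym)$. One must check that the torsion argument is legitimate equivariantly: the iterated multiplicativity isomorphism holds as $\J^0(C)[m]$-equivariant bundles, and the normalization gives equivariant triviality. If this is problematic, it suffices to note that $H^*(D(\Prym),\BC) = H^*(\Prym,\BC)^{\J^0(C)[m]}$. One can then check that $c_1$ vanishes on the cover, where $\mc P_\alpha$ is a torsion line bundle by the same computation, and rational cohomology ignores torsion.
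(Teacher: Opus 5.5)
Your proposal is correct. Steps 1 and 3 match the paper: Corollary \ref{cor: eigenvalue of P} gives the character, and then $\rho_\gamma$ scales by $\alpha(\gamma)$ and $cl$ is compatible with Chern classes. Step 2 takes a genuinely different route to $c_1(\mc P_\alpha)=0$.

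The paper argues as follows:
\begin{enumerate}
\item It pulls back along the quotient $k:\Prym\to D(\Prym)$, which is injective on $H^*(-,\BC)$.
\item It writes $k=Di\circ i$ and applies Lemma \ref{lem: compare smooth Poincare sheaves} to identify $k^*\mc P_\alpha$ with $\mc P_{J/J}|_{\Prym\times\{L_\alpha\}}$.
\item It notes that $L\mapsto \mc P_{J/J}|_{\Prym\times\{c^*L\}}$ is a family of line bundles over the connected base $\J^0(C)$ that is trivial at $L=\ms O_C$.
\end{enumerate}
So $k^*\mc P_\alpha$ is algebraically, hence homologically, trivial.

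You instead iterate the isomorphism of Corollary \ref{cor: multiplicativity of P} along the $m$-torsion section $c^*L_\alpha$ to get $\mc P_\alpha^{\otimes m}\cong\ms O$. You should note that the second statement of that corollary needs $\Prym\cong D(D(\Prym))$; this is \cite[Theorem 4.13(i)]{Brochard}, as already used in Proposition \ref{prop: components of smooth chP}.

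Each route has an advantage:
\begin{itemize}
\item Yours stays on the stack and uses only the group-stack formalism. It also proves more: $c_1(\mc P_\alpha)$ already vanishes in $\CH^1(D(\Prym))_\BQ$. Hence $\tch(p_1^*\mc P_\alpha)_\gamma=\alpha(\gamma)$ holds in Chow with $\BC$-coefficients, before applying $cl$.
\item The paper's uses only normalization and connectedness of $\J^0(C)$, and does not need double duality. But it only gives vanishing modulo algebraic equivalence, which is why the lemma is stated after $cl$.
\end{itemize}

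The worry you flag as the main obstacle is not a real issue. Corollary \ref{cor: multiplicativity of P} comes from a $2$-commutative diagram of commutative group stacks, so it is already an isomorphism of line bundles on the stacks. The restriction to $D(\Prym)\times\{\ms O\}$ is the pullback along the zero morphism to $B\BG_m$, hence trivial on the stack. In any case, your fallback of checking on the cover is exactly the reduction the paper makes.
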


\begin{proof}
    First, since the $\J^0(C)[m]$-quotient map $k:\Prym \to D(\Prym)$ induces an injection on cohomology, it suffices to show the vanishing of $c_1(k^* \mc P_\alpha)$. Letting $i: \Prym \hookrightarrow \J^0$ be the inclusion, recall from the proof of Corollary \ref{cor: smooth Prym P/P vs J/J} that $k = Di \circ i$, and thus
    $$k^* \mc P_\alpha = (i \times 1)^* (Di \times 1)^*\mc P_{D(\Prym)/\Prym}|_{\Prym \times \{L_\alpha\}}$$
    $$= (i \times 1)^* (1 \times i)^*\mc P_{J/J}|_{\Prym \times \{L_\alpha\}} = \mc P_{J/J}|_{\Prym \times \{L_\alpha\}}$$
    by Lemma \ref{lem: compare smooth Poincare sheaves}. We now note that under the pullback $l: \J^0(C) \to \J^0(Y/U)$, we obtain a family of line bundles $(i \times l)^* \mc P_{J/J}$ on $\Prym(Y/U)$ over the connected base $\J^0(C)$ with $(i \times l)^* \mc P_{J/J}|_{\Prym \times \{\ms O_C\}} \cong \ms O_{\Prym}$. Then $c_1(\mc P_{J/J}|_{\Prym \times L_\alpha}) = c_1(\ms O_{\Prym}) = 0$ since these two classes are algebraically, thus also topologically, equivalent.

    Next, it follows from Corollary \ref{cor: eigenvalue of P} that $K$ acts on $\mc P_\alpha$ via the character $\alpha$, and so
    $$\tch(\mc P_\alpha) = (\alpha(\gamma) \cdot \ch^{EG}(\mc P_\alpha))_{\gamma \in K} \in \CH^*(ID(\Prym)) = \oplus_{\gamma \in K} \CH^*(D(\Prym)).$$
    Recalling from \S \ref{subsubsec: constructible sheaves} the compatibility of Chern classes with the cycle class map, we have
    $$cl(\tch(\mc P_\alpha) \cap (C_\gamma)_{\gamma \in K}) = (\alpha(\gamma) \cdot \ch^{EG}_{top}(\mc P_\alpha))_\gamma \cap (cl(C_\gamma))_\gamma = (\alpha(\gamma) \cdot cl(C_\gamma))_\gamma$$
    by the vanishing of $c_1(\mc P_\alpha)$.
\end{proof}

Let $d$ be the relative dimension of the projection $\pi_U: \Prym \to U$. Recall from Proposition \ref{prop: GS} that $\mc P_{D(\Prym)/\Prym}$ has (left and right) adjoint
$$\mc P^{-1}_{\Prym/D(\Prym)} := R\mc Hom(\mc P_{\Prym/D(\Prym)}, \pi_U^* \ms A)[d] = \mc P_{\Prym/D(\Prym)}^\vee \otimes \pi_U^* \ms A[d]$$
for a line bundle $\ms A = \det \pi_{X/U*} \omega_{X/U}$ on $U$. Similarly, in the non-stacky case $\mc P_{(\Prym^\circ)^\vee/\Prym^\circ}$ has adjoint
$$\mc P^{-1}_{\Prym^\circ/(\Prym^\circ)^\vee} := \mc P^{\vee}_{\Prym^\circ/(\Prym^\circ)^\vee} \otimes \pi_U^* \ms A[d].$$
We define
$$\tilde{\mf G}^{-1} = (\tilde{\mf G}^{-1}_{\gamma, \alpha})_{\gamma \in K, \alpha \in K^\vee} := \ttd(-\pi_{U}^* T_U) \cap \tau(\mc P^{-1}_{\Prym/D(\Prym)}) \in \bigoplus_{\gamma \in K, \alpha \in K^\vee} \CH_*(\Prym^\alpha \times_U D(\Prym))$$
and
$$\mf F^{-1} := \td(-\pi_{U}^* T_U) \cap \tau(\mc P^{-1}_{\Prym^\circ/(\Prym^\circ)^\vee}) \in \CH_*(\Prym^\circ \times (\Prym^\circ)^\vee).$$
Again, these are $\mf G^{-1}$ from Definition \ref{def: G and p} and $\mf F^{-1}$ from \cite{MSY}, respectively.
As before, we have a comparison:

\begin{prop} \label{prop: components of smooth chP-1}
    For any $\gamma \in K$ and $\alpha \in K^\vee$, we have
    $$cl((f_\alpha \times 1)^*\tilde{\mf G}_{\gamma, \alpha}^{-1}) = (\alpha(\gamma^{-1}) \cdot cl((1 \times g)^* \mf F^{-1}))_{\gamma \in K} \in \oplus_{\gamma \in K} H^{BM}_*(\Prym^\circ \times_U D(\Prym)).$$
\end{prop}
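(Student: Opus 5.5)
We mirror the proof of Proposition \ref{prop: components of smooth chP}, now for the adjoint kernel $\mc P^{-1}_{\Prym/D(\Prym)} = \mc P^\vee_{\Prym/D(\Prym)} \otimes \pi_U^*\ms A[d]$. Since $\Prym \times_U D(\Prym)$ is smooth and $\mc P^{-1}$ is a shifted line bundle, Theorem \ref{thm: tau properties}(d) and (a) give
$$\tilde{\mf G}^{-1} = (-1)^d\, \tch(\mc P^\vee_{\Prym/D(\Prym)}) \cap \tch(\pi_U^*\ms A) \cap \ttd(T_{\Prym \times_U D(\Prym)} - \pi_U^*T_U) \cap [I(\Prym \times_U D(\Prym))].$$
The same formula holds for $\mf F^{-1}$ with $\ch$, $\td$ and $\Prym^\circ \times_U (\Prym^\circ)^\vee$. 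Both total spaces are smooth over $U$ with relative tangent bundle pulled back from the group structures. The relative Todd class is then trivial in cohomology, or at least pulled back compatibly along $f_\alpha \times 1$ and $1 \times g$. The $K$-action on these tangent bundles is trivial, since $K$ acts trivially on $\Prym$. Hence on each component $\gamma \in K$ of the inertia stack, $\ttd$ reduces to $\td^{EG}$ with no moving part. So the Todd and $\ms A$ factors match on both sides under pullback by Proposition \ref{prop: ttd properties}(a), and only the Chern characters of $\mc P^\vee$ need comparing.

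For the Chern character, I pull back along $f_\alpha \times 1$. Applying Corollary \ref{cor: multiplicativity of P} to $\mu(L_\alpha,-)$ in the $\Prym$ factor gives
$$(f_\alpha \times 1)^*\mc P_{\Prym/D(\Prym)} \cong p_2^*\mc P_\alpha \otimes (j \times 1)^*\mc P_{\Prym/D(\Prym)}.$$
Dualizing yields $p_2^*\mc P_\alpha^\vee \otimes (j\times 1)^*\mc P^\vee_{\Prym/D(\Prym)}$. Then Lemma \ref{lem: compare smooth Poincare sheaves}, with $j = Dg$ and transposed, identifies $(j \times 1)^*\mc P_{\Prym/D(\Prym)}$ with $(1 \times g)^*\mc P_{\Prym^\circ/(\Prym^\circ)^\vee}$. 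Compatibility of $\tch$ with pullback and duals (Proposition \ref{prop: ttd properties}(a)) then gives
$$(f_\alpha\times 1)^*\tilde{\mf G}^{-1}_{\gamma,\alpha} = \tch(p_2^*\mc P_\alpha^\vee)_\gamma \cap (1\times g)^*\mf F^{-1}.$$
Here I use that $g$ is a gerbe with $Ig$ restricting to $g$ on each component, so the untwisted EG classes pull back correctly.

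The last step is a dual version of Lemma \ref{lem: Palpha top trivial}. We have $c_1(\mc P_\alpha^\vee) = -c_1(\mc P_\alpha) = 0$ in cohomology, and by Corollary \ref{cor: eigenvalue of P} the group $K$ acts on $\mc P_\alpha^\vee$ by the character $\alpha^{-1}$. The $\rho$-twist then gives $\tch(\mc P^\vee_\alpha)_\gamma$ the scalar $\alpha(\gamma)^{-1} = \alpha(\gamma^{-1})$ in cohomology. Applying $cl$ finishes the proof.

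I expect the main obstacle to be bookkeeping for the Todd and tangent-bundle terms and the shift/sign. One must check that the normalizations of $\tilde{\mf G}^{-1}$ and $\mf F^{-1}$, namely $\ttd(-\pi_U^*T_U)$ against $\td(-\pi_U^*T_U)$, really coincide under $(f_\alpha\times 1)^*$ and $(1\times g)^*$. This requires $T_{D(\Prym)} = g^*T_{(\Prym^\circ)^\vee}$, which holds because $g$ is étale-locally a trivial $BK$-gerbe. It also requires that $K$ has no moving part on the tangent bundles, which is clear since $K$ acts trivially on $\Prym$.
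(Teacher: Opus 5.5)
Your proposal is correct and takes essentially the same approach as the paper. The Chern-character comparison for $\mc P^\vee$ runs exactly as in Proposition \ref{prop: components of smooth chP}, with the dual of Lemma \ref{lem: Palpha top trivial} supplying the factor $\alpha(\gamma^{-1})$; the $\ch(\pi_U^*\ms A[d])$ and Todd factors are then matched via Proposition \ref{prop: ttd properties}(a) and $g^*T_{(\Prym^\circ)^\vee} = T_{D(\Prym)}$, and Theorem \ref{thm: tau properties}(a),(d) identifies $\tilde{\mf G}^{-1}$ and $\mf F^{-1}$ (the paper multiplies by the common factor after the comparison rather than expanding first, which is only a difference of order).
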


\begin{proof}
    Exactly as in Proposition \ref{prop: components of smooth chP} we have
    $$cl((f_\alpha \times 1)^* 
    \tCh(\mc P^\vee_{\Prym/D(\Prym)})_{\alpha, \gamma})= \alpha(\gamma^{-1}) cl((1 \times g)^*\Ch(\mc P^\vee_{\Prym^\circ/(\Prym^\circ)^\vee}))$$
    in $H^{BM}_*(\Prym^\circ \times_U D(\Prym)).$
    Then applying
    $$cl(\tch(\pi_U^* \ms A[d])_\gamma \cap \ttd(-\pi_U^* T_U + T_{\Prym^\circ \times D(\Prym)})_\gamma) $$
    $$= cl((1 \times g)^*(\ch(\pi_U^* \ms A[d]) \cap \td(-\pi_U^* T_U + T_{\Prym^\circ \times (\Prym^\circ)^\vee}))) \in H^*(\Prym^\circ \times_U D(\Prym))$$
    to both sides (where for the equality we use Proposition \ref{prop: ttd properties}(a) and the fact that $g^* T_{(\Prym^\circ)^\vee} = T_{D(\Prym)}$), the needed result follows from Theorem \ref{thm: tau properties}(a) and (d).
\end{proof}

\subsubsection{Fourier transform}
We are now ready to complete the proof of Theorem \ref{thm: main sec 2}(ii) in the smooth case. We start with a toy example of Fourier transform in the stacky setting, to see how the components of $\tau(\mc P_{D(\Prym)/\Prym})$ on the different pieces of the inertia stack will recover the decomposition of $H^*(\Prym)$ into $K$-isotypic components:

\begin{example} \label{ex: Fourier for finite G}
    Let $G$ be a finite abelian group, and let $G^\vee$ be its dual. Recall from the proof of Corollary \ref{cor: eigenvalue of P} that $BG$ and $G^\vee$ are dual commutative group stacks, and $\mc P_{BG/G^\vee}$ is the trivial vector bundle on $G^\vee$ such that $G$ acts on the fiber over $\alpha \in G^\vee$ by the character $\alpha$. We see that
    $$\tCh(\mc P_{BG/G^\vee}) = (\alpha(\gamma) [BG \times_U \{\alpha\}])_{\gamma \in G, \alpha \in G^\vee} \in \oplus_{\gamma \in G, \alpha \in G^\vee} \CH_*(BG \times_U \{\alpha\})$$
    with inverse
    $$\tCh(\mc P_{G^\vee/BG}^{-1}) = (\alpha(\gamma^{-1}) [\{\alpha\} \times_U BG])_{\gamma \in G, \alpha \in G^\vee} \in \oplus_{\gamma \in G, \alpha \in G^\vee} \CH_*( \{\alpha\} \times_U BG).$$
    %For example, from
    %$$[BG \times_U \{\beta\}] \circ [\{\alpha\} \times_U BG] = \frac{1}{\# G} [\{\alpha\} \times_U \{\beta\}],$$
    %it follows that
    %$$\tCh(\mc P_{BG/G^\vee}) \circ \tCh(\mc P_{G^\vee/BG}^{-1}) =  \sum_{\alpha, \beta \in G^\vee} \sum_{\gamma \in G}\frac{\alpha^{-1} \beta(\gamma)}{\# G} [\{\alpha\} \times_U \{\beta\}]  = \sum_{\alpha \in G^\vee} \frac{1}{\# G} [\{\alpha\} \times_U \{\alpha\}] = [\Delta_{G^\vee}].$$
    Consider the decomposition of $H^*(G^\vee)$ into isotypic components indexed by elements $\gamma \in G$: we have
    $$H^*(G^\vee)_\gamma = H^*(U) \cdot (\alpha(\gamma))_\alpha \subset H^*(G^\vee) = \oplus_{\alpha \in G^\vee} H^*(U).$$
    Then note that the cycle class of $\tCh(\mc P^{-1}_{G^\vee/BG})_\gamma$ for any $\gamma \in G$ gives a morphism
    $$H^*(G^\vee) = \oplus_\alpha H^*(U) \to H^*(BG) = H^*(U), (x_\alpha)_\alpha \mapsto \frac{1}{\# G} \sum_\alpha \alpha(\gamma^{-1}) x_\alpha,$$
    which is trivial on all isotypic components other than $H^*(G^\vee)_\gamma$ but defines an isomorphism $H^*(G^\vee)_\gamma \to H^*(BG)$.
\end{example}

We recall the following notation: for each $\gamma \in K$, we write
$$\tilde{\mf G}_\gamma =: \sum_{i \ge 0} \tilde{\mf G}_{\gamma, i}, \,\, \tilde{\mf G}_{\gamma, i} \in  \Corr^{i-d}_U(D(\Prym), \Prym)$$
and similarly for $\tilde{\mf G}^{-1}_\gamma$ (again matching Definition \ref{def: G and p}, since $h = h_\gamma =: d$). We also decompose
$$\mf F =: \sum_{i \ge 0} \mf F_i, \,\, \mf F_i \in \Corr_U^{i-d}((\Prym^\circ)^\vee, \Prym^\circ)$$
and similarly for $\mf F^{-1}$, as in \cite{MSY}. We define projectors $\mf p^{\tilde{\mf G}}_{\gamma,k}, \mf q_{\gamma,k}^{\tilde{\mf G}}$ for $\tilde{\mf G}$, respectively $\mf p_k, \mf q_k$ for $\mf F$, as in (\ref{eq: pGgk def}) and (\ref{eq: qGgk def}), respectively \cite[Theorem 2.6(i)]{MSY}. The following proposition is a summary of the smooth case of \cite[Theorem 2.6]{MSY}, which comes from \cite[Remark 2 after Corollary 3.2]{Deninger-Murre}.

\begin{prop} \label{prop: MSY smooth}
    The $\mf p_k, \mf q_k \in \Corr^0_U(\Prym^\circ, \Prym^\circ)$ are projectors, such that the homological realization of $(\Prym^\circ, \mf p_k, 0) \to (\Prym^\circ, \Delta_{\Prym^\circ}, 0)$ is the perverse truncation
    $$\ptau_{\le k+\dim U} \pi_{U*} \BC_{\Prym^\circ} \to \pi_{U*} \BC_{\Prym^\circ}$$
    and the homological realization of $(\Prym^\circ, \Delta_{\Prym^\circ}, 0) \to (\Prym^\circ, \mf q_k, 0)$ is
    $$\pi_{U*} \BC_{\Prym^\circ} \to \ptau_{\ge k +\dim U} \pi_{U*} \BC_{\Prym^\circ}.$$
\end{prop}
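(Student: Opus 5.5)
This statement is the smooth case of \cite[Theorem 2.6]{MSY}, which rests on the classical result of \cite{Deninger-Murre}. Since $\Prym^\circ \to U$ is an abelian scheme, I would not re-prove it but reduce it to that classical input. The plan has three steps: verify that $\mf F$ and $\mf F^{-1}$ are the normalized Poincar\'e correspondences in the sense of \cite{MSY}; invoke the Beauville/Deninger--Murre decomposition to get projectors; and match their realizations with perverse truncations.

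\textbf{Step 1: identify $\mf F$ and $\mf F^{-1}$.} $\mf F = \Ch(\mc P)$ with $\mc P := \mc P_{(\Prym^\circ)^\vee/\Prym^\circ}$, and $\mf F^{-1}$ is the Todd-twisted $\tau$ of the inverse kernel $\mc P^\vee \otimes \pi_U^*\ms A[d]$. On an abelian scheme $T_{\Prym^\circ}$ is $\pi_U^*(T_U \oplus \mathrm{Lie})$. So by Grothendieck--Riemann--Roch for the smooth morphism (Fulton's version of Theorem \ref{thm: tau properties}(d)), the Todd factors cancel up to classes pulled back from $U$. Up to sign and the normalization $(-1)^d$, this gives $\mf F^{-1} = \Ch(\mc P^\vee)$, which is the standard Fourier--Mukai inverse.

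\textbf{Step 2: the Deninger--Murre decomposition.} By \cite{Deninger-Murre}, $\Delta_{\Prym^\circ} = \sum_i \pi_i$ with orthogonal projectors $\pi_i$. Multiplication $[N]$ acts on $\pi_i$ by $N^i$, and $\pi_i$ is exactly $\mf F_{i'} \circ \mf F^{-1}_{2d-i'}$ for the appropriate index shift; this uses Mukai's formula $\mf F \circ \mf F^{-1} = \Delta$ graded by the $[N]$-eigenvalues. Thus $\mf p_k = \sum_{i\le k}\pi_i$ and $\mf q_k = \sum_{i \ge k}\pi_i$ are orthogonal projectors summing appropriately.

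\textbf{Step 3: realization.} The cycle class of $\pi_i$ acts on $\pi_{U*}\BC_{\Prym^\circ} \cong \oplus_j R^j\pi_{U*}\BC[-j]$ as the $[N]$-eigenprojector of eigenvalue $N^i$. This eigenprojector is the projection onto $R^i\pi_{U*}\BC[-i]$, since $[N]^*$ acts on $R^j$ by $N^j$. Because $\pi_U$ is smooth, each $R^j\pi_{U*}\BC$ is a local system, so ${}^{\mf p}\mc H^{j+\dim U} = R^j[\dim U]$. Hence the perverse and Leray truncations agree up to the shift by $\dim U$. The realizations of $\mf p_k$ and $\mf q_k$ are therefore $\ptau_{\le k+\dim U}$ and $\ptau_{\ge k+\dim U}$, and the inclusion and projection morphisms match because the truncation maps are canonical.

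\textbf{Main obstacle.} The delicate part is bookkeeping: matching the index convention $\mf F_i \in \Corr^{i-d}$ with the eigenvalue grading. One must check that $\mf F_{i}\circ\mf F^{-1}_{2d-i}$ lands in the $N^{i}$-eigenspace and not the $N^{2d-i}$-eigenspace. I would check this by computing the $[N]$-behaviour of $\Ch(\mc P)$ using $([N]\times1)^*\mc P \cong \mc P^{\otimes N}$, which is available from Corollary \ref{cor: multiplicativity of P}.
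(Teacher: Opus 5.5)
Your proposal takes the same route as the paper. The paper gives no separate argument and simply cites the smooth case of \cite[Theorem 2.6]{MSY}, resting on \cite{Deninger-Murre}, and your Steps 2 and 3 unpack that citation correctly, including the index check you flagged: $\mf F_i \circ \mf F^{-1}_{2d-i}$ is the Deninger--Murre projector of $[N]$-weight $i$, it realizes to $R^i\pi_{U*}\BC_{\Prym^\circ}[-i]$, and $\pH^{i+\dim U}(\pi_{U*}\BC_{\Prym^\circ}) = R^i\pi_{U*}\BC_{\Prym^\circ}[\dim U]$ for a smooth proper family. The one loose point is Step 1: the leftover $\ch(\ms A)$ and Todd factors pulled back from $U$ should not simply be declared trivial (that amounts to Mumford's relation for the Hodge bundle in rational Chow), but they are harmless, because the same $[N]$-scaling argument gives $\ch_i(\mc P)\circ\ch_j(\mc P^\vee)=0$ whenever $i+j\neq 2d$, so only their degree-$0$ part contributes to $\mf F_i\circ\mf F^{-1}_{2d-i}$.
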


\begin{prop} \label{prop: (ii) smooth}
    Theorem \ref{thm: main sec 2}(ii) holds for the smooth Prym fibration $\pi_U: \Prym(Y/U) \to U$.
\end{prop}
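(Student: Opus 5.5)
The plan is to reduce everything to Proposition \ref{prop: MSY smooth} for the abelian scheme $\Prym^\circ$, using the comparisons of Propositions \ref{prop: components of smooth chP} and \ref{prop: components of smooth chP-1}. In this smooth situation we have $IB = \coprod_{\gamma\in K} U$, all fixed loci equal $D(\Prym)$, and $h=h_\gamma=d$. Since $\Prym=\coprod_{\alpha\in K^\vee}\Prym^\alpha$, the realization of $\mf p^{\tilde{\mf G}}_{\gamma,k}=\sum_{i\le k}\tilde{\mf G}_{\gamma,i}\circ\tilde{\mf G}^{-1}_{\gamma,2d-i}$ decomposes into blocks $\Prym^\beta\to\Prym^\alpha$. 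Pulling back along the isomorphisms $f_\alpha, f_\beta$, both $\pi_{U*}\BC_{\Prym^\alpha}$ and $\pi_{U*}\BC_{\Prym^\beta}$ are identified with $\pi_{U*}\BC_{\Prym^\circ}$.

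Next I compute the $(\alpha,\beta)$ block. By Propositions \ref{prop: components of smooth chP} and \ref{prop: components of smooth chP-1}, applied degree by degree, its cycle class is
\[
\alpha(\gamma)\beta(\gamma^{-1})\cdot cl\big(((g\times 1)^*\mf F_i)\circ((1\times g)^*\mf F^{-1}_{2d-i})\big).
\]
Here the composition is taken over $D(\Prym)$. Because $g: D(\Prym)\to(\Prym^\circ)^\vee$ is a flat $BK$-gerbe of degree $1/|K|$, the projection formula gives
\[
(g\times 1)^*\mf F_i\circ(1\times g)^*\mf F^{-1}_{2d-i}=\tfrac{1}{|K|}\,\mf F_i\circ\mf F^{-1}_{2d-i}.
\]
Concretely, this uses $p_{13*}$ together with $g_*g^*=\tfrac{1}{|K|}$ on Borel--Moore homology. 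This is the step I expect to need the most care: the gerbe degree factor, the compatibility of the nonrepresentable pushforward with cycle class maps (\S\ref{subsubsec: constructible sheaves}), and the matching of Todd twists in the definition of $\tilde{\mf G}^{-1}$ all have to be tracked. If the composition over the stack proves awkward, I would instead pull everything back to $\Prym\times_U\Prym$ via $k$ and compute there.

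Summing the blocks then shows that the realization of $\mf p^{\tilde{\mf G}}_{\gamma,k}$ is
\[
\tfrac{1}{|K|}\Big(\sum_{\alpha,\beta}\alpha(\gamma)\beta(\gamma)^{-1}E_{\alpha\beta}\Big)\otimes cl(\mf p_k).
\]
Here $E_{\alpha\beta}$ is the identification $\Prym^\beta\cong\Prym^\alpha$ via $f_\alpha f_\beta^{-1}$. This is exactly the structure of Example \ref{ex: Fourier for finite G}. The matrix $\tfrac{1}{|K|}(\alpha(\gamma)\beta(\gamma)^{-1})_{\alpha,\beta}$ is the idempotent projecting $\bigoplus_{\alpha}\pi_{U*}\BC_{\Prym^\circ}$ onto the $\gamma$-isotypic part under the $K^\vee$-translation action. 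This $K^\vee$-translation action realizes the $\Gamma$-action on components, since $\Gamma=\J^0(C)[m]$ permutes the $\Prym^\alpha$ through $K^\vee=\pi_0$ by \cite[Theorem 1.1]{HP}. It then remains to check that the $\gamma$-isotypic part for translation coincides with the $\kappa=\langle\gamma,-\rangle$ component of $\pi_{U*}\BC_\Prym$. This should follow from Corollary \ref{cor: eigenvalue of P} and the Weil-pairing identification of Definition \ref{def: Weil pairing}.

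With that identification, $\mf p^{\tilde{\mf G}}_{\gamma,k}$ realizes the $\kappa$-isotypic projector tensored with $cl(\mf p_k)$. By Proposition \ref{prop: MSY smooth}, $cl(\mf p_k)$ is the perverse truncation $\ptau_{\le k+\dim U}$ on each $\pi_{U*}\BC_{\Prym^\alpha}$. Since the truncations are canonical and commute with the $\Gamma$-action and with the isomorphisms $f_\alpha$, this yields $(\ptau_{\le k+b}\pi_*\BC)_\kappa\to\pi_*\BC$. The case of $\mf q^{\tilde{\mf G}}_{\gamma,k}$ is identical, using $\mf q_k$.
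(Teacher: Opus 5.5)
Your proposal is correct and follows the paper's proof essentially step for step. The shared steps are: the block decomposition over $\pi_0(\Prym)=K^\vee$ via the $f_\alpha$; Propositions \ref{prop: components of smooth chP} and \ref{prop: components of smooth chP-1}; the factor $\frac{1}{\# K}$ from the gerbe $g$, which the paper packages as $[\Gamma_g]\circ[\Gamma_g]^T=\frac{1}{\# K}[\Delta_{(\Prym^\circ)^\vee}]$ in Lemma \ref{lem: composition with graphs}; and the rank-one idempotent of Example \ref{ex: Fourier for finite G} combined with Proposition \ref{prop: MSY smooth}.

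The one soft spot is matching the translation-isotypic piece with the $\kappa=\langle\gamma,-\rangle$ component. Corollary \ref{cor: eigenvalue of P} only describes the $K$-action on $\mc P$, so it does not by itself give this identification. The paper instead cites \cite[Lemma 7.2]{HP}, which says the kernel of $\J^0(C)[m]\to\pi_0(\Prym)$ acts trivially on $\pi_{U*}\BC_{\Prym}$, and \cite[Proof of Proposition 1.3]{MS}, which gives the Weil-pairing description of the character $\J^0(C)[m]\to\pi_0(\Prym)\xrightarrow{\gamma}\BC^*$.
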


\begin{proof}
    Note that components of $I(D(\Prym))$ are indexed by $\gamma \in K$ and that $\ker(\J^0(C)[m] \to \pi_0(\Prym))$ acts trivially on $\pi_{U*} \BC_{\Prym}$ (see \cite[Lemma 7.2]{HP}), and so the isotypic components of $\pi_{U*} \BC_{\Prym}$ are indexed by characters of $\pi_0(\Prym) = K^\vee$, i.e., by elements of $K$. As noted in \cite[Proof of Proposition 1.3]{MS}, the character $\J^0(C)[m] \to \pi_0(\Prym) \xrightarrow{\gamma} \BC^*$ can be described in terms of the Weil pairing as $\kappa := \langle \gamma, - \rangle$; we use $(\pi_{U*} \BC_{\Prym})_\gamma$ and $(\pi_{U*} \BC_{\Prym})_\kappa$ interchangeably to describe this component.
    
    We consider only $\mf p_{\gamma, k}^{\tilde{\mf G}}$; the proof for $\mf q_{\gamma, k}^{\tilde{\mf G}}$ is identical. By definition and the compatibility of the cycle class map with the operations on Chow groups (see \S \ref{subsubsec: constructible sheaves}) we have
    $$cl(\mf p_{\gamma, k}^{\tilde{\mf G}}) = \sum_{i \le k} cl(\tilde{\mf G}_{\gamma,i}) \circ cl(\tilde{\mf G}^{-1}_{\gamma, 2d-i})  = \bigg(\sum_{i \le k} cl(\tilde{\mf G}_{\gamma,\beta,i}) \circ cl(\tilde{\mf G}^{-1}_{\gamma, \alpha, 2d-i})\bigg)_{\alpha, \beta \in K^\vee}$$
    $$= \bigg(\sum_{i \le k} \beta(\gamma) cl((g \times 1)^* \mf F_i) \circ \alpha(\gamma^{-1}) cl((1 \times g)^* \mf F^{-1}_{2d-i}) \bigg)_{\alpha, \beta \in K^\vee}\in \oplus_{\alpha, \beta \in K^\vee} H^{BM}_*(\Prym^\circ \times_U \Prym^\circ)$$
    by Propositions \ref{prop: components of smooth chP} and \ref{prop: components of smooth chP-1}, where here we use the isomorphisms $(f_\alpha \times f_\beta): \Prym^\circ \times_U \Prym^\circ \xrightarrow{\sim} \Prym^\alpha \times_U \Prym^\beta$ to identify the sum with $H_*^{BM}(\Prym \times_U \Prym)$
    $$ = \bigg( \sum_{i \le k} \alpha^{-1}\beta(\gamma) \cdot cl( \mf F_i \circ [\Gamma_g]) \circ cl([\Gamma_g]^T \circ  \mf F^{-1}_{2d-i}) \bigg)_{\alpha, \beta \in K^\vee}$$
    by Lemma \ref{lem: composition with graphs}(a)
    $$ = \bigg(\sum_{i \le k} \alpha^{-1}\beta(\gamma) \cdot cl( \mf F_i \circ [\Gamma_g] \circ [\Gamma_g]^T \circ  \mf F^{-1}_{2d-i})\bigg)_{\alpha, \beta \in K^\vee}$$
    $$ = \frac{1}{\# K} \bigg(\sum_{i \le k} \alpha^{-1}\beta(\gamma) \cdot cl( \mf F_i \circ  \mf F^{-1}_{2d-i})\bigg)_{\alpha, \beta \in K^\vee} = \frac{1}{\# K} \bigg( \alpha^{-1}\beta(\gamma) \cdot cl( \mf p_k) \bigg)_{\alpha, \beta \in K^\vee}$$
    by Lemma \ref{lem: composition with graphs}(b). Then as in Example \ref{ex: Fourier for finite G} we note that the corresponding morphism of $\pi_{U*} \BC_{\Prym}$ is trivial on all $K$-isotypic components other than $(\pi_{U*} \BC_{\Prym})_\gamma$, and under the isomorphism
    $$h := (\alpha(\gamma) \cdot)_{\alpha \in K^\vee}: \pi_{U*} \BC_{\Prym^\circ} \xrightarrow{\sim} (\pi_{U*} \BC_{\Prym})_\gamma \subset \oplus_{\alpha \in K^\vee} \pi_{U*} \BC_{\Prym^\circ}$$
    we see that
    $$\pi_{U*} \BC_{\Prym^\circ} \xrightarrow{h} (\pi_{U*} \BC_{\Prym})_\gamma \subset \pi_{U*} \BC_{\Prym} \xrightarrow{cl(\mf p^{\tilde{\mf G}}_{\gamma, k})} \pi_{U*} \BC_{\Prym} \cong \oplus_{\beta \in K^\vee} \pi_{U*} \BC_{\Prym^\circ}$$
    is given by
    $$\frac{1}{\# K} \bigg(\sum_{\alpha \in K^\vee} \alpha(\gamma) \cdot \alpha^{-1} \beta(\gamma) \cdot cl(\mf p_k)\bigg)_{\beta \in K^\vee} = (\beta(\gamma) \cdot cl(\mf p_k))_{\beta \in K^\vee} = h \circ cl(\mf p_k).$$
    In other words, we see that the morphisms induced by $\mf p^{\tilde{\mf G}}_{\gamma, k}$ and $\mf p_k$ agree under the isomorphism $h: \pi_{U*} \BC_{\Prym^\circ} \xrightarrow{\sim} (\pi_{U*} \BC_{\Prym})_\kappa $, and thus, by Proposition \ref{prop: MSY smooth}, the image of $cl(\mf p_{\gamma, k}^{\tilde{\mf G}})$ is exactly
    $$\ptau_{\le k + \dim U} (\pi_{U*} \BC_{\Prym})_\kappa.$$
\end{proof}

\begin{lem} \label{lem: composition with graphs}
    (a) Let $\mc X, \mc Y, \mc Z$ be smooth nice quotients, all of which are proper over a smooth base variety $B$. Given $\eta \in \Corr_B^*(\mc X,\mc Y)$ and a morphism $f: \mc Y \to \mc Z$ over $B$ with graph $[\Gamma_f] \in \Corr^*_B(\mc Y, \mc Z)$, we have
    $$(1 \times f)_* \eta = [\Gamma_f] \circ \eta, \,\, \textrm{and the transpose} \,\, (f \times 1)_* \eta^T = \eta^T \circ [\Gamma_f]^T.$$
    Similarly, given $\eta \in \Corr_B^*(\mc Z, \mc X)$, we have
    $$(f \times 1)^* \eta = \eta \circ [\Gamma_f], \,\, \textrm{and the transpose} \,\, (1 \times f)^* \eta^T = [\Gamma_f]^T \circ \eta^T.$$

    (b) Let $[\Gamma_g] \in \Corr^0_U(D(\Prym), (\Prym^\circ)^\vee)$ be the graph of the morphism $g$ defined in (\ref{eq: def of g}). We have
    $$[\Gamma_g] \circ [\Gamma_g]^T = \frac{1}{\# K} [\Delta_{(\Prym^\circ)^\vee}] \in \Corr^0_U((\Prym^\circ)^\vee, (\Prym^\circ)^\vee).$$
\end{lem}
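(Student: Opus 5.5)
For part (a), I will reduce everything to the definition of composition, $\beta\circ\alpha = p_{13*}\Delta^!(\alpha\times\beta)$, together with the standard compatibilities of proper pushforward, flat pullback and refined Gysin maps on nice quotients (\S\ref{subsubsec: EG def of Chow}, including the nonrepresentable case via the covers in (\ref{diagram: proper pushforward})). The graph is $[\Gamma_f] = (1,f)_*[\mc Y]$, with $(1,f): \mc Y \to \mc Y\times_B \mc Z$. For the first identity, I write $\eta\times[\Gamma_f] = (1_{\mc X\times\mc Y}\times(1,f))_*(\eta\times[\mc Y])$. I then apply $\Delta_{\mc Y}^!$, using that refined Gysin pullback commutes with proper pushforward [Fulton, Thm.~6.2(a)].

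The resulting fiber square identifies $\Delta_{\mc Y}^!(\eta\times[\mc Y])$ with $\eta$ itself, viewed on $\mc X\times_B\mc Y$; this is the identity $\Delta^!(\alpha\times[\mc Y])=\alpha$. The pushforward then becomes $(1\times f)_*\eta$ after composing with $p_{13}$. The transpose statement follows by applying the involution swapping the factors.

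For the pullback statement, $\eta\circ[\Gamma_f] = p_{13*}\Delta_{\mc Z}^!((1,f)_*[\mc Y]\times\eta)$. Again I commute the Gysin map past the pushforward. The fiber product of $\mc Y\times_B\mc Z\times_B\mc X$ along the graph with the diagonal of $\mc Z$ is $\mc Y\times_B\mc X$. On it, $\Delta_{\mc Z}^!$ restricted to the graph is the Gysin pullback along $(f\times 1)$, which is a morphism to the smooth $\mc Z\times_B \mc X$ in the relevant sense. This gives $(f\times 1)^*\eta$ as the lci (Gysin) pullback through the graph factorization [Fulton, Prop.~6.6 / Ex.~17.4.? style]. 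The main care is that $\times_B$ rather than absolute products are involved. I will handle this by working with the flat base changes $\delta$ of the diagonals as in \S\ref{subsubsec: coherent sheaves}, where everything is an honest fiber square.

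For part (b), I compute $[\Gamma_g]\circ[\Gamma_g]^T$ using (a). Setting $\eta=[\Gamma_g]^T\in\Corr_U((\Prym^\circ)^\vee, D(\Prym))$, part (a) gives $[\Gamma_g]\circ\eta = (1\times g)_*[\Gamma_g]^T = (1\times g)_*(g,1)_*[D(\Prym)] = (g,g)_*[D(\Prym)]$. Now $(g,g) = \Delta_{(\Prym^\circ)^\vee}\circ g$, so this equals $\Delta_*g_*[D(\Prym)]$. Since $g$ is a $BK$-gerbe over its coarse space, $g_*[D(\Prym)] = \frac{1}{\#K}[(\Prym^\circ)^\vee]$. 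This follows by pulling back along $\Prym\to(\Prym^\circ)^\vee$ and comparing degrees via diagram (\ref{diagram: proper pushforward}), or directly as in Example \ref{ex: need stacky tau}.

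The expected obstacle is the sign and degree bookkeeping of the nonrepresentable pushforward for the gerbe. I will fix the normalization through the finite flat cover $\Prym\to D(\Prym)$ of degree $\#\J^0(C)[m]$ versus $\Prym\to(\Prym^\circ)^\vee$ of degree $\#(\J^0(C)[m]/K)$; the ratio gives $1/\#K$.
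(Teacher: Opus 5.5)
Your proposal is correct and essentially matches the paper: (a) is Fulton's Proposition 16.1.1(c) argument carried over to nice quotients (the paper adds only that the stacky diagonal is a representable regular local immersion in Kresch's sense, so the Gysin compatibilities you invoke hold), and for (b) you derive $(1\times g)_*(g,1)_*[D(\Prym)]=\Delta_*g_*[D(\Prym)]$ from (a), with the degree ratio giving $1/\#K$ — the same computation the paper carries out directly on the fibre square. The only slip is calling $\mc Z\times_B\mc X$ smooth: $(f\times 1)^*$ should be read as the refined lci pullback along $f$, which exists because $\mc Y$ and $\mc Z$ are smooth (and in the paper's applications $f$ is flat anyway).
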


\begin{proof}
    Part (a) follows as in the scheme case (see \cite[Proposition 16.1.1(c)]{Fulton}, which is stated in the case that $B$ is trivial but holds in this generality). The point to note is that the diagonal $\delta_{\mc Y}$ for the first statement, respectively $\delta_{\mc Z}$ for the second, is a representable regular local immersion in the sense of \cite[\S 4.1]{Kresch}, and thus there is a well-defined Gysin pullback satisfying the standard compatibilities with flat pullback and projective pushforward.

    For part (b), consider the stack of Cartesian squares
    \[
    \begin{tikzcd}
        D(\Prym) \arrow[r] \arrow[d, "g \times 1 \times g"] & D(\Prym) \times D(\Prym) \arrow[d, "(g \times 1) \times (1 \times g)"] \\
        (\Prym^\circ)^\vee \times_U D(\Prym) \times_U (\Prym^\circ)^\vee \arrow[d, "p_2"] \arrow[r] & (\Prym^\circ)^\vee \times_U D(\Prym) \times D(\Prym) \times_U (\Prym^\circ)^\vee \arrow[d, "p_{23}"] \\
        D(\Prym) \arrow[r, "\Delta_{D(\Prym)}"] & D(\Prym) \times D(\Prym)
    \end{tikzcd}
    \]
    We start by observing that
    $$\Delta_{D(\Prym)}^!([\Gamma_g]^T \times [\Gamma_g]) = \Delta^!_{D(\Prym)}(g \times 1 \times 1 \times g)_* [D(\Prym) \times D(\Prym)] $$
    $$= (g \times 1 \times g)_* \Delta_{D(\Prym)}^![D(\Prym) \times D(\Prym)]$$
    by the generalization of \cite[Theorem 6.2(a)]{Fulton} (see \cite[\S 4.1]{Kresch}), noting that $\Delta_{D(\Prym)}$ is a regular local immersion
    $$=(g \times 1 \times g)_* [D(\Prym)]$$
    by the corresponding statement on schematic approximations (noting that $\Delta_{D(\Prym)}$ is representable). Thus, we see that
    $$[\Gamma_g] \circ [\Gamma_g]^T = p_{13*} (g \times 1 \times g)_* [D(\Prym)] = \Delta_{(\Prym^\circ)^\vee *} g_*[D(\Prym)]$$
    $$= \frac{1}{\# K} [\Delta_{(\Prym^\circ)^\vee}] \in \Corr^0_U((\Prym^\circ)^\vee, (\Prym^\circ)^\vee),$$
    as $D(\Prym)$, respectively $(\Prym^\circ)^\vee$, is a $\J^0(C)[m]$-quotient, respectively $\J^0(C)[m]/K$-quotient, of $\Prym$.
\end{proof}

\subsection{Comparison of Poincar\'e sheaves} \label{subsec: FHHO Pryms in families}
In this section, we prove an identity (Proposition \ref{prop: smooth and singular Poincare}) comparing the Poincar\'e sheaf of the compactified Prym fibration of a family of nodal curves with the Poincar\'e sheaf of the Prym fibration of a family of normalizations. We start with the corresponding identity for the compactified Jacobian of a single nodal curve \cite[Theorem A]{FHHO}, restrict to the compactified Prym, show that the identity holds in families, and descend. Because our arguments will involve the moduli of semistable sheaves, in this subsection we do distinguish between derived and underived functors.

\subsubsection{Setup} \label{subsubsec: FHHO setup}
Let $U$ be a smooth variety, and $X \to U$ a family of projective curves, each with $k$ nodes, equipped with a finite flat map $c: X \to C \times U$ over $U$. Let $\tilde X \to U$ be a family of smooth projective curves with a morphism $\nu: \tilde X \to X$, where $\nu_u: \tilde X_u \to X_u$ is the normalization map for each $u \in U$. We assume that $\det \pi_{U*} \omega_{\tilde X/U} \cong \det \pi_{U*} \omega_{X/U} \cong \ms O_U$. We also assume that there are sections $\{\sigma_i: U \to X\}_{1 \le i \le k}$ parametrizing the $k$ nodes of each fiber of $X \to U$, and sections $\{\sigma_i^1, \sigma_i^2: U \to \tilde X\}_{1 \le i \le k}$ parametrizing the two preimages in $\tilde X$ of each node. Finally, suppose also that there is some section $s: U \to \tilde X$ disjoint from the $\{\sigma_i^1, \sigma_i^2\}$. (In Proposition \ref{prop: U exists} we will see that for each $\gamma \in \Gamma$, we can find such a $U$ \'etale over $B_\gamma$.)

We start by defining a morphism
$$a: \J^0(\tilde X/U) \xrightarrow{\sim} \J^{-k}(\tilde X/U) \to \oJ^0(X/U), [\ms L] \in \J^0(\tilde X_u) \mapsto [\ms L(-ks(u))] \mapsto [\nu_{u*} \ms L(-ks(u))].$$
Note that $a$ is $\J^0(C)$-equivariant by the projection formula, and thus gives a morphism
$$a_P: [\J^0(\tilde X/U)/\J^0(C)] \to [\oJ^0(X/U)/\J^0(C)].$$

Next, let $\tilde{\mc P}_{\tilde X/J}$ be the Poincar\'e line bundle on $\tilde X \times_U \J^0(\tilde X/U)$ normalized along $s$ (i.e., such that $(s \times 1)^* \tilde{\mc P}_{\tilde X/J} \cong \ms O_{J^0}$).
\begin{defn} \label{def: PMod}
    Consider the $\BP^1$-bundle
    $$\BP_i := \BP((\sigma_i^1 \times 1)^* \tilde{\mc P}_{\tilde X/J} \oplus (\sigma_i^2 \times 1)^* \tilde{\mc P}_{\tilde X/J})$$
    over $\J^0(\tilde X/U)$.
    We define varieties
    $$\PMod := \BP_1 \times_{\J^0(\tilde X/U)} \cdots \times_{\J^0(\tilde X/U)} \BP_k, \,\,\,\PMod_P := \Prym(\tilde X/U) \times_{\J^0(\tilde X/U)} \PMod$$
    as well as line bundles
    $$\mc V := \ms O_{\BP_1}(-1) \boxtimes \cdots \boxtimes \ms O_{\BP_k}(-1) \in \Pic(\PMod), \,\,\, \mc V_P := \mc V|_{\PMod_P}.$$
    (Note that $\PMod$ is independent of the choice of normalization of $\tilde{\mc P}_{\tilde X/J}$, but $\mc V$ is not.)
\end{defn}

By definition there are projections
$$p: \PMod \to \J^0(\tilde X/U), \,\, p_P: \PMod_P \to \Prym(\tilde X/U).$$
There is also a morphism
$$f: \PMod \to \oJ^0(X/U), ([\ms L] \in \J^0(\tilde X_u), (\ms L_{\sigma_i^1(u)} \oplus \ms L_{\sigma_i^2(u)} \twoheadrightarrow V_i)_i) \mapsto \ker(\nu_{u*} \ms L \to \oplus_i V_i)$$
where here we consider the sum of maps
$$\nu_{u*} \ms L \to \nu_{u*}(\ms L \otimes \ms O_{\sigma_i^1(u) \cup \sigma_i^2(u)}) = (\ms L_{\sigma_i^1(u)} \oplus \ms L_{\sigma_i^2(u)} ) \otimes \ms O_{\sigma_i(u)} \twoheadrightarrow V_i \otimes \ms O_{\sigma_i(u)}.$$
(Note that the kernel is still torsion-free of rank 1, and of degree $= \deg \nu_{u*} \ms L - k = 0$.) As described in \cite[Proposition 2.2]{Bhosle}, the map $f$ is a fiberwise resolution of singularities, and a point of $\oJ^0(\tilde X/U)$ has exactly $2^i$ preimages when it fails to be locally free at exactly $i$ nodes. We also note that $f$ restricts to a morphism
$$f_P: \PMod_P \to \oPrym(X/U)$$
since
$$\det(c_*(\ker(\nu_{u*} \ms L \to \oplus_i V_i))) = \det( c_* \nu_{u*} \ms L)(-\sum_i c(\sigma_i(u)))$$
$$= \det(c_* \nu_{u*} \ms O_{\tilde X_u})(-\sum_i c(\sigma_i(u))) = \det(c_*\ms O_{ X_u}).$$

\begin{prop} \label{prop: PMod = Sigma}
    The morphism $(f, p): \PMod \to \oJ^0(X/U) \times_U \J^0(\tilde X/U)$ is a closed embedding whose image is the closure of the graph of the pullback morphism $\nu^*: \J^0(X/U) \to \J^0(\tilde X/U)$. The same holds for $(f_P, p_P): \PMod_P \to \oPrym(X/U) \times_U\Prym(\tilde X/U)$ and $\nu^*: \Prym(X/U) \to \Prym(\tilde X/U)$.
\end{prop}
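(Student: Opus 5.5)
Since $p$ is proper (a $(\BP^1)^k$-bundle) and $\oJ^0(X/U)$ is separated over $U$, the map $(f,p)$ is proper, so its image is closed. The plan is to show three things: that $(f,p)$ is a monomorphism (hence, being proper, a closed embedding once we also check it is unramified/injective on tangent spaces); that over the open locus $\J^0(X/U)\subset \oJ^0(X/U)$ its image is exactly the graph of $\nu^*$; and that $\PMod$ is irreducible over each connected component of $U$ with $f^{-1}(\J^0(X/U))$ dense. The last point is clear, because $\PMod$ is a $(\BP^1)^k$-bundle over the smooth family $\J^0(\tilde X/U)$, and $f^{-1}(\J^0(X/U))$ is the open locus where each quotient $V_i$ is given by a surjection that is nonzero on both summands. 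Given the first two points, the image is then the closure of the graph.

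For the graph statement, we work on the open set where $\ms L_{\sigma_i^1}\oplus\ms L_{\sigma_i^2}\to V_i$ is an isomorphism on each factor. There the kernel $F=\ker(\nu_*\ms L\to\oplus V_i)$ is the line bundle on $X_u$ obtained by gluing the fibers of $\ms L$ at $\sigma_i^1,\sigma_i^2$ via the induced identification. Its pullback $\nu^*F$ is $\ms L(-\sum_i(\sigma_i^1+\sigma_i^2))$ up to torsion, or more precisely $\nu^*F\cong \ms L$ twisted by the gluing. I will need to be careful here with degrees, since $\deg\ms L=0$ while $\deg F = 0$. The cleanest approach is to check the standard fact that $\nu^*\nu_*\ms L/\mathrm{tors}\cong \ms L$ and that $\nu^*F\to\nu^*\nu_*\ms L$ has image $\ms L(-\sum\sigma_i^{j})$ in the appropriate sense. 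I will reconcile this with the paper's convention, which may require reinterpreting $p$ as $\nu^*F$ twisted back, following \cite[Proposition 2.2]{Bhosle}. Conversely, every line bundle $F$ on $X_u$ arises this way with $\ms L=\nu^*F$, since $F=\ker(\nu_*\nu^*F\to\oplus_i F_{\sigma_i}\otimes\BC)$. This shows the image over $\J^0(X/U)$ is the graph and that $(f,p)$ is bijective there.

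For injectivity and unramifiedness globally, I will use that $p$ recovers $\ms L$ and that, given $\ms L$, the point of $\BP_i$ is recovered from $F$ as the quotient $\nu_*\ms L\otimes k(\sigma_i)\to \mathrm{coker}(F\to\nu_*\ms L)_{\sigma_i}$. This works because $F\subset\nu_*\ms L$ is determined as a subsheaf once the pair $(F,\ms L)$ is given, as the unique (up to scalar) map $F\to\nu_*\ms L$: indeed $\Hom(F,\nu_*\ms L)=\Hom(\nu^*F,\ms L)$ is one-dimensional in degree 0. Carrying this out in families over $T$-points gives a monomorphism. The main obstacle is this functorial recovery, in particular showing $\Hom(F,\nu_*\ms L)$ is a line bundle over the image in families. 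I would handle it by base change and the fiberwise dimension count, and then conclude with proper monomorphism $\Rightarrow$ closed embedding.

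Finally, the Prym statement follows by base change. The map $\PMod_P\to\PMod$ is the pullback of $\Prym(\tilde X/U)\hookrightarrow\J^0(\tilde X/U)$, and $f$ sends $\PMod_P$ into $\oPrym(X/U)$ by the norm computation above. Hence $(f_P,p_P)$ is the restriction of a closed embedding to a closed subset, so it is again a closed embedding. Its image is the closure of the graph of $\nu^*$ restricted to $\Prym(X/U)$. Here I use that $\nu^*$ is compatible with the norm maps, so that $\nu^*(\Prym(X/U))\subset\Prym(\tilde X/U)$, and that $\PMod_P$ is still a $(\BP^1)^k$-bundle, so the open part is dense in each component.
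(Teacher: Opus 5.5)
\textbf{Gap in the injectivity step.} You claim that $\Hom(F,\nu_*\ms L)=\Hom(\nu^*F,\ms L)$ is one-dimensional ``in degree $0$''. This holds only where $F$ is locally free, so your monomorphism argument fails at the boundary.

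Suppose $F=f(\ms L,(V_i)_i)$ fails to be locally free at the nodes $i\in I$. For $i\in I$, the quotient $V_i$ is one of the two coordinate projections, say onto $\ms L_{\sigma_i^{j_i}}$. Near $\sigma_i$, the sheaf $F$ is then the $\ms O_{\tilde X}$-module $\nu_*(\ms L(-\sigma_i^{j_i}))$. Hence $\nu^*F/\mathrm{tors}\cong\ms L(-D)$ with $D=\sum_{i\in I}\sigma_i^{j_i}$ of degree $|I|$, and
\[
\Hom(F,\nu_*\ms L)\cong H^0(\tilde X_u,\ms O(D)).
\]
This need not be one-dimensional. In families, $\pi_{T*}\mc Hom(F_T,\nu_*\ms L_T)$ can jump, so your $T$-point recovery breaks down as well.

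Note that $\ms O(D)\cong\ms L\otimes(\nu^*F/\mathrm{tors})^{\vee}$ is determined by $(F,\ms L)$. So injectivity on points amounts to $(F,\ms L)$ determining $D$ itself. That is, for each $I$, the $2^{|I|}$ divisors $\sum_{i\in I}\sigma_i^{j_i}$ must be pairwise non-linearly-equivalent on $\tilde X_u$. Granting this, the inclusion $F\subset\nu_*\ms L$ corresponds to the section of $\ms O(D)$ with divisor exactly $D$. It is then unique up to scalar, and your recovery of the $V_i$ works on points.

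This condition is a genuine input, not a consequence of a degree count. Take $k=2$ with $\sigma_1^1+\sigma_2^1\sim\sigma_1^2+\sigma_2^2$, e.g.\ both divisors in the $g^1_2$ of a hyperelliptic $\tilde X_u$. Then the two distinct points of $\PMod$ over the same $\ms L$, given by the quotients onto $\ms L_{\sigma_i^1}$, respectively onto $\ms L_{\sigma_i^2}$, for $i=1,2$, both map to $(\nu_*(\ms L(-\sigma_1^1-\sigma_2^1)),\ms L)$.

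\textbf{What the paper does instead.} The paper does not try to prove a scheme-theoretic monomorphism. It identifies the image with $\overline{\Gamma}_{\nu^*}$ exactly as you do: $f$ is an isomorphism over $\J^0(X/U)$, that locus is dense, and the maps are proper. It then imports injectivity on points from \cite[Lemma 3.7(3)]{FHHO}, i.e.\ precisely the non-equivalence input above. Finally it uses smoothness of $\overline{\Gamma}_{\nu^*}$ \cite[Lemma A.3.1]{Yun}: a proper birational bijection from the smooth $\PMod$ onto a normal variety is an isomorphism by Zariski's main theorem. To complete your proof, replace the Hom-line-bundle argument by these two ingredients.

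\textbf{The open locus.} Your worry about twists there is unfounded. If $V_i$ is $(x,y)\mapsto\lambda_ix+\mu_iy$ with $\lambda_i\mu_i\neq0$, then in a local trivialization of $\ms L$ the local generator of $F\subset\nu_*\ms L$ takes the values $\mu_i,-\lambda_i$ on the two branches. It is therefore a unit, so the adjoint $\nu^*F\to\ms L$ is an isomorphism and $p=\nu^*\circ f$ on $f^{-1}(\J^0(X/U))$. This matches the paper's inverse $F\mapsto(\nu^*F,\mathrm{coker}(F\to\nu_*\nu^*F))$. The twist by $-\sigma_i^{j_i}$ occurs only at non-locally-free nodes, which is exactly what invalidates your degree-$0$ count.

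The properness, density and Prym-reduction parts of your plan are fine.
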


\begin{proof}
    Recall that $f: \PMod \to \oJ^0(X/U)$ is an isomorphism over $\J^0(X/U) \subset \oJ^0(X/U)$, with inverse given by
    $$\ms L \in \J^0(X_u) \mapsto (\nu_u^* \ms L, \textrm{coker}(\ms L \hookrightarrow \nu_{u*} \nu_u^* \ms L)) \in \PMod_u.$$
    Since $f^{-1}(\J^0(X/U))$ is dense in $\PMod$ and $p,f$ are proper, we see that the image of $(f,p)$ is the closure of $(f,p)(f^{-1}(\J^0(X/U)) = \Gamma_{\nu^*}$. It follows from \cite[Lemma 3.7(3)]{FHHO} that $(f,p)$ is injective on points. Then since $\overline{\Gamma}_{\nu^*}$ is smooth \cite[Lemma A.3.1]{Yun}, it follows from Zariski's main theorem that $(f,p): \PMod \to \overline{\Gamma}_{\nu^*}$ is an isomorphism. The same argument holds for $(f_P, p_P)$.
\end{proof}

Recall that we write $\mc P_{J/J}$ for the Arinkin Poincar\'e sheaf on $\oJ^0(X/U) \times_U \oJ^0(X/U)$ and $\mc P$ for the Poincar\'e sheaf on $[\oJ^0(X/U)/\J^0(C)] \times_U \oPrym(X/U)$. We will also write $\tilde{\mc P}_{J/J}$ for the Poincar\'e bundle on $\J^0(\tilde X/U) \times_U \J^0(\tilde X/U)$ and $\tilde{\mc P}$ for the restriction and descent to $[\J^0(\tilde X/U)/\J^0(C)] \times_U \Prym(\tilde X/U)$ constructed as in Proposition \ref{prop: GS}. As there, we define
$$\mc P^{-1} := R\mc Hom(\mc P^T, \ms O[h]), \,\, \tilde{\mc P}^{-1} := R\mc Hom(\tilde{\mc P}^T, \ms O[h-k])$$
(since by our assumptions on $X, \tilde X \to U$ we have $\ms A \cong \ms O_U$ in each case, and $\dim \Prym(\tilde X_u) = \dim \oPrym(X_u)-k$).

The goal of \S \ref{subsec: FHHO Pryms in families} is to prove the following:
\begin{prop} \label{prop: smooth and singular Poincare}
    With the definitions of this section, and letting $q_1$ be the projection onto the first factor of $\PMod_P \times_U [\J^0(\tilde X/U)/\J^0(C)]$, there is an isomorphism
    $$(1 \times a_P)^* \mc P^{-1} \cong (f_P \times 1)_*(q_1^*(\omega_{\PMod_P} \otimes \mc V_P^\vee) \otimes (p_P \times 1)^* \tilde{\mc P}^{-1})[k]$$
    of $h$-shifted sheaves on $\oPrym(X/U) \times_U [\J^0(\tilde X/U)/\J^0(C)]$.
\end{prop}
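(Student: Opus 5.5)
The plan has three stages. First, prove the identity on a single nodal fiber and for compactified Jacobians, using \cite[Theorem A]{FHHO}. Then restrict to Pryms and spread the identity over $U$. Finally, descend along the $\J^0(C)$-action.

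Concretely, for a fixed $u \in U$, \cite[Theorem A]{FHHO} (in its form with $\mc P^{-1}$, i.e.\ the dual of the Arinkin sheaf up to shift) gives
\[
(1 \times a)^* \mc P_{J/J}^{-1}|_u \cong (f \times 1)_*\bigl(q_1^*(\omega_{\PMod} \otimes \mc V^\vee) \otimes (p \times 1)^* \tilde{\mc P}_{J/J}^{-1}\bigr)|_u[k].
\]
Here $\mc P_{J/J}^{-1}$ and $\tilde{\mc P}_{J/J}^{-1}$ are the Jacobian analogues of $\mc P^{-1}$ and $\tilde{\mc P}^{-1}$. The normalization conventions, namely the choice of the section $s$ in $a$ and in $\mc V$, are fixed precisely so that the normalizations on the two sides match.

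To pass to families, I would compare both sides on $\oJ^0(X/U) \times_U \J^0(\tilde X/U)$.
\begin{itemize}
\item \emph{Same shape.} Both sides are sheaves (concentrated in one degree after the shift by $h$) flat over $\J^0(\tilde X/U)$. On the left this is because $\mc P_{J/J}$ is maximal Cohen--Macaulay and flat over each factor. On the right it follows from Proposition \ref{prop: PMod = Sigma} together with vanishing of higher direct images, which can be checked fiberwise from \cite{FHHO}.
\item \emph{Agreement on the regular locus.} Over the open set $\J^0(X/U) \times_U \J^0(\tilde X/U)$, where $f$ is an isomorphism, both sides are line bundles. Their agreement there is a seesaw computation with Poincar\'e bundles, $\nu^*$, and the $\ms O(-1)$ factors in $\mc V$. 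I would verify it by restricting to $\{\ms L\} \times \J^0(\tilde X/U)$ and to the section $s$.
\item \emph{Agreement everywhere.} Since the fiberwise isomorphisms hold for every $u$, both sides are flat families of sheaves over $\J^0(\tilde X/U)$ whose fibers are isomorphic. Moreover the fibers are simple (they are Fourier--Mukai images of skyscrapers under an equivalence). A relative seesaw argument then gives a global isomorphism up to twisting by a line bundle pulled back from $\J^0(\tilde X/U)$. That line bundle is trivial because it is trivial along the zero section, which follows from the normalizations.
\end{itemize}

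Next, restrict to $\oPrym(X/U)$ in the first factor; this is a regular embedding by Remark \ref{rmk: i is reg}. Using base change via Lemma \ref{lem: Tor indep}(b), the right-hand side becomes the $\PMod_P$ expression. One must check that $\omega_{\PMod}$ restricts to $\omega_{\PMod_P}$ up to a trivial factor; this holds because $\PMod_P \hookrightarrow \PMod$ is a base change of $\{\ms O_C\} \hookrightarrow \J^0(C)$, so its normal bundle is trivial. The shift $h$ also has to be tracked, using $\dim \Prym(\tilde X_u) = \dim \oPrym(X_u) - k$.

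Finally, descend in the second factor to $[\J^0(\tilde X/U)/\J^0(C)]$. Both sides carry natural $\J^0(C)$-equivariant structures: $a$ is equivariant by the projection formula, and $\tilde{\mc P}^{-1}$ and $\mc P^{-1}$ are descended as in Proposition \ref{prop: GS}. The descent data are unique by Proposition \ref{prop: GS} (through Proposition \ref{prop: GS descent}), so they agree once they agree over the connected open dense locus where both sides are line bundles. There the comparison reduces to the smooth case, Corollary \ref{cor: smooth Prym P/P vs J/J}.

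I expect the main obstacle to be the families step: upgrading the pointwise FHHO isomorphism to an isomorphism over $U$, including controlling the line-bundle ambiguity and checking that the right-hand side has no higher direct images. My first attempt would be to construct the map explicitly on the regular locus and extend it using reflexivity ($S_2$, since the codimension of the complement is at least $2$, as in Arinkin's construction). I would then conclude it is an isomorphism by checking on fibers via \cite{FHHO}.
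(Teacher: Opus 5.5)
Your overall architecture matches the paper's: a fiberwise \cite{FHHO} identity, spread over $U$, restricted to $\oPrym(X/U)$, then descended along the connected group $\J^0(C)$. The families step, however, is a different and valid argument.

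The paper starts from the non-dual identity \cite[Theorem 5.3]{FHHO} for $\mc P_{J/J}$ itself. It treats $\mc F=(1\times a)^*\mc P_{J/J}$ and $\mc G$ as $U$-flat families of Gieseker-stable sheaves. Separatedness of Simpson's projective moduli space, and its fineness at stable points \cite{Simpson-reps1}, give an \'etale-local isomorphism, hence $\mc G\cong \mc F\otimes\pi_U^*\pi_{U*}\mc Hom(\mc F,\mc G)$. The line bundle is then killed along the section $t$. Only afterwards does the paper restrict to $\oPrym(X/U)$ and apply Grothendieck duality for the finite map $f_P$; this is where $\omega_{\PMod_P}$ enters.

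You instead dualize fiberwise at the outset. You should justify this by that same duality computation, since \cite[Theorem 5.3]{FHHO} is stated for $\mc P_{J/J}$. You then run a seesaw over the reduced base $\J^0(\tilde X/U)$. Both sides are flat over it, and their fibers are isomorphic torsion-free rank-one sheaves on the integral $\oJ^0(X_u)$, hence simple. Representability of relative $\mc Hom$ then makes $p_*\mc Hom$ a line bundle commuting with base change, and the evaluation map is an isomorphism. This avoids moduli spaces and is more elementary. The cost is that the ambiguity is a line bundle on $\J^0(\tilde X/U)$ rather than on $U$, so you must check triviality along all of $\{\ms O_X\}\times_U\J^0(\tilde X/U)$. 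That works because $f$ is an isomorphism there and $\mc P_{J/J}$, $\tilde{\mc P}_{J/J}$, $\mc V$ are normalized, as in the paper's check along $t$.

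Two side remarks should be dropped or repaired.

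\emph{The $S_2$-extension ``first attempt'' cannot work.} After pulling back along $1\times a$, whose image lies in the deepest boundary stratum, $\mc P_{J/J}$ is a line bundle only on $\J^0(X/U)\times_U\J^0(\tilde X/U)$. Its complement in $\oJ^0(X/U)\times_U\J^0(\tilde X/U)$ is a divisor, since sheaves failing to be locally free at a single node form a codimension-one stratum. Arinkin's codimension-two property is lost. The ``agreement on the regular locus'' bullet is likewise unnecessary for the seesaw.

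\emph{The descent justification cites the wrong statement.} Propositions \ref{prop: GS} and \ref{prop: GS descent} give uniqueness of the descent of $\mc P$ on $\oJ^0\times_B\oPrym$. They do not give uniqueness of a $\J^0(C)$-equivariant structure on $\mc S:=(f_P\times1)_*\ms L$ over $\oPrym(X/U)\times_U\J^0(\tilde X/U)$, and uniqueness does not simply pull back along $a$. The paper proves it directly. Because $f_P$ is finite and birational, the adjunction map
\[
(1\times f_P\times 1)^*(1\times f_P\times1)_*p_{23}^*\ms L\to p_{23}^*\ms L
\]
is surjective with torsion kernel. This gives $\mathrm{End}(p_{23}^*\mc S)=H^0(\PMod_P\times_U\J^0(\tilde X/U),\ms O)$, so an isomorphism $\sigma^*\mc S\cong p_{23}^*\mc S$ is determined by its restriction to $\{\ms O_C\}\times\cdots$, which the cocycle condition fixes. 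With this, nothing needs checking on the line-bundle locus. Corollary \ref{cor: smooth Prym P/P vs J/J}, which concerns families of smooth curves, is not what is needed there. Your choice to descend along $\J^0(C)$ rather than the finite group $\Gamma$ is the right one: with $\Gamma$ one would face the character ambiguity noted in the remark after the statement.
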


\begin{rmk}
    The Grothendieck dual of a very similar identity (over a trivial base $U$) is proved in \cite[Propositions 5.16-7]{FHHO}. However, there the $\Gamma$-equivariant structure of the Poincar\'e sheaf on $\oPrym(X_u) \times \oPrym(X_u)$ is defined in order to make the equality hold. In particular, it is not a priori clear that the resulting sheaf is $\mc P$ as defined in Proposition \ref{prop: GS} rather than, say, a tensor product of $\mc P$ with the pullback of a nontrivial line bundle from $B\Gamma$.
\end{rmk}

\subsubsection{Descending in families} \label{subsubsec: FHHO descent}
We will now prove Proposition \ref{prop: smooth and singular Poincare} in a series of lemmas. The starting point is the following:

\begin{lem} \label{lem: FHHO}
    With notation and definitions as before, for any $u \in U$ there is an isomorphism
    \begin{equation} \label{eq: lem FHHO}
      (1 \times a_u)^* \mc P_{J_u/J_u} \cong (f_u \times 1)_*(q_1^* \mc V_u \otimes (p_u \times 1)^* \tilde{\mc P}_{J_u/J_u})  
    \end{equation}
    of sheaves on $\oJ^0(X_u) \times \J^0(\tilde X_u)$. Moreover, the right and left side of the equation do not change if all functors are taken to be derived.
\end{lem}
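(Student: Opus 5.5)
The plan is to reduce Lemma \ref{lem: FHHO} to the single-curve statement \cite[Theorem A]{FHHO} (see also \cite[Theorem 5.3, Lemma 3.7]{FHHO}), checking that our normalizations and conventions match theirs. Fix $u \in U$, so $X_u$ is a nodal integral curve with nodes $\sigma_i(u)$ and normalization $\nu_u: \tilde X_u \to X_u$. By Proposition \ref{prop: PMod = Sigma} (restricted to the fiber), $(f_u, p_u): \PMod_u \to \oJ^0(X_u) \times \J^0(\tilde X_u)$ is the closed embedding of the closure of the graph of $\nu_u^*$. Our $\PMod_u$ is then the iterated $\BP^1$-bundle used in \cite{FHHO} (and \cite{Bhosle}). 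I would identify $\mc V_u$ with their tautological line bundle twisted by the normalization along $s(u)$.

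\textbf{Step 1: agreement on the line-bundle locus.} Over $\J^0(X_u) \times \J^0(\tilde X_u)$, $f_u$ is an isomorphism, so the right side is the line bundle $q_1^*\mc V_u \otimes (p_u \times 1)^*\tilde{\mc P}_{J_u/J_u}$ transported along the section $\ms L \mapsto (\nu_u^*\ms L, \text{gluing data})$. I would compute $(1 \times a_u)^*\mc P_{J_u/J_u}$ at $(\ms L, \ms M)$ as the Deligne-pairing (normalized determinant of cohomology) line $\langle \ms L, \nu_{u*}\ms M(-ks(u))\rangle$. Then I would use $\chi(X_u, \ms L \otimes \nu_{u*}\ms N) = \chi(\tilde X_u, \nu_u^*\ms L \otimes \ms N)$ to rewrite this as $\langle \nu_u^*\ms L, \ms M\rangle$ on $\tilde X_u$, corrected by the fibers of $\nu_u^*\ms L$ at the preimages of the nodes and by the twist at $s(u)$. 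These corrections are precisely the fibers of $\ms O_{\BP_i}(-1)$ and the normalization factors. So the two sides agree on this open locus, canonically up to a constant.

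\textbf{Step 2: extension.} Both sides should be maximal Cohen-Macaulay sheaves on $\oJ^0(X_u) \times \J^0(\tilde X_u)$. The left side is MCM because it is a flat pullback of Arinkin's MCM sheaf along $a_u$; restricted to each $\{\ms L\} \times \J^0$ it is flat. For the right side, I would argue that $f_u \times 1$ is finite over the relevant locus with CM source, and that the higher direct images of $q_1^*\mc V_u \otimes \tilde{\mc P}$ vanish. The vanishing should follow because the fibers of $f_u$ are products of $\BP^1$'s, and on each $\BP^1$ the bundle $\mc V_u \otimes \tilde{\mc P}$ has degree $-1$, so its $R^1$ vanishes. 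This also gives the claim that the underived and derived functors agree: $(f_u\times1)_*$ has no higher terms, and the pullbacks are along flat maps, or, for $a_u$, the pullback of a sheaf flat over the relevant factor. Since both sides are reflexive/MCM and agree off codimension $\ge 2$, namely the complement of $\J^0\times\J^0 \cup \oJ^0\times\J^0$-type loci as in \cite{Arinkin}, they are isomorphic.

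\textbf{Main obstacle.} The main obstacle is the codimension-2 statement: the right side is a priori only known to agree with the left over $\J^0(X_u)\times\J^0(\tilde X_u)$, whose complement has codimension 1 in the first factor. I would instead use Arinkin's characterization that $\mc P_{J_u/J_u}$ is the pushforward from the locus where either factor is a line bundle. Here $a_u$ lands in non-locally-free sheaves, so I need to show the right side is $j_*$ of its restriction to $\J^0(X_u)\times\J^0(\tilde X_u)$. I would establish this via the $S_2$ property of the right side, which in turn follows from Cohen-Macaulayness of a finite pushforward of a line bundle from the smooth $\PMod_u\times\J^0$. If that route fails, I would simply cite \cite[Theorem A]{FHHO} directly after matching normalizations.
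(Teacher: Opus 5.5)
Your fallback, citing \cite[Theorem 5.3]{FHHO} after matching normalizations, is exactly what the paper does for the isomorphism; the paper does not reprove it. The self-contained argument you actually develop, however, has a genuine gap at Step 2, and it is the one you flag yourself. Suppose $k \ge 1$. After pulling back along $1 \times a_u$, the second factor lies entirely in the non-locally-free locus. So the only region where Arinkin's line-bundle description applies is $\J^0(X_u) \times \J^0(\tilde X_u)$, and its complement $(\oJ^0(X_u) \setminus \J^0(X_u)) \times \J^0(\tilde X_u)$ is a divisor. Maximal Cohen--Macaulay (or $S_2$) sheaves are determined by their restriction away from codimension $\ge 2$, but not across a divisor. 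Near a general boundary point the space is locally a node $\{xy=0\}$ times a smooth factor. There, the structure sheaf and the pushforward of the structure sheaf of the normalization are both MCM of rank one and agree off the singular locus. Deciding which behaviour (and which gluing) occurs along the boundary is exactly the content of \cite{FHHO}. Your proposed fix, showing that the right side is $j_*$ of its restriction to the open locus via $S_2$, cannot work. $S_2$ only gives extension across codimension-$2$ loci, and $j_*$ of a line bundle across this divisor is not even coherent, since it contains sections with arbitrary poles along the boundary. So Steps 1--2 cannot replace the citation: genuine information about $\mc P_{J_u/J_u}$ at non-locally-free points is needed.

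Your justification of the ``derived = underived'' claim also has errors, although the conclusion is right. The map $f_u$ is finite: a point failing to be locally free at $i$ nodes has $2^i$ preimages. The $(\BP^1)^k$ you describe are the fibers of $p_u$, not of $f_u$, so the degree $-1$ computation is beside the point. As in the paper, $R(f_u \times 1)_* = (f_u \times 1)_*$ simply because $f_u \times 1$ is finite, and the pullbacks on the right are of line bundles. Similarly, $a_u$ is a closed embedding of codimension $k$, not a flat map, so the left side is not a flat pullback and its MCM property does not come for free that way. The reason $L(1 \times a_u)^* \mc P_{J_u/J_u} = (1 \times a_u)^* \mc P_{J_u/J_u}$ is that $\mc P_{J_u/J_u}$ is flat over the second factor \cite[Theorem A(2)]{Arinkin}. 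You do mention this, and it is precisely the paper's argument.
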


\begin{proof}
    The first part is \cite[Theorem 5.3]{FHHO}. For the second part, on the right side of the equation, since $\mc V_u$ and $\tilde{\mc P}_{J_u/J_u}$ are line bundles, it suffices to note that $R(f_u \times 1)_* = (f_u \times 1)_*$ since $f_u \times 1$ is finite. On the left side of the equation, recall from \cite[Theorem A(2)]{Arinkin} that $\mc P_{J_u/J_u}$ is flat over the second factor $\oJ^0(X_u)$, and so $L(1 \times a_u)^* = (1 \times a_u)^*$ by \cite[Tag 0C0V]{stacks-project}.
\end{proof}

\begin{lem} \label{lem: FHHO families to point}
    Consider sheaves
    $$\mc F := (1 \times a)^* \mc P_{J/J}$$
    $$\mc G := (f \times 1)_*(q_1^* \mc V \otimes (p \times 1)^* \tilde{\mc P}_{J/J})$$
    on $\oJ^0(X/U) \times_U \J^0(\tilde X/U)$.
    For each $u \in U$, let $\iota_u: \oJ^0(X_u) \times \J^0(\tilde X_u) \hookrightarrow \oJ^0(X/U) \times_U \J^0(\tilde X/U)$ be the inclusion. Then $\iota_u^*\mc F, \iota_u^*\mc G$ are isomorphic to the sheaves of Lemma \ref{lem: FHHO}.
\end{lem}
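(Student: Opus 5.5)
The statement is a base change assertion: restricting the two families of sheaves to the fiber over $u$ recovers the fiberwise sheaves of Lemma \ref{lem: FHHO}. I would treat $\mc F$ and $\mc G$ separately, each time reducing to a base-change identity for the relevant pullback or pushforward. Throughout, let $j_u$ denote the inclusion of the fiber over $u$ in each relevant space ($\J^0(\tilde X/U)$, $\oJ^0(X/U)$, $\PMod$, and the fiber products). These fiber inclusions are compatible with $a$, $f$, $p$ and with the formation of $\tilde{\mc P}_{J/J}$ and $\mc V$.

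\textbf{The sheaf $\mc F$.} The composition $(1 \times a)\circ \iota_u$ equals $\iota'_u \circ (1 \times a_u)$, where $\iota'_u$ is the fiber inclusion into $\oJ^0(X/U)\times_U\oJ^0(X/U)$. Since pullbacks of sheaves compose, we get $\iota_u^*\mc F \cong (1\times a_u)^*(\iota_u'^*\mc P_{J/J})$. It remains to identify $\iota_u'^*\mc P_{J/J}$ with $\mc P_{J_u/J_u}$, the Arinkin sheaf of the single curve $X_u$. This is the base change compatibility of Arinkin's construction \cite{Arinkin}, which holds because $\mc P_{J/J}$ is flat over each factor. At the derived level, flatness over $\oJ^0(X/U)$, which is itself flat over $U$, also makes the underived and derived restrictions agree, so no higher Tor appears.

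\textbf{The sheaf $\mc G$.} I would first observe that $(f \times 1)$ is finite. Indeed, $(f,p)$ is a closed embedding by Proposition \ref{prop: PMod = Sigma}, so $f\times 1$ factors through a closed embedding into $\oJ^0(X/U)\times_U\J^0(\tilde X/U)\times_U\J^0(\tilde X/U)$ followed by a projection. More simply, $f$ is proper and quasi-finite on fibers, hence finite. Next:

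\begin{enumerate}
\item The square formed by $\iota_u$ and the analogous inclusion of $\PMod_u\times\J^0(\tilde X_u)$ is Cartesian. Because $f\times1$ is affine, underived base change holds for quasi-coherent pushforward along affine morphisms. This gives $\iota_u^*(f\times1)_*\mc H\cong (f_u\times1)_*(\mc H|_{u})$ for $\mc H = q_1^*\mc V\otimes(p\times1)^*\tilde{\mc P}_{J/J}$. I would verify this with \cite[Tag 02KG]{stacks-project}, which applies to affine, or even just finite, morphisms and pullback of quasi-coherent sheaves.
\item Restricting line bundles commutes with pullback. So $\mc H|_u = q_1^*\mc V_u\otimes (p_u\times1)^*\tilde{\mc P}_{J_u/J_u}$, where $\mc V_u$ is built from the normalized Poincar\'e bundle of $\tilde X_u$ (normalized along $s(u)$), and $\tilde{\mc P}_{J/J}|_u=\tilde{\mc P}_{J_u/J_u}$ by the universal property of the normalized Poincar\'e bundle of the abelian scheme.
\end{enumerate}

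The main subtlety is making sure the normalizations match those used in \cite[Theorem 5.3]{FHHO}, namely normalization along $s(u)$ and the same sign convention for $\mc V$; I would check this directly from Definition \ref{def: PMod}. For the Arinkin side, the only input is Arinkin's fiberwise compatibility of $\mc P_{J/J}$.
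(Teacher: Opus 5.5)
Your outline matches the paper's. You reduce $\iota_u^*\mc F$ to the base-change compatibility $\iota_u'^*\mc P_{J/J}\cong\mc P_{J_u/J_u}$ of Arinkin's sheaf, and you reduce $\iota_u^*\mc G$ to base change for the finite morphism $f\times 1$ applied to a line bundle.

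For $\mc G$ your route is slightly more elementary. You use underived base change for affine morphisms, which needs no hypothesis on the other map. The paper instead proves the derived identity $L\iota_u^*(f\times1)_*=(f_u\times1)_*L\iota_u^*$ via Tor-independence (Lemma \ref{lem: Tor indep}(b), using that both fiber inclusions are regular embeddings of the same codimension because everything is flat over the smooth $U$), and then takes $\mc H^0$. That gives the extra fact that $L\iota_u^*\mc G$ is a sheaf, which this statement does not need. Of your two finiteness arguments, only the second works. Factoring $f\times 1$ through a closed embedding followed by a projection onto a non-finite factor proves nothing. Proper plus quasi-finite, using the $2^i$-preimage description of $f$, does prove it.

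The point to correct is your reason for the base change of $\mc P_{J/J}$. Flatness over each factor only tells you that derived and underived restriction to the fiber agree. It does not identify $\iota_u'^*\mc P_{J/J}$ with $\mc P_{J_u/J_u}$. Arinkin's sheaf is defined as a pushforward from the open locus $\oJ^0\times\J^0\cup\J^0\times\oJ^0$, and flatness alone does not make such a pushforward commute with restriction to fibers. The paper's argument runs in three steps:
\begin{itemize}
\item The two sheaves agree on that open locus, because the determinant-of-cohomology description commutes with base change; its complement has codimension $\ge 2$.
\item $\mc P_{J/J}$ is maximal Cohen--Macaulay, so its restriction to the fiber (cut out by a regular sequence) is again maximal Cohen--Macaulay by \cite[Lemma 2.3]{Arinkin}.
\item Maximal Cohen--Macaulay extensions across codimension $\ge 2$ are unique by \cite[Lemma 2.2]{Arinkin}.
\end{itemize}
With this substituted for the flatness remark, your proof is complete.
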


\begin{proof}
    The statement for $\mc F$ is clear once we recall from \cite{Arinkin} that $\iota_u^* \mc P_{J/J} = \mc P_{J_u/J_u}$. (From the description of each in terms of pushforward of determinant of cohomology it is clear that they agree on $\oJ^0 \times \J^0 \cup \J^0 \times \oJ^0$, i.e., away from codimension 2. Recall that $\mc P_{J/J}$ and $\mc P_{J_u/J_u}$ are maximal Cohen-Macaulay; by \cite[Lemma 2.3]{Arinkin} the same is true of $\iota_u^* \mc P_{J/J}$, and thus this agrees with $\mc P_{J_u/J_u}$ by \cite[Lemma 2.2]{Arinkin}.)
    
    As for $\mc G$, we start by noting that $L\iota_u^* (f \times 1)_* = (f_{u} \times 1)_* L\iota_u^*$ by Lemma \ref{lem: Tor indep}(b) and finiteness of $f$. Thus
    $$\iota_u^* \mc G = \mc H^0((f \times 1)_* L\iota_u^*(q_1^* \mc V \otimes (p \times 1)^*\tilde{\mc P}_{J/J}))$$
    $$=\mc H^0((f \times 1)_*(q_1^* \mc V_u \otimes (p_{u} \times 1)^* \iota_u^*\tilde{\mc P}_{J/J}))$$
    since $L\iota_u^* = \iota_u^*$ on a line bundle. Finally, we note that the $\mc H^0$ is superfluous, and that $\iota_u^* \tilde{\mc P}_{J/J} = \tilde{\mc P}_{J_u/J_u}$.
\end{proof}

The next step is to prove that $\mc F$ and $\mc G$ are isomorphic, using a suitable generalization of the ``seesaw lemma" for line bundles. As in \cite[Proposition 4.4]{Bai}, the approach is to note that $\mc F, \mc G$ induce the same morphism to a (separated) moduli space of rank 1 torsionfree sheaves and therefore differ at most by a line bundle, which can be shown to be trivial. However, instead of using an analytic moduli space of simple sheaves, we use the moduli spaces of relatively stable sheaves constructed in \cite{Simpson-reps1}. (See \cite[Remark 4.5]{Bai} for further discussion.)

\begin{lem} \label{lem: FHHO relative J iso}
    We have $\mc F \cong \mc G$ for the sheaves $\mc F$ and $\mc G$ on $\oJ^0(X/U) \times_U \J^0(\tilde X/U)$ of Lemma \ref{lem: FHHO families to point}.
\end{lem}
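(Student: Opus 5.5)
We follow the seesaw strategy sketched just before the statement, in the spirit of \cite[Proposition 4.4]{Bai}. Both $\mc F$ and $\mc G$ are coherent sheaves on $\oJ^0(X/U) \times_U \J^0(\tilde X/U)$. We regard them as families of sheaves on the fibers of the projection $q_2$ to the second factor $\J^0(\tilde X/U)$, that is, as families of sheaves on $\oJ^0(X_u)$ parametrized by points $\ms M \in \J^0(\tilde X_u)$.

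\textbf{Flatness and fibers.} First we check that both sheaves are flat over $\J^0(\tilde X/U)$.
\begin{itemize}
\item For $\mc F$, this follows from flatness of $\mc P_{J/J}$ over each factor \cite{Arinkin}, pulled back along $1 \times a$.
\item For $\mc G$, the sheaf $q_1^*\mc V \otimes (p\times 1)^*\tilde{\mc P}_{J/J}$ is a line bundle on $\PMod \times_U \J^0(\tilde X/U)$. This space is flat over $\J^0(\tilde X/U)$, because $\PMod$ is smooth over $U$. Pushing forward along the finite map $f \times 1$ preserves flatness over the second factor.
\end{itemize}
Next we restrict to each fiber $\oJ^0(X_u) \times \{\ms M\}$. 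Lemma \ref{lem: FHHO families to point} together with Lemma \ref{lem: FHHO} gives an isomorphism of the restrictions to $\oJ^0(X_u)\times \J^0(\tilde X_u)$, and hence of the further restrictions $\mc F_{\ms M} \cong \mc G_{\ms M}$. By \cite[Theorem A]{Arinkin}, Fourier--Mukai by $\mc P_{J_u/J_u}$ is an equivalence. Its fibers $\mc F_{\ms M}$ are therefore images of skyscraper sheaves, hence simple, with $\Hom(\mc F_{\ms M},\mc F_{\ms M})=\BC$.

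\textbf{Seesaw via a moduli space.} We choose a relatively ample polarization on $\oJ^0(X/U)/U$ and check that each $\mc F_{\ms M}$ is stable with respect to it. If stability is hard to verify directly, we instead use Simpson's moduli of relatively stable sheaves \cite{Simpson-reps1} after twisting, or restrict to an open where the sheaves are stable and extend afterwards. Once stability holds, $\mc F$ and $\mc G$ define two morphisms from $\J^0(\tilde X/U)$ to the separated coarse moduli space $\mathcal M$. These morphisms agree on closed points by the fiberwise isomorphism. Since $\J^0(\tilde X/U)$ is reduced, being smooth, they agree as morphisms.

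Because the points are simple sheaves, the moduli space is a $\BG_m$-gerbe over its coarse space, at least étale locally. Two families giving the same map to the coarse space therefore differ by a line bundle from the base. Concretely, $q_{2*}\mathcal{H}om(\mc F,\mc G)$ is a line bundle $\ms N$ on $\J^0(\tilde X/U)$: it is flat with one-dimensional fibers by cohomology and base change, using simplicity and the fiberwise isomorphisms. The evaluation map then gives $\mc F \otimes q_2^*\ms N \xrightarrow{\sim} \mc G$.

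\textbf{Trivializing $\ms N$.} We restrict along the zero section $\ms O \in \J^0(\tilde X/U)$ pulled back to $\oJ^0(X/U)\times_U U$, or equivalently we compare the sheaves along a normalizing section.
\begin{itemize}
\item For $\mc F$: $a(\ms O)$ is the sheaf $\nu_*\ms O(-ks)$, and $\mc P_{J/J}$ is normalized so that its restriction to $\oJ^0 \times \{\ms O\}$ is trivial. Hence $\mc F$ restricted to the slice $\{\ms O_X\}\times \J^0(\tilde X/U)$ (first factor the trivial sheaf) is trivial.
\item For $\mc G$: over $\{\ms O_X\}$, the map $f$ is an isomorphism and $\mc V$ is normalized via $s$. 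Using $(s\times 1)^*\tilde{\mc P}_{\tilde X/J}\cong \ms O$, we compute that $\mc G$ restricted there is trivial as well.
\end{itemize}
Comparing gives $\ms N \cong \ms O$.

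The main obstacle is the seesaw step. We need a separated moduli space in which the fiber sheaves are represented, and we need the $\Hom$-sheaf argument to be rigorous without fiberwise stability known a priori. We would first try to avoid stability entirely, using only simplicity, flatness and the base-change argument for $q_{2*}\mathcal Hom$, since that already produces $\ms N$.
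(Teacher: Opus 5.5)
Your proof is correct, but it is organized differently from the paper's.

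\textbf{The paper's route.} The paper regards $\mc F$ and $\mc G$ as families over $U$, with fibers on $\oJ^0(X_u)\times\J^0(\tilde X_u)$. Each fiber is torsion-free of rank one on an integral variety, hence Gieseker stable for any polarization. So the two sheaves give classifying maps $U\to M^s$ into Simpson's projective relative moduli space, and these agree because they agree on closed points and $U$ is reduced. Simpson's Theorem 1.21(4) then gives an \'etale cover on which $\mc F\cong\mc G$. Descent shows that $\pi_{U*}\mathcal{H}om(\mc F,\mc G)$ is a line bundle on $U$, which is trivialized by pulling back along the single section $t(u)=(\ms O_{X_u},\ms O_{\tilde X_u})$.

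\textbf{Your route.} You parametrize over $S=\J^0(\tilde X/U)$ and bypass the moduli space altogether. Flatness over $S$ (which you check correctly, including for $\mc G$ via affineness of $f\times 1$), simplicity and isomorphism of the closed fibers, and reducedness of $S$ already force $q_{2*}\mathcal{H}om(\mc F,\mc G)$ to be a line bundle $\ms N$ with $\mc F\otimes q_2^*\ms N\cong\mc G$. This is more elementary: no moduli space and no \'etale descent.

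The cost is that $\ms N$ lives on the larger base $S$, so you must kill it along the whole slice $\{\ms O_X\}\times_U\J^0(\tilde X/U)$. This uses the same normalizations as the paper:
\begin{itemize}
\item triviality of $\mc P_{J/J}$ along $\{\ms O_X\}\times_U\oJ^0(X/U)$ (your remark about $a(\ms O)=\nu_*\ms O(-ks)$ plays no role here);
\item triviality of $\tilde{\mc P}_{J/J}$ along $\{\ms O_{\tilde X}\}\times_U\J^0(\tilde X/U)$;
\item the fact that $f^{-1}(\ms O_{X_u})$ is a constant section of $\BP_i|_{\ms O_{\tilde X_u}}\cong\BP^1\times U$, so that $\mc V$ restricts trivially.
\end{itemize}

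\textbf{Two corrections to your self-assessment.} First, the stability you worried about is automatic. A rank one torsion-free sheaf on the integral variety $\oJ^0(X_u)$ is stable for every polarization, which is exactly what the paper uses, so the moduli route would also have worked.

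Second, the base-change step should not be justified by ordinary cohomology and base change. The sheaf $\mathcal{H}om(\mc F,\mc G)$ need not be $S$-flat or commute with restriction to fibers when $\mc F$ is not locally free. Use instead the representability of $T\mapsto\Hom(\mc F_T,\mc G_T)$ by a coherent sheaf $\mc Q$ on $S$ (EGA III, 7.7.6, which needs only $\mc G$ flat). On the reduced base, constant one-dimensional fibers make $\mc Q$ invertible. The evaluation map is then a fiberwise, hence global, isomorphism of $S$-flat sheaves.
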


\begin{proof}
    Our first goal is to show that $\mc F$ and $\mc G$ are relatively Gieseker stable (for any polarization) in the sense of \cite[\S 1]{Simpson-reps1}, i.e., flat over $U$ with $\iota_u^* \mc F \cong \iota_u^* \mc G$ Gieseker stable for each $u \in U$. Since $\mc P_{J/J}$ is flat over each factor \cite[Theorem A(2)]{Arinkin}, we see that $\mc F = (1 \times a)^* \mc P_{J/J}$ is flat over the second factor $\oJ^0(X/U)$, thus over $U$. As for $\mc G$, we note that $q_1^* \mc V \otimes (p \times 1)^* \tilde{\mc P}_{J/J}$ is a line bundle over $\PMod$, which is flat over $U$. Then since $(f_P \times 1)$ is finite (thus affine), the pushforward $\mc G$ remains flat over $U$. Finally, since by Lemma \ref{lem: FHHO families to point} the fiber $\iota_u^* \mc F \cong \iota_u^* \mc G$ is isomorphic to the right side of (\ref{eq: lem FHHO}), we see that this must be torsion-free (being the pushforward of a line bundle from an integral variety) of rank 1. Thus this is Gieseker stable for any polarization.

    Thus, choosing any relatively ample line bundle $H$ on $\oJ^0(X/U) \times_U \oJ^0(X/U) \to U$, both $\mc F$ and $\mc G$ give rise to morphisms $\phi_{\mc F}, \phi_{\mc G}: U \to M$, where $M$ is the coarse moduli space of relatively Gieseker semistable sheaves on $\oJ^0(X/U) \times_U \oJ^0(X/U)$ with Hilbert polynomial matching that of $\iota_u^* \mc F \cong \iota_u^* \mc G$ constructed, as constructed in \cite[Theorem 1.21]{Simpson-reps1}. Importantly, $M$ is projective \cite[Theorem 1.21(2)]{Simpson-reps1}, thus separated; then $\phi_{\mc F}$ and $\phi_{\mc G}$ agree on some closed subscheme on $U$ \cite[Tag 01KM]{stacks-project}, which contains every closed point (Lemma \ref{lem: FHHO families to point}), and thus must be all of $U$, since $U$ is reduced. Furthermore, since $\phi_{\mc F} = \phi_{\mc G}$ lands in the open locus $M^s \subset M$ of relatively stable sheaves, by \cite[Theorem 1.21(4)]{Simpson-reps1} we see that there is an \'etale cover $a: U' \to U$ such that $a^* \mc F \cong a^* \mc G$. We conclude that $\pi_{U'*} \mc Hom(a^* \mc F, a^* \mc G) \cong \ms O_{U'}$ \cite[Lemma 4.6.3]{Huybrechts-Lehn} and the evaluation map
    \begin{equation*}
        a^* \mc F \otimes \pi_{U'}^* \pi_{U'*} \mc Hom(a^* \mc F, a^* \mc G) \to a^* \mc F \otimes \mc Hom(a^* \mc F, a^* \mc G) \to a^* \mc G
    \end{equation*}
    is an isomorphism. These properties descend to $U$, once we note the following: since $a^* = La^*$ we have
    $$\mc Hom(a^* \mc F, a^* \mc G) = \mc Hom_{U'}(La^* \mc F, La^* \mc G) = \mc H^0(R\mc Hom(La^* \mc F, La^* \mc G))$$
    $$= \mc H^0(La^*R\mc Hom(\mc F, \mc G)) = a^* \mc Hom(\mc F, \mc G)$$
    and so
    $$\pi_{U'*}\mc Hom(a^* \mc F, a^* \mc G) = \pi_{U'*} a^* \mc Hom(\mc F, \mc G) = a^* \pi_{U*} \mc Hom(\mc F, \mc G)$$
    with the last equality following from applying $\mc H^0$ to \cite[Tag 08IB]{stacks-project}. Thus, we conclude that $\pi_{U*} \mc Hom(\mc F, \mc G)$ is a line bundle and that there is an isomorphism
    \begin{equation} \label{eq: stable ev}
        \mc F \otimes \pi_U^* \pi_{U*} \mc Hom(\mc F, \mc G) \xrightarrow{\sim} \mc G.
    \end{equation}
    
    Finally, to identify the line bundle $\pi_U^* \pi_{U*} \mc Hom(\mc F, \mc G)$, we pull back along the section
    $$t: U \to \oJ^0(X/U) \times_U \J^0(\tilde X/U), u \mapsto (\ms O_{X_u}, \ms O_{\tilde X_u}).$$
    Note that $\mc P_{J/J}$ is trivial along $\{\ms O_{X_u}\} \times_U \oJ^0(X/U)$, so that we have
    \begin{equation} \label{eq: FHHO equality F}
    t^*(\mc F \otimes \pi_U^* \pi_{U*} \mc Hom(\mc F, \mc G)) = t^* \mc F \otimes \pi_{U*} \mc Hom(\mc F, \mc G) \cong  \pi_{U*} \mc Hom(\mc F, \mc G).  
    \end{equation}
    On the other hand, recall that $f$ is an isomorphism over $\J^0(X/U)$, and thus
    $$t^* \mc G = t^*(q_1^* \mc V \otimes (p \times 1)^* \tilde{\mc P}_{J/J}).$$
    Now, recall that $\tilde{\mc P}_{J/J}$ is trivial along $\J^0(\tilde X/U) \times_U \{\ms O_{\tilde X_u}\}$, and so $t^*(p \times 1)^* \tilde{\mc P}_{J/J}$ is trivial. For the same reason, we also have $\BP_i|_{\ms O_{\tilde X_u}} \cong \BP^1 \times U$ (see Definition \ref{def: PMod}) and the composition of $t$ with projection $U \xrightarrow{q_1 \circ t} \PMod \to \BP_i$ is given by $u \mapsto ([1:1], u)$, so that the pullback of $\ms O_{\BP_i}(-1)$, thus also $t^* q_1^* \mc V$, is trivial.
    We conclude that $t^* \mc G \cong \ms O_U$. Combining this with (\ref{eq: stable ev}) and (\ref{eq: FHHO equality F}), we see that $\pi_{U*} \mc Hom(\mc F, \mc G) = \ms O_U$ and $\mc F \cong \mc G$.
\end{proof}

Next, let $i: \oPrym(X/U) \hookrightarrow \oJ^0(X/U)$, respectively $\tilde i: \Prym(\tilde X/U) \hookrightarrow \J^0(\tilde X/U)$, be the inclusions. We write $\mc P_{P/J} := (i \times 1)^* \mc P_{J/J}$ and $\tilde{\mc P}_{P/J} := (\tilde i \times 1)^* \tilde{\mc P}_{J/J}$.

\begin{lem}
    We have an isomorphism
    \begin{equation} \label{eq: FHHO pre descent}
    (1 \times a)^* \mc Hom(\mc P_{P/J}, \ms O_{\oPrym \times \oJ^0}) \cong (f_P \times 1)_*(q_1^* (\omega_{\PMod_P/U} \otimes \mc V^\vee_P) \otimes (p_P \times 1)^* \tilde{\mc P}_{P/J}^\vee)
    \end{equation}
    of sheaves on $\oPrym(X/U) \times_U \J^0(\tilde X/U)$, where both sides are unchanged if any subset of the functors is taken to be derived.
\end{lem}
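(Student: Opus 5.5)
The plan is to restrict the isomorphism $\mc F \cong \mc G$ of Lemma \ref{lem: FHHO relative J iso} along $i \times 1$ and then dualize it. Applying $\mc Hom(-,\ms O)$ to the left side gives the left side of (\ref{eq: FHHO pre descent}). Applying Grothendieck duality for the finite morphism $f_P \times 1$ to the right side gives the right side, where the twist $\omega_{\PMod_P/U}$ appears.

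\textbf{Step 1 (restriction).} Pull back $\mc F \cong \mc G$ along $i \times 1: \oPrym(X/U)\times_U \J^0(\tilde X/U) \hookrightarrow \oJ^0(X/U)\times_U\J^0(\tilde X/U)$. On the left, $(i\times 1)^*(1\times a)^*\mc P_{J/J} = (1\times a)^*\mc P_{P/J}$. This is underived, since $\mc P_{J/J}$ is flat over the first factor.

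On the right, use base change for the finite map $f\times 1$ along the regular embedding $i\times 1$. The embedding $i$ is regular of codimension $g$ by Remark \ref{rmk: i is reg}, and its preimage in $\PMod$ is $\PMod_P$. Since $\PMod_P = \Prym(\tilde X/U)\times_{\J^0(\tilde X/U)}\PMod$ is smooth of the right dimension, Lemma \ref{lem: Tor indep}(b) gives Tor-independence. Hence
$$L(i\times1)^*(f\times1)_* = (f_P\times1)_*L(i'\times1)^*,$$
and the pullback of a line bundle is underived. This yields
$$(1\times a)^*\mc P_{P/J} \cong (f_P\times 1)_*\bigl(q_1^*\mc V_P\otimes (p_P\times1)^*\tilde{\mc P}_{P/J}\bigr).$$

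\textbf{Step 2 (dualize).} Apply $R\mc Hom(-,\ms O)$ to both sides.

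On the right, Grothendieck duality for the finite map $f_P\times 1$ gives
$$R\mc Hom\bigl((f_P\times1)_*\mc L,\ms O\bigr) = (f_P\times1)_*R\mc Hom\bigl(\mc L,(f_P\times1)^!\ms O\bigr).$$
The source $\PMod_P\times_U\J^0(\tilde X/U)$ is smooth. The target $\oPrym(X/U)\times_U\J^0(\tilde X/U)$ is Gorenstein with relative dualizing sheaf pulled back from $U$ (Lemma \ref{lem: oPrym is flat}), and that pullback is trivial since $\ms A\cong\ms O_U$. The two spaces have the same dimension, so
$$(f_P\times 1)^!\ms O = q_1^*\omega_{\PMod_P/U}.$$
Combining these, the right side becomes $(f_P\times1)_*\bigl(q_1^*(\omega_{\PMod_P/U}\otimes\mc V_P^\vee)\otimes(p_P\times1)^*\tilde{\mc P}_{P/J}^\vee\bigr)$, concentrated in degree $0$.

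On the left, $R\mc Hom$ commutes with the flat pullback $(1\times a)^*$; flatness comes from $\mc P_{J/J}$ being flat over the second factor. So the left side becomes $(1\times a)^*R\mc Hom(\mc P_{P/J},\ms O)$.

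\textbf{Step 3 (underived).} It remains to see that $\mc P_{P/J}$ is maximal Cohen–Macaulay on the Cohen–Macaulay space $\oPrym\times_U\oJ^0$, so that its higher $\mc Ext(\mc P_{P/J},\ms O)$ vanish. Two routes are available:
\begin{itemize}
\item \cite[Lemma 2.3]{Arinkin}: restriction along a regular embedding of a flat MCM sheaf stays MCM.
\item Read the vanishing off the right side, which sits in degree $0$. Pullback along $1\times a$ detects vanishing only over the image, so I would use the first route.
\end{itemize}
Once these vanishings hold, every functor in (\ref{eq: FHHO pre descent}) may be taken derived or underived without change.

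The main obstacle is Step 2, specifically identifying $(f_P\times1)^!\ms O$ with $\omega_{\PMod_P/U}$. This requires the relative dualizing sheaf of $\oPrym\times_U\J^0(\tilde X/U)$ over $U$ to be trivial, and that triviality relies on the normalization $\det\pi_{U*}\omega = \ms O_U$ fixed in the setup.
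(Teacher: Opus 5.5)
Your proposal is correct and follows the paper's proof essentially step for step. You restrict $\mc F \cong \mc G$ along $i \times 1$, using Tor-independence (Lemma \ref{lem: Tor indep}(b)) for the finite map $f \times 1$. You then dualize, using Grothendieck duality for $f_P \times 1$ on one side, and flatness of $\mc P_{P/J}$ over the second factor together with Arinkin's MCM lemma on the other.

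The differences are cosmetic. You dualize into $\ms O$ and compute $(f_P \times 1)^! \ms O = q_1^* \omega_{\PMod_P/U}$ directly, whereas the paper dualizes into $\omega^\bullet$ and afterwards twists by $\pi_U^* \omega_U^\vee[-e]$. One wording fix: $1 \times a$ is not itself a flat morphism. What you actually use is that $\mc P_{P/J}$ is flat over the second factor, so that $L(1 \times a)^* \mc P_{P/J} = (1 \times a)^* \mc P_{P/J}$, exactly as in the paper.
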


\begin{proof}
    We start by applying $(i \times 1)^*$ to the isomorphism $\mc F \cong \mc G$ of Lemma \ref{lem: FHHO relative J iso}. Noting that $i$ and the corresponding map $i_{\PMod}: \PMod_P \hookrightarrow \PMod$ (a flat base change of $\tilde i$) are both flat base changes of $\{\ms O_C\} \hookrightarrow \J^0(C)$ (see Remark \ref{rmk: i is reg}), we have $L(i \times 1)^* R(f \times 1)_* = R(f_P \times 1)_* L(i_{\PMod} \times 1)^*$ by Lemma \ref{lem: Tor indep}(b), where as usual, since $f, f_P$ are finite the derived pushforwards are underived. Thus
    $$(i \times 1)^* \mc G = \mc H^0(L(i \times 1)^* (f \times 1)_*(q_1^* \mc V \otimes (p \times 1)^* \tilde{\mc P}_{J/J})$$
    $$ = \mc H^0((f_P \times 1)_* L(i_{\PMod} \times 1)^*(q_1^* \mc V \otimes (p \times 1)^* \tilde{\mc P}_{J/J})) = (f_P \times 1)_* (q_1^* \mc V_P \otimes (p_P \times 1)^* \tilde{\mc P}_{P/J})$$
    where in the last equality we use that derived and underived pullback of a line bundle agree. So from Lemma \ref{lem: FHHO relative J iso} we get
    \begin{equation} \label{eq: FHHO restriction}
      (f_P \times 1)_* (q_1^* \mc V_P \otimes (p_P \times 1)^* \tilde{\mc P}_{P/J}) = (i \times 1)^* \mc G = (i \times 1)^* \mc F = (1 \times a)^* \mc P_{P/J}.  
    \end{equation}
    The needed identity will follow by taking the dual $R\mc Hom(-, \omega_{\oPrym \times_U \J^0}^\bullet)$ of each side of \ref{eq: FHHO restriction}) and tensoring by $\pi_U^* \omega_U^\vee[-e]$ for $e := \dim \oPrym(X/U) \times_U \J^0(\tilde X/U)$.

    On the left side, note that
    $$R\mc Hom((i \times 1)^* \mc G, \omega_{\oPrym \times \J^0}^\bullet) = R\mc Hom(R(f_P \times 1)_*(q_1^* \mc V_P \otimes (p_P \times 1)^* \tilde{\mc P}_{P/J}), \omega_{\oPrym \times_U \J^0}^\bullet)$$
    $$ = R(f_P \times 1)_* R\mc Hom(q_1^* \mc V_P \otimes (p_P \times 1)^* \tilde{\mc P}_{P/J}, \omega_{\PMod_P \times_U \J^0}^\bullet)$$
    by Grothendieck duality
    $$ = R(f_P \times 1)_* (q_1^* \mc V^\vee_P \otimes R\mc Hom((p_P \times 1)^* \tilde{\mc P}_{P/J}, \ms O_{\PMod_P \times \J^0}) \otimes \omega_{\PMod_P \times_U \J^0}[e])$$
    since $\PMod_P \times_U \J^0(\tilde X/U)$ is smooth
    $$=R(f_P \times 1)_* (q_1^* \mc V^\vee_P \otimes (p_P \times 1)^* \tilde{\mc P}_{P/J}^\vee \otimes \omega_{\PMod_P \times_U \J^0}[e])$$
    $$=R(f_P \times 1)_* (q_1^*(\omega_{\PMod/U} \otimes \mc V^\vee_P) \otimes (p_P \times 1)^* \tilde{\mc P}_{P/J}^\vee) \otimes \pi_U^* \omega_U[e]$$
    using the projection formula and the fact that $\omega_{\PMod_P \times_U \J^0} \cong q_1^*\omega_{\PMod_P/U} \otimes \pi_U^* \omega_U \otimes q_2^* \omega_{\J^0/U}$, where the third term is trivial by assumption on $U$
    $$=(f_P \times 1)_* (q_1^*(\omega_{\PMod/U} \otimes \mc V^\vee_P) \otimes (p_P \times 1)^* \tilde{\mc P}_{P/J}^\vee) \otimes \pi_U^* \omega_U[e]$$
    since $f_P$ is finite.

    On the right side, we have
    $$R\mc Hom((1 \times a)^* \mc P_{P/J}, \omega^\bullet_{\oPrym \times_U \J^0}) = R\mc Hom((1 \times a)^* \mc P_{P/J}, \ms O) \otimes \omega_{\oPrym \times_U \J^0}[e]$$
    since $\oPrym(X/U) \times_U \J^0(\tilde X/U)$ is Gorenstein (Lemma \ref{lem: oPrym is flat})
    $$ =  R\mc Hom(L(1 \times a)^* \mc P_{P/J}, \ms O) \otimes \omega_{\oPrym \times_U \J^0}[e]$$
    by \cite[Tag 0C0V]{stacks-project}, since $\mc P_{P/J}$ and $\oPrym(X/U) \times_U \J^0(\tilde X/U)$ are flat over the second factor by \cite[Lemma 4.3(b)]{GS} and Lemma \ref{lem: oPrym is flat}, respectively
    $$ = L(1 \times a)^* R\mc Hom(\mc P_{P/J}, \ms O) \otimes \omega_{\oPrym \times_U \J^0}[e]$$
    $$ = L(1 \times a)^* R\mc Hom(\mc P_{P/J}, \ms O) \otimes \pi_U^*\omega_U[e]$$
    since by assumption $\omega_{\oPrym/U}$ and $ \omega_{\J^0/U}$ are both trivial (see Lemma \ref{lem: oPrym is flat})
    $$ = L(1 \times a)^* \mc Hom(\mc P_{P/J}, \ms O) \otimes \pi_U^*\omega_U[e]$$
    since $\mc P_{P/J}$ is maximal Cohen-Macaulay by \cite[Lemma 2.3(b)]{Arinkin}
    $$ = (1 \times a)^* \mc Hom(\mc P_{P/J}, \ms O) \otimes \pi_U^*\omega_U[e]$$
    because by the previous paragraph, the result of applying the duality functor to (\ref{eq: FHHO restriction}) is an $e$-shifted sheaf.
\end{proof}

It remains to descend the isomorphism of (\ref{eq: FHHO pre descent}) from $\oPrym(X/U) \times_U \J^0(\tilde X/U)$ to $\oPrym(X/U) \times_U [\J^0(\tilde X/U)/\J^0(C)]$. 

\begin{proof}[Proof of Proposition \ref{prop: smooth and singular Poincare}]
    Let $\tilde \varpi$, respectively $\varpi$, represent all base changes of the $\J^0(C)$-quotient map $\J^0(\tilde X/U) \to [\J^0(\tilde X/U)/\J^0(C)]$, respectively $\oJ^0(X/U) \to [\oJ^0(X/U)/\J^0(C)]$. We note that 
    $$\tilde \varpi^* (1 \times a_P)^* \mc P^{-1} := \tilde \varpi^* (1 \times a_P)^* R\mc Hom(\mc P^T, \ms O)[h] = (1 \times a)^* \varpi^* R\mc Hom(\mc P^T, \ms O)[h]$$
    $$=(1 \times a)^* R\mc Hom(\varpi^*\mc P^T, \ms O)[h]$$
    since $\varpi$ is flat
    $$=(1 \times a)^* R\mc Hom(\mc P_{P/J}, \ms O)[h]$$
    by definition of $\mc P$, so that we recover the left side of (\ref{eq: FHHO pre descent}), shifted by $h$. Similarly,
    $$\tilde \varpi^*(f_P \times 1)_*(q_1^*(\omega_{\PMod_P} \otimes \mc V_P^\vee) \otimes (p_P \times 1)^* \tilde{\mc P}^{-1})$$
    $$:= \tilde \varpi^*(f_P \times 1)_*(q_1^*(\omega_{\PMod_P} \otimes \mc V_P^\vee) \otimes (p_P \times 1)^* (\tilde{\mc P}^T)^\vee)[h-k]$$
    $$= (f_P \times 1)_*\tilde{\varpi}^*(q_1^*(\omega_{\PMod_P} \otimes \mc V_P^\vee) \otimes (p_P \times 1)^* (\tilde{\mc P}^T)^\vee)[h-k]$$
    because $\varpi$ is flat and $f_P$ finite
    $$=(f_P \times 1)_*(q_1^*(\omega_{\PMod_P} \otimes \mc V_P^\vee) \otimes (p_P \times 1)^* (\tilde{\varpi}^*\tilde{\mc P}^T)^\vee)[h-k]$$
    $$=(f_P \times 1)_*(q_1^*(\omega_{\PMod_P} \otimes \mc V_P^\vee) \otimes (p_P \times 1)^* \tilde{\mc P}_{P/J}^\vee)[h-k],$$
    matching the (shifted) right side of (\ref{eq: FHHO pre descent}). Thus, it suffices to check that there is at most one way to descend the sheaf (\ref{eq: FHHO pre descent}) to $\oPrym(X/U) \times_U [\J^0(\tilde X/U)/\J^0(C)]$.

    To ease notation, write
    $$\ms L := q_1^*(\omega_{\PMod_P/U} \otimes \mc V^\vee_P) \otimes (p_P \times 1)^* \tilde{\mc P}_{P/J}^\vee \in \Pic(\PMod \times_U \J^0(\tilde X/U)).$$ Consider the sheaf $\mc End(p_{23}^*(f_P \times 1)_* \ms L)$ on $\J^0(C) \times \oPrym(X/U) \times_U \J^0(\tilde X/U)$. As usual, since $p_{23}$ is flat, we have $p_{23}^* (f_P \times 1)_* \ms L = (1 \times f_P \times 1)_* p_{23}^* \ms L$. Now, since $(1 \times f_P \times 1)$ is finite, therefore in particular affine, note that the adjunction
    $$(1 \times f_P \times 1)^*(1 \times f_P \times 1)_* p_{23}^* \ms L \to p_{23}^* \ms L$$
    is surjective; moreover, the kernel is torsion since $(1 \times f_P \times 1)$ is an isomorphism over a dense open subset. Since $p_{23}^* \ms L$ is a line bundle and thus torsion-free, we conclude that the map
    $$\mc Hom(p_{23}^* \ms L, p_{23}^* \ms L) \to \mc Hom((1 \times f_P \times 1)^*(1 \times f_P \times 1)_* p_{23}^* \ms L, p_{23}^* \ms L)$$
    is an isomorphism, and so
    $$\mc End(p_{23}^* (f_P \times 1)_* \ms L) = \mc Hom((1 \times f_P \times 1)_* p_{23}^* \ms L, (1 \times f_P \times 1)_* p_{23}^* \ms L)$$
    $$= (1 \times f_P \times 1)_* \mc Hom((1 \times f_P \times 1)^*(1 \times f_P \times 1)_* p_{23}^* \ms L, p_{23}^* \ms L)$$
    $$= (1 \times f_P \times 1)_* \mc Hom(p_{23}^* \ms L, p_{23}^* \ms L) = (1 \times f_P \times 1)_* \ms O_{\J \times \PMod_P \times_U \J}.$$
    Taking global sections, we see that
    $$\mathrm{End}(p_{23}^*(f_P \times 1)_* \ms L) = H^0(\J^0(C) \times \PMod_P \times_U \J^0(\tilde X/U), \ms O) = H^0(\PMod_P \times_U \J^0(\tilde X/U), \ms O)$$
    and in particular, an isomorphism of $p_{23}^*(f_P \times 1)_* \ms L$ is uniquely determined by its value on $\{\ms O_C\} \times \oPrym(X/U) \times_U \J^0(\tilde X/U)$. Since a $\J^0(C)$-equivariant structure on the sheaf (\ref{eq: FHHO pre descent}) is given by an isomorphism of sheaves $p_{23}^* (f_P \times 1)_* \ms L$ and $\sigma^* (f_P \times 1)_* \ms L$ (where $\sigma$ is the action map) satisfying the cocycle condition, which implies in particular that the isomorphism $(p_{23}^*(f_P \times 1)_* \ms L)|_{\{\ms O_C\}} \cong (\sigma^*(f_P \times 1)_* \ms L)|_{\{\ms O_C\}}$ is given by the identification of each side with $(f_P \times 1)_* \ms L$ under pullback, we conclude that a $\J^0(C)$-equivariant structure must be unique.
\end{proof}

Finally, as a last example of the techniques of this section, we return to an assertion made in the proof of Proposition \ref{prop: GS}:

\begin{prop} \label{prop: GS descent}
    The restriction of the Poincar\'e sheaf $\mc P_{J/J}$ to $\oJ^0(X/B) \times_B \oPrym(X/B)$ descends uniquely to $[\oJ^0(X/B)/\J^0(C)] \times_B \oPrym(X/B)$. 
\end{prop}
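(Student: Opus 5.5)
Writing $\mc Q := (1 \times i)^* \mc P_{J/J}$ for the restriction to $\oJ^0(X/B) \times_B \oPrym(X/B)$, with $i: \oPrym(X/B) \hookrightarrow \oJ^0(X/B)$, I would prove existence and uniqueness of a $\J^0(C)$-equivariant structure on $\mc Q$ separately, with $\J^0(C)$ acting on the first factor. Uniqueness should follow as at the end of the proof of Proposition \ref{prop: smooth and singular Poincare}, and existence by a seesaw argument.

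\textbf{Uniqueness.} I would show that $\mathrm{End}(p_{23}^* \mc Q) = H^0(\J^0(C) \times \oJ^0(X/B) \times_B \oPrym(X/B), \ms O)$. By \cite[Lemma 2.3]{Arinkin}, $\mc Q$ is maximal Cohen-Macaulay and flat over each factor, and it is a line bundle on the open locus $W$ of pairs in which one factor is locally free. The complement of $W$ should have codimension $\ge 2$; I would check this using flatness of $\Nm_{X/C}$ (Remark \ref{rmk: i is reg}) and the codimension-two statement for $\oJ^0 \times_B \oJ^0$. Then $\mc End(\mc Q)$ is reflexive and agrees with $\ms O$ on $W$, so $\mc End(\mc Q) = \ms O$ by the Cohen-Macaulay/$S_2$ extension property (\cite[Lemma 2.2]{Arinkin}). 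The same holds after the flat pullback $p_{23}$. Hence an equivariant structure is determined by its restriction to $\{\ms O_C\} \times (\cdots)$, and the cocycle condition forces that restriction to be the identity. So any two equivariant structures coincide.

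\textbf{Existence.} I need an isomorphism $\phi: \sigma^* \mc Q \cong p_{23}^* \mc Q$ on $\J^0(C) \times \oJ^0 \times_B \oPrym$ satisfying the cocycle condition. On $W$, the determinant-of-cohomology description of $\mc P_{J/J}$ gives, for $L \in \J^0(C)$ and $F \in \oPrym(X_b)$, $\mc P_{J/J}|_{(c^*L \otimes E, F)} \cong \mc P_{J/J}|_{(E,F)} \otimes \mc P_{J/J}|_{(c^*L, F)}$. This is the multiplicativity of the Poincar\'e bundle in the line-bundle variable. The last factor is $\langle L, \Nm_{X/C}(F) \rangle$-type data: by the projection formula, $\det R\Gamma(c^*L \otimes F)$ is computed on $C$ via $c_* F$, and it is canonically trivial when $\det c_* F \cong \det c_* \ms O_{X_b}$, i.e. for $F \in \oPrym$. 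This yields a canonical $\phi$ on $W$ that satisfies the cocycle condition there, since the isomorphisms are canonical. Because both $\sigma^* \mc Q$ and $p_{23}^* \mc Q$ are maximal Cohen-Macaulay and the complement of $W$ has codimension $\ge 2$, $\phi$ extends uniquely, and the cocycle identity extends by the same $S_2$ argument. Flatness of the descended sheaf over each factor follows from flatness of $\mc Q$, since the quotient map is flat.

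\textbf{Main obstacle.} The delicate step is making the trivialization of $\mc P_{J/J}|_{(c^*L,F)}$ canonical in families and compatible with the normalization of $\mc P_{J/J}$. If it is only canonical up to a character, the descent could be twisted by a line bundle pulled back from $B\J^0(C)$, which would obstruct the cocycle. Since $\J^0(C)$ is connected, any such character of $\J^0(C)$ into $\BG_m$ is trivial. I would use exactly this fact to kill the ambiguity, normalizing at $L = \ms O_C$.
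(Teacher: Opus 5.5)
Your argument is correct in outline, and your uniqueness step is the paper's. There too, uniqueness and the cocycle condition follow because homomorphisms between the relevant pullbacks are functions pulled back from $\oPrym(X/B)$, so everything is fixed by normalizing along the identity section. Existence, however, goes by a different route.

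The paper never works on the locally free locus $W$. It takes, for each $L \in \J^0(C)$ separately, the isomorphism $\sigma(L,-)^*\mc P_{J/J} \cong \mc P_{J/J} \otimes p_2^*(\mc P_{J/J}|_{\{c^*L\} \times \oJ^0})$ of \cite[Lemma 6.5]{Arinkin}, together with the triviality of the last factor over $\oPrym$ from \cite[Proposition 4.4]{GS}. It upgrades these fiberwise isomorphisms to families by a seesaw argument in Simpson's moduli of relatively stable sheaves, so that $\sigma^*\mc Q$ and $p_{23}^*\mc Q$ differ by $p_{13}^*$ of a line bundle on $\J^0(C) \times \oPrym(X/B)$. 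It then kills that line bundle by restricting along the section through $\ms O_{X_b}$ and computing $(L,F) \mapsto \mc P_{J/J}|_{(c^*L,F)}$ with a degree-$n$ version of \cite[Proposition 1.5]{Huishi}. Stability also supplies the rigidity statement there.

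You instead get rigidity from the $S_2$ property of the maximal Cohen-Macaulay sheaf $\mc Q$ across the codimension $\ge 2$ complement of $W$, and you replace the seesaw by an explicit determinant-of-cohomology construction. This is more self-contained, but it puts all the weight on those identities holding in families over $\J^0(C) \times W$. Two points need care.

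First, on the part of $W$ where only $F$ is invertible, the identity you use is not multiplicativity in the line-bundle variable. The clean way to get one formula on all of $W$, so that the two pieces glue on the overlap, is to push everything to $C$. There $\mc P_{J/J}|_{(c^*L \otimes E,F)} \otimes \mc P_{J/J}|_{(E,F)}^{-1} \otimes \mc P_{J/J}|_{(c^*L,F)}^{-1}$ becomes the Deligne pairing of $L$ with $\det c_*(E \otimes F) \otimes \det c_*(E)^{-1} \otimes \det c_*(F)^{-1} \otimes \det c_* \ms O_X$, which is trivial as soon as $E$ or $F$ is invertible.

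Second, the triviality in families of $(L,F) \mapsto \mc P_{J/J}|_{(c^*L,F)}$ on $\J^0(C) \times \oPrym(X/B)$ is exactly what the paper imports from \cite{Huishi}. Your projection-formula sketch must genuinely deliver it. Connectedness of $\J^0(C)$ cannot substitute, since the danger is a line bundle that varies nontrivially in the $\J^0(C)$-direction, not a character.

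Conversely, canonicity is not needed for the cocycle. Once any isomorphism $\sigma^*\mc Q \cong p_{23}^*\mc Q$ exists, normalizing it at $L = \ms O_C$ forces the cocycle condition. This is because automorphisms of $p_{34}^*\mc Q$ over $\J^0(C) \times \J^0(C) \times \oJ^0(X/B) \times_B \oPrym(X/B)$ are units pulled back from the last two factors, $\J^0(C)$ being proper and connected. That is exactly how the paper concludes.
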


\begin{proof}
    First, for each $L \in \J^0(C)$, acting on the first factor of $\oJ^0(X/B) \times_B \oPrym(X/B)$ by tensor product with the pullback (call this action $\sigma$), we have $\sigma(L,-)^* \mc P_{J/J} \cong \mc P_{J/J} \otimes p_2^*(\mc P_{J/J}|_{\{c^*L\} \times J})$ by \cite[Lemma 6.5]{Arinkin}. We note that $\mc P_{J/J}|_{\{c^*L\} \times \oPrym}$ is trivial, by \cite[Proposition 4.4]{GS} applied to $\mc M = p_{13}^* \mc L$ and $\mc N = \ms O_{X \times \oJ \times \oJ}$, since after restricting to $X \times_B \oJ^0(X/B) \times_B \oPrym(X/B)$ one has $\det c_*(\mc M) \cong \det c_*(\mc N)$ by definition of $\oPrym$. Thus the pullbacks of $\mc P_{J/J}|_{\oJ \times \oPrym} =: \mc P_{J/P}$ under the action $\sigma$ and projection $p_{23}:  \J^0(C) \times \oJ^0(X/B) \times_B \oPrym(X/B) \to \oJ^0(X/B) \times_B \oPrym(X/B)$ are isomorphic over each point $L \in \J^0(C)$.

    Now, consider the projection $p_{13}: \J^0(C) \times \oJ^0(X/B) \times_B \oPrym(X/B) \to \J^0(C) \times \oPrym(X/B)$. We note that $\sigma^* \mc P_{J/P}, p_{23}^* \mc P_{J/P}$ are relatively flat with respect to $p_{13}$ since $\mc P_{J/J}$ is flat over each factor. Moreover, for each $x \in \J^0(C) \times \oPrym(X/B)$, the fibers $\sigma^* \mc P_{J/P}|_x \cong p_{23}^* \mc P_{J/P}|_x$ are torsion-free (because maximal Cohen-Macaulay \cite[Theorem A(2)]{Arinkin}) of rank 1 on the irreducible variety $\oJ^0(X_{\pi(x)})$ \cite[Theorem 9]{AIK}, thus stable with respect to any polarization. The argument in the proof of Lemma \ref{lem: FHHO relative J iso} then implies that $\sigma^* \mc P_{J/P}$ and $p_{23}^* \mc P_{J/P}$ differ by the line bundle $p_{13}^* p_{13*} \mc Hom(\sigma^* \mc P_{J/P}, p_{23}^* \mc P_{J/P})$.

    To identify this line bundle, we pull back along the section $t$ of $p_{13}$ given by the section $B \to \oJ^0(X/B), b \mapsto \{\ms O_{X_b}\}$. Since this is a section of $\J^0(X/B) \subset \oJ^0(X/B)$, we note that $t^* \sigma^* \mc P_{J/P}$ and $t^*p_{23}^* \mc P_{J/P} = p_2^*\mc P_{J/P}|_{\{\ms O\} \times P} \cong \ms O_{\J^0(C) \times \oPrym}$ are line bundles, computable by the determinant of cohomology formula \cite[(1.1)]{Arinkin}. In order to calculate $t^* \sigma^* \mc P_{J/P}$, we use \cite[Proposition 1.5]{Huishi}, whose proof also holds when $X \to C \times B$ is a flat finite map but not an \'etale double cover (although in general one must replace $\mc D_{q_{23}}(q_{12}^* K|_{D_Y})$ by $\mc D_{q_{23}}(q_{12}^* K|_{D_Y})^{\otimes (n-1)}$, where $n$ is the degree). Applying this over the base $\J^0(C) \times \oPrym(X/B)$ with $K$ on $C \times \J^0(C) \times \oPrym(X/B)$ pulled back from a universal bundle on $C \times \J^0(C)$, $\mc M$ pulled back from a universal bundle on $X \times_B \oPrym(X/B)$, and $\mc N$ trivial, since $\det c_*(\mc M) \cong \det c_*(\mc N)$, we conclude that $t^*\sigma^*\mc P_{J/P} = \mc P|_{\J^0(C) \times \oPrym}$ is trivial. In other words, since the line bundles $t^* \sigma^* \mc P_{J/P} \cong t^*p_{23}^* \mc P_{J/P}$ differ by the line bundle $t^* p_{13}^* p_{13*} \mc Hom(\sigma^* \mc P_{J/P}, p_{23}^* \mc P_{J/P}) = p_{13*}\mc Hom(\sigma^* \mc P_{J/P}, p_{23}^* \mc P_{J/P})$, we conclude that this is trivial, hence in fact $\sigma^* \mc P_{J/P} \cong p_{23}^* \mc P_{J/P}$.

    Now we are ready to define the equivariant structure on $\mc P_{J/P}$. Since from the previous paragraph we have $p_{13*} \mc Hom(\sigma^* \mc P_{J/P}, p_{23}^* \mc P_{J/P}) \cong \ms O_{\J \times \oPrym}$, in particular
    $$\Hom(\sigma^* \mc P_{J/P}, p_{23}^* \mc P_{J/P}) \cong H^0(\J^0(C) \times \oPrym(X/B), \ms O) = H^0(\oPrym(X/B), \ms O)$$
    and so a morphism is uniquely determined by its value on the section $s: \oPrym(X/B) \to \J^0(C) \times \oJ^0(X/B) \times_B \oPrym(X/B)$ given by $\{\ms O_C\} \times \{\ms O_{X_b}\} \times_B \textrm{id}$. Let $\phi: \sigma^* \mc P_{J/P} \to p_{23}^* \mc P_{J/P}$ be the unique morphism restricting to the identity on
    $$s^* \sigma^* \mc P_{J/P} = s^* p_{23}^* \mc P_{J/P} = \mc P_{J/P}|_{\ms O \times P} \cong \ms O_{\oPrym},$$
    as required by the cocycle condition. Note that $\phi$ is an isomorphism, as it does not have zeroes on $s$.

    Finally, we claim that $\phi$ must satisfy the cocycle condition, i.e., equality of the two isomorphisms $(m \times 1)^* \phi$ and $p_{234}^* \phi \circ (1 \times \sigma)^* \phi$ on the sheaf isomorphic to $p_{34}^* \mc P_{P/J}$ for the projection $p_{34}: \J^0(C) \times \J^0(C) \times \oJ^0(X/B) \times_B \oPrym(X/B): \oJ^0(X/B) \times_B \oPrym(X/B)$. Indeed, repeating the argument above, we see that
    $$\mathrm{End}(p_{34}^* \mc P_{P/J}) \cong H^0(\J^0(C) \times \J^0(C) \times \oPrym(X/B), \ms O) = H^0(\oPrym(X/B), \ms O)$$
    and so it suffices that the two isomorphisms agree on the section $\{\ms O_C\} \times \{\ms O_C\} \times \{\ms O_{X_b}\} \times_B \textrm{id}$, which is true by choice of $\phi$.
\end{proof}

\subsection{Proof of Theorem \ref{thm: main sec 2}(ii)} \label{subsec: proof of realization}
We are now ready to describe the homological realization of the projectors $\pg_{\gamma,k}, \qg_{\gamma,k}$. In \S \ref{subsubsec: breaking into chars} we see that $\mf G_\gamma, \mf G_\gamma^{-1}$ only interact with the isotypic component $(\pi_* \BC_M)_\kappa$, where $\gamma \in \Gamma$ and $\kappa \in \Gamma^\vee$ are paired as in Definition \ref{def: Weil pairing}. Then in \S \ref{subsubsec: reducing to U} we explain, one element $\gamma \in \Gamma$ at a time, how to reduce the problem to a suitable \'etale neighborhood of $B_\gamma$ on which to apply the Poincar\'e sheaf comparison result of Proposition \ref{prop: smooth and singular Poincare}, reducing the problem to the smooth case already treated in Proposition \ref{prop: (ii) smooth}.

\subsubsection{Matching isotypic components to components of the inertia stack} \label{subsubsec: breaking into chars}
Given the corresponding result in the smooth case (Proposition \ref{prop: (ii) smooth}) and the comparison from Proposition \ref{prop: smooth and singular Poincare}, it will quickly follow that $\mf G_\gamma$, respectively $\mf G_\gamma^{-1}$, only induce nontrivial morphisms to, respectively from, $(\pi_* \BC_M)_\kappa \subset \pi_* \BC_M$.

\begin{lem} \label{lem: theorem of square}
    Write $M^\circ := \Prym(X/B) \subset M$, and let $\mu: M \times_B M^\circ \to M$ be the multiplication map. Then
    $$(1 \times \mu)^* \mc P = p_{12}^* \mc P \otimes p_{13}^* \mc P|_{M^\vee \times_B M^\circ} \in \Coh(M^\vee \times_B M \times_B M^\circ).$$
\end{lem}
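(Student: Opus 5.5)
The plan is to prove the identity first upstairs on $\oJ^0(X/B) \times_B M \times_B M^\circ$ and then descend. Concretely, I would use the same two devices that appear in the proof of Proposition \ref{prop: GS descent}: Arinkin's translation formula, followed by a stable-sheaf seesaw argument and a uniqueness-of-descent argument. Let $\varpi$ denote all base changes of the quotient $\oJ^0(X/B) \to M^\vee = [\oJ^0(X/B)/\J^0(C)]$. Since $\mc P$ is by definition the unique descent of $\mc P_{J/P} := \mc P_{J/J}|_{\oJ \times \oPrym}$ (Proposition \ref{prop: GS descent}), the pullback of the claimed identity along $\varpi$ reads
$$(1 \times \mu)^* \mc P_{J/P} \cong p_{12}^* \mc P_{J/P} \otimes p_{13}^* \mc P_{J/P}|_{\oJ \times_B M^\circ}$$
on $\oJ^0(X/B) \times_B M \times_B M^\circ$.

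\textbf{Step 1: the identity upstairs.} By the symmetry of $\mc P_{J/J}$ in its two factors, tensoring in the second factor by a line bundle $\ms L \in \J^0(X_b)$ is the same as translating the other factor. Then \cite[Lemma 6.5]{Arinkin} gives
$$\mc P_{J/J}|_{\oJ \times \{F \otimes \ms L\}} \cong \mc P_{J/J}|_{\oJ \times \{F\}} \otimes \mc P_{J/J}|_{\oJ \times \{\ms L\}}.$$
This holds fiberwise over each point $(F, \ms L) \in M \times_B M^\circ$, and it is just the displayed identity restricted to a fiber. To upgrade it to a family statement, I would run the argument of Lemma \ref{lem: FHHO relative J iso} relative to the projection $p_{23}$ onto $M \times_B M^\circ$:
\begin{itemize}
\item[(i)] Both sides are flat over $M \times_B M^\circ$, since $\mc P_{J/J}$ is flat over each factor and the second factor on the right is a line bundle.
\item[(ii)] Their fibers are rank $1$ torsion-free sheaves (maximal Cohen--Macaulay by \cite[Theorem A(2)]{Arinkin}) on the irreducible $\oJ^0(X_b)$, hence stable for any polarization.
\item[(iii)] Simpson's moduli space of relatively stable sheaves \cite{Simpson-reps1} then shows that the two sides differ by $p_{23}^*$ of the line bundle $p_{23*}\mc Hom(\text{LHS}, \text{RHS})$.
\end{itemize}
To show this line bundle is trivial, I would pull back along the section $t$ given by $\ms O_{X_b} \in \J^0(X_b)$ in the first factor. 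Since $\mc P_{J/J}$ is trivial along $\{\ms O\} \times \oJ^0$, both sides pull back to $\ms O_{M \times_B M^\circ}$, so the line bundle is trivial and the two sides are isomorphic.

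\textbf{Step 2: descent.} The two sides carry natural $\J^0(C)$-equivariant structures, induced from those of $\mc P$ (and $\varpi$ is equivariant for $1 \times \mu$ and the projections). It therefore suffices to know that a $\J^0(C)$-equivariant structure on the sheaf $\mc S := (1 \times \mu)^*\mc P_{J/P}$ is unique; then the isomorphism of Step 1 is automatically equivariant and descends. For uniqueness I would repeat the last two paragraphs of the proof of Proposition \ref{prop: GS descent}. The same seesaw computation gives
$$\mathrm{End}(p_{23}^* \mc S) = H^0(\J^0(C) \times M \times_B M^\circ, \ms O) = H^0(M \times_B M^\circ, \ms O),$$
so an isomorphism $\sigma^* \mc S \cong p_{23}^* \mc S$ is determined by its restriction to the section $\{\ms O_C\} \times \{\ms O_{X_b}\} \times \mathrm{id}$. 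The cocycle condition forces this restriction to be the identity, so there is at most one equivariant structure.

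\textbf{Main obstacle.} I expect the main difficulty to be Step 1 in families. One has to check that Arinkin's translation lemma, which is stated for translation in the first factor of $\oJ \times \oJ$, applies here: that is, that the symmetry isomorphism of $\mc P_{J/J}$ is compatible with restricting the second factor to $\oPrym$, and that the fiberwise isomorphisms are canonical enough for the stable-sheaf seesaw to apply. If \cite[Lemma 6.5]{Arinkin} is already formulated over a base (translation by a family of line bundles), Step 1 becomes immediate and only the normalization along $t$ and the descent of Step 2 remain.
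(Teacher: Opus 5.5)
Your proof is correct and is essentially the paper's argument: it uses the translation identity of \cite[Lemma 6.5]{Arinkin} upstairs on $\oJ^0(X/B)\times_B M\times_B M^\circ$, then uniqueness of the $\J^0(C)$-equivariant structure exactly as in Proposition \ref{prop: GS descent}, so the two descents agree. The one difference is that the paper applies Arinkin's lemma directly as a relative statement on $\oJ^0(X/B)\times_B\oJ^0(X/B)\times_B\J^0(X/B)$ and then restricts, so your Simpson-moduli seesaw upgrade in Step 1 is valid but unnecessary.
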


\begin{proof}
    Writing $\mc P_{J/J}$ for the Arinkin Poincar\'e sheaf on $\oJ^0(X/B) \times_B \oJ^0(X/B)$, the corresponding identity holds on $\oJ^0(X/B) \times_B \oJ^0(X/B) \times_B \J^0(X/B)$ by \cite[Lemma 6.5]{Arinkin}. Restricting to $\oJ^0(X/B) \times_B \oPrym(X/B) \times_B \Prym(X/B)$, we obtain an identity
    $$(1 \times \mu)^* \mc P_{J/M} = p_{12}^* \mc P_{J/M} \otimes p_{13}^* \mc P_{J/M^\circ} \in \Coh(\oJ^0(X/B) \times_B M \times_B M^\circ)$$
    (noting that $\mc P_{J/J^\circ}$ is a line bundle). As in the proof of Proposition \ref{prop: GS descent}, we note that this sheaf descends uniquely to $M^\vee \times_B M \times_B M^\circ$ and thus that the two possible descents $(1 \times \mu)^* \mc P$ and $p_{12}^* \mc P \otimes p_{13}^* \mc P|_{M^\vee \times_B M^\circ}$ are isomorphic.
\end{proof}

\begin{prop} \label{prop: Ggamma in the kappa cpt}
    For any $\gamma \in \Gamma$, we have
    $$cl(\mf G_\gamma) \in H_*^{BM}(M^{\gamma, \vee} \times_B M)_\kappa, \,\,\, cl(\mf G_\gamma^{-1}) \in H^{BM}_*(M \times_B M^{\gamma, \vee})_{\kappa^{-1}},$$
    where $\Gamma$ acts on $M^{\gamma, \vee} \times_B M$,  respectively $M \times_B M^{\gamma, \vee}$, via its action on $M$.
\end{prop}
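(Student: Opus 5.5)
The plan is to realize the $\Gamma$-action on the $M$ factor as translation by elements of $M^\circ$, and then use Lemma \ref{lem: theorem of square} to turn that translation into a twist by a line bundle pulled back from $M^\vee$. For $\gamma' \in \Gamma$ we have $\Nm_{X/C}(c^*\gamma') = \gamma'^{\otimes n} = \ms O_C$, so $c^*\gamma'$ defines a section $s_{\gamma'}: B \to M^\circ = \Prym(X/B)$. The action of $\gamma'$ on $M$ is then $t_{\gamma'} := \mu(-, s_{\gamma'})$. Lemma \ref{lem: theorem of square}, pulled back along $1 \times (1, s_{\gamma'})$, gives
$$(1 \times t_{\gamma'})^* \mc P \cong \mc P \otimes p_1^* \mc L_{\gamma'}, \qquad \mc L_{\gamma'} := (1 \times s_{\gamma'})^* \mc P \in \Pic(M^\vee).$$
Since $1 \times t_{\gamma'}$ is an automorphism of $IM^\vee \times_B M$ over $IM^\vee \times_B B$, pullback along it commutes with $\tau$. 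This can be seen from Theorem \ref{thm: tau properties}(c) applied to the inverse automorphism, or directly from the definition, since the automorphism is representable. It also fixes the Todd class $\td^I(\pi_{IB}^*T_{\iota_B} - T_{IM^\vee \times_B M})$, because $\Gamma$ acts by automorphisms over $IB$. Theorem \ref{thm: tau properties}(a) then gives
$$(1 \times t_{\gamma'})^* \mf G = \tch(p_1^* \mc L_{\gamma'}) \cap \mf G .$$

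Next I will compute $\tch(\mc L_{\gamma'})$ on the component $M^{\gamma,\vee}$. I will show that $c_1(\mc L_{\gamma'})$ vanishes topologically, and that the canonical automorphism $\gamma$ of $\iota^*\mc L_{\gamma'}$ on $M^{\gamma,\vee}$ acts by the scalar $\langle \gamma, \gamma' \rangle = \kappa(\gamma')$. Then $cl(\tch(\mc L_{\gamma'})_\gamma) = \kappa(\gamma')$, exactly as in Lemma \ref{lem: Palpha top trivial}. Hence $(1 \times t_{\gamma'})^* cl(\mf G_\gamma) = \kappa(\gamma') \, cl(\mf G_\gamma)$ for every $\gamma'$, which is precisely the statement that $cl(\mf G_\gamma)$ is $\kappa$-isotypic.

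For the vanishing I will argue as in Lemma \ref{lem: Palpha top trivial}. The bundles $(1 \times s_L)^*\mc P$ for $L \in \J^0(C)$ form a family of line bundles on $M^\vee$ over the connected base $\J^0(C)$. At $L = \ms O_C$ the bundle is trivial, because $\mc P$ is normalized along the zero section. Hence each $c_1$ is algebraically, and therefore topologically, trivial. Alternatively, one can pull back to the cover $\oJ^0(X/B)$, which is injective on rational cohomology.

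The main obstacle is the eigenvalue computation. The eigenvalue is locally constant on $M^{\gamma}$, and each connected component of $M^\gamma$ surjects onto $B_\gamma$ by flatness. So it suffices to compute the eigenvalue at a point over a general $b \in B_\gamma$, where $X_b$ is nodal by condition (3) of Definition \ref{defn: good fibration}. There I would first try the following: pass to an étale neighborhood $U$ as in \S\ref{subsubsec: FHHO setup} and apply Proposition \ref{prop: smooth and singular Poincare}. This pulls $\mc L_{\gamma'}$ back along $a_P$ to the analogous line bundle for $\Prym(\tilde X/U)$, because $c^*\gamma'$ is a line bundle and the isomorphism is compatible with the determinant-of-cohomology description. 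The eigenvalue on the smooth side is then given by Corollary \ref{cor: eigenvalue of P}: $\gamma \in K$ acts by $\alpha(\gamma)$, where $\alpha$ is the component of $\nu^*c^*\gamma'$ in $\pi_0(\Prym(\tilde X))$. Finally, $\alpha(\gamma) = \langle \gamma, \gamma' \rangle$ by the Weil pairing description of \cite[Theorem 1.1]{HP} and \cite{MS}, as used in the proof of Proposition \ref{prop: (ii) smooth}. If compatibility with $a_P$ is awkward, the fallback is a direct computation: a $\gamma$-fixed point $F$ comes with an isomorphism $c^*\gamma \otimes F \cong F$, and one computes its induced action on the one-dimensional fiber $\det R\Gamma(F \otimes c^*\gamma') \otimes \det R\Gamma(F)^{-1}$ via Arinkin's formula \cite[(1.1)]{Arinkin}.

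The statement for $\mf G^{-1}_\gamma$ follows in the same way. Since $\mc P^{-1} = R\mc Hom(\mc P^T, \pi_B^*\ms A)[\dim \pi]$, the translation produces the dual twist $\mc L_{\gamma'}^\vee$. The eigenvalue is therefore $\kappa(\gamma')^{-1}$, placing $cl(\mf G^{-1}_\gamma)$ in the $\kappa^{-1}$ component.
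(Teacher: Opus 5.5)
Your proposal is correct and follows the paper's proof. Lemma \ref{lem: theorem of square} turns the action of $\beta \in \Gamma$ on $M$ into a twist by $p_1^*\mc P|_{M^\vee \times \{\beta\}}$ (and by its dual for $\mc P^{-1}$); the $\gamma$-component of $\tch$ of this line bundle then realizes to the scalar $\kappa(\beta)$, using topological triviality as in Lemma \ref{lem: Palpha top trivial} and the eigenvalue computation via Corollary \ref{cor: eigenvalue of P} and Proposition \ref{prop: smooth and singular Poincare}.

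One small fix: for general $L \in \J^0(C)$ the point $c^*L$ is not in $M$ (its norm is $L^{\otimes n}$), so the family $\mc P_{J/J}|_{-\times\{c^*L\}}$ only makes sense on the cover $\oJ^0(X/B)$ (or on $M$), not on $M^\vee$. Your ``alternative'' of pulling back to the cover is therefore the version of the topological-triviality argument that actually works.
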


\begin{proof}
    For any $\beta \in \Gamma$, let $\sigma_\beta: M \to M$ be the action map. By Lemma \ref{lem: theorem of square} we have
    \begin{equation} \label{eq: pullback of P}
        \sigma_\beta^* \mc P = \mc P \otimes p_1^*\mc P|_{M^\vee \times \{\beta\}}, \,\,\, \sigma_\beta^* \mc P^{-1} = \mc P^{-1} \otimes p_2^*\mc P|_{M^\vee \times \{\beta\}}^\vee
    \end{equation}
    and thus
    $$\beta \cdot cl(\mf G_\gamma) = cl(\tch(p_1^*\mc P|_{M^\vee \times \{\beta\}})_\gamma \cap \mf G_\gamma), \,\,\, \beta \cdot cl(\mf G_\gamma^{-1}) = cl(\tch(p_2^*\mc P|_{M^\vee \times \{\beta\}}^\vee)_\gamma \cap \mf G_\gamma^{-1}).$$
    Since $\mc P|_{M^\vee \times \beta} \in \Pic(M^\vee)$ is a line bundle, the decomposition into $\gamma$-eigensheaves is trivial, and by Corollary \ref{cor: eigenvalue of P} and Proposition \ref{prop: smooth and singular Poincare} (applied, say, over a single point $b \in B_\gamma$), we see that the eigenvalue of $\gamma$ is $\kappa(\beta)$. As in the proof of Lemma \ref{lem: Palpha top trivial}, we see that $\mc P_{M^\vee \times \{\beta\}}$ is topologically trivial, and thus
    $$\beta \cdot cl(\mf G_\gamma) = \tch^{top}(p_1^*\mc P|_{M^\vee \times \{\beta\}})_\gamma \cap cl(\mf G_\gamma) = \kappa(\beta) \cdot cl(\mf G_\gamma), \,\,\, \beta \cdot cl(\mf G_\gamma^{-1}) = \kappa(\beta)^{-1} \cdot cl(\mf G_\gamma^{-1}).$$
\end{proof}

\begin{cor} \label{cor: Ggamma triv on other pieces}
    Under the homological realization isomorphism (\ref{eq: BM = sheaf Hom}) the morphism
    $$cl(\mf G_{\gamma, i}) \in \Hom_B(\pi_* \BC_{M^{\gamma, \vee}}, \pi_* \BC_M[2(i-h_\gamma)])$$
    factors through the (shifted) inclusion $(\pi_* \BC_M)_\kappa \subset \pi_* \BC_M$, and
    $$cl(\mf G_{\gamma, i}^{-1}) \in \Hom_B(\pi_* \BC_M,\pi_* \BC_{M^{\gamma, \vee}}[2(i-2h+h_\gamma)])$$
    factors through the projection $\pi_* \BC_M \twoheadrightarrow (\pi_* \BC_M)_\kappa$.
\end{cor}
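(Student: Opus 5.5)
The plan is to deduce the corollary directly from Proposition \ref{prop: Ggamma in the kappa cpt}, using the compatibility of the isomorphism (\ref{eq: BM = sheaf Hom}) with the $\Gamma$-actions. The group $\Gamma$ acts on $M$ over $B$, hence on $\pi_*\BC_M$, and it acts trivially on the factor $M^{\gamma,\vee}$ appearing in $M^{\gamma,\vee}\times_B M$ and $M\times_B M^{\gamma,\vee}$. So the first step is to check that, for $\beta\in\Gamma$ acting via $\sigma_\beta$ on the $M$ factor, the induced action on $H^{BM}_*(M^{\gamma,\vee}\times_B M)$ corresponds under (\ref{eq: BM = sheaf Hom}) to post-composition with the automorphism $\sigma_{\beta*}$ of $\pi_*\BC_M$. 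Similarly, the action on $H^{BM}_*(M\times_B M^{\gamma,\vee})$ should correspond to pre-composition with the induced action. Both follow from naturality of the Corti--Hanamura isomorphism with respect to proper pushforward by automorphisms over $B$; concretely, pushforward along $1\times\sigma_\beta$ equals composition with the graph $[\Gamma_{\sigma_\beta}]$ by Lemma \ref{lem: composition with graphs}(a), together with the compatibility of composition with the cycle class map.

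Given this, Proposition \ref{prop: Ggamma in the kappa cpt} says that each $\beta$ acts on $cl(\mf G_\gamma)$ by the scalar $\kappa(\beta)$. Since $\Gamma$ preserves degree, the same holds for each graded piece $\mf G_{\gamma,i}$. In the first case, let $\phi=cl(\mf G_{\gamma,i})$. Then $\sigma_{\beta*}\circ\phi=\kappa(\beta)\phi$ for all $\beta$. Applying the averaging idempotent $e_\kappa=\frac{1}{|\Gamma|}\sum_\beta \kappa(\beta)^{-1}\sigma_{\beta*}$, which is the projector onto the summand $(\pi_*\BC_M)_\kappa$ in the decomposition of \S\ref{subsubsec: constructible sheaves}, gives $e_\kappa\circ\phi=\phi$. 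Hence $\phi$ factors through the inclusion of $(\pi_*\BC_M)_\kappa$.

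In the second case, let $\psi=cl(\mf G^{-1}_{\gamma,i})$. The action by $\kappa(\beta)^{-1}$ is by pre-composition, so it reads $\psi\circ\sigma_{\beta*}^{\pm1}=\kappa(\beta)^{-1}\psi$. The step I expect to require the most care is the bookkeeping of whether $\beta$ acts through $\sigma_\beta^*$ or $\sigma_{\beta*}=\sigma_{\beta^{-1}}^*$ on Borel--Moore homology versus on $\pi_*\BC_M$. With the consistent convention, this becomes $\psi\circ\sigma_{\beta*}=\kappa(\beta)\psi$. It then follows that $\psi\circ e_\kappa=\psi$, so $\psi$ vanishes on the complementary isotypic summands and factors through the projection onto $(\pi_*\BC_M)_\kappa$, as claimed. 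The index $\kappa$, rather than $\kappa^{-1}$, in the statement is exactly the outcome of this sign convention. I would pin it down by tracking (\ref{eq: pullback of P}) through the transpose involved in defining $\mc P^{-1}$.
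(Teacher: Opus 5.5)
Your proposal is correct and follows the paper's proof. Proposition~\ref{prop: Ggamma in the kappa cpt} places $cl(\mf G_{\gamma,i})$ and $cl(\mf G^{-1}_{\gamma,i})$ in the $\kappa$- and $\kappa^{-1}$-isotypic parts of the respective Hom spaces; because $\Gamma$ acts trivially on $\pi_*\BC_{M^{\gamma,\vee}}$, post-composition forces the image into $(\pi_*\BC_M)_\kappa$, while pre-composition (where the source action enters inversely) kills every summand except $(\pi_*\BC_M)_\kappa$. Your averaging idempotent and the check via Lemma~\ref{lem: composition with graphs}(a) just make explicit what the paper leaves implicit, and no further tracking of $\mc P^{-1}$ is needed, since the flip from $\kappa^{-1}$ to $\kappa$ comes purely from the contragredient action on the source of a Hom.
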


\begin{proof}
    It follows from Proposition \ref{prop: Ggamma in the kappa cpt} that each degree component $\mf G_{\gamma,i}$, respectively $\mf G_{\gamma,i}^{-1}$, is in the $\kappa$, respectively $\kappa^{-1}$, component of the relevant $H^{BM}_*$. Since $\Gamma$ acts trivially on $\pi_* \BC_{M^{\gamma, \vee}}$, note that a $\kappa$-component morphism must have image in $(\pi_* \BC_M[*])_\kappa$ (for $\mf G_{\gamma, i}$), and in the opposite direction, the image of a $\kappa'$-component subsheaf by a $\kappa^{-1}$ morphism cannot be $\Gamma$-invariant unless $\kappa' = \kappa$ (for $\mf G_{\gamma, i}^{-1}$).
\end{proof}

It follows from the above that for each $\gamma \ne \beta \in \Gamma$, we have $cl(\mf G_{\gamma,i}^{-1}) \circ cl(\mf G_{\beta,j}) = 0$. For future use, we record the fact that this is already true on the level of Chow groups:

\begin{prop} \label{prop: FV for different chars}
    For any $i,j \in \BN$ and $\gamma \ne \beta \in \Gamma$, we have
    $$\mf G_{\gamma,i}^{-1} \circ \mf G_{\beta,j} = 0 \in \Corr_B^{i+j-2h}(M^{\beta, \vee}, M^{\gamma, \vee}).$$
\end{prop}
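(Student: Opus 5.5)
The plan is to lift the cohomological argument of Proposition \ref{prop: Ggamma in the kappa cpt} and Corollary \ref{cor: Ggamma triv on other pieces} to Chow groups. The key tool is the $\Gamma$-action on the middle factor $M$ of the composition, together with the fact that the diagonal pullback $\Delta_M^!$ is compatible with this action. Write the composition as
$$\mf G_{\gamma,i}^{-1} \circ \mf G_{\beta,j} = p_{13*}\Delta_M^!(\mf G_{\beta,j} \times \mf G^{-1}_{\gamma,i}).$$
For each $\sigma \in \Gamma$, the automorphism $1 \times \sigma \times \sigma \times 1$ of $M^{\beta,\vee} \times_B M \times M \times_B M^{\gamma,\vee}$ preserves the diagonal. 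Hence the $\sigma$-pushforward of the integrand yields the same class after applying $p_{13*}$. Concretely, we get
$$p_{13*}\Delta_M^!((1\times\sigma)_*\mf G_{\beta,j} \times (\sigma \times 1)_*\mf G^{-1}_{\gamma,i}) = \mf G_{\gamma,i}^{-1} \circ \mf G_{\beta,j},$$
because $\Delta_M$ is a base change along $(\sigma,\sigma)$ and $p_{13}$ is invariant.

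Next, I will compute the action at the level of Chow groups rather than cohomology. By (\ref{eq: pullback of P}), which comes from Lemma \ref{lem: theorem of square}, we have $\sigma^*\mc P \cong \mc P \otimes p_1^*\mc L_\sigma$ with $\mc L_\sigma := \mc P|_{M^\vee \times \{\sigma\}}$. Theorem \ref{thm: tau properties}(a) then gives
$$(1\times\sigma)^*\mf G_\beta = \tch(p_1^*\mc L_\sigma)_\beta \cap \mf G_\beta.$$
The same holds for $\mf G^{-1}_\gamma$ with $\mc L_\sigma^\vee$ on the second factor. In the composition, $\tch(\mc L_\sigma)$ sits on $M^{\beta,\vee}$ and $\tch(\mc L_\sigma^\vee)$ sits on $M^{\gamma,\vee}$; both factors are outer and pass through $p_{13*}$ and $\Delta_M^!$ by the projection formula. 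On $M^{\beta,\vee}$, we can write $\tch(\mc L_\sigma)_\beta = \kappa_\beta(\sigma)\,\ch^{EG}(\mc L_\sigma|_{M^{\beta,\vee}})$, since $\beta$ acts on $\mc L_\sigma$ by the scalar $\kappa_\beta(\sigma)$ (Corollary \ref{cor: eigenvalue of P} and Proposition \ref{prop: smooth and singular Poincare}). Similarly, $\tch(\mc L_\sigma^\vee)_\gamma = \kappa_\gamma(\sigma)^{-1}\ch^{EG}(\mc L_\sigma^\vee|_{M^{\gamma,\vee}})$. Averaging the identity above over $\sigma \in \Gamma$ then gives
$$\mf G_{\gamma,i}^{-1}\circ\mf G_{\beta,j} = \frac{1}{|\Gamma|}\sum_\sigma \kappa_\beta\kappa_\gamma^{-1}(\sigma)\, \ch(\mc L_\sigma^\vee)\cdot(\mf G^{-1}_\gamma\circ\mf G_\beta)\cdot\ch(\mc L_\sigma),$$
taken in the appropriate degree.

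The main obstacle is that the Chern characters $\ch^{EG}(\mc L_\sigma)$ are only topologically trivial, not trivial in Chow. I would handle this in one of two ways. The first is to show that $\mc L_\sigma$ is torsion in $\Pic(M^\vee)$. Indeed, $\sigma$ has order dividing $n$, so $\mc L_\sigma^{\otimes n} \cong \mc L_{\sigma^n} = \mc O$ by Lemma \ref{lem: theorem of square} applied multiplicatively. Then $c_1(\mc L_\sigma)=0$ in $\CH^*_\BQ$, and hence $\ch^{EG}(\mc L_\sigma) = 1$ with rational coefficients. With this, the average becomes $\frac{1}{|\Gamma|}\sum_\sigma (\kappa_\beta\kappa_\gamma^{-1})(\sigma)$ times the composition. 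Since $\gamma\ne\beta$ and the Weil pairing is nondegenerate (Definition \ref{def: Weil pairing}), we have $\kappa_\beta \ne \kappa_\gamma$, so the character sum vanishes. This forces $\mf G_{\gamma,i}^{-1}\circ\mf G_{\beta,j} = 0$ in each degree separately, because all the operations involved preserve degree.

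The second way requires more care. The invariance under $(\sigma,\sigma)$ uses that $\sigma_*$ commutes with $\Delta_M^!$, and this holds for automorphisms via Kresch's Gysin compatibilities. It also uses the precise statement $\sigma^*\mc P^{-1} = \mc P^{-1}\otimes p_2^*\mc L_\sigma^\vee$ from (\ref{eq: pullback of P}). Both of these I take from earlier results.
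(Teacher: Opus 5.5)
Your proposal is correct. Its first half matches the paper's proof exactly: the composition is invariant under $1\times\sigma\times\sigma\times 1$, and $(1\times\sigma)^*\mf G_\beta=\tch(p_1^*\mc L_\sigma)_\beta\cap\mf G_\beta$ by (\ref{eq: pullback of P}).

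Where you diverge is in handling the positive-degree part of $\tch(\mc L_\sigma)$. The paper does not try to show it vanishes. It expands
$$\mf G^{-1}_{\gamma,i}\circ\mf G_{\beta,j}=\sum_{a,b\ge 0}\tch^a(\ldots)_\gamma\cap\tch^b(\ldots)_\beta\cap(\mf G^{-1}_{\gamma,i-a}\circ\mf G_{\beta,j-b}),$$
then inducts on $(i,j)$ in lexicographic order so that only the $a=b=0$ term survives. It concludes using the single scalar $\langle\beta,\alpha\rangle\langle\gamma,\alpha\rangle^{-1}\neq 1$ for one well-chosen $\alpha$.

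You instead kill the higher terms outright. Restricting Lemma \ref{lem: theorem of square} along sections shows that $\sigma\mapsto\mc L_\sigma$ is a homomorphism $\Gamma\to\Pic(M^\vee)$. Hence $\mc L_\sigma$ is torsion, $c_1^{EG}(\mc L_\sigma)=0$ rationally, and $\tch(\mc L_\sigma)_\beta=\kappa_\beta(\sigma)$ on the nose. You should say why $\mc L_1\cong\mc O$; it follows from the same lemma, since $\mc L_1\cong\mc L_1^{\otimes 2}$.

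Your route yields a stronger intermediate statement: $\mf G_\beta$ is itself a $\kappa_\beta$-eigenvector for the $\Gamma$-action in Chow. This is a Chow-level version of Proposition \ref{prop: Ggamma in the kappa cpt}. After it, no induction is needed, and averaging over $\Gamma$ versus choosing a single $\sigma$ is cosmetic. The paper's induction uses only the degree-zero part of $\tch$, so it does not depend on $\mc L_\alpha$ being torsion; yours uses finiteness of $\Gamma$ but is shorter.

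Your closing paragraph about a ``second way'' is unnecessary. Switching between $(1\times\sigma)_*$ and $(1\times\sigma)^*$ only replaces $\sigma$ by $\sigma^{-1}$, which is harmless.
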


\begin{proof}
    For any $\alpha \in \Gamma$ acting on $M$ via an isomorphism $\sigma_\alpha$, we note that
    $$\mf G_{\gamma,i}^{-1} \circ \mf G_{\beta,j} = (\sigma_\alpha \times 1)^* \mf G_{\gamma,i}^{-1} \circ (1 \times \sigma_\alpha)^* \mf G_{\beta,j}.$$
    By definition and (\ref{eq: pullback of P}) we see that
    $$(1 \times \sigma_\alpha)^* \mf G_\beta = \tch(p_1^* \mc P|_{M^\vee \times \{\alpha\}})_\beta \cap \mf G_\beta, \,\,\, (\sigma_\alpha \times 1)^* \mf G_\gamma^{-1} = \tch(p_2^* \mc P|^\vee_{M^\vee \times \{\alpha\}})_\gamma \cap \mf G_\gamma^{-1}.$$
    In particular, splitting into degree pieces
    $$(1 \times \sigma_\alpha)^* \mf G_{\beta,j} = ((1 \times \sigma_\alpha)^* \mf G_{\beta})_j = \sum_{a \ge 0} \tch^a(p_1^* \mc P|_{M^\vee \times \{\alpha\}})_\beta \cap \mf G_{\beta, j-a}$$
    and similarly for $\mf G_{\gamma,i}^{-1}$.
    Then by the projection formula for the representable morphism $p_{13}: M^{\beta, \vee} \times_B M \times_B M^{\gamma, \vee} \to M^{\beta, \vee} \times_B M^{\gamma, \vee}$ we have
    $$\mf G_{\gamma,i}^{-1} \circ \mf G_{\beta,j} = (\sigma_\alpha \times 1)^* \mf G_{\gamma,i}^{-1} \circ (1 \times \sigma_\alpha)^* \mf G_{\beta,j}$$
    $$=\sum_{a,b \ge 0} \tch^a(p_2^* \mc P|^\vee_{M^\vee \times \{\alpha\}})_\gamma \cap \tch^b(p_1^* \mc P|_{M^\vee \times \{\alpha\}})_\beta \cap (\mf G_{\gamma, i-a}^{-1} \circ \mf G_{\beta,j-b}).$$
    
    By induction, we may suppose that for all pairs $(i',j') < (i,j)$ in lexicographic order we have $\mf G_{\gamma,i'}^{-1} \circ \mf G_{\beta, j'} = 0$. Then
    \begin{equation} \label{eq: GV diff cpts}
        \mf G_{\gamma,i}^{-1} \circ \mf G_{\beta, j} = \tch^0(p_2^* \mc P|^\vee_{M^\vee \times \{\alpha\}})_\gamma \cap \tch^0(p_1^* \mc P|_{M^\vee \times \{\alpha\}})_\beta \cap (\mf G_{\gamma, i}^{-1} \circ \mf G_{\beta,j}).
    \end{equation}
    As in the proof of Proposition \ref{prop: Ggamma in the kappa cpt}, we note that
    $$\tch^0( \mc P|^\vee_{M^\vee \times \{\alpha\}})_\gamma  = \langle \gamma, \alpha \rangle^{-1}, \,\,\, \tch^0( \mc P|_{M^\vee \times \{\alpha\}})_\beta  = \langle \beta, \alpha \rangle.$$
    Since $\gamma \ne \beta$ and the pairing of Definition \ref{def: Weil pairing} is nondegenerate, we may choose $\alpha$ with $\langle \gamma^{-1}\beta, \alpha \rangle \ne 1$. Then (\ref{eq: GV diff cpts}) implies that $\mf G_{\gamma, i}^{-1} \circ \mf G_{\beta,j}=0$.
\end{proof}

\subsubsection{Reduction to the smooth case} \label{subsubsec: reducing to U}
Let $X \to B$ be a family of integral curves with a morphism to $C$ inducing a good compactified Prym fibration $M = \oPrym(X/B) \to B$. Choose an element $\gamma \in \Gamma$.

\begin{prop} \label{prop: U exists}
    There is an \'etale morphism $U \to B_\gamma$ which can be equipped with a family $\tilde X \to U$ normalizing $X_U \to U$ and sections $\{\sigma_i, \sigma_i^1, \sigma_i^2, s: 1 \le i \le k\}$, satisfying all the properties described at the beginning of \S \ref{subsubsec: FHHO setup}.
\end{prop}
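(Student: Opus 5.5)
We first pass to a dense open subset and then trivialize the relevant data étale-locally. By condition (3) of Definition \ref{defn: good fibration}, the general fiber of $X|_{B_\gamma} \to B_\gamma$ is at worst nodal. By Remark \ref{rmk: Severi/Ngo} we have $\delta(B_\gamma) = \codim B_\gamma =: k$. Since $\delta$ is upper semicontinuous and $B_\gamma$ is irreducible (smooth and, component by component, the image of $M^\gamma$), there is a dense open $B_\gamma^\circ \subset B_\gamma$ over which every fiber $X_b$ is nodal with exactly $k$ nodes. We then take $U$ étale over $B_\gamma^\circ$; since $B_\gamma^\circ \to B_\gamma$ is an open immersion, this gives $U \to B_\gamma$ étale. (I read the statement as allowing $U \to B_\gamma$ to be étale without being surjective, which is what the later reduction needs: a dense open subset carrying the perverse sheaves with full support on $B_\gamma$.)

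Over $B_\gamma^\circ$ the singular locus $\Sigma \subset X_{B_\gamma^\circ}$ is defined by the relative Fitting ideal of $\Omega_{X/B}$. Because every fiber has exactly $k$ ordinary nodes, $\Sigma \to B_\gamma^\circ$ is finite étale of degree $k$. This is where I expect the main obstacle. I would check étaleness via the local form $xy = t$ with $t$ a local function on the base: the local ring of the equisingular family is $\mathcal O_{B}[[x,y]]/(xy)$ up to étale change, because the node does not smooth along $B_\gamma^\circ$. Equisingularity (constant $\delta$, reduced base) should force the versal deformation parameter of each node to vanish on $B_\gamma^\circ$, possibly after shrinking further. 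A finite étale cover $U_1 \to B_\gamma^\circ$ then trivializes $\Sigma$, giving sections $\sigma_1, \dots, \sigma_k$.

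Still using the local form $xy=0$, simultaneous normalization $\tilde X := $ the normalization of $X_{U_1}$ is smooth over $U_1$, and its fibers are the normalizations $\tilde X_u$. This follows from Teissier's criterion, since $\delta$ is constant and the base is smooth, or directly from the local form, which normalizes to two smooth branches. The preimage of $\sigma_i$ is then finite étale of degree 2 over $U_1$. After a further finite étale base change $U_2 \to U_1$ it splits into sections $\sigma_i^1, \sigma_i^2$.

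It remains to produce $s$ and to trivialize the determinant bundles. For $s$, we choose a point of a fiber away from the preimages of the nodes; smoothness of $\tilde X \to U_2$ gives an étale-local section through it, so we replace $U_2$ by an étale neighborhood $U_3$, shrinking until $s$ is disjoint from the $\sigma_i^j$. The condition $\det \pi_{U*}\omega_{\tilde X/U} \cong \det \pi_{U*}\omega_{X/U} \cong \ms O_U$ holds after restricting to a Zariski open affine cover and passing to a small open $U \subset U_3$ on which both line bundles are trivial. Finally, the finite flat map $c: X_U \to C \times U$ is inherited by base change. Composing, $U \to B_\gamma^\circ \subset B_\gamma$ is étale, and all the hypotheses of \S \ref{subsubsec: FHHO setup} hold.
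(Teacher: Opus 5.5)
Your proposal is correct and follows the paper's proof essentially step by step. You pass to a dense open of $B_\gamma$ where the fibers are nodal with a constant number $k$ of nodes, note that the singular locus is finite \'etale of degree $k$ and that normalizing gives a smooth simultaneous normalization, use finite \'etale covers to produce $\sigma_i$ and $\sigma_i^1,\sigma_i^2$, take an \'etale neighborhood to get a section $s$ of $\tilde X$ away from the $\sigma_i^j$, and shrink to trivialize the two determinant bundles. The differences are minor: you derive the \'etaleness of the singular locus and the simultaneous normalization from the local model $xy=t$, with $t=0$ forced by a constant node count over a reduced base, where the paper asserts the first and cites a theorem for the second. The paper also states explicitly, where you leave it implicit, that $\omega_{X/U'}$ is a line bundle and $\pi_{U'*}\omega_{X/U'}$ is locally free, so the determinants make sense.
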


\begin{proof}
    By Definition \ref{defn: good fibration}(3), there is a dense open subset $U' \subset B_\gamma$ over which the curves $X_{U'} \to U'$ are nodal. Restricting further if necessary, by upper semicontinuity of the $\delta$-invariant, we may assume that the number of nodes is some constant $k$. Then there exists a flat family $\tilde X_{U'} \to U'$ of normalizations \cite[Theorem 4.2]{normalization}. Restricting further if needed, we may assume that the line bundles $\det \pi_{U'*} \omega_{\tilde X/U'}, \det \pi_{U'*} \omega_{X/U'}$ are trivial. (Note that since the singularities of $X_{U'} \to U'$ are nodal, this is a family of Gorenstein curves \cite[Tag 0E37]{stacks-project}, and thus $\omega_{X'/U'}$ is a line bundle compatible with base change \cite[Tag 0E6R]{stacks-project}, and by Grauert's theorem $\pi_{U'*} \omega_{X'/U'}$ is a vector bundle.) We will take $U$ to be an \'etale cover of $U'$ (which is smooth by Definition \ref{defn: good fibration}(1)).
    
    Note that the discriminant locus $D$ of $X_{U'} \to U'$ is finite \'etale over $U'$ of order $k$; after passing to a $(k!)$-fold finite \'etale cover $U'' \to U'$, we can trivialize the cover $D \to U'$, i.e., define sections $\sigma_i$. Then $\nu^{-1}(\sigma_i)$ gives a double \'etale cover of $U''$ for each $i$, and trivializing these $k$ covers via some finite \'etale $U''' \to U''$, we obtain sections $\{\sigma_i^1, \sigma_i^2\}$. Finally, after taking an \'etale cover $U$ of $U'''$, we can define a section of the smooth morphism $\tilde X_{U'''} \setminus \cup_i (\sigma_i^1 \cup \sigma_i^2) \to U'''$.
\end{proof}

\begin{prop} \label{prop: a is restriction to Mgamma}
    Let $U$ be as in Proposition \ref{prop: U exists}. The morphism $a_P: [\J^0(\tilde X/U)/\J^0(C)] \to [\oJ^0(X/U)/\J^0(C)]$ is a closed embedding whose image is the fixed locus $[\oJ^0(X/U)^\gamma/\J^0(C)]$.
\end{prop}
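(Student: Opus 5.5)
The plan is to prove the statement upstairs, $\J^0(C)$-equivariantly, and then descend. The fixed locus $[\oJ^0(X/U)/\J^0(C)]^\gamma$ is the image of $\oJ^0(X/U)^{\gamma} := \{F : F \otimes c^*\gamma \cong F\}$. This set is $\J^0(C)$-stable, and under $[\oJ^0(X/U)/\J^0(C)] = [M_U/\Gamma]$ it matches $[M_U^\gamma/\Gamma]$. Since $a$ is $\J^0(C)$-equivariant, it suffices to show three things: $a$ lands in $\oJ^0(X/U)^\gamma$, $a$ is a closed embedding, and its image is all of $\oJ^0(X/U)^\gamma$. The stabilizers are compatible because $\J^0(C)$ acts on both sides through the same character data: an element fixing $\nu_*\ms L(-ks)$ fixes $\ms L$ and vice versa, by the first step below. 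Hence the induced map of quotient stacks is a closed embedding onto $[\oJ^0(X/U)^\gamma/\J^0(C)]$.

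\textbf{Landing in the fixed locus.} By Remark \ref{rmk: endoscopic locus defs}, for $u \in U$ (which lies over $B_\gamma$) the line bundle $\nu_u^*c_u^*\gamma$ is trivial on $\tilde X_u$. Hence $\nu^*c^*\gamma$ on $\tilde X$ is pulled back from $U$, and it is trivial after normalizing along $s$. The projection formula then gives $c^*\gamma \otimes \nu_*\ms L(-ks) \cong \nu_*(\ms L(-ks) \otimes \nu^*c^*\gamma) \cong \nu_*\ms L(-ks)$.

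\textbf{Closed embedding.} The map $a$ is proper, since $\J^0(\tilde X/U)$ is proper over $U$. It is injective on points: from $F = \nu_*\ms L'$ one recovers $\ms L' = \nu^*F/\mathrm{torsion}$, as already used in Remark \ref{rmk: endoscopic locus defs}. For unramifiedness I would compare with Proposition \ref{prop: PMod = Sigma}. The map $a$ factors as the section of $p: \PMod \to \J^0(\tilde X/U)$ given by $\ms L \mapsto (\ms L(-ks), \text{both coordinate quotients } V_i)$, followed by $f$. This lands in the locus of sheaves non-locally-free at every node, over which $(f,p)$ is a closed embedding. Hence $a$ is the composite of the closed embedding $(f,p)\circ\text{section}$ with the projection to $\oJ^0(X/U)$. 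This projection is injective on this stratum, and it is unramified there because $p$ can be recovered from $f$ by $F \mapsto \nu^*F/\mathrm{torsion}$. So $a$ is a proper monomorphism, i.e.\ a closed embedding.

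\textbf{Surjectivity onto the fixed locus.} This is where I expect the main obstacle: showing that every $\gamma$-fixed $F$ fails to be locally free at \emph{every} node. Suppose $F$ is locally free exactly at a nonempty set $S$ of nodes. Then $F = \nu'_*F'$ for the partial normalization $\nu': X' \to X_u$ resolving the nodes outside $S$, with $F'$ invertible near the nodes in $S$. Being $\gamma$-fixed forces $\nu'^*c^*\gamma$ to be trivial on $X'$. Let $\Sigma \subset B$ be the locus where the pullback of $\gamma$ to some curve is trivial and the curve retains the $|S|$ nodes of $S$ unnormalized; $\Sigma$ is a locally closed piece of $B_\gamma$. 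Along $\Sigma$ the fixed locus of $M$ would have relative dimension at least $h_\gamma + |S|$. This would contradict flatness of $\pi_{IB}$ (Definition \ref{defn: good fibration}(1)), which forces constant relative dimension $h_\gamma$. Equivalently, it would contradict the equality $\delta(B_\gamma) = \codim B_\gamma$ from Remark \ref{rmk: Severi/Ngo} together with $h - h_\gamma = b - b_\gamma$. Once every node is non-locally-free, $F \cong \nu_*(\nu^*F/\mathrm{torsion})$, of the required degree $-k$. Twisting back by $ks$ exhibits $F$ in the image of $a$, using that $\nu^*F/\mathrm{torsion} \in \J^{-k}(\tilde X_u)$. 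The dimension bookkeeping in this step is what I would check most carefully. As an alternative, one can first try to argue directly on the generalized Jacobian: $c^*\gamma$ is an element of the torus $(\BC^*)^{k}$ lying over $\J^0(\tilde X_u)$, and the claim is that all $k$ of its coordinates are $\ne 1$. A trivial coordinate would mean the generic curve of $B_\gamma$ could be deformed, keeping $\gamma$ endoscopic, to smooth that node. That would contradict minimality of $\delta$ along $B_\gamma$.
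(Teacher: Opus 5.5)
Your proof of the closed-embedding part and of the inclusion $a(\J^0(\tilde X/U)) \subset \oJ^0(X/U)^\gamma$ is fine. It is essentially what the paper does. The paper identifies the image of $a$ with the closed stratum $\oJ^0(X/U)_{|k|}$ of sheaves that are non-locally-free at every node, and it inverts $a$ there by $\mc F \mapsto (\nu^*\mc F/T)(ks)$, which is the same inverse your unramifiedness argument relies on.

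\textbf{The gap is in the surjectivity step.} The flatness argument gives no contradiction. Here $h_\gamma$ is \emph{defined} as $\dim M^\gamma - \dim B_\gamma$, and $u$ is already a general point of $B_\gamma$: $U$ is \'etale over a dense open subset of $B_\gamma$ with a constant number $k$ of nodes (Proposition \ref{prop: U exists}). So suppose that over such $u$ the fixed locus contained $\nu'_*\oJ(X')$. Then the generic fiber dimension of $M^\gamma \to B_\gamma$ would simply be at least $\dim \Prym(\tilde X_u) + |S|$. Your $\Sigma$ is not a proper sublocus along which the fiber dimension jumps, so flatness of $\pi_{IB}$ is not violated; it just computes $h_\gamma$. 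The contradiction you claim tacitly identifies $h_\gamma$ with $\dim \Prym(\tilde X_u) = h-k$. Your ``equivalent'' version explicitly invokes $h - h_\gamma = b - b_\gamma$. That identity is Corollary \ref{cor: h-hgamma}, which the paper deduces \emph{from} this proposition: its proof starts from $h_\gamma = \dim \Prym(\tilde X_u)$, obtained via Proposition \ref{prop: a is restriction to Mgamma}. So that step is circular.

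\textbf{The missing idea in the paper is smoothness, not dimension.} Smoothness of $M$ gives smoothness of $\oJ^0(X/B)$ (Remark \ref{rmk: etale cover}). Hence the fixed locus of the finite-order automorphism $\gamma$ is smooth, and so is a general fiber $\oJ^0(X_u)^\gamma$.

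The paper then stratifies by the set $I$ of nodes where the sheaf fails to be locally free. The map $\nu_{I*}$ identifies each stratum with $\J^{-\#I}(X_{u,I})$, and $\gamma$ acts on each stratum either freely or trivially. So the fixed locus is a union of strata, closed under enlarging $I$. A minimal $I \subsetneq \{1,\dots,k\}$ would therefore produce a component equal to the compactified Jacobian of the still-nodal curve $X_{u,I}$. That component is singular: the map from $\PMod_{X_{u,I}}$ is proper birational with disconnected fibers, so Zariski's main theorem applies.

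\textbf{Your alternative deformation sketch could become a valid, genuinely different proof,} but only after you supply what you currently assert. You would need three things:
\begin{enumerate}
\item At $u$, the locus where the nodes \emph{outside} $S$ persist (those in $S$ may smooth) has codimension $\le k-|S|$. This holds because there is one equation $t_i=0$ per node in the local model $xy=t_i$.
\item Triviality of the pullback of $\gamma$ to the partial normalization spreads from $u$ along that locus. This holds because the $n$-torsion of a smooth relative $\Pic^0$ is \'etale and separated in characteristic $0$.
\item Consequently that locus lies in $B_\gamma$, contradicting $\codim B_\gamma = \delta(B_\gamma) = k$. This equality comes from Remark \ref{rmk: Severi/Ngo}, which is available non-circularly.
\end{enumerate}
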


\begin{proof}
    We start with the claim that $a_P$ (or equivalently $a: \J^0(\tilde X/U) \to \oJ^0(X/U)$) is a closed embedding. Consider the closed subvariety $\oJ^0(X/U)_{|k|} \subset \oJ^0(X/U)$ consisting of sheaves $\mc F$ which fail to be locally free at each node of $X_u$. (This is the image of the closed subvariety of $\PMod = \BP_1 \times_{\J^0(\tilde X/U)} \cdots \times \BP_k$ consisting of points such that the coordinate in each $\BP_i$ belongs to one of the two distinguished sections of $\BP_i \to \J^0(\tilde X/U)$.) It follows from \cite[Lemma 3.7(3-4)]{FHHO} that this is the image of $a$. Now, note that for $\ms L \in \J^0(\tilde X_u)$, the adjunction map $\nu_u^* \nu_{u*} \ms L \to \ms L$ is surjective since $\nu_u$ is finite, thus affine, and the kernel is the torsion subsheaf of $\nu_u^* \nu_{u*} \ms L$. In particular, we can define an inverse morphism
    $$b: \oJ^0(X/U)_{|k|} \to \J^0(\tilde X/U), \mc F \mapsto (\nu_u^* \mc F/T_{\nu_u^*\mc F})(ks(u))$$
    where $T_{\nu_u^* \mc F}$ is the torsion subsheaf. (More precisely, since $a$ and $b$ are inverse on closed points, $\oJ^0(X/U)_{|k|}$ is connected over each connected component of $U$, and $\J^0(\tilde X/U)$ is a smooth variety, it follows from Zariski's main theorem that $b$ is an isomorphism.)

    Now, recall from Remark \ref{rmk: endoscopic locus defs} that the image of $a$ is contained in the fixed locus $\oJ^0(X/U)^\gamma$. To show the other containment, for each subset $I \subset \{1, \ldots, k\}$, we consider the locally closed subvariety $\oJ^0(X/U)_I \subset \oJ^0(X/U)$ consisting of sheaves which fail to be locally free exactly at the nodes $\{\sigma_i: i \in I\}$. If $\nu_I: X_{u,I} \to X_u$ is the partial normalization resolving the nodes $\{\sigma_i: i \in I\}$, then by the arguments of the previous paragraph, pushforward gives an isomorphism $\nu_{I*}: \J^{-\# I}(X_{u,I}) \to \oJ^0(X_u)_I$; in particular $\dim \oJ^0(X_u)_I = \dim \J^0(X_{u,I}) = \dim \J^0(X_u) - \#I$. Note that $\nu_{I*}$ is $\J^0(C)$-equivariant by the projection formula, and that $\gamma \in \J^0(C)$ acts on $\J^{-\# I}(X_{u,I})$ without fixed points if the pullback $\nu_I^* a_u^* \gamma \in \J^0(X_{u,I})$ is nontrivial, and otherwise trivially. Thus $\oJ^0(X_u)^\gamma$ is of the form $\coprod_{I \in \mc S} \nu_{I*} \J^{-\# I}(X_{u,I})$ for some subset $\{|k|\} \subset \mc S \subset 2^{2^{|k|}}$.

    We now claim that smoothness of $M$ (Definition \ref{defn: good fibration}(1)) implies that $\mc S = \{|k|\}$. Indeed, if $I \subsetneq |k|$ were minimal such that $I \in \mc S$, then we note that for each $u \in U$ there would be a proper birational morphism
    $$\PMod_{X_{u,I}} \to \oJ^0(X_{u,I}) \cong \oJ^{-\# I}(X_{u,I}) \xrightarrow{\nu_{I*}} \oJ^0(X_u)^\gamma$$
    onto an irreducible component of $\oJ^0(X_u)^\gamma$ with some nontrivially disconnected fibers, and then by Zariski's main theorem $\oJ^0(X_u)^\gamma$ would not be smooth. On the other hand, as in Remark \ref{rmk: etale cover}, smoothness of $M$ implies smoothness of $\oJ^0(X/B)$, thus of the fixed locus $\oJ^0(X/B)^\gamma$, thus of the \'etale cover $\oJ^0(X_U/U)^\gamma$, and thus of a general fiber $\oJ^0(X_u)^\gamma$. Thus $\oJ^0(X_u)^\gamma = \nu_{u*} \J^{-k}(\tilde X_u) = a_u(\J^0(\tilde X_u))$.
\end{proof}

\begin{cor} \label{cor: h-hgamma}
    If $M \to B$ is a good Prym fibration, we have $h-h_\gamma = b-b_\gamma$.
\end{cor}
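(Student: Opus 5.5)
The plan is to compute both sides over a general point of $B_\gamma$ using the étale neighborhood $U \to B_\gamma$ of Proposition \ref{prop: U exists} and the description of the fixed locus in Proposition \ref{prop: a is restriction to Mgamma}. By definition $h = \dim M - \dim B$ is the relative dimension of $\pi$, which is $\dim \oPrym(X_b) = g(X_b) - g$, where $g(X_b)$ is the arithmetic genus; this is constant over $B$ by flatness (Remark \ref{rmk: i is reg} and Lemma \ref{lem: oPrym is flat}). Similarly $h_\gamma = \dim M^\gamma - \dim B_\gamma$. The fibration $\pi_{IB}: IM^\vee \to IB$ is flat by Definition \ref{defn: good fibration}(1), so $h_\gamma$ equals the dimension of the fiber $M^\gamma_u$ over a general point $u \in B_\gamma$. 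Since $B_\gamma$ is smooth, we may pass to the étale cover $U$ without changing dimensions.

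Over $U$, Proposition \ref{prop: a is restriction to Mgamma} identifies $[\oJ^0(X/U)^\gamma/\J^0(C)]$ with $[\J^0(\tilde X/U)/\J^0(C)]$ via the closed embedding $a_P$. Taking fibers over $u$ and using $[\oJ^0(X_u)/\J^0(C)] = [\oPrym(X_u)/\Gamma]$ (Definition \ref{def: Mvee}), together with the same identity for the normalizations, we get the following. The stacky fiber $[M^\gamma_u/\Gamma]$ has dimension $\dim \J^0(\tilde X_u) - g = g(\tilde X_u) - g$. Finite group quotients preserve dimension, so $h_\gamma = g(\tilde X_u) - g$.

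Since $X_u$ is nodal with $k$ nodes, $g(\tilde X_u) = g(X_u) - k = g(X_u) - \delta(B_\gamma)$. Hence
\[ h - h_\gamma = g(X_u) - g(\tilde X_u) = \delta(B_\gamma). \]
By the Ng\^o support inequality in Definition \ref{defn: good fibration}(2), combined with the Severi inequality as in Remark \ref{rmk: Severi/Ngo}, we have $\delta(B_\gamma) = \codim B_\gamma = b - b_\gamma$, which finishes the argument.

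The only delicate point is bookkeeping. We must check that the relevant dimensions are computed at a general point of $B_\gamma$ where both the nodal description and the flatness of $M^\gamma \to B_\gamma$ apply. We must also check that the number of nodes $k$ there is exactly $\delta(B_\gamma)$, in the sense of Definition \ref{defn: delta invt}. Both follow because the open set $U'$ of Proposition \ref{prop: U exists} is dense in $B_\gamma$.
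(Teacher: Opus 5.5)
Your proposal is correct and follows the paper's argument. You identify $h_\gamma$ with $\dim \Prym(\tilde X_u) = g(\tilde X_u) - g$ via Proposition \ref{prop: a is restriction to Mgamma}, deduce $h - h_\gamma = k = \delta(B_\gamma)$, and conclude with Remark \ref{rmk: Severi/Ngo}. The only cosmetic difference is that the paper reads off $h = h_\gamma + k$ from $\dim \PMod_u = \dim \Prym(\tilde X_u) + k$, whereas you use the genus drop $g(X_u) - g(\tilde X_u) = k$ for a $k$-nodal curve, which is the same fact.
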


\begin{proof}
    By Proposition \ref{prop: a is restriction to Mgamma} we have $h_\gamma = \dim \Prym(\tilde X_u)$ for any $u \in U$. Then
    $$h := \dim \oPrym(X_u) = \dim \PMod_u = \dim \Prym(\tilde X_u) + k = h_\gamma + k,$$
    where by definition $k = \delta(B_\gamma)$. Now recall from Remark \ref{rmk: Severi/Ngo} that $\delta(B_\gamma) = b-b_\gamma$.
\end{proof}

\begin{prop} \label{prop: tau B tau U}
    Let $\alpha: U \to B_\gamma$ be as in Proposition \ref{prop: U exists}. We write $\mc P_U^{-1}$, respectively $\mc P^{-1}$, for the Poincar\'e sheaves defined in Proposition \ref{prop: GS} on $\oPrym(X/U) \times_U [\oPrym(X/U)/\Gamma]$, respectively $M \times_B M^\vee$. Then
    \begin{equation} \label{eq: prop restrict}
        \alpha^! \tau(\mc P^{-1})_\gamma = (1 \times \alpha)^*\ttd(-T_{1 \times \iota_{\gamma}})_\gamma \cap \tau((1 \times \iota_{\gamma,U})^*\mc P^{-1}_U)_\gamma
    \end{equation}
    in $\CH_*(\oPrym(X/U) \times_U [\oPrym(X/U)^\gamma/\Gamma]),$
    where $\iota_{\gamma,U}: [\oPrym(X/U)^\gamma/\Gamma] \hookrightarrow [\oPrym(X/U)/\Gamma]$ is the inclusion.
\end{prop}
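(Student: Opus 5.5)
The plan is to factor the comparison into a base-change step along $\alpha$ and a restriction-to-the-fixed-locus step, and to handle each with Theorem \ref{thm: tau properties}(e) (and its étale special case). Write $\alpha$ as the composite of the étale map $U \to B_\gamma$ and the closed embedding $B_\gamma \hookrightarrow B$. Both $B_\gamma$ and $B$ are smooth by Definition \ref{defn: good fibration}(1), so this is a regular embedding of codimension $b-b_\gamma$.

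\textbf{Step 1: base change of Poincaré sheaves.} First I would check that $\mc P^{-1}_U$ is the pullback of $\mc P^{-1}$ along $f_U : \oPrym(X/U)\times_U[\oPrym(X/U)/\Gamma] \to M\times_B M^\vee$.
\begin{itemize}
\item For $\mc P$ itself, use flatness over each factor together with the maximal Cohen--Macaulay argument of Lemma \ref{lem: FHHO families to point}: the restriction of Arinkin's sheaf is MCM and agrees with the Poincaré bundle off codimension $2$. Then use uniqueness of descent (Proposition \ref{prop: GS descent}).
\item For the inverse, $R\mc Hom(-,\pi^*\ms A)[h]$ commutes with derived pullback, since $\mc P$ is perfect relative to the base and $\ms A$ pulls back.
\end{itemize}

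\textbf{Step 2: apply Theorem \ref{thm: tau properties}(e) to $B_\gamma\hookrightarrow B$.} Take
\[
W := M\times M^\vee\times B,\qquad Z := M\times M^\vee\times B_\gamma,\qquad Y := M\times_B M^\vee \hookrightarrow W \text{ via } (p_1,p_2,\pi).
\]
Then $Y\times_W Z = (M\times_B M^\vee)|_{B_\gamma}$, and $Z\hookrightarrow W$ is a smooth regular embedding of codimension $b-b_\gamma$. The top arrow is regular of the same codimension by Lemma \ref{lem: Tor indep}(a), using that $M\times_B M^\vee$ is flat over $B$ and Cohen--Macaulay (Lemma \ref{lem: oPrym is flat}).
\begin{itemize}
\item This gives $\tau(f^*\mc P^{-1}) = \ttd(T_f)\cap (Ie)^!\tau(\mc P^{-1})$.
\item On the $\gamma$-component, $(Ie)^!$ is exactly the Gysin map $\alpha^!$ along $B_\gamma\hookrightarrow B$.
\item The étale part $U\to B_\gamma$ is handled as in Case 2 of that proof: $\tau$ commutes with flat representable pullback via \cite[Th\'eor\`eme 4.10]{Toen} and Corollary \ref{cor: agrees with Toen}.
\end{itemize}

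\textbf{Step 3: restrict to the fixed locus.} Apply Theorem \ref{thm: tau properties}(e) again, now to $1\times\iota_{\gamma,U}$, with smooth ambient spaces
\[
W' = \oPrym(X/U)\times[\oPrym(X/U)/\Gamma]\times U,\qquad Z' = \oPrym(X/U)\times[\oPrym(X/U)^\gamma/\Gamma]\times U.
\]
The fixed locus of a smooth variety is smooth, and $e'$ is a base change of $\iota_{\gamma,U}$.
\begin{itemize}
\item The two codimensions agree because $IM^\vee\to IB$ is flat, which gives the required dimension count.
\item On $\gamma$-components, $I(1\times\iota_{\gamma,U})$ identifies the $\gamma$-piece of $I[\oPrym^\gamma/\Gamma]$ with the $\gamma$-piece $[\oPrym^\gamma/\Gamma]$ of $I[\oPrym/\Gamma]$. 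So the Gysin map on this component is the identity.
\item This yields $\tau((1\times\iota_{\gamma,U})^*\mc P_U^{-1})_\gamma = \ttd(T_{1\times\iota_\gamma})_\gamma\cap\tau(\mc P_U^{-1})_\gamma$.
\item Since $\ttd$ is invertible, I then move the Todd class to the other side.
\end{itemize}

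\textbf{Step 4: combine and compare Todd classes.} Combining Steps 1--3 gives (\ref{eq: prop restrict}) up to Todd-class bookkeeping. The remaining check is how the normal-bundle term $\ttd(T_f)$ from Step 2 (pulled back from $-N_{B_\gamma/B}$) interacts with $\ttd(-T_{1\times\iota_\gamma})$. I would expect either a cancellation or an absorption into the convention for $\alpha^!$, using Proposition \ref{prop: ttd properties}(a) to move the classes through $(1\times\alpha)^*$.

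\textbf{Main obstacle.} I expect the main difficulty to be verifying the hypotheses of Theorem \ref{thm: tau properties}(e) in Steps 2 and 3: that the squares are Cartesian with representable maps, and that the codimensions match. This is where the flatness of $IM^\vee\to IB$ and the equality $h-h_\gamma=b-b_\gamma$ (Corollary \ref{cor: h-hgamma}) enter. The final moving/fixed Todd-class comparison in Step 4 is the other place I would be most careful.
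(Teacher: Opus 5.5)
Your Step 3 is where the argument breaks. Theorem \ref{thm: tau properties}(e) requires $f$ to be a representable regular embedding and $\mc Z,\mc W$ to be smooth, and both conditions fail for $1\times\iota_{\gamma,U}$.

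\begin{itemize}
\item Over $U$ the curves are $k$-nodal, so $\oPrym(X/U)$ is singular and your $W',Z'$ are not smooth.
\item More seriously, $1\times\iota_{\gamma,U}$ is not regular. By Proposition \ref{prop: a is restriction to Mgamma}, the $\gamma$-fixed locus consists of the sheaves that fail to be locally free at every node. Locally $\oPrym(X/U)$ looks like $\prod_{i=1}^k\{x_iy_i=0\}$ times a smooth factor, and the fixed locus is cut out by all $x_i=y_i=0$. So it has codimension $k$ but needs $2k$ equations.
\end{itemize}

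No choice of ambient spaces repairs this, because (e) needs $f$ itself to be regular; this is exactly the phenomenon of Remark \ref{rmk: not reg}. In fact the Step 3 identity is false here. In the simplest model, let $\Gamma=\BZ/2$ act by $-1$ on $\BA^2$, fibred over $\BA^1$ by $t=xy$, with nodal fibre $Y_0=\{xy=0\}$ over $B_\gamma=\{0\}$. The $\Gamma$-invariant resolution $\ms O\xrightarrow{xy}\ms O$ gives $\iota^!\ms O_{Y_0}=[\ms O_0]-[\ms O_0]=0$ in Lemma \ref{lem: localization inverse}, so $\tau(\ms O_{Y_0})_\gamma=0$. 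But $\ms O_{Y_0}$ pulls back to $\ms O_0$ on the fixed point, and $\tau(\ms O_0)_\gamma=[B\Gamma]\neq 0$, which your Step 3 formula would force to vanish.

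Step 2 is a legitimate use of (e), but its output on the $\gamma$-component is not the $\alpha^!$ of the statement. Since $M^{\gamma,\vee}$ already lies over $B_\gamma$, the $\gamma$-components of $I((M\times_B M^\vee)|_{B_\gamma})$ and $I(M\times_B M^\vee)$ are both $M\times_B M^{\gamma,\vee}$. So $(Ie)^!$ there is an excess intersection, namely $c_{b-b_\gamma}(\pi^*N_{B_\gamma/B})\cap(-)$, which lowers dimension by $b-b_\gamma$. By contrast, the $\alpha^!$ in (\ref{eq: prop restrict}) is the degree-preserving \'etale pullback $(1\times\alpha)^*$ along $(M\times_B M^{\gamma,\vee})\times_{B_\gamma}U\to M\times_B M^{\gamma,\vee}$. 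What is left over in Step 4 is therefore not Todd-class bookkeeping but a positive-degree Euler class (the one that kills $\tau(\ms O_{Y_0})_\gamma$ above), and nothing cancels it.

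The missing idea is to factor the composite the other way, never passing through the singular space $(M\times_B M^\vee)|_{B_\gamma}$. First restrict along $1\times\iota_\gamma\colon M\times_B M^{\gamma,\vee}\hookrightarrow M\times_B M^\vee$ over all of $B$. This is a flat base change of the embedding of smooth stacks $M^{\gamma,\vee}\hookrightarrow M^\vee$, so (e) applies with $\mc Z=M\times M^{\gamma,\vee}$ and $\mc W=M\times M^\vee$; both rows have codimension $\dim M-\dim M^\gamma$. Moreover $(Ie)^!$ is the identity on the $\gamma$-component, which gives $\tau(\mc P^{-1})_\gamma=\ttd(-T_{1\times\iota_\gamma})_\gamma\cap\tau(L(1\times\iota_\gamma)^*\mc P^{-1})_\gamma$. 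Then pass to $U$ by the \'etale base change on the $\gamma$-fixed component, where $\tau=\tau^{EG}\circ\rho_\gamma$ commutes with \'etale pullback. Combining with your Step 1 identification $\mc P_U^{-1}\cong L(1\times\alpha)^*\mc P^{-1}$ gives the statement. This is how the paper proves it.
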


\begin{proof}
    Consider the composition
    $$\phi: \oPrym(X/U) \times_U [\Prym(X/U)^\gamma/\Gamma] \xhookrightarrow{1 \times \iota_{\gamma,U}} \oPrym(X/U) \times_U [\oPrym(X/U)/\Gamma]$$
    $$= (M \times_B M^\vee) \times_B U \xrightarrow{1 \times \alpha} M \times_B M^\vee$$
    or equivalently,
    $$\phi: \oPrym(X/U) \times_U [\Prym(X/U)^\gamma/\Gamma] = (M \times_B M^{\gamma,\vee}) \times_B U \xrightarrow{1 \times \alpha} M \times_B M^{\gamma,\vee} \xhookrightarrow{1 \times \iota_\gamma} M \times_B M^\vee.$$
    We claim first that $\mc P_U^{-1} = L(1 \times \alpha)^* \mc P^{-1}$: indeed, recall that $\mc P_{J/J}$ is maximal Cohen-Macaulay \cite[Theorem A(2), Lemma 2.1(1)]{Arinkin}; then since $i \times i: \oPrym(X/B) \times_B \oPrym(X/B) \hookrightarrow \oJ^0(X/B) \times_B \oJ^0(X/B)$ is a flat base change of the regular embedding $\{\ms O_C\} \times \{\ms O_C\} \hookrightarrow \J^0(C) \times \J^0(C)$, we conclude that $(i \times i)^* \mc P_{J/J}$ is maximal Cohen-Macaulay \cite[Lemma 2.3(2)]{Arinkin}; finally, since $\mc P$ pulls back to $(i \times i)^* \mc P_{J/J}$ under the \'etale map $M \times_B M \to M \times_B M^\vee$, we conclude that $\mc P$ is also maximal Cohen-Macaulay, and thus the shifted sheaf $\mc P^{-1}$ (the Grothendieck dual, up to shift and the line bundle $\omega_B$) is as well. The same argument applies to $\mc P_U^{-1}$. In particular, since $\mc P_U^{-1}$ and $L(1 \times \alpha)^* \mc P^{-1}$ are maximal Cohen-Macaulay extensions (the second assertion being \cite[Lemma 2.3]{Arinkin}) of the same line bundle defined away from codimension 2, by definition of $\mc P, \mc P_U$, we conclude from \cite[Lemma 2.2]{Arinkin} that $\mc P_U^{-1} \cong L(1 \times \alpha)^* \mc P^{-1}$. Note as well that $\iota_{\gamma,U}$ is a representable embedding of smooth quotients, and so by another application of \cite[Lemma 2.3(1)]{Arinkin} we see that $(1 \times \iota_{\gamma,U})^* \mc P_U^{-1} = L(1 \times \iota_{\gamma,U})^* \mc P_U^{-1}$.

    With these preliminaries, we rewrite the right side of (\ref{eq: prop restrict}) using
    $$\tau((1 \times \iota_{\gamma,U})^*\mc P_U^{-1})_\gamma = \tau(L(1 \times \iota_{\gamma,U})^*\mc P_U^{-1})_\gamma = \tau(L\phi^* \mc P^{-1})_\gamma = \tau(L(1 \times \alpha)^* L(1 \times \iota_\gamma)^* \mc P^{-1})_\gamma$$
    $$= \tau^{EG} \circ \rho_\gamma(L(1 \times \alpha)^* L(1 \times \iota_\gamma)^* \mc P^{-1})$$
    since $\oPrym(X/U) \times_U [\oPrym(X/U)^\gamma/\Gamma]$ is already $\gamma$-fixed, so $\iota_{\gamma *}^{-1}$ is trivial, and projecting to $\pi_g$ is not necessary by Proposition \ref{prop: EG factors through m1} and (\ref{eq: rho components})
    $$= \tau^{EG}((1 \times \alpha)^* \rho_\gamma(L(1 \times \iota_\gamma)^* \mc P^{-1}))$$
    as $(1 \times \alpha)$ is $\Gamma$-equivariant
    $$=(1 \times \alpha)^* \tau^{EG}(\rho_\gamma(L(1 \times \iota_\gamma)^* \mc P^{-1}))$$
    by \cite[Theorem 3.1(d)(1)]{EG-RRequivChow}, since $U \to B_\gamma$ is \'etale
    \begin{equation} \label{eq: restricting 1}
        \implies\tau((1 \times \iota_{\gamma,U})^*\mc P_U^{-1})_\gamma =(1 \times \alpha)^* \tau(L(1 \times \iota_\gamma)^* \mc P^{-1})_\gamma
    \end{equation}
    by the same argument with $\iota_{\gamma *}^{-1}, \pi_\gamma$ as before.

    On the left side, consider the representable regular embedding $(1 \times \iota_\gamma): M \times_BM^{\gamma, \vee} \hookrightarrow M \times_B M^\vee$. By Theorem \ref{thm: tau properties}(e) applied with $\mc Z := M \times M^{\gamma, \vee}$ and $\mc W := M \times M^\vee$, we see that
    % $$\tau(L(1 \times \iota_\gamma)^* \mc P^{-1})_\gamma = \ttd(T_{1 \times \iota_\gamma})_\gamma \cap \tau(\mc P^{-1})_\gamma$$
    \begin{equation*} \label{eq: reduce to U 1}
      \tau(\mc P^{-1})_\gamma = \ttd(-T_{1 \times \iota_\gamma})_\gamma \cap \tau(L(1 \times \iota_\gamma)^*\mc P^{-1})_\gamma. 
    \end{equation*}
    Then
    $$\alpha^!\tau(\mc P^{-1})_\gamma := (1 \times \alpha)^* \tau(\mc P^{-1})_\gamma = (1 \times \alpha)^* \ttd(-T_{1 \times \iota_\gamma})_\gamma \cap (1 \times \alpha)^* \tau(L(1 \times \iota_\gamma)^* \mc P^{-1})_\gamma.$$
    By (\ref{eq: restricting 1}), this completes the proof.
\end{proof}

\begin{prop} \label{prop: comparison on V}
    There is an analytic open subset $V \subset U$ mapping homeomorphically onto its image $V \subset B_\gamma$ such that
    $$cl(\mf G^{-1}_\gamma)|_V = (-1)^k cl(\tilde{\mf G}^{-1}_{\gamma})|_V \circ cl(\Sigma)|_V \in H_*^{BM}(\oPrym(X_V/V) \times_V [\oPrym(X_V/V)^\gamma/\Gamma]),$$
    where $\Sigma := (f_P \times p_P)_*[\PMod_P] \in \Corr_U^{-k}(\oPrym(X/U), \Prym(\tilde X/U))$.
\end{prop}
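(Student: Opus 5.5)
The plan is to pull $\mf G^{-1}_\gamma$ back along $\alpha: U \to B_\gamma$, compute it there via Proposition \ref{prop: tau B tau U}, substitute the Poincar\'e sheaf comparison of Proposition \ref{prop: smooth and singular Poincare}, and push the resulting $\tau$ through the finite morphism $f_P \times 1$ using Theorem \ref{thm: tau properties}(c). Finally, we pass to an analytic open $V \subset U$ on which $\alpha$ is a homeomorphism onto its image, so that restricting Borel--Moore classes along $\alpha|_V$ is just restriction to the open $V \subset B_\gamma$.

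\textbf{Step 1.} By Proposition \ref{prop: a is restriction to Mgamma}, $a_P$ identifies $[\J^0(\tilde X/U)/\J^0(C)]$ with $[\oPrym(X/U)^\gamma/\Gamma]$, so $(1\times \iota_{\gamma,U})^*\mc P_U^{-1} = (1 \times a_P)^*\mc P_U^{-1}$. Proposition \ref{prop: tau B tau U} then gives
$$\alpha^!\mf G^{-1}_\gamma = (\text{Todd factors}) \cap \tau((1\times a_P)^*\mc P_U^{-1})_\gamma.$$

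\textbf{Step 2.} Proposition \ref{prop: smooth and singular Poincare} rewrites $(1 \times a_P)^*\mc P_U^{-1}$ as
$$(f_P\times 1)_*\bigl(q_1^*(\omega_{\PMod_P}\otimes\mc V_P^\vee)\otimes(p_P\times 1)^*\tilde{\mc P}^{-1}\bigr)[k].$$
The shift $[k]$ contributes the sign $(-1)^k$. Theorem \ref{thm: tau properties}(c) moves $\tau$ past $(f_P\times 1)_*$, part (a) extracts $\tch$ of the line bundle $q_1^*(\omega_{\PMod_P}\otimes \mc V_P^\vee)$, and part (d) for the representable morphism $p_P\times 1$ of smooth stacks turns $\tau((p_P\times 1)^*\tilde{\mc P}^{-1})$ into $\ttd(T_{p_P\times 1})\cap (p_P\times 1)^!\tau(\tilde{\mc P}^{-1})$. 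Since $\Gamma$ acts trivially on $\PMod_P$, the $\tch$ and $\ttd$ terms there are ordinary classes on the $\gamma$-component.

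\textbf{Step 3.} Next I would collect all Todd and Chern factors. These are $\td(-T_{1\times\iota_\gamma})$ from Step 1, $\ch(\omega_{\PMod_P})$, $\ch(\mc V_P^\vee)$ and $\td(T_{p_P})$, together with the normalizing factors built into Definition \ref{def: G and p} for $\mf G^{-1}$ and for $\tilde{\mf G}^{-1}$ (the latter involving $T_U$ in place of $T_{IB}$). I expect this bookkeeping to be the main obstacle. The identity that has to hold is that, after applying the cycle class map, the product is $1$ up to classes that are topologically trivial on the fibers over $V$. The approach I would try is as follows.
\begin{itemize}
\item $T_{p_P}$ is the relative tangent bundle of the $(\BP^1)^k$-bundle, $\omega_{\PMod_P/U}$ is its dual, and $\mc V_P=\boxtimes\ms O(-1)$. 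The Euler sequence gives $T_{\BP_i/J}=\ms O(2)\otimes(\text{pullback})$, so on each $\BP^1$-factor the product $\td(T)\cdot\ch(\omega)\cdot\ch(\ms O(1))$ reduces to an expression in $c_1(\ms O(1))$ and classes pulled back from $\J^0$.
\item The pulled-back classes are Chern classes of the $(\sigma_i^j\times 1)^*\tilde{\mc P}_{\tilde X/J}$. These are algebraically, hence topologically, trivial, arguing exactly as in Lemma \ref{lem: Palpha top trivial} by deforming within the connected family $\J^0(\tilde X/U)$.
\item Over $V$, where one can arrange $T_U$, $T_{B_\gamma}$ and the normal data to be trivialized, the normal bundle factor $T_{1\times\iota_\gamma}$ (rank $2k$, with eigenvalues of $\gamma$) should cancel against the $T_{\iota_B}$ and $T_{\iota}$ corrections in the definitions of $\mf G^{-1}$ and $\tilde{\mf G}^{-1}$, leaving only the $\BP^1$ contributions. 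I would verify this cancellation by comparing ranks via Corollary \ref{cor: h-hgamma}.
\end{itemize}

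\textbf{Step 4.} With the factors cancelled, the pushforward $(f_P\times1)_*(p_P\times1)^!(\cdot)$ of a class is exactly composition with $\Sigma=(f_P\times p_P)_*[\PMod_P]$, by the graph-composition identities of Lemma \ref{lem: composition with graphs}(a) extended to the closed subvariety $\PMod_P$ of $\oPrym\times_U\Prym$ (Proposition \ref{prop: PMod = Sigma}). Applying $cl$ and restricting to $V$ then yields $(-1)^k\, cl(\tilde{\mf G}^{-1}_\gamma)|_V\circ cl(\Sigma)|_V$. If the residual $\BP^1$ factor does not vanish identically but only in the relevant degrees, I would try to absorb it into a degree shift of $\Sigma\in\Corr^{-k}_U$, or check that it acts trivially after composition because $\Sigma$ has fiber dimension consistent with only the leading term surviving.
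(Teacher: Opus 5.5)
Your proposal follows the paper's proof essentially step for step: Proposition~\ref{prop: tau B tau U} combined with Proposition~\ref{prop: smooth and singular Poincare} and Theorem~\ref{thm: tau properties}(c), (a), (d); then cancellation of the Todd/Chern factors after applying $cl$ and restricting to a small $V$; and finally Lemma~\ref{lem: composition with graphs}(a) to recognize composition with $\Sigma$. The bookkeeping you flag closes exactly, so your Step 4 fallback is never needed. The factor $\ttd(-T_{1\times\iota_\gamma})_\gamma$ cancels identically against the $T_{\iota_{M\times_B M^\vee}}$ term in the definition of $\mf G^{-1}$, so no rank comparison is needed, and $\alpha^*T_{B_\gamma}=T_U$ because $\alpha$ is \'etale. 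Over a contractible $V$ trivializing $\PMod_P$ (Ehresmann) one computes on a fiber $\td^{top}(T_{p_P})=\prod_i(1+H_i)$ and $\ch^{top}(\omega_{\PMod_P}\otimes\mc V_P^\vee)=\prod_i(1-H_i)$, whose product $\prod_i(1-H_i^2)$ equals $1$; this is exactly the paper's Lemma~\ref{lem: ch and td cancel}.
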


\begin{proof}
    Combining Propositions \ref{prop: smooth and singular Poincare} and \ref{prop: tau B tau U}, and using the identification of $a_P: [\J^0(\tilde X/U)/\J^0(C)] \hookrightarrow [\oJ^0(X/U)/\J^0(C)]$ with $\iota_{\gamma,U}: M^{\gamma,\vee}_U \hookrightarrow M^\vee_U$ from Proposition \ref{prop: a is restriction to Mgamma}, we have
    $$(-1)^k\alpha^! \tau(\mc P^{-1})_\gamma = (1 \times \alpha)^*\ttd(-T_{1 \times \iota_\gamma})_\gamma \cap \tau((f_P \times 1)_*(q_1^*(\omega_{\PMod_P} \otimes \mc V_P^\vee) \otimes (p_P \times 1)^* \tilde{\mc P}^{-1}))_\gamma$$
    $$= (1 \times \alpha)^*\ttd(-T_{1 \times \iota_\gamma})_\gamma \cap I(f_P \times 1)_{\gamma *}\tau((q_1^*(\omega_{\PMod_P} \otimes \mc V_P^\vee) \otimes (p_P \times 1)^* \tilde{\mc P}^{-1}))_\gamma$$
    $$= (1 \times \alpha)^*\ttd(-T_{1 \times \iota_\gamma})_\gamma \cap I(f_P \times 1)_{\gamma *}(\tch(q_1^*(\omega_{\PMod_P} \otimes \mc V_P^\vee))_\gamma \cap \tau((p_P \times 1)^* \tilde{\mc P}^{-1})_\gamma)$$
    $$= (1 \times \alpha)^*\ttd(-T_{1 \times \iota_\gamma})_\gamma \cap I(f_P \times 1)_{\gamma *}(\tch(q_1^*(\omega_{\PMod_P} \otimes \mc V_P^\vee))_\gamma \cap \ttd(T_{p_P \times 1})_\gamma \cap I(p_P \times 1)^!_\gamma\tau(\tilde{\mc P}^{-1})_\gamma)$$
    $$\in \CH_*(\oPrym(X/U) \times_U [\oPrym(X/U)^\gamma/\Gamma])$$
    by applications of Theorem \ref{thm: tau properties}(c), (a), (d), respectively.

    Next, we apply the cycle class map and restrict to an open subset $V \subset U$ as in Lemma \ref{lem: ch and td cancel}, shrinking if necessary to ensure that $V$ maps homeomorphically onto its image under the \'etale map $U \to B_\gamma$. Because of the compatibilities recalled in \S \ref{subsubsec: constructible sheaves}, as well as the fact that proper pushforward in Borel-Moore homology commutes with restriction to an open subset (for schemes, thus immediately for representable morphisms), we see that
    $$(-1)^k cl(\tau(\mc P^{-1})_\gamma)|_V = \ttd^{top}(-T_{1 \times \iota_\gamma}|_V)_\gamma \cap I(f_P \times 1)_{\gamma *}I(p_P \times 1)^*_\gamma cl(\tau(\tilde{\mc P}^{-1})_\gamma)|_V$$
    $$\in H^{BM}_*(\oPrym(X_V/V) \times_V [\oPrym(X_V/V)^\gamma/\Gamma]).$$
    Multiplying both sides by $\ttd^{top}(T_{1 \times \iota_\gamma}|_V-\pi_V^* T_V)$ and using the projection formula, we obtain
    \begin{equation} \label{eq: restrict to V 1}
        \ttd^{top}(T_{1 \times \iota_\gamma}|_V-\pi_V^* T_V)_\gamma \cap cl(\tau(\mc P^{-1})_\gamma)|_V = (-1)^k I(f_P \times 1)_{\gamma *} I(p_P \times 1)^*_\gamma cl(\tilde{\mf G}^{-1}_{\gamma})|_V.
    \end{equation}
    Since $V \subset B_\gamma$ is a (Euclidean) open subset, the left side of (\ref{eq: restrict to V 1}) is
    \begin{equation*} \label{eq: restrict to V LHS}
        \ttd^{top}(T_{1 \times \iota_\gamma}|_V-\pi_V^* T_V)_\gamma \cap cl(\tau(\mc P^{-1})_\gamma)|_V = cl(\ttd(T_{1 \times \iota_\gamma} - \pi_{B_\gamma}^* T_{B_\gamma})_\gamma \cap \tau(\mc P^{-1})_\gamma)|_V = \mf G^{-1}_\gamma|_V.
    \end{equation*}
    On the right side, note that by Lemma \ref{lem: composition with graphs}(a) we have
    $$I(f_P \times 1)_{\gamma *} I(p_P \times 1)^*_\gamma cl(\tilde{\mf G}^{-1}_\gamma)|_V = cl(\tilde{\mf G}^{-1}_\gamma \circ [\Gamma_{p_{P}}] \circ [\Gamma_{f_P}^T])|_V$$
    $$= cl(\tilde{\mf G}^{-1}_\gamma \circ (f_P \times 1)_*[\Gamma_{p_P}])|_V =: cl(\tilde{\mf G}^{-1}_\gamma \circ \Sigma)|_V.$$
    Finally, as before, because of the compatibilities of the class map and restriction to the open subset $V$ with pushforwards and pullbacks, we have
    $$cl(\tilde{\mf G}^{-1}_\gamma \circ \Sigma)|_V = (cl(\tilde{\mf G}^{-1}_\gamma) \circ cl(\Sigma))|_V = cl(\tilde{\mf G}^{-1}_\gamma)|_V \circ cl(\Sigma)|_V.$$
    Thus (\ref{eq: restrict to V 1}) gives the needed equality.
\end{proof}

\begin{lem} \label{lem: ch and td cancel}
    In the situation of Proposition \ref{prop: comparison on V}, for each $u \in U$ there is an analytic open subset $u \in V \subset U$ on which
    $$\tch^{top}(q_1^*(\omega_{\PMod_P} \otimes \mc V_P^\vee))_\gamma \cap \ttd^{top}(T_{p_P \times 1})_\gamma \in H^*(\PMod_P \times_U [\Prym(\tilde X/U)/\Gamma])$$
    pulls back to $1 \in H^*(\PMod_P \times_U [\Prym(\tilde X/U)/\Gamma] \times_U V)$.
\end{lem}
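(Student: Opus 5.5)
The plan is to work one factor at a time. On $\PMod_P \times_U [\Prym(\tilde X/U)/\Gamma]$ the group $\Gamma$ acts only on the second factor, and $\gamma$ acts trivially there, because $\Prym(\tilde X/U)$ is $\gamma$-fixed by Proposition \ref{prop: a is restriction to Mgamma}. So the $\gamma$-components of $\tch$ and $\ttd$ are $\rho_\gamma$-twisted versions of the ordinary classes. I will check three things: that both classes are pulled back from $\PMod_P$ (or from $U$), that all eigenvalues involved are $1$, and that their product is $1$ in cohomology after shrinking to a small ball $V$.

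\textbf{Step 1: the tangent class.} Since $p_P \times 1$ is the base change of $p_P$, we have $T_{p_P \times 1} = q_1^* T_{p_P}$. The group $\Gamma$ acts trivially on the first factor, so $\gamma$ acts on this bundle with eigenvalue $1$ and it has no moving part. Hence $\ttd(T_{p_P \times 1})_\gamma = q_1^*\td(T_{p_P})$. Now $p_P$ is an iterated $\BP^1$-bundle $\BP_1 \times \cdots \times \BP_k$ over $\Prym(\tilde X/U)$, with $\BP_i = \BP(L_i^1 \oplus L_i^2)$ and $L_i^j := (\sigma_i^j \times 1)^*\tilde{\mc P}_{\tilde X/J}$. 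So $T_{p_P} = \oplus_i T_{\BP_i/\J}$, and each summand sits in an Euler sequence
$$0 \to \ms O \to \ms O_{\BP_i}(1) \otimes (L_i^1 \oplus L_i^2) \to T_{\BP_i/\J} \to 0.$$

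\textbf{Step 2: the line bundle class.} The relative dualizing sheaf is $\omega_{\PMod_P} = \pi^*\omega_U \otimes \omega_{\PMod_P/U}$, and $\omega_{\PMod_P/U} = \omega_{p_P} \otimes p_P^*\omega_{\Prym/U}$, where $\omega_{\Prym/U}$ is trivial by Lemma \ref{lem: oPrym is flat}. Moreover $\omega_{\BP_i/\J} = \ms O_{\BP_i}(-2) \otimes \det(L_i^1 \oplus L_i^2)^\vee$. On $V$ we may trivialize $\omega_U$ topologically, since $V$ is contractible. This leaves the class
$$\prod_i \ch\bigl(\ms O_{\BP_i}(-1) \otimes (L_i^1 L_i^2)^{\vee}\bigr) \cdot \td(T_{\BP_i/\J}),$$
using $\mc V_P^\vee = \boxtimes_i \ms O_{\BP_i}(1)$.

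\textbf{Step 3: the key cancellation.} Write $x_i = c_1(\ms O_{\BP_i}(1))$ and $a_i = c_1(L_i^1)$, $b_i = c_1(L_i^2)$. Then $c(T_{\BP_i/\J}) = 1 + 2x_i + a_i + b_i$, so the factor above becomes
$$e^{-x_i - a_i - b_i} \cdot \frac{2x_i + a_i + b_i}{1 - e^{-(2x_i + a_i + b_i)}}.$$
The main obstacle is showing this equals $1$. It is not identically $1$, so I will use topological triviality. The $L_i^j$ are restrictions of the normalized Poincar\'e bundle to the points $\sigma_i^j(u)$, so their classes lie in $H^2$ of the Prym coming from $H^1(\tilde X_u)$. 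By the argument of Lemma \ref{lem: Palpha top trivial}, $a_i - b_i$ is the class of the Poincar\'e bundle along $\sigma_i^1 - \sigma_i^2$. I would first try to normalize $\tilde{\mc P}$ using the freedom noted in Definition \ref{def: PMod} (the choice of $\mc V$ and of the section $s$). Failing that, I would argue that on $\Prym(\tilde X/U)$, which is the kernel of the norm, these restricted classes become algebraically (hence topologically) trivial, as in Lemma \ref{lem: Palpha top trivial}. This reduces the factor to $\frac{2x_i\, e^{-x_i}}{1 - e^{-2x_i}} = \frac{x_i}{\sinh x_i}$.

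That is still not $1$, so I must also use $x_i^2 = -(a_i + b_i)x_i - a_i b_i$, which equals $0$ once the $a_i, b_i$ vanish. Then $\frac{x_i}{\sinh x_i} = 1 + O(x_i^2) = 1$. If the normalization step fails, I will instead absorb the discrepancy via the relation $x_i^2 + (a_i + b_i)x_i + a_i b_i = 0$ and check directly that the even power series in $y_i = x_i + (a_i + b_i)/2$ collapses to $1$.
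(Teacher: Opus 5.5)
Your Steps 1 and 2 are fine. Once $a_i=b_i=0$ is known, your Step 3 is exactly the paper's computation, which the paper writes as $\prod_i(1-H_i)(1+H_i)=1$ using $H_i^2=0$. The gap is the vanishing of $a_i,b_i$ itself: none of your three routes delivers it.

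\textbf{Renormalizing.} This is not a free choice. The class in the lemma is the one from Proposition \ref{prop: smooth and singular Poincare}, which is pinned down by the normalization along $s$. Twisting $\tilde{\mc P}_{\tilde X/J}$ by a line bundle $N$ from $\J^0(\tilde X/U)$ twists each $\ms O_{\BP_i}(1)$ by $N$ but leaves $\omega_{\PMod_P}$ and $T_{p_P}$ unchanged. So it multiplies the class by a nontrivial power of $\ch(N)$.

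\textbf{Global vanishing on $\Prym(\tilde X/U)$.} This is not available either. $L_i^j$ is only fiberwise in $\Pic^0$, and $c_1(L_i^j)$ can have a nonzero Leray component in $H^1(U,R^1)$ coming from the section $\ms O(\sigma_i^j-s)$ of the Jacobian family. Lying in the kernel of the norm does not help. Lemma \ref{lem: Palpha top trivial} works globally only because its connecting family is parametrized by the fixed connected space $\J^0(C)$.

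\textbf{Your fallback.} This is false. With your own substitution, the relation $x_i^2+(a_i+b_i)x_i+a_ib_i=0$ gives $y_i^2=(a_i-b_i)^2/4$. The factor then equals $e^{-(a_i+b_i)/2}\,y_i/\sinh y_i$, whose degree-$2$ part is $-(a_i+b_i)/2$, so it is not $1$ in general.

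\textbf{The missing idea.} Use the shrinking to $V$ for more than trivializing $\omega_U$.
\begin{itemize}
\item By Ehresmann, choose a contractible $V\ni u$ over which $\PMod_P\to U$ is topologically trivial.
\item Then restriction $H^*(\PMod_P|_V)\to H^*(\PMod_{P,u})$ is an isomorphism. Since everything is pulled back by $q_1$, it suffices to compute on the single fiber over $u$.
\item On that fiber, $L_i^j|_{\J^0(\tilde X_u)}=\tilde{\mc P}_{\tilde X/J}|_{\{\sigma_i^j(u)\}\times\J^0(\tilde X_u)}$. Moving the point in the connected curve $\tilde X_u$ shows it is algebraically equivalent to $\tilde{\mc P}_{\tilde X/J}|_{\{s(u)\}\times\J^0(\tilde X_u)}\cong\ms O$.
\item Hence $a_i=b_i=0$ there, so $x_i^2=0$ and $x_i/\sinh x_i=1$.
\end{itemize}
This is the paper's argument.
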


\begin{proof}
    First, note that since $q_1^*(\omega_{\PMod_P} \otimes \mc V^\vee_P)$ and $T_{p_P \times 1} = q_1^* T_{p_P}$ are pulled back from the $\gamma$-fixed factor $\PMod_P$, in fact we have
    $$\tch(q_1^*(\omega_{\PMod_P} \otimes \mc V_P^\vee))_\gamma \cap \ttd(T_{p_P \times 1})_\gamma = q_1^*(\ch(\omega_{\PMod_P} \otimes \mc V^\vee_P) \cap \td(T_{p_P})).$$
    Since the cycle class map is compatible with pullback, it therefore suffices to show that there is $V$ such that
    $$(\ch^{top}(\omega_{\PMod_P} \otimes \mc V^\vee_P) \cap \td^{top}(T_{p_P}))|_V = 1 \in H^*(\PMod_P|_V).$$
    
    By Ehresmann's theorem, we may choose some contractible open $V \ni u$ such that the morphism $\PMod_P \to  U$ is topologically trivial; then restriction $H^*(\PMod_P|_V) \to H^*(\PMod_{P,u})$ is an isomorphism, and so it suffices to work over the point $u$.

    Note first that $\PMod_{P,u} \cong \Prym(\tilde X_u) \times (\BP^1)^k$, as each $\BP_i$ is the projectivization of the sum of topologically trivial line bundles $(\sigma_i^j \times 1)^* \tilde{\mc P}_{\tilde X_u/J}$ (which via the Abel-Jacobi map are restrictions of the universal bundle on $\Pic^0(\J^0(\tilde X_u)) \times \J^0(\tilde X_u)$).
    Thus $T_{p_{P,u}} \cong T_{(\BP^1)^k} = a_1^* T_{\BP^1} \oplus \cdots \oplus a_k^* T_{\BP^1}$ and
    $$\td^{top}(T_{p_{P,u}}) \cong \otimes_i \td^{top}(a_i^* T_{\BP^1}) = \prod_{i=1}^k a_i^*(1 + H_i),$$
    where $a_i: \PMod_{P,u} \to \BP^1$ is the projection onto the $i^{\mathrm{th}}$ factor and $H_i$ the corresponding hyperplane class. On the other hand, note that $c_1^{top}(\mc V_{P,u}^{\otimes 2}) = c_1^{top}(\omega_{\PMod_{P,u}})$ by the relative Euler sequence (see \cite[Lemma 3.8]{FHHO}) and the fact that each factor $\BP_i$ is the projectivization of a topologically trivial vector bundle. Since 
    $$c_1^{top}(\omega_{\PMod_{P,u}}) = \sum_{i=1}^k a_i^*(-2H_i)$$
    we conclude that (always in $H^*_\BC$)
    $$\ch^{top}(\omega_{\PMod_{P,u}} \otimes \mc V_{P,u}^\vee) = \exp\left(\frac{1}{2} \sum_{i=1}^k -2a^*_i(H_i)\right) = \prod_{i=1}^k (1 - a_i^* H_i).$$
    Thus
    $$\ch^{top}(\omega_{\PMod_{P,u}} \otimes \mc V_{P,u}^\vee) \cap \td^{top}(T_{p_{P,u}}) = \prod_{i=1}^k(1- a_i^* H_i) \cap \prod_{i=1}^k(1 + a_i^* H_i)$$
    $$= \prod_{i=1}^k a_i^*(1 - H_i^2) = 1 \in H^*(\PMod_{P,u}).$$
\end{proof}

\begin{cor} \label{cor: Gi vs Fi}
    For each $i \in \BZ$, recalling the homological realization isomorphism (\ref{eq: BM = sheaf Hom}) and the degree conventions of Definition \ref{def: G and p}, we have
    $$cl(\mf G^{-1}_{\gamma,i+h-h_\gamma})|_V = \pm cl(\tilde{\mf G}^{-1}_{\gamma,i})_V \circ cl(\Sigma)_V \in \Hom(\pi_{V*} \BC_{M_V}, \pi_{V*}\BC_{M^{\gamma, \vee}_V}[2(i-h)])$$
    and
    $$cl(\Sigma)|_V \circ cl(\mf G_{\gamma,i})|_V = \pm cl(\tilde{\mf G}_{\gamma, i-h+h_\gamma})|_V \in \Hom(\pi_{V*} \BC_{M^{\gamma, \vee}_V}, \pi_{V*} \BC_{\Prym(\tilde X_V/V)}[2(i-h)]).$$
\end{cor}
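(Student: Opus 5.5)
The first identity will be read off from Proposition \ref{prop: comparison on V} by splitting it into degrees. The second identity is the "inverse" statement, and I will deduce it from the first using Proposition \ref{prop: G and G-1 inverse} on $M$ together with its smooth analogue for $\Prym(\tilde X_V/V)$, since both $\mf G,\mf G^{-1}$ and $\tilde{\mf G},\tilde{\mf G}^{-1}$ are mutually inverse.

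\emph{First identity.} Start from
$$cl(\mf G^{-1}_\gamma)|_V = (-1)^k cl(\tilde{\mf G}^{-1}_\gamma)|_V\circ cl(\Sigma)|_V .$$
Project both sides onto a single cohomological degree. By Definition \ref{def: G and p}, $\mf G^{-1}_{\gamma,i'}$ lies in $\Corr_B^{i'-2h+h_\gamma}$. On the smooth side $\tilde h = h_\gamma$ (Proposition \ref{prop: a is restriction to Mgamma}), so $\tilde{\mf G}^{-1}_{\gamma,i}$ lies in $\Corr_V^{i-h_\gamma}$. The correspondence $\Sigma$ lies in $\Corr^{-k}$, and $k=h-h_\gamma$ by Corollary \ref{cor: h-hgamma}.

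Composition of correspondences adds degrees. Hence the degree-$(i-h)$ part of the right-hand side is exactly $\tilde{\mf G}^{-1}_{\gamma,i}\circ\Sigma$. On the left-hand side, degree $i-h$ corresponds to $i' = i+h-h_\gamma$. This gives the first identity with sign $(-1)^k$.

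One point needs care. Degrees of correspondences on the stacky component $M^{\gamma,\vee}$ must be measured consistently on both sides, using the shift by $d_\gamma$ in Definition \ref{def: G and p}. The base also changes from $B$ to $V\subset B_\gamma$, with $\dim B - \dim B_\gamma = k$. I will check that this is absorbed into the indexing and produces the stated target $[2(i-h)]$.

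\emph{Second identity.} Compose the first identity on the left with $\tilde{\mf G}_\gamma$ and on the right with $\mf G_\gamma$, working over $V$. Proposition \ref{prop: G and G-1 inverse} gives $\mf G^{-1}\circ\mf G = [\Delta_{IM^\vee}]$. Proposition \ref{prop: FV for different chars} kills the cross terms, so this reduces componentwise to
$$\mf G^{-1}_\gamma\circ\mf G_\gamma = [\Delta_{M^{\gamma,\vee}}].$$
The smooth analogue, applied to $\tilde{\mf G}$ on $\Prym(\tilde X/U)$ (Proposition \ref{prop: G and G-1 inverse} in the smooth case), gives
$$\tilde{\mf G}_\gamma\circ\tilde{\mf G}^{-1}_\gamma = \text{projector onto the }\kappa\text{-component}.$$
By Corollary \ref{cor: Ggamma triv on other pieces}, and its smooth analogue via Proposition \ref{prop: components of smooth chP}, this projector acts as the identity on the images of the maps in question.

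These relations yield
$$cl(\Sigma)\circ cl(\mf G_\gamma) = \pm\, cl(\tilde{\mf G}_\gamma)\circ\big(cl(\tilde{\mf G}^{-1}_\gamma)\circ cl(\Sigma)\circ cl(\mf G_\gamma)\big) = \pm\, cl(\tilde{\mf G}_\gamma)\circ cl(\mf G^{-1}_\gamma)\circ cl(\mf G_\gamma) = \pm\, cl(\tilde{\mf G}_\gamma),$$
all restricted to $V$. Taking graded pieces, the degree shifts match in the same way as before: $\mf G_{\gamma,i}$ sits in degree $i-h_\gamma$ on the $M$-side, and $\tilde{\mf G}_{\gamma,i-h+h_\gamma}$ sits in degree $i-h$ after composing with $\Sigma$.

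The main obstacle is the first equality in the displayed chain. It needs $cl(\tilde{\mf G}_\gamma)\circ cl(\tilde{\mf G}^{-1}_\gamma)$ to act as the identity on the image of $cl(\Sigma)\circ cl(\mf G_\gamma)$. This requires knowing that the image lies in the $\kappa$-isotypic part of $\pi_{V*}\BC_{\Prym(\tilde X_V/V)}$. I would verify this from the $\Gamma$-equivariance of $\Sigma$, which holds because $f_P,p_P$ are $\J^0(C)$-equivariant, combined with Proposition \ref{prop: Ggamma in the kappa cpt}.

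If this equivariance argument proves awkward, there is an alternative. Use the smooth-case inversion on the other side, $\tilde{\mf G}^{-1}_\gamma\circ\tilde{\mf G}_\gamma = [\Delta]$, which holds exactly on the $\gamma$-component of $ID(\Prym)$. This yields the same identity after composing the first identity with inverses in the opposite order.
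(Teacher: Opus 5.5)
Your proposal is correct and follows the paper's proof essentially step for step. The first identity is Proposition \ref{prop: comparison on V} read one degree at a time, with $k=h-h_\gamma$. The second is obtained exactly as you describe: insert $\mf G^{-1}_\gamma\circ\mf G_\gamma=[\Delta_{M^{\gamma,\vee}}]$, substitute the first identity, and use that $cl(\tilde{\mf G}_\gamma)\circ cl(\tilde{\mf G}^{-1}_\gamma)$ is the identity on the $\kappa$-component. That component contains the image of $cl(\Sigma)\circ cl(\mf G_\gamma)$ by Corollary \ref{cor: Ggamma triv on other pieces} and the $\Gamma$-invariance of $\Sigma$, which is the closure of the graph of $\nu^*$. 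The paper quotes Proposition \ref{prop: (ii) smooth} for the projector statement, rather than deriving it from the smooth inversion as you do.

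Your fallback in the last paragraph would not actually avoid this isotypic step, since left-cancelling $\tilde{\mf G}^{-1}_\gamma$ requires the same fact. The main argument does not need the fallback, so this does not affect correctness.
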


\begin{proof}
    The first part is Proposition \ref{prop: comparison on V} taken one degree at a time. (Recall that $\Sigma = [\PMod_P]$ is of pure dimension $\dim \oPrym(X/U)$.)
    % Sigma in Corr(-k), G^{-1}_j in Corr(j-2h + h_\gamma), tilde G^{-1}_j in Corr(j - h_\gamma)
    % i - h_\gamma - k = i - h_\gamma - h + h_\gamma = i -h
    % = j - 2h + h_\gamma

    For the second part, recall from Proposition \ref{prop: G and G-1 inverse} that
    \begin{equation*} 
        \mf G^{-1} \circ \mf G = [\Delta_{IM}] \implies \mf G^{-1}_\gamma \circ \mf G_\gamma = [\Delta_{M^{\gamma, \vee}}].
    \end{equation*}
    It follows that
    \begin{equation} \label{eq: G PMod}
       cl(\tilde{\mf G}_\gamma)|_V = cl(\tilde{\mf G}_\gamma)|_V \circ cl(\mf G^{-1}_\gamma)|_V \circ cl(\mf G_\gamma)|_V
    \end{equation}
    $$ = \pm cl(\tilde{\mf G}_\gamma)|_V  \circ cl(\tilde{\mf G}^{-1}_\gamma)|_V \circ cl(\Sigma)|_V\circ cl(\mf G_\gamma)|_V $$
    by the first part. We can rewrite the last expression as follows: first, by Corollary \ref{cor: Ggamma triv on other pieces}, we see that
    $$cl(\mf G_{\gamma,i})|_V: \pi_{V*} \BC_{M^{\gamma, \vee}_V} \to (\pi_{V*} \BC_{M_V})_\kappa[2(i-h_\gamma)] \subset \pi_{V*} \BC_{M_V}[2(i-h_\gamma)].$$
    Next, recall from Proposition \ref{prop: PMod = Sigma} that $ \Sigma \in \Corr_U^{h_\gamma - h}(\oPrym(X/U), \Prym(\tilde X/U))$ can be defined as the closure of the graph of the $\Gamma$-equivariant pullback map $\nu^*: \Prym(X/U) \to \Prym(\tilde X/U)$, it is stable under the diagonal action of $\Gamma$ and so preserves isotypic components, i.e., we have
    $$cl(\Sigma)|_V: (\pi_{V*} \BC_{M_V})_\kappa[2(i-h_\gamma)] \to (\pi_{V*} \BC_{\Prym(\tilde X_V/V)})_\kappa[2(i-h)].$$
    Finally, recall from Proposition \ref{prop: (ii) smooth} that
    $$\tilde{\mf G}_\gamma \circ \tilde{\mf G}_\gamma^{-1} \in \Corr^0_U(\Prym(\tilde X/U), \Prym(\tilde X/U))$$
    induces the projection
    $$\pi_{U*} \BC_{\Prym(\tilde X/U)} \to (\pi_{U*} \BC_{\Prym(\tilde X/U)})_\kappa$$
    and in particular acts trivially on the $\kappa$ component. Thus, as a composition we have
    $$cl(\tilde{\mf G}_\gamma)|_V  \circ cl(\tilde{\mf G}^{-1}_\gamma)|_V \circ cl(\Sigma)|_V\circ cl(\mf G_\gamma)|_V =  cl(\Sigma)|_V\circ cl(\mf G_\gamma)|_V.$$
    The statement then follows by taking degree pieces of (\ref{eq: G PMod}).
    
    % Sigma in Corr(-k), G_j in Corr(j-h_\gamma), tilde G_j in Corr(j - h_\gamma)
    % -h + h_\gamma + i-h_\gamma = j - h_\gamma
\end{proof}

\begin{proof}[Proof of Theorem \ref{thm: main sec 2}(ii)]
    Fix $\gamma \in \Gamma$ and $k \in \BN$. We only consider $\mf p^{\mf G}_k$; the proof for $\mf q^{\mf G}_k$ is identical. Recall from Corollary \ref{cor: Ggamma triv on other pieces} that $\mf p^{\mf G}_k$ is trivial on all isotypic components $(\pi_* \BC_M)_{\kappa'}$ with $\kappa' \ne \kappa$, so it suffices to understand the image of $(\pi_* \BC_M)_\kappa$. Exactly as in the proof of \cite[Theorem 2.6(ii)]{MSY}, by the condition that $(\pi_* \BC_M)_\kappa$ has full support on $B_\gamma$, it suffices to show that $\mf p_k^{\mf G}$ has image $\ptau_{\le k + \dim B} (\pi_* \BC_M)_\kappa$ after restricting to an analytic open subset $V \subset B_\gamma$.

    Over $V$, it follows from Corollary \ref{cor: Gi vs Fi} and the definition of $\mf p_k^{\mf G}$ that we have
    $$cl(\Sigma)|_V \circ cl(\mf p_k^{\mf G})|_V = \pm cl(\mf p_{k-d_\gamma}^{\tilde{\mf G}})|_V \circ cl(\Sigma)|_V \in \Hom(\pi_{V*} \BC_{M_V}, \pi_{V*} \BC_{\Prym(\tilde X_V/V)}[-2d_\gamma]).$$
    Since $\Sigma$ is identified with Ng\^o's endoscopic correspondence by Proposition \ref{prop: PMod = Sigma}, it follows from \cite[Lemma 3.4.1]{Yun}, as in \cite[Proof of Theorem 3.10]{MS}, that $cl(\Sigma)|_V$
    induces an isomorphism of $\kappa$ components
    $$(\pi_{V*} \BC_{M_V})_\kappa \xrightarrow{\sim} (\pi_{V*} \BC_{\Prym(\tilde X_V/V)})_\kappa[-2d_\gamma].$$
    Combining this with Proposition \ref{prop: (ii) smooth}, we conclude that
    $$cl(\mf p_k^{\mf G})|_V = \pm cl(\Sigma)|_V^{-1} \circ cl(\mf p_{k-d_\gamma}^{\tilde{\mf G}})|_V \circ cl(\Sigma)|_V$$
    is given by
    $$(\pi_{V*} \BC_{M_V})_\kappa \xrightarrow{\sim} (\pi_{V*} \BC_{\Prym(\tilde X_V/V)})_\kappa[-2d_\gamma] \rightarrow (\ptau_{\le k-d_\gamma + b_\gamma}(\pi_{V*} \BC_{\Prym(\tilde X_V/V)})_\kappa)[-2d_\gamma] $$
    $$= \ptau_{\le k+d_\gamma + b_\gamma}((\pi_{V*} \BC_{\Prym(\tilde X_V/V)})_\kappa[-2d_\gamma]) \xrightarrow{\sim} \ptau_{\le k+d_\gamma + b_\gamma}(\pi_{V*} \BC_{M_V})_\kappa.$$
    It now suffices to note that $d_\gamma + b_\gamma = b$ by Corollary \ref{cor: h-hgamma}.
\end{proof}

\section{Projectors and multiplicativity} \label{sec: projectors and mult}
This section of the paper more closely follows \cite{MSY}. In \S \ref{subsec: supp K} we start by establishing an Arinkin-style support inequality for the restriction of convolution kernels to the various endoscopic loci, generalizing \cite[\S 3.3]{MSY}. Via the Adams operations argument of \cite[\S 3.5]{MSY} we then obtain the needed form of the ``Fourier vanishing" condition \cite[Definition 2.5]{MSY}, in \S \ref{subsec: FV}. The proofs of Theorem \ref{thm: main sec 2}(i) and (iii) are then essentially formal; these are treated in \S \ref{subsec: pf of projectors} and \ref{subsec: pf of multiplicativity}, respectively. We return to the convention that all functors are derived unless otherwise stated.

\subsection{Arinkin support inequalities} \label{subsec: supp K}
This section is dedicated to the proofs of two similar support inequalities, Propositions \ref{prop: Fn supp ineq} and \ref{prop: supp K endoscopic}, for the complexes defined in (\ref{eq: Fn def}) and (\ref{eq: K def}), respectively. Let $M = \oPrym(X/B) \to B$ be a good compactified Prym fibration, and write $M_J := \oJ^0(X/B)$ for the corresponding compactified Jacobian variety. Let $i: M \hookrightarrow M_J$ be the inclusion and $p: M_J \to M^\vee = [M_J/\J^0(C)]$ the quotient. We use $\mc P$, respectively $\mc P_J$, to denote the Poincar\'e sheaf on $M^\vee \times_B M$, respectively on $M_J \times_B M_J$, and similarly for the Fourier-Mukai inverses $\mc P^{-1}, \mc P^{-1}_J$.

Let $j: M^\vee \times_B M \hookrightarrow M^\vee \times M$ be the inclusion, and similarly $j_J: M_J \times_B M_J \hookrightarrow M_J \times M_J$. In order to prove the Fourier vanishing result (Proposition \ref{prop: FV}), we will use the complex
\begin{equation} \label{eq: Fn def}
    \mc F^n :=\mc P^{-1} \circ (j_* \mc P)^{\otimes n} \in D^b\Coh(M^\vee \times M^\vee).
\end{equation}
(Here, composition is defined by the formula (\ref{eq: composition on Db}); since $M^\vee \times M^\vee$ is smooth, note that $(j_*\mc P)$ is perfect, thus $(j_* \mc P)^{\otimes n}$ is again perfect, and all operations preserve the bounded derived category.) Similarly, we define
$$\mc F^n_J := \mc P^{-1}_J \circ (j_{J*} \mc P_J)^{\otimes n} \in D^b\Coh(M_J \times M_J).$$

\begin{lem} \label{lem: Kn KnJ}
    We have
    $$(p \times 1)^*\mc F^n = (1 \times p)_*\mc F^n_J \in D^b\Coh(M_J \times M^\vee).$$
\end{lem}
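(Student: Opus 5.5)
The plan is to compare the two convolutions term by term, using flat base change along the quotient map $p$ and the étale presentation $M_J \cong [\J^0(C) \times M/\Gamma]$ of Remark \ref{rmk: etale cover}.

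\textbf{Step 1: unwind the definition.} By (\ref{eq: composition on Db}), $\mc F^n$ and $\mc F^n_J$ are pushforwards along $p_{13}$ of $\delta^*$-pullbacks of exterior products of $\mc P^{-1}$ (resp. $\mc P^{-1}_J$) and $(j_*\mc P)^{\otimes n}$ (resp. $(j_{J*}\mc P_J)^{\otimes n}$). The middle variable runs over $M \times M$ in the first case and over $M_J \times M_J$ in the second. I will pull back $\mc F^n$ along $p \times 1$ and commute $(p\times 1)^*$ past $p_{13*}$ and $\delta^*$ using flat base change (Tag 08IB), since $p$ is flat.

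\textbf{Step 2: identify the pieces.} By the definition of $\mc P$ as the descent of $\mc P_J|_{M_J \times_B M}$ (Propositions \ref{prop: GS} and \ref{prop: GS descent}), we have $(p \times 1)^*\mc P = (1\times i)^*\mc P_J$. Dually, $\mc P^{-1}$ pulled back along $1 \times p$ is $(i \times 1)^*\mc P_J^{-1}$, up to the line bundle $\ms A$ and the shift; here I use that $R\mc Hom$ commutes with flat pullback and the Gorenstein/canonical bundle computation of Lemma \ref{lem: oPrym is flat}. After this rewriting, $(p\times 1)^*\mc F^n$ becomes a convolution over the middle variable $M$, i.e. over the restricted locus $i: M \hookrightarrow M_J$. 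On the other side, $(1\times p)_*\mc F^n_J$ is a convolution over the full $M_J$, with the last factor pushed down to $M^\vee$.

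\textbf{Step 3: compare $M$ with $M_J$ in the middle variable.} I will write the middle $M_J$ via the étale cover $\J^0(C) \times M \to M_J$. Lemma \ref{lem: theorem of square} and \cite[Lemma 6.5]{Arinkin} give $\sigma(L,-)^*\mc P_J \cong \mc P_J \otimes p^*(\mc P_J|_{\{c^*L\}\times -})$. This splits the integrand into the restricted integrand times line bundles pulled back from $\J^0(C)$. The $\J^0(C)$-integral is then an ordinary Mukai transform on the abelian variety $\J^0(C)$ of a line bundle of the form $\mc P_{\J^0(C)}$ evaluated at a norm-type combination of the outer variables. It should produce the structure sheaf of the corresponding zero locus, shifted by $g$. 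I will match this with the $\J^0(C)$-torsor pushforward $(1\times p)_*$ and the $g$-shift built into $\mc P^{-1}$ versus $\mc P^{-1}_J$, which differ by relative dimension $g$. Finally, I will keep track of the $\Gamma$-equivariant structures, checking compatibility on the identity section and then invoking the uniqueness argument of Proposition \ref{prop: GS descent}.

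\textbf{Expected obstacle.} The main obstacle is Step 3: carrying out the $\J^0(C)$ Fourier computation in families with the correct normalization (the line bundle $\ms A$ and the shifts), while also keeping the descent data honest. I would first try to sidestep the explicit computation. The idea is to show instead that $(1 \times p)^*$ of both sides agree, using that $\mc P_J$ restricted along $\{c^*L\}\times M$ is trivial, as shown in the proof of Proposition \ref{prop: GS descent}. I would then descend the isomorphism, with uniqueness of the equivariant structure coming from $H^0(\ms O)$ being constant along $\J^0(C)$.
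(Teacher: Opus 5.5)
Your Steps 1--2 match the start of the paper's proof. The ingredients are flat base change along $p$ and the defining identity $(p\times 1)^*\mc P\cong(1\times i)^*\mc P_J$. You should also state explicitly that Tor-independence (Lemma \ref{lem: Tor indep}(b)) is needed to rewrite $(p\times 1)^*j_*\mc P$ as $(1\times i)^*j_{J*}\mc P_J$. Your dual formula $(1\times p)^*\mc P^{-1}\cong(i\times 1)^*\mc P_J^{-1}[-g]$ is also correct.

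After these rewritings, associativity reduces the lemma to a single kernel identity on $M_J\times M^\vee$:
$$\mc P^{-1}\circ\ms O_{\Gamma_i^T}\cong\ms O_{\Gamma_p}\circ\mc P_J^{-1},$$
that is, $\Phi_{\mc P^{-1}}\circ i^*\cong p_*\circ\Phi_{\mc P_J^{-1}}$. The idea you are missing is that this follows formally from the defining identity, read as $\ms O_{\Gamma_i^T}\circ\mc P_J\cong\mc P\circ\ms O_{\Gamma_p}$, by composing with the inverse kernels:
$$\mc P^{-1}\circ\ms O_{\Gamma_i^T}\cong\mc P^{-1}\circ\ms O_{\Gamma_i^T}\circ\mc P_J\circ\mc P_J^{-1}\cong\mc P^{-1}\circ\mc P\circ\ms O_{\Gamma_p}\circ\mc P_J^{-1}\cong\ms O_{\Gamma_p}\circ\mc P_J^{-1}.$$
This uses Arinkin's $\mc P_J\circ\mc P_J^{-1}\cong\ms O_{\Delta_{M_J}}$ and $\mc P^{-1}\circ\mc P\cong\ms O_{\Delta_{M^\vee}}$ from Proposition \ref{prop: GS}. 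Then $\ms O_{\Gamma_p}\circ\mc F^n_J=(1\times p)_*\mc F^n_J$ finishes the proof. This is the paper's argument, and it needs no computation on $\J^0(C)$ and no descent. Your dual formula does not by itself supply this identity; extracting it still requires exactly this use of invertibility.

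The route you propose instead does not close. Step 3 is only sketched, and at best it gives an isomorphism after pulling back along $1\times p$ and along the \'etale cover of the middle variable. You then plan to descend that isomorphism by the uniqueness argument of Proposition \ref{prop: GS descent}, and this step fails. That argument works only because $\mc P_J|_{\oJ\times\oPrym}$ is a flat family of rank-one torsion-free, hence simple, sheaves. Its endomorphism sheaf is therefore $\ms O$, and an isomorphism is pinned down by its value along a section.

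The objects $(p\times 1)^*\mc F^n$ and $(1\times p)_*\mc F^n_J$ have no such rigidity: they are complexes with several cohomology sheaves and nontrivial self-Ext's. An isomorphism between their pullbacks to $M_J\times M_J$ neither determines nor produces an isomorphism in $D^b\Coh(M_J\times M^\vee)$, i.e. an isomorphism of $\J^0(C)$-equivariant objects. Your proposed sidestep, comparing $(1\times p)^*$ of both sides and then descending, breaks down at the same point.
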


\begin{proof}
    We have
    $$(p \times 1)^*(\mc P^{-1} \circ (j_* \mc P)^{\otimes n}) := (p \times 1)^* p_{13*} (p_{12}^*(j_* \mc P)^{\otimes n} \otimes p_{23}^* \mc P^{-1})$$
    $$ = p_{13*}(p \times 1 \times 1)^* (p_{12}^*(j_* \mc P)^{\otimes n} \otimes p_{23}^* \mc P^{-1})$$
    since $(p \times 1)$ is flat
    $$ = p_{13*}((p_{12}^*(p \times 1)^*j_* \mc P)^{\otimes n} \otimes p_{23}^* \mc P^{-1}) = p_{13*}((p_{12}^* j_*' (p \times 1)^* \mc P)^{\otimes n} \otimes p_{23}^* \mc P^{-1})$$
    again since $(p \times 1)$ is flat (where here $j'$ is the inclusion $M_J \times_B M \hookrightarrow M_J \times M$)
    $$ = p_{13*}((p_{12}^* j_*' (1 \times i)^* \mc P_J)^{\otimes n} \otimes p_{23}^* \mc P^{-1})$$
    by definition of $\mc P$ (noting that there is no difference between derived and underived pullbacks because $p$ is flat, $i$ is a regular embedding, and $\mc P_J$ is maximal Cohen-Macaulay)
    $$ = p_{13*}((p_{12}^*(1 \times i)^* j_{J*} \mc P_J)^{\otimes n} \otimes p_{23}^* \mc P^{-1})$$
    by Lemma \ref{lem: Tor indep}(b), since $j', j_J$ are both flat base changes of the regular embedding $\Delta_B: B \hookrightarrow B \times B$
    $$= \mc P^{-1} \circ (1 \times i)^*(j_{J*} \mc P_J)^{\otimes n} = \mc P^{-1} \circ (\ms O_{\Gamma_i^T} \circ (j_{J*} \mc P_J)^{\otimes n})$$
    \begin{equation} \label{eq: Kn J}
        =(\mc P^{-1} \circ \ms O_{\Gamma_i^T}) \circ (j_{J*} \mc P_J)^{\otimes n} = (\ms O_{\Gamma_p} \circ \mc P^{-1}_J) \circ (j_{J*} \mc P_J)^{\otimes n}
    \end{equation}
    $$= \ms O_{\Gamma_p} \circ (\mc P^{-1}_J \circ (j_{J*} \mc P_J)^{\otimes n}) = (1 \times p)_* (\mc P^{-1}_J \circ (j_{J*} \mc P_J)^{\otimes n}).$$

    Note that the second equality of (\ref{eq: Kn J}) holds because
    \begin{equation} \label{eq: P-1 i}
        \mc P^{-1} \circ \ms O_{\Gamma_i^T} = \mc P^{-1} \circ \ms O_{\Gamma_i^T} \circ \mc P_J \circ \mc P_J^{-1} = \mc P^{-1} \circ \mc P \circ \ms O_{\Gamma_p} \circ \mc P_J^{-1} = \ms O_{\Gamma_p} \circ \mc P^{-1}_J
    \end{equation}
    where for the middle equality we again used that $(1 \times i)^* \mc P_J = (p \times 1)^* \mc P$.
\end{proof}

\begin{prop} \label{prop: Fn supp ineq}
    For any $\gamma, \beta \in \Gamma$ and $n \ge 1$, we have
    $$\dim(\Supp(\mc F^n) \cap M^{\gamma, \vee} \times M^{\beta, \vee}) \le b-h+h_\gamma+h_\beta.$$
\end{prop}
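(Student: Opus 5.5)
We will reduce the statement to Arinkin's support estimate for the Jacobian convolution kernels $\mc F^n_J$, using Lemma \ref{lem: Kn KnJ}, and then intersect with fixed loci.

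\textbf{Step 1: reduce to a set-theoretic problem on $M_J$.} Since $p \times 1: M_J \times M^\vee \to M^\vee \times M^\vee$ is flat and surjective, we have $\Supp(\mc F^n) = (p \times 1)(\Supp (p\times 1)^*\mc F^n)$. By Lemma \ref{lem: Kn KnJ} this equals $(p\times 1)\bigl((1\times p)(\Supp \mc F^n_J)\bigr)$, i.e. the image of $\Supp \mc F^n_J$ under the quotient $M_J \times M_J \to M^\vee \times M^\vee$. Dimensions on the stack side agree with those of the $\J^0(C)\times\J^0(C)$-saturated preimage minus $2g$. So it suffices to bound
$$\dim\bigl(\Supp(\mc F^n_J) \cap (M_J^{\gamma} \times M_J^{\beta})\bigr) \le b - h + h_\gamma + h_\beta + 2g,$$
where $M_J^\gamma := p^{-1}(M^{\gamma,\vee})$ is the $\J^0(C)$-saturation of $M^\gamma$. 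It has dimension $\dim M^\gamma + g = b_\gamma + h_\gamma + g$.

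\textbf{Step 2: Arinkin's fiberwise bound.} Following \cite[\S 3.3]{MSY} and \cite{Arinkin}, the kernel $\mc F^n_J$ is supported over the relative diagonal of $B$, hence over $M_J \times_B M_J$. Over a point $b$ its support is contained in the locus of pairs $(x,y)$ such that $x - y$ lies in the ``affine part''. More precisely, the fiber of $\Supp \mc F^n_J$ over $b$ has dimension at most $\dim \oJ^0(X_b) + \delta(X_b)$. We would prove this as in MSY: use the group structure (Arinkin's \cite[Lemma 6.5]{Arinkin}) to translate by $\J^0(X_b)$, which shows the support is stable under the antidiagonal-type action, and then use the vanishing of Fourier transforms of nontrivial line bundles on the abelian part.

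We then intersect with $M_J^\gamma \times M_J^\beta$ over $b \in B_\gamma \cap B_\beta$. For general $b$ in a stratum $S \subset B_\gamma\cap B_\beta$, we need a bound of the form: the fiber of the support meets $(M_J^\gamma)_b \times (M_J^\beta)_b$ in dimension at most $\dim(M^\gamma_J)_b + \dim(M^\beta_J)_b - \dim (M_J)_b + \delta(X_b)$. Heuristically this holds because the support is, up to the $\delta$-dimensional affine part, a torsor for the diagonal translation by $\J^0(X_b)$, and each fixed locus is a torsor for the subgroup preserving it (Proposition \ref{prop: a is restriction to Mgamma} describes $M^\gamma_J$ as $\J^0(\tilde X)$-orbits).

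\textbf{Step 3: sum over strata.} We stratify $B_\gamma\cap B_\beta$ by $\delta$. On a stratum $S$, Ngô's support inequality (Definition \ref{defn: good fibration}(2)) together with the Severi inequality (Remark \ref{rmk: Severi/Ngo}) gives $\dim S \le b - \delta(S)$. Adding $\dim S$ to the fiber bound from Step 2, and using $h - h_\gamma = b - b_\gamma$ (Corollary \ref{cor: h-hgamma}), the $\delta$ terms cancel. This yields $b - h + h_\gamma + h_\beta + 2g$, as required.

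\textbf{Main obstacle.} The main obstacle is the fiberwise intersection bound in Step 2 when $\gamma \ne \beta$. In that case the two fixed loci are unrelated sub-torsors and we need their relative position inside the support. We would first try to prove it on the open locus where $X_b$ is nodal, using the explicit description through $\PMod$ and Proposition \ref{prop: a is restriction to Mgamma}. We would then handle deeper strata by upper semicontinuity combined with the crude bound $\dim\Supp_b \le h + \delta$ together with $\dim S \le b-\delta(S)$, checking that these strata never dominate.
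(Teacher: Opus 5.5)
Your architecture is the paper's: pass to $\mc F^n_J$ via Lemma \ref{lem: Kn KnJ}, bound the fibres of $\Supp\mc F^n_J$ inside the fixed loci, then integrate over $B$ with the Severi inequality. But Step 1 contains a counting error, and Step 2 --- the only non-formal step --- is left unproved, with a proposed workaround that would not succeed.

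\textbf{Step 1.} You may not subtract $2g$. $\Supp\mc F^n_J$ is not $\J^0(C)\times\J^0(C)$-saturated: over $u$ with $X_u$ smooth it lies in $\{\mc G_1^{\otimes n}\cong\mc G_2\}$, which is stable essentially only under the $g$-dimensional subgroup of pairs $(L,L^{\otimes n})$. So saturating can raise its dimension by $g$. The correct count, as in the paper, is as follows. Pulling back along the smooth surjection $p\times 1$ adds $g$. On the other side, $\Supp((1\times p)_*\mc F^n_J)\subset(1\times p)(\Supp\mc F^n_J)$, and the $\J^0(C)$-torsor $1\times p$ does not raise dimension. Hence
\[
\dim\bigl(\Supp\mc F^n\cap M^{\gamma,\vee}\times M^{\beta,\vee}\bigr)\le\dim\bigl(\Supp\mc F^n_J\cap M^\gamma_J\times M^\beta_J\bigr)-g ,
\]
and what you must prove is the bound $b-h+h_\gamma+h_\beta+g$. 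Your Step 3 arithmetic actually produces exactly this, since $(b-\delta)+(h_\gamma+g)+(h_\beta+g)-(h+g)+\delta$ equals that bound, not one with $+2g$. So the two slips cancel, but as written the reduction in Step 1 is false.

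\textbf{Step 2.} This is the genuine gap, and the obstacle you single out is not the real one. Tensoring by $L\in\J^0(X_u)$ commutes with tensoring by $c_u^*\gamma$. So the whole group $\J^0(X_u)$, not a subgroup, acts on $\oJ^0(X_u)^\gamma$ and on $\oJ^0(X_u)^\beta$.

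Now consider $Z_u=\{(\mc G_1,\mc G_2):(\mc G_1|_{X_u^{\sm}})^{\otimes n}\cong\mc G_2|_{X_u^{\sm}}\}$, which contains $\Supp(\mc F^n_J)_u$ by \cite[Proof of Proposition 3.3]{MSY}. Arinkin's codimension count for $Z_u$ \cite[Proposition 7.4]{Arinkin} is an equivariance argument for these translations, which act on $\J^0(\tilde X_u)$ transitively through $\nu^*$. It therefore runs verbatim inside the $\J^0(X_u)$-stable subvariety $M^\gamma_{J,u}\times M^\beta_{J,u}$. The upshot is that $Z_u\cap(M^\gamma_{J,u}\times M^\beta_{J,u})$ is a countable union of closed subsets of codimension $g(\tilde X_u)$, at every $u\in B$.

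That is the missing idea. With it, the relative position of the two fixed loci never matters, and you need neither nodality, nor $\PMod$, nor any stratification beyond the Severi inequality.

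Your fallback does not work. On a deeper stratum $S$, the crude fibre bound $h+g+\delta(S)$ together with $\dim S\le b-\delta(S)$ gives only $b+h+g$. This exceeds the required $b-h+h_\gamma+h_\beta+g$ by $d_\gamma+d_\beta$. Moreover, Definition \ref{defn: good fibration}(3) gives nodality only generically on each $B_\gamma$, not on $B_\gamma\cap B_\beta$.

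Also, $\dim S\le b-\delta(S)$ is the Severi inequality, coming from smoothness of $M$. Ng\^o's inequality goes the other way and is not needed here.
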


\begin{proof}
    Let $M^\gamma_J$ be the fixed locus of $\gamma$ acting on $M_J$, i.e., $p^{-1}(M^{\gamma,\vee})$. Then
    $$\dim(\Supp(\mc F^n) \cap M^{\gamma, \vee} \times M^{\beta,\vee}) = \dim(\Supp((p \times 1)^* \mc F^n) \cap M^\gamma_J \times M^{\beta, \vee}) -g$$
    $$= \dim(\Supp((1 \times p)_*\mc F^n_J) \cap M^\gamma_J \times M^{\beta,\vee}) -g$$
    by Lemma \ref{lem: Kn KnJ}
    \begin{equation} \label{eq: Fn support}
     \implies \dim(\Supp(\mc F^n) \cap M^{\gamma, \vee} \times M^{\beta,\vee})  \le \dim(\Supp(\mc F^n_J) \cap M^\gamma_J \times M^{\beta}_J) -g. 
    \end{equation}

    Now, by \cite[Proof of Proposition 3.3]{MSY}, we have
    $$\Supp(\mc F^n_J) \subset \{(\mc G_1, \mc G_2) \in \oJ^0(X_u)^{\times 2}: (\mc G_1|_{X_u^{\sm}})^{\otimes n} \cong \mc G_2|_{X_u^{\sm}}\} =: Z \subset M_J \times_B M_J \subset M_J \times M_J.$$
    We note that, for each $u \in B$, the restriction $Z_u \cap (M^\gamma_{J,u} \times_B M^\beta_{J,u}) \subset M^\gamma_{J,u} \times_B M^\beta_{J,u}$ is a countable union of closed subsets of codimension $g(\tilde X_u)$, as multiplication by $\J^0(X_u)$ preserves the fixed loci $\oJ^0(X_u)^\gamma, \oJ^0(X_u)^\beta$ and so the argument of \cite[Proposition 7.4]{Arinkin} applies. Then
    $$\dim (\Supp(\mc F^n_J)|_u \cap (M^\gamma_{J,u} \times M^\beta_{J,u})) \le h_\gamma + g + h_\beta+g - g(\tilde X_u).$$
    Recall from Remark \ref{rmk: Severi/Ngo} that by the Severi inequality
    $$\dim \overline{\{u \in B: g(X_u) - g(\tilde X_u) \ge n\}} \le b - n$$
    and so we have (using that $g(X_u) = \dim M_J - b = h+g$)
    $$\dim (\Supp \mc F^n \cap M_J^{\gamma} \times M^{\beta}_J) \le b + 2g +h_\gamma + h_\beta - (h+g).$$
    Combining this with (\ref{eq: Fn support}), we obtain the needed result.
\end{proof}

Our second complex of interest (for the multiplicativity result in \S \ref{subsec: pf of multiplicativity}) will be
\begin{equation} \label{eq: K def}
    \mc K:= \mc P^{-1} \circ \ms O_{\Delta_{M}^s} \circ (\mc P \boxtimes \mc P) \in D^b\Coh(M^\vee \times_B M^\vee \times_B M^\vee)
\end{equation}
and its counterpart
$$\mc K_J := \mc P^{-1}_J \circ \ms O_{\Delta_{M_J}^s} \circ (\mc P_J \boxtimes \mc P_J) \in D^b\Coh(M_J \times_B M_J \times_B M_J),$$
where compositions are as defined in \S \ref{subsubsec: coherent sheaves} and the small diagonal $\ms O_{\Delta_{M}^s}$ is the pushforward of $\ms O_M$ under the diagonal map $M \to M \times_B M \times_B M$, and similarly for $\ms O_{\Delta_{M_J}^s}$.

\begin{prop} \label{prop: K on M and MJ}
    We have
    $$(p \times p \times 1)^* \mc K = (1 \times 1 \times p)_* \mc K_J \in D^b\Coh(M_J \times_B M_J \times_B M^\vee).$$
\end{prop}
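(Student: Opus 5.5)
The plan follows the proof of Lemma \ref{lem: Kn KnJ}, now applied to the triple composition. Recall the three relations from that proof.
\begin{itemize}
\item[(R1)] $(p \times 1)^* \mc P = (1 \times i)^* \mc P_J$. This is the definition of $\mc P$; derived and underived pullbacks agree since $p$ is flat, $i$ is regular and $\mc P_J$ is maximal Cohen-Macaulay.
\item[(R2)] $\mc P^{-1} \circ \ms O_{\Gamma_i^T} = \ms O_{\Gamma_p} \circ \mc P_J^{-1}$. This is (\ref{eq: P-1 i}), which uses Proposition \ref{prop: GS}.
\item[(R3)] Pullback along $p \times 1$ (respectively pushforward along $1 \times p$) is composition with $\ms O_{\Gamma_p^T}$ (respectively $\ms O_{\Gamma_p}$).
\end{itemize}
We rewrite the left side $(p \times p \times 1)^*\mc K$ as a composition, move the graph of $p$ past the Poincaré kernels using these relations, and read off $(1\times 1\times p)_*\mc K_J$.

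\textbf{Step 1.} Since $p \times p \times 1$ is flat and the composition $\mc K = p_{*}\delta^*(\cdots)$ is built from flat base changes, flat base change lets $(p\times p\times 1)^*$ pass through the pushforward and the diagonal pullbacks. This gives
$$(p \times p\times 1)^*\mc K = \mc P^{-1} \circ \ms O_{\Delta^s_M} \circ \big((p\times 1)^*\mc P \boxtimes (p \times 1)^*\mc P\big).$$
By (R1) this equals
$$\mc P^{-1} \circ \ms O_{\Delta^s_M} \circ \big((1\times i)^*\mc P_J \boxtimes (1\times i)^*\mc P_J\big).$$
Tor-independence (Lemma \ref{lem: Tor indep}(b)) holds as in the previous lemma, because $M \times_B M \hookrightarrow M_J \times_B M_J$ is a flat base change of a regular embedding.

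\textbf{Step 2.} We move $i$ across the small diagonal. Writing $(1\times i)^*\mc P_J = \ms O_{\Gamma_i^T}\circ \mc P_J$, we need
$$\ms O_{\Delta^s_M}\circ(\ms O_{\Gamma_i^T}\boxtimes \ms O_{\Gamma_i^T}) = \ms O_{\Gamma_i^T}\circ \ms O_{\Delta^s_{M_J}}$$
as kernels from $M_J\times_B M_J$ to $M$. Both sides are the structure sheaf of the graph of $M \to M_J\times_B M_J$, $x\mapsto (i(x),i(x))$. The remaining point is a derived check: the derived pullback of $\ms O_{\Delta^s_{M_J}}$ along $i$ (or along $i \times i$) has no higher Tor. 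This should follow from Lemma \ref{lem: Tor indep}(b), using that $i$ is a flat base change of $\{\ms O_C\}\hookrightarrow \J^0(C)$ and that the relevant fiber products have the expected codimension.

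\textbf{Step 3.} Using associativity of composition, the kernel becomes
$$(\mc P^{-1}\circ \ms O_{\Gamma_i^T})\circ \ms O_{\Delta^s_{M_J}}\circ(\mc P_J\boxtimes\mc P_J).$$
By (R2) this is
$$\ms O_{\Gamma_p}\circ \mc P_J^{-1}\circ \ms O_{\Delta^s_{M_J}}\circ(\mc P_J\boxtimes \mc P_J) = \ms O_{\Gamma_p}\circ \mc K_J.$$
By (R3) this is $(1\times 1\times p)_*\mc K_J$, as claimed.

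\textbf{Main obstacle.} I expect Step 2, and the associativity used in Step 3 for the multiple compositions, to be the main obstacle. The multiple composition of \S\ref{subsubsec: coherent sheaves} involves $\delta_{\mc X_2}\circ\delta_{\mc X_1}$, which is a non-flat base change of a diagonal, so base change identities must be justified by Tor-independence rather than flatness. My first attempt would be to verify these identities on the étale cover $M_J \to M^\vee$. There all stacks become varieties, and the required Tor-independence reduces to the regularity of $i$ together with the Cohen-Macaulayness of $M_J$ (Lemma \ref{lem: Tor indep}).
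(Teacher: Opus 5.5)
Your proposal is correct and follows the paper's proof essentially step for step. You rewrite $(p\times p\times 1)^*\mc K$ via graphs of $p$, use $(p\times 1)^*\mc P=(1\times i)^*\mc P_J$ to pass to $\ms O_{\Gamma_i^T}\circ\mc P_J$, exchange $i$ with the small diagonal, and finish with (\ref{eq: P-1 i}). Your Step 2 identity is exactly Lemma \ref{lem: Deltasm base change}, which the paper proves just as you suggest: by Tor-independence (Lemma \ref{lem: Tor indep}(b)) of the Cartesian square formed by $i$ and its flat base change $1\times 1\times i$. Note that the relevant derived pullback is along $1\times 1\times i$ in the third factor, not along $i\times i$ as you wrote.
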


\begin{proof}
    We have
    $$(p \times p \times 1)^* \mc K = \mc K \circ (\ms O_{\Gamma_p} \boxtimes \ms O_{\Gamma_p})$$
    $$:= \mc P^{-1} \circ \ms O_{\Delta^s_{M}} \circ (\mc P \boxtimes \mc P) \circ (\ms O_{\Gamma_p} \boxtimes \ms O_{\Gamma_p}) = \mc P^{-1} \circ \ms O_{\Delta^s_{M}} \circ (\ms O_{\Gamma_i^T} \boxtimes \ms O_{\Gamma_i^T}) \circ (\mc P_J \boxtimes \mc P_J)$$
    since $(p \times 1)^* \mc P = (1 \times i)^* \mc P_J$
    $$ = \mc P^{-1} \circ (i \times i \times 1)_*\ms O_{\Delta^s_{M}} \circ (\mc P_J \boxtimes \mc P_J)$$
    $$= \mc P^{-1} \circ (1 \times 1 \times i)^*\ms O_{\Delta^s_{M_J}} \circ (\mc P_J \boxtimes \mc P_J)$$
    by Lemma \ref{lem: Deltasm base change}
    $$= \mc P^{-1} \circ \ms O_{\Gamma_i^T} \circ \ms O_{\Delta^s_{M_J}}  \circ (\mc P_J \boxtimes \mc P_J)$$
    $$= \ms O_{\Gamma_p} \circ \mc P^{-1}_J \circ \ms O_{\Delta^s_{M_J}}  \circ (\mc P_J \boxtimes \mc P_J) =: \ms O_{\Gamma_p} \circ \mc K_J = (1 \times 1 \times p)_* \mc K_J$$
    where for the first equality we use (\ref{eq: P-1 i}).
\end{proof}

\begin{lem} \label{lem: Deltasm base change}
    We have
    $$(i \times i \times 1)_*\ms O_{\Delta^s_{M}} = (1 \times 1 \times i)^*\ms O_{\Delta^s_{M_J}} \in D^b\Coh(M_J \times_B M_J \times_B M).$$
\end{lem}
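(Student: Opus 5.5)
We prove the identity by base change for the Cartesian square obtained by pulling the small diagonal of $M_J$ back along $1 \times 1 \times i$. Write $\Delta^s_{M_J}: M_J \to M_J \times_B M_J \times_B M_J$ for the small diagonal. The object $(1 \times 1 \times i)^*\ms O_{\Delta^s_{M_J}}$ is (derived) $(1\times 1\times i)^* \Delta^s_{M_J*}\ms O_{M_J}$. Consider the square
\[
\begin{tikzcd}
M \arrow[r, "i"] \arrow[d, "\Delta'"] & M_J \arrow[d, "\Delta^s_{M_J}"] \\
M_J \times_B M_J \times_B M \arrow[r, "1\times 1\times i"] & M_J \times_B M_J \times_B M_J
\end{tikzcd}
\]
where $\Delta' = (i,i,1)$. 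First we check that it is Cartesian: a point of the fiber product is a triple $(x,x,x)$ with $x \in M_J$ whose third coordinate lies in $M$. This is exactly $M$, embedded via $(i\times i\times 1)\circ \Delta^s_M$. This holds scheme-theoretically, since $\Delta^s_{M_J}$ is a closed embedding and the fiber product of $\Delta^s_{M_J}$ with the closed embedding $1\times1\times i$ is the scheme-theoretic intersection $\Delta^s_{M_J} \cap (M_J \times_B M_J \times_B M)$, which is $i(M)$ diagonally.

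Next we check Tor-independence so that derived base change applies. Here $1\times1\times i$ is a flat base change of the regular embedding $\{\ms O_C\} \hookrightarrow \J^0(C)$ (Remark \ref{rmk: i is reg}), so it is a regular embedding of codimension $g$. Its pullback along $\Delta^s_{M_J}$ is $i: M \hookrightarrow M_J$, which is again a regular embedding of the same codimension $g$ (Remark \ref{rmk: i is reg}). Lemma \ref{lem: Tor indep}(b) therefore applies. It gives Tor-independence of $M_J$ and $M_J \times_B M_J \times_B M$ over $M_J\times_B M_J\times_B M_J$, together with
$$L(1\times1\times i)^* R\Delta^s_{M_J*}\ms O_{M_J} \cong R\Delta'_* Li^*\ms O_{M_J} = \Delta'_*\ms O_M.$$
Finally, $\Delta'_*\ms O_M = (i\times i\times 1)_*\Delta^s_{M*}\ms O_M = (i\times i\times 1)_*\ms O_{\Delta^s_M}$, which is the claim.

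The main point to watch is the regularity hypothesis in Lemma \ref{lem: Tor indep}(b). We need the induced map $M \to M_J$ to be regular of the same codimension $g$, and that is exactly Remark \ref{rmk: i is reg}. Alternatively, Lemma \ref{lem: Tor indep}(a) gives this directly from the Cohen--Macaulayness of $M_J$ and the dimension count $\dim M_J - \dim M = g$. The ambient triple fiber products are not smooth, but Lemma \ref{lem: Tor indep} does not require smoothness, so no further input is needed. We should also confirm that $\Delta^s_{M_J}$ is a closed embedding, since $M_J \to B$ is separated; this is what makes the identification of the fiber product scheme-theoretic.
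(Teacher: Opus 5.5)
Your proof is correct and is essentially the paper's argument. The paper uses the same Cartesian square (transposed), applies Lemma \ref{lem: Tor indep}(b) because $i$ and its flat base change $1\times 1\times i$ are regular embeddings of codimension $g$, and concludes by derived base change; your explicit check that the square is Cartesian, and your labeling of the map $M \to M_J\times_B M_J\times_B M$ as $(i,i,1) = (i\times i\times 1)\circ \Delta^s_M$, are slightly more careful than the paper's write-up.
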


\begin{proof}
    Consider the Cartesian square
    \[
    \begin{tikzcd}
        M \arrow[d, "i"] \arrow[r, "i \times i \times 1"] & M_J \times_B M_J \times_B M \arrow[d, "1 \times 1 \times i"] \\
        M_J \arrow[r, "1 \times 1 \times 1"] & M_J \times_B M_J \times_B M_J
    \end{tikzcd}
    \]
    Since $i: M \hookrightarrow M_J$ is an embedding of regular varieties, this and the flat base change $1 \times 1 \times i$ are regular embeddings of the same codimension $g$, and so by Lemma \ref{lem: Tor indep}(b) the square is $\Tor$-independent and
    $$(i \times i \times 1)_* \ms O_{\Delta^s_{M}} = (i \times i \times 1)_* i^* \ms O_{M_J}$$
    $$=(1 \times 1 \times i)^* (1 \times 1 \times 1)_* \ms O_{M_J} = (1 \times 1 \times i)^* \ms O_{\Delta^s_{M_J}}.$$
\end{proof}

\begin{prop} \label{prop: supp K endoscopic}
    For any $\gamma, \beta, \alpha \in \Gamma$, we have
    $$\dim(\Supp \mc K \cap M^{\gamma, \vee} \times_B M^{\beta, \vee} \times_B M^{\alpha, \vee}) \le b-h+h_\gamma + h_\beta + h_\alpha.$$
\end{prop}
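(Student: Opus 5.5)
We follow the template of Proposition \ref{prop: Fn supp ineq}, now for the triple kernel. The plan is to pass to the compactified Jacobian via Proposition \ref{prop: K on M and MJ} and use the \cite{MSY}/Arinkin description of $\Supp(\mc K_J)$. Let $M^\gamma_J := p^{-1}(M^{\gamma,\vee})$, and similarly for $\beta,\alpha$.

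First, since $p \times p \times 1$ is flat and surjective with fibers of dimension $2g$ (two copies of the $\J^0(C)$-torsor), we get
$$\dim(\Supp \mc K \cap M^{\gamma,\vee}\times_B M^{\beta,\vee}\times_B M^{\alpha,\vee}) = \dim(\Supp((p\times p\times 1)^*\mc K)\cap M^\gamma_J\times_B M^\beta_J\times_B M^{\alpha,\vee}) - 2g.$$
By Proposition \ref{prop: K on M and MJ}, the support on the right is the image under the finite map $1\times 1\times p$ of $\Supp \mc K_J$. Hence it is bounded by $\dim(\Supp\mc K_J \cap M^\gamma_J\times_B M^\beta_J\times_B M^\alpha_J) - g$. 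In total we lose $3g$ relative to the Jacobian-level bound.

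Next we invoke \cite[\S 3.3]{MSY}, itself based on \cite[Proposition 7.4]{Arinkin}. It shows that $\Supp \mc K_J$ lies in the locus of triples $(\mc G_1,\mc G_2,\mc G_3)$ over a common $u$ with $\mc G_1|_{X_u^{\sm}}\otimes \mc G_2|_{X_u^{\sm}} \cong \mc G_3|_{X_u^{\sm}}$. Over a point $u$, this is a condition on the triple modulo the diagonal $\J^0(X_u)$-action. The fixed loci $\oJ^0(X_u)^\gamma$, $\oJ^0(X_u)^\beta$, $\oJ^0(X_u)^\alpha$ are preserved by $\J^0(X_u)$, so as in Proposition \ref{prop: Fn supp ineq} Arinkin's argument shows the following. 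The intersection of this locus with $M^\gamma_{J,u}\times M^\beta_{J,u}\times M^\alpha_{J,u}$ has codimension at least $g(\tilde X_u)$. This gives the fiberwise bound
$$(h_\gamma+g)+(h_\beta+g)+(h_\alpha+g)-g(\tilde X_u).$$
We then stratify $B$ by $\delta_u = g(X_u)-g(\tilde X_u)$. By the Severi inequality (Remark \ref{rmk: Severi/Ngo}) the stratum with $\delta_u\ge n$ has dimension at most $b-n$, so each stratum contributes at most
$$b+3g+h_\gamma+h_\beta+h_\alpha - g(X_u) = b+2g+h_\gamma+h_\beta+h_\alpha-h,$$
using $g(X_u)=h+g$. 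Subtracting $g$ from the previous step gives the claimed bound $b-h+h_\gamma+h_\beta+h_\alpha$.

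The main obstacle is the middle step. One must make sure the \cite{MSY} support statement for $\mc K_J$, which is proved for the convolution kernel with the small diagonal, really yields the twisted-multiplicativity condition above. One must also check that Arinkin's codimension estimate applies after intersecting with three fixed loci rather than two. The plan is to repeat Arinkin's argument verbatim, with $\J^0(X_u)$ acting diagonally on each fixed locus. Its key input is that over each point of the orbit space the relevant stabilizers are controlled by the abelian part of $\J^0(X_u)$, whose dimension is $g(\tilde X_u)$. That input is unchanged by restricting to $\J^0(X_u)$-stable closed subsets.
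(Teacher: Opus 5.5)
Your route is the paper's: pass to $M_J$ via Proposition \ref{prop: K on M and MJ}, use the \cite{MSY} description of $\Supp \mc K_J$, apply the Arinkin-type codimension-$g(\tilde X_u)$ estimate on the product of fixed loci, and finish with the Severi inequality. Set $S := \Supp\mc K_J \cap M^\gamma_J\times_B M^\beta_J\times_B M^\alpha_J$. Your Jacobian-level bound $\dim S \le b+2g-h+h_\gamma+h_\beta+h_\alpha$ is exactly the paper's. The gap is in the descent from $M_J$ to $M^\vee$.

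\textbf{The false step.} You claim the image under $1\times 1\times p$ costs a further $g$, calling this map finite. It is not finite: $p\colon M_J\to M^\vee=[M_J/\J^0(C)]$ is a $\J^0(C)$-torsor, smooth of relative dimension $g$. For a closed $S$ one only has $\dim(1\times1\times p)(S)\le\dim S$. The drop by $g$ occurs only when $S$ is saturated for the $\J^0(C)$-action on the third factor. $\Supp\mc K_J$ is not saturated, because on the locus $\{\mc F_1|_{X_u^{\sm}}\otimes\mc F_2|_{X_u^{\sm}}\cong\mc F_3|_{X_u^{\sm}}\}$ the third sheaf is pinned down by the first two. For instance, over the smooth locus $\Supp\mc K_J$ is the graph $\{(\mc F_1,\mc F_2,\mc F_1\otimes\mc F_2)\}$, which maps injectively under $1\times1\times p$.

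Taken at face value, a total loss of $3g$ would give $\dim\Supp\mc K\le b+2h-g$ for $\gamma=\beta=\alpha=1$. This is false: over the smooth locus $\mc K$ is supported on the graph of the group law of $M^\vee$, which has dimension $b+2h$. Your final sentence is also inconsistent with either accounting. Subtracting $g$ from $b+2g-h+\cdots$ gives $b+g-h+\cdots$, not the claimed bound.

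\textbf{The fix.} The correct loss is exactly $2g$, coming from the flat pullback alone, as in the paper. Set $T:=\Supp\mc K\cap M^{\gamma,\vee}\times_B M^{\beta,\vee}\times_B M^{\alpha,\vee}$. Since $M^\alpha_J=p^{-1}(M^{\alpha,\vee})$, Proposition \ref{prop: K on M and MJ} gives
$$\dim T+2g=\dim(p\times p\times 1)^{-1}(T)\le\dim(1\times1\times p)(S)\le\dim S.$$
Hence $\dim T\le\dim S-2g\le b-h+h_\gamma+h_\beta+h_\alpha$. With this correction your argument coincides with the paper's proof.
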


\begin{proof}
    The proof is analogous to that of Proposition \ref{prop: Fn supp ineq}. By \cite[Proof of Proposition 3.2]{MSY}, over each point $u \in B$, the closed subset $\Supp \mc K_J|_u \subset \oJ^0(X_u)^{\times 3}$ is contained in the locus
    $$Z :=\{(\mc F_1, \mc F_2, \mc F_3): \mc F_1|_{X_u^{\sm}} \otimes \mc F_2|_{X_u^{\sm}} = \mc F_3|_{X_u^{\sm}} \} \subset \oJ^0(X_u)^{\times 3}.$$
    As before, $Z \cap \oJ^0(X_u)^\gamma \times \oJ^0(X_u)^\beta \times \oJ^0(X_u)^\alpha$ is a countable union of closed subsets of codimension $ g(\tilde X_u)$. Then by the Severi inequality
    $$\dim (\Supp \mc K_J \cap M_J^{\gamma} \times_B M^{\beta}_J \times_B M^{\alpha}_J) \le b + 3g+ h_\gamma + h_\beta + h_\alpha - (h+g).$$
    Finally, by Proposition \ref{prop: K on M and MJ} we have
    $$\dim(\Supp \mc K \cap M^{\gamma, \vee} \times_B M^{\beta, \vee} \times_B M^{\alpha, \vee})  \le \dim(\Supp \mc K_J \cap M_J^{\gamma} \times_B M^{\beta}_J \times_B M^{\alpha}_J) - 2g.$$
\end{proof}

\subsection{Fourier vanishing} \label{subsec: FV}
In this section, we combine the bound of Proposition \ref{prop: Fn supp ineq} with the Adams operations argument in \cite[\S 3.5]{MSY} to establish an equivalent of the Fourier vanishing condition of \cite[Definition 2.5]{MSY} for each $\gamma \in \Gamma$ (Proposition \ref{prop: FV}). 

Fix $\gamma \in \Gamma$, and pick an approximation scheme $u: M^{\gamma, \vee}_\Gamma \to M^{\gamma, \vee}$ as defined in \S \ref{subsubsec: EG def of Chow}. (Instead of verifying that the usual compatibilities among $\tau$, localized Chern classes, and Adams operations hold also in the case of nice quotients, we find it more convenient to work on the approximation scheme.)

Recall from \cite[\S 18.2]{Fulton} that from the smooth embedding of varieties $M_\Gamma^{\gamma, \vee} \times_B M \hookrightarrow M^{\gamma, \vee}_\Gamma \times M$ we obtain
a localized Chern character
$$\ch^{loc} : K_0(M_\Gamma^{\gamma, \vee} \times_B M) \to \CH^*(M_\Gamma^{\gamma, \vee} \times_B M \rightarrow M^{\gamma, \vee}_\Gamma \times M)$$
such that by definition
$$\tau(-) := \td(T_{M^{\gamma, \vee}_\Gamma \times M}) \cap \ch^{loc}(-) \cap [M^{\gamma, \vee}_\Gamma \times M]: K_0(M_\Gamma^{\gamma, \vee} \times_B M) \to \CH_*(M_\Gamma^{\gamma, \vee} \times_B M).$$
We recall from \cite{Gillet-Soule} that there are Adams operations $\psi^n$ on the relative $K$-theory group
$$K_0^{M^{\gamma, \vee}_\Gamma \times_B M}(M^{\gamma, \vee}_\Gamma \times M) = K_0(M^{\gamma, \vee}_\Gamma \times_B M)$$
(the equality being \cite[Lemma 1.9]{Gillet-Soule})
defined inductively via
$$\psi^k - \psi^{k-1} \cdot  \lambda^1 + \cdots + (-1)^{k-1} \psi^1 \cdot \lambda^{k-1} + (-1)^kk\lambda^k = 0,$$
where $\lambda_i$ is given by the $i^{\mathrm{th}}$ exterior power of a perfect complex on $M^{\gamma, \vee}_\Gamma \times M$ supported on $M^{\gamma, \vee}_\Gamma \times_B M$.
We will use the identity \cite[Theorem 3.1]{Kurano-Roberts}
\begin{equation} \label{eq: Kurano-Roberts}
    \ch^{loc, i} \circ \psi^k = k^i \ch^{loc,i}: K_0(M^{\gamma, \vee}_\Gamma \times_B M) \to \CH^i(M_\Gamma^{\gamma, \vee} \times_B M \rightarrow M^{\gamma, \vee}_\Gamma \times M).
\end{equation}

Let $j_u: M^{\gamma,\vee}_\Gamma \times_B M \hookrightarrow M^{\gamma,\vee}_\Gamma \times M$ be the inclusion, and similarly $j_u^T: M \times_B M^{\gamma,\vee}_\Gamma \hookrightarrow M \times M^{\gamma,\vee}_\Gamma$ We now recast Proposition \ref{prop: Fn supp ineq} in the following form:

\begin{prop} \label{prop: psi support}
    The class
    $$(j_{u*}^T(1 \times u)^*(\rho_\gamma(\mc P^{-1}_\gamma))) \circ (j_{u*}\psi^n((u \times 1)^*(\rho_\gamma(\mc P_\gamma)) \in K_0(M^{\gamma, \vee}_\Gamma \times M_\Gamma^{\gamma, \vee})$$
    has support of dimension $\le 2 \dim u + b-h+2h_\gamma$, where
    $\mc F_\gamma := \iota_{\gamma *}^{-1} \circ \pi_\gamma(\mc F)$ is the localization defined in (\ref{eq: pi}) and (\ref{eq: iota}).
\end{prop}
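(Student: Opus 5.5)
The idea is to identify the Adams-operation class, up to the operations that do not enlarge supports, with a pullback of the $\gamma$-component of a localization of the complex $\mc F^n$ from (\ref{eq: Fn def}). Then Proposition \ref{prop: Fn supp ineq} gives the bound. I would go in three steps.

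\textbf{Step 1: remove the Adams operation.} On the relative $K$-group $K_0(M^{\gamma,\vee}_\Gamma\times_B M)$, the operation $\psi^n$ applied to a class represented by a perfect complex $E$ on $M^{\gamma,\vee}_\Gamma\times M$ supported on $M^{\gamma,\vee}_\Gamma\times_B M$ is a universal polynomial in the $\lambda^i(E)$. Each $\lambda^i(E)$ is supported on $\Supp E^{\otimes i}$. Since $j_{u*}(u\times 1)^*\rho_\gamma(\mc P_\gamma)$ has the same support as its line-bundle-generic part, the support of $j_{u*}\psi^n(\cdots)$ lies in the union of the supports of the tensor powers $(j_{u*}(u\times 1)^*\rho_\gamma(\mc P_\gamma))^{\otimes m}$, $1\le m\le n$. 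I would rather use the sharper statement, following \cite[\S 3.5]{MSY}: $\psi^n$ of a class is a $\BZ$-combination of tensor powers up to lower terms, each supported inside the support of some $j_*(\mc P)^{\otimes m}$. So it suffices to bound the support of
\[(j_{u*}^T(1\times u)^*\rho_\gamma(\mc P^{-1}_\gamma))\circ (j_{u*}(u\times 1)^*\rho_\gamma(\mc P_\gamma))^{\otimes m}\]
for every $m\ge 1$.

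\textbf{Step 2: compare with $\mc F^m$.} The functor $\rho_\gamma$ only rescales eigensheaves, so it does not change supports. The pullbacks $(u\times1)^*$ and $(1\times u)^*$ are flat pullbacks along the affine-bundle-type approximation, so they commute with composition and tensor products, and they multiply support dimension by adding $\dim u$ for each factor. It therefore remains to see that the localized components $\mc P_\gamma$ and $\mc P^{-1}_\gamma$ behave under composition and tensor powers like the restrictions of $\mc P$ and $\mc P^{-1}$ to $M^{\gamma,\vee}$. Using Lemma \ref{lem: localization inverse} with the smooth ambient space $M^\vee\times M$, I would write $\mc P_\gamma$ as $\lambda_{-1}(\mc N^\vee)^{-1}\cdot \iota^!\mc P$, where $\iota^!$ is the Gysin pullback to the fixed locus, and similarly for $\mc P^{-1}$. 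The factor $\lambda_{-1}(\mc N^\vee)^{-1}$ is a class pulled back from the smooth stack $M^{\gamma,\vee}$, or from the approximation scheme after multiplying by a suitable element outside $\mf m_\gamma$. Multiplying by such a class does not enlarge supports. Consequently the composite class is supported inside the restriction of $\Supp(\mc F^m)$ to $M^{\gamma,\vee}\times M^{\gamma,\vee}$, pulled back to $M^{\gamma,\vee}_\Gamma\times M^{\gamma,\vee}_\Gamma$.

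\textbf{Step 3: conclude.} Proposition \ref{prop: Fn supp ineq} with $\beta=\gamma$ bounds the dimension of $\Supp(\mc F^m)\cap M^{\gamma,\vee}\times M^{\gamma,\vee}$ by $b-h+2h_\gamma$. Adding $2\dim u$ for the two approximation factors gives the claimed bound $2\dim u+b-h+2h_\gamma$.

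The main obstacle is Step 2: making precise that localizing at $\mf m_\gamma$ and applying $\iota_{\gamma*}^{-1}$ commutes with the composition $\circ$ and the tensor powers up to support. The delicate points are the inverse $\lambda_{-1}^{-1}$, which exists only in the localization, and the fact that composition involves pushforward along $p_{13}$ from $M$, which is not $\gamma$-fixed. My first attempt would be to keep $M$ unlocalized in the middle factor, use Lemma \ref{lem: localization inverse} only on the $M^\vee$ factors, and clear the denominator by multiplying through by an element $r\notin\mf m_\gamma$. This multiplication leaves supports unchanged and is invertible after localization.
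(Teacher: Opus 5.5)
Your outline is the paper's: remove $\psi^n$, move the flat pullbacks along $u$ outside, drop $\rho_\gamma$, compare with $\mc F^n$ through Lemma~\ref{lem: localization inverse}, and apply Proposition~\ref{prop: Fn supp ineq} with $\beta=\gamma$. However, the two steps with real content rest on a principle that fails. A support bound on the \emph{inputs} of $\circ$ does not bound the support of the output, because $\circ$ involves the pushforward $p_{13*}$. The whole content of Arinkin-type bounds is that $\Supp(\mc F^n)$ is much smaller than the image of the support of the input, by cancellation in that pushforward.

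In Step 1, the fact you need is neither that each $\lambda^iE$ lies in $\Supp E^{\otimes i}$, nor that $\psi^n$ is a $\BZ$-combination of tensor powers (it is not: $\psi^2E=E^{\otimes 2}-2\lambda^2E$). What you need is this: the Newton recursion writes $\psi^n(E)$ as a $\BZ$-combination of products $\lambda^{i_1}E\otimes\cdots\otimes\lambda^{i_r}E$ with $i_1+\cdots+i_r=n$. Each such product is a \emph{direct summand} of $E^{\otimes n}$, cut out by a symmetrizer (we are in characteristic $0$). Since $\circ$ is additive, each term composes to a direct summand of the composite with $E^{\otimes n}$. So only $m=n$ is needed, and the justification is the summand structure, not input supports.

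The genuine gap is Step 2. Your ``consequently'' is exactly the statement you then set aside as the main obstacle. Writing $\mc P_\gamma$ as a unit times the restriction of $j_*\mc P$ to the fixed locus controls the support of $j_{\gamma*}\mc P_\gamma$. It does not control the support of $(j^T_{\gamma*}\mc P^{-1}_\gamma)\circ(j_{\gamma*}\mc P_\gamma)^{\otimes n}$. What is missing is that localization commutes with composition, which is the content of Lemma~\ref{lem: gamma loc vs intersect}:
\begin{enumerate}
\item From $j_{\gamma*}\mc P_\gamma=(j_*\mc P)_\gamma$ and Lemma~\ref{lem: localization inverse} on the smooth $M^\vee\times M$, one gets $(j_{\gamma*}\mc P_\gamma)^{\otimes n}=\lambda_{-1}(\mc N^\vee_{\iota_\gamma})^{-n+1}\cdot((j_*\mc P)^{\otimes n})_\gamma$. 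The unit then leaves the composite by the projection formula.
\item Next, $(j^T_*\mc P^{-1})_\gamma\circ((j_*\mc P)^{\otimes n})_\gamma=(\mc P^{-1}\circ(j_*\mc P)^{\otimes n})_{(\gamma,\gamma)}$. This holds because $\iota_*^{-1}\circ\pi$ commutes with proper pushforward by (\ref{eq: pushforward loc}) (applied both to the closed inclusion and to $p_{13}$, which is proper on the support). It also commutes with $\delta_M^!$ by the first equality of Lemma~\ref{lem: pullback first step}, and with $\boxtimes$ by (\ref{eq: boxtimes loc}).
\item Finally, by (\ref{eq: pushforward loc}) again, the $(\gamma,\gamma)$-part of a class is supported on the fixed part of its support.
\end{enumerate}

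Your concern about the middle factor is unfounded. $\Gamma\times\Gamma$ acts only on the two $M^\vee$ factors, so $M^{\gamma,\vee}\times_B M\times_B M^{\gamma,\vee}$ is already the whole $(\gamma,\gamma)$-fixed locus, and nothing in $M$ needs localizing or denominator-clearing.

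Alternatively, you can complete your own restriction picture directly. The Gysin map for $M^{\gamma,\vee}\times M^{\gamma,\vee}\hookrightarrow M^\vee\times M^\vee$ commutes with $\boxtimes$, with $\delta_M^!$, and with the proper $p_{13*}$. Hence the composite of the restricted classes is the restriction of $\mc F^n$, and so is supported in $\Supp(\mc F^n)\cap(M^{\gamma,\vee}\times M^{\gamma,\vee})$. Removing $\rho_\gamma$ likewise needs $\rho_\gamma(A)\circ\rho_\gamma(B)=\rho_{(\gamma,\gamma)}(A\circ B)$, not just that $\rho_\gamma$ preserves supports of inputs, though this is routine since every operation in $\circ$ is equivariant.
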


Note that by pushing forward along $j_u, j_u^T$ we eliminate the problem that the diagonal $\delta_M: M^{\gamma,\vee}_\Gamma \times_B M \times_B M^{\gamma,\vee}_\Gamma \to M^{\gamma,\vee}_\Gamma \times_B M \times M \times_B M^{\gamma,\vee}_\Gamma$ may not be an l.c.i. morphism, as $M \times_B B_\gamma$ may not be smooth. In general, the (not necessarily proper) pushforward $p_{13*}$ may not be defined on $K_0$, but since we work with a class on $M^{\gamma,\vee}_\Gamma \times M \times M^{\gamma,\vee}_\Gamma$ supported on $M^{\gamma,\vee}_\Gamma \times_B M \times_B M^{\gamma,\vee}_\Gamma$, which is proper over $M^{\gamma,\vee}_\Gamma \times M^{\gamma,\vee}_\Gamma$, the pushforward is well-defined in this case.

\begin{proof}
    We have
    \begin{multline*}
        \Supp (j_{u*}^T(1 \times u)^*(\rho_\gamma(\mc P^{-1}_\gamma)) \circ (j_{u*}\psi^n(u \times 1)^*(\rho_\gamma(\mc P_\gamma)))) \\ \subset \Supp (j_{u*}^T(1 \times u)^*(\rho_\gamma(\mc P^{-1}_\gamma)) \circ (j_{u*}(u \times 1)^*(\rho_\gamma(\mc P_\gamma)))^{\otimes n})
    \end{multline*}
    since $j_{u*} \psi^n(\mc F)$ was defined inductively as a sum of products of exterior powers of $j_{u*} \mc F$ of degrees adding up to $n$, each of which is an $S_n$-isotypic component (and thus direct summand) of $(j_{u*} \mc F)^{\otimes n}$
    $$= \Supp ((1 \times u)^*(j_*^T(\rho_\gamma(\mc P^{-1}_\gamma))) \circ (u \times 1)^*(j_*(\rho_\gamma(\mc P_\gamma)))^{\otimes n})$$
    since $u$ is flat
    $$= (u \times u)^{-1} (\Supp (j^T_{\gamma *}(\rho_\gamma(\mc P^{-1}_\gamma)) \circ (j_{\gamma *}(\rho_\gamma(\mc P_\gamma)))^{\otimes n}))$$
    using the usual compatibility of flat pullback with proper pushforward and Gysin morphisms \cite[Lemmas 3.1, 3.2]{Anderson-Payne}, and using $j_\gamma: M^{\gamma,\vee} \times_B M \hookrightarrow M^{\gamma,\vee} \times M$ for the inclusion 
    $$ = (u \times u)^{-1} (\Supp (\rho_\gamma(j_{\gamma *}^T\mc P^{-1}_\gamma) \circ \rho_\gamma((j_{\gamma *}\mc P_\gamma)^{\otimes n})))$$
    since $j_\gamma$ is $\Gamma$-equivariant
    $$= (u \times u)^{-1} (\Supp ((j_{\gamma *}^T\mc P^{-1}_\gamma) \circ (j_{\gamma *}\mc P_\gamma)^{\otimes n}))$$
    since $\rho_\gamma(\mc F) \circ \rho_\gamma(\mc G) = \rho_{(\gamma, \gamma)}(\mc F \circ \mc G)$ and $\rho_{(\gamma, \gamma)}$ is an isomorphism of $K_0(M^{\gamma, \vee} \times M^{\gamma, \vee})$ compatible with $(\Gamma \times \Gamma)$-equivariant pushforward, thus respecting supports
    $$\subset (u \times u)^{-1}(\Supp(\mc P^{-1} \circ (j_* \mc P)^{\otimes n}) \cap (M^{\gamma, \vee} \times M^{\gamma, \vee}))$$
    by Lemma \ref{lem: gamma loc vs intersect}. Thus it follows from Proposition \ref{prop: Fn supp ineq} that
    $$\dim \Supp (j_{u*}^T(1 \times u)^*(\rho_\gamma(\mc P^{-1}_\gamma)) \circ (j_{u*}\psi^n(u \times 1)^*(\rho_\gamma(\mc P_\gamma)))) \le 2\dim u + b-h+2h_\gamma.$$
\end{proof}

\begin{lem} \label{lem: gamma loc vs intersect}
    We have
    $$\Supp((j_{\gamma *}^T\mc P^{-1}_\gamma) \circ (j_{\gamma *} \mc P_\gamma)^{\otimes n}) \subset \Supp(\mc P^{-1} \circ (j_*\mc P)^{\otimes n}) \cap (M^{\gamma, \vee} \times M^{\gamma, \vee}) \subset M^\vee \times M^\vee.$$
\end{lem}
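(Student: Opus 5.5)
The plan is to realize the left-hand side as a localization of the right-hand side, and then track supports through each operation. Recall that $\mc F_\gamma := \iota_{\gamma*}^{-1}\circ\pi_\gamma(\mc F)$, where $\iota_\gamma: M^{\gamma,\vee}\hookrightarrow M^\vee$ is the fixed locus inclusion. The basic principle is that localization commutes with the relevant operations up to classes whose support only shrinks.

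\textbf{Step 1: localize the kernels.} By the explicit inverse of Lemma \ref{lem: localization inverse}, $\mc F_\gamma = \lambda_{-1}(\mc N^\vee)^{-1}\cdot \iota^!\pi_\gamma(\mc F)$ for a smooth ambient stack. Here one may use $M^\vee\times M$, which is smooth, with $j_*\mc P$ regarded on it. Moreover $\pi_\gamma$ is multiplication by an idempotent of $K^0(B\Gamma)_\BC$, hence by a class of a virtual combination of $\Gamma$-twists of $\ms O$. It follows that
\[
\Supp(j_{\gamma*}\mc P_\gamma)\subset \Supp(j_*\mc P)\cap (M^{\gamma,\vee}\times M),
\]
and similarly for $\mc P^{-1}$. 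Since Gysin pullback and tensoring with $K^0$-classes do not enlarge supports, these containments hold at the level of $K_0$ classes. The same argument, applied on $M^\vee\times M$ with the diagonal $(\gamma)$ action, handles $(j_{\gamma*}\mc P_\gamma)^{\otimes n}$. For this I first check that $\pi_\gamma$ and $\iota^{-1}_*$ are compatible with tensor products, using the projection formula as in the proof of Theorem \ref{thm: tau properties}(a),(b). This gives $(j_{\gamma*}\mc P_\gamma)^{\otimes n}$ as a localization $\big((j_*\mc P)^{\otimes n}\big)_\gamma$ up to multiplication by $K^0$ classes on the fixed locus.

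\textbf{Step 2: commute localization with composition.} The composition is $p_{13*}\delta_M^*(p_{12}^*(-)\otimes p_{23}^*(-))$, where the middle factor $M$ is not localized. I would show
\[
(j^T_{\gamma*}\mc P^{-1}_\gamma)\circ(j_{\gamma*}\mc P_\gamma)^{\otimes n}
= \big(\mc P^{-1}\circ (j_*\mc P)^{\otimes n}\big)_{(\gamma,\gamma)}
\]
up to a unit in $K^0(M^{\gamma,\vee}\times M^{\gamma,\vee})_{\mf m_{(\gamma,\gamma)}}$. For pushforward this follows from (\ref{eq: pushforward loc}) in the proof of Theorem \ref{thm: tau properties}(c), applied to $p_{13}$ with the trivial group on the middle factor. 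For the exterior product it follows from (\ref{eq: boxtimes loc}). For the diagonal pullback, the middle factor carries the full $\Gamma$-action through $(\gamma,\gamma)$ acting on $M$; but $\Gamma$ acts on $M$ freely away from the fixed loci, and we only need the component over $M^{\gamma,\vee}\times M^{\gamma,\vee}$. The restriction of the outer factors to their $\gamma$-fixed loci is then a Gysin pullback along a closed embedding, which commutes with $\delta_M^*$ and $p_{13*}$ by base change (Lemma \ref{lem: Tor indep}(b) and \cite[Lemma 3.2]{Anderson-Payne}).

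\textbf{Step 3: conclude.} Any localization of $\mc G$ at $(\gamma,\gamma)$, written via Lemma \ref{lem: localization inverse} as a $K^0$-multiple of $\iota^!\mc G$, is supported in $\Supp\mc G\cap (M^{\gamma,\vee}\times M^{\gamma,\vee})$. Combining this with Step 2 gives the stated containment.

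The main obstacle is Step 2. Localization and composition need not strictly commute, because the inverse $\lambda_{-1}(\mc N^\vee)^{-1}$ exists only after localizing, so it is not an actual sheaf. To avoid this, I would rather argue purely with supports. Every operation involved — $\pi_\gamma$, the Gysin map $\iota^!$, multiplication by localized $K^0$ classes, $\delta^*$, $\boxtimes$, $\otimes$, $p_{13*}$ — sends a class supported on $Z$ to a class supported on the appropriate image or preimage of $Z$. Localized classes can be represented by genuine classes after multiplying by an element of $K^0(B\Gamma)\setminus\mf m_\gamma$, and this multiplication does not change supports. Tracing supports through each operation then suffices.
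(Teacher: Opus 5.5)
Read as identities of classes in the localized groups, your Steps 1--3 are the paper's proof. There, $(j_{\gamma*}\mc P_\gamma)^{\otimes n}=\lambda_{-1}(\mc N^\vee_{\iota_\gamma})^{1-n}\cdot((j_*\mc P)^{\otimes n})_\gamma$, and the unit is pulled out of the composition by the projection formula. Localization is then moved past $\boxtimes$, $\delta_M^!$ and $p_{13*}$ using (\ref{eq: boxtimes loc}), the first equality of Lemma \ref{lem: pullback first step}, and (\ref{eq: pushforward loc}). The last is applied to the inclusion of $M^{\gamma,\vee}\times_B M\times_B M^{\gamma,\vee}$ and then to the proper projection, since $p_{13}$ is only proper on the support. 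Finally, a $(\gamma,\gamma)$-localization of $\mc G$ is supported in $\Supp\mc G\cap(M^{\gamma,\vee}\times M^{\gamma,\vee})$.

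One small correction to Step 2: the middle factor $M$ carries no group action. $\Gamma\times\Gamma$ acts only on the outer factors of $M^\vee\times M\times M\times M^\vee$, so the $(\gamma,\gamma)$-fixed locus is $M^{\gamma,\vee}\times M\times M\times M^{\gamma,\vee}$. That, not any freeness of $\Gamma$ on $M$, is why Gysin pullback to it commutes with $\delta_M^!$.

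The genuine problem is your final paragraph, for two reasons.

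\textbf{The obstacle does not exist.} Composition is $K^0(B\Gamma)$-bilinear, so everything passes to $K_0(-)_{\mf m,\BC}$. There $\lambda_{-1}(\mc N^\vee)$ is a unit, and one has on the nose $(j^T_*\mc P^{-1})_\gamma\circ((j_*\mc P)^{\otimes n})_\gamma=\bigl(\mc P^{-1}\circ(j_*\mc P)^{\otimes n}\bigr)_{(\gamma,\gamma)}$. The only discrepancy is the unit $p_1^*\lambda_{-1}(\mc N^\vee)^{1-n}$, which cannot enlarge supports.

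\textbf{The substitute would not prove the lemma.} Tracking supports through $\otimes$, $\delta_M^*$ and $p_{13*}$, starting from the supports of the individual kernels, only places the left side inside the set-theoretic convolution $p_{13}\bigl(p_{12}^{-1}(Z_1)\cap p_{23}^{-1}(Z_2)\bigr)$. Here $Z_1=\Supp(j_*\mc P)\cap(M^{\gamma,\vee}\times M)$ and $Z_2=\Supp(j^T_*\mc P^{-1})\cap(M\times M^{\gamma,\vee})$. That set never sees $\Supp(\mc P^{-1}\circ(j_*\mc P)^{\otimes n})$ and is typically far larger. For comparison, $\mc P^{-1}\circ\mc P\cong\ms O_{\Delta_{M^\vee}}$, while the convolution of $\Supp\mc P$ and $\Supp\mc P^{-1}$ is all of $M^\vee\times_B M^\vee$.

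The right-hand side of the lemma is the support of the actual composite class, which is where the cancellation exploited in Proposition \ref{prop: Fn supp ineq} takes place. So you must keep Step 2 as an identity of classes. Support-tracking is only valid at the very end, for $\pi_{(\gamma,\gamma)}$, $\iota^!$ and multiplication by localized $K^0$-classes applied to that composite.
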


\begin{proof}
    First, by (\ref{eq: pushforward loc}), we have $j_{\gamma *} \mc P_\gamma = (j_*\mc P)_\gamma$. Since $M^\vee \times M$ is smooth, recall from Lemma \ref{lem: localization inverse} that
    $$(j_* \mc P)_\gamma = \lambda_{-1}(\mc N_{\iota_\gamma}^\vee)^{-1} \cdot \iota_\gamma^!(j_*\mc P) \in K_0(M^{\gamma, \vee} \times M),$$
    where $\iota_\gamma: M^{\gamma, \vee} \times M \hookrightarrow M^\vee \times M$ is the inclusion of the fixed locus. In particular, we conclude that
    $$(j_{\gamma *} \mc P_\gamma)^{\otimes n} = (\lambda_{-1}(\mc N_{\iota_\gamma}^\vee)^{-1} \cdot \iota_\gamma^!(j_*\mc P))^{\otimes n}$$
    $$= \lambda_{-1}(\mc N_{\iota_\gamma}^\vee)^{-n} \cdot \iota_\gamma^!((j_*\mc P)^{\otimes n}) = \lambda_{-1}(\mc N_{\iota_\gamma}^\vee)^{-n+1} \cdot ((j_*\mc P)^{\otimes n})_\gamma.$$
    Thus, with another application of (\ref{eq: pushforward loc}) we have
    $$(j_{\gamma *}^T\mc P^{-1}_\gamma) \circ (j_{\gamma *} \mc P_\gamma)^{\otimes n} = (j_{*}^T\mc P^{-1})_\gamma \circ (\lambda_{-1}(\mc N_{\iota_\gamma}^\vee)^{-n+1} \cdot ((j_*\mc P)^{\otimes n})_\gamma)$$
    $$= p_1^*\lambda_{-1}(\mc N_{M^{\gamma,\vee}/M^\vee}^\vee)^{-n+1} \cdot ((j_*^T\mc P^{-1})_\gamma \circ ((j_*\mc P)^{\otimes n})_\gamma)$$
    by the projection formula, and so
    \begin{equation} \label{eq: move loc}
        \Supp ((j_{\gamma *}^T\mc P^{-1}_\gamma) \circ (j_{\gamma *} \mc P_\gamma)^{\otimes n}) \subset \Supp((j^T_*\mc P^{-1})_\gamma \circ ((j_*\mc P)^{\otimes n})_\gamma).
    \end{equation}

    Next, we claim that
    \begin{equation} \label{eq: composition loc}
        (j_*^T\mc P^{-1})_\gamma \circ ((j_*\mc P)^{\otimes n})_\gamma = ((j_*^T\mc P^{-1}) \circ (j_*\mc P)^{\otimes n})_{(\gamma,\gamma)} \in K_0(M^{\gamma, \vee} \times M^{\gamma, \vee}).
    \end{equation}
    Indeed, we have
    $$((j_*^T\mc P^{-1}) \circ (j_*\mc P)^{\otimes n})_{(\gamma,\gamma)} := (p_{13*}\delta_M^! ((j_*\mc P)^{\otimes n} \boxtimes (j_*^T\mc P^{-1})))_{(\gamma, \gamma)}$$
    $$ = p_{13*}((\delta_M^! ((j_*\mc P)^{\otimes n} \boxtimes (j_*^T\mc P^{-1})))_{(\gamma, \gamma)})$$
    by (\ref{eq: pushforward loc}), applied to both the (proper) inclusion $M^{\gamma,\vee} \times_B M \times_B M^{\gamma,\vee} \hookrightarrow M^{\gamma,\vee} \times M \times M^{\gamma,\vee}$ and the (proper) projection map $M^{\gamma,\vee} \times_B M \times_B M^{\gamma,\vee} \to M^{\gamma,\vee} \times M^{\gamma,\vee}$
    $$ = p_{13*}\delta_M^! (((j_*\mc P)^{\otimes n} \boxtimes (j_*^T\mc P^{-1}))_{(\gamma, \gamma)})$$
    by the first equality in Lemma \ref{lem: pullback first step}
    $$ = p_{13*}\delta_M^! (((j_*\mc P)^{\otimes n})_\gamma \boxtimes (j_*^T\mc P^{-1})_\gamma) \,\,\,\,\, \mathrm{by}\,\, (\ref{eq: boxtimes loc})$$
    $$ =: (j_*^T\mc P^{-1})_\gamma \circ ((j_*\mc P)^{\otimes n})_\gamma.$$

    Finally, we note that
    $$(j_*^T\mc P^{-1}) \circ (j_* \mc P)^{\otimes n} = \mc P^{-1} \circ (j_* \mc P)^{\otimes n}$$
    by compatibility of proper pushforward with Gysin morphisms \cite[Lemma 3.1]{Anderson-Payne}, and so, combining with (\ref{eq: move loc}) and (\ref{eq: composition loc}), we obtain
    $$\Supp ((j_{\gamma *}^T\mc P^{-1}_\gamma) \circ (j_{\gamma *} \mc P_\gamma)^{\otimes n}) \subset \Supp(\mc P^{-1} \circ (j_* \mc P)^{\otimes n})_{(\gamma,\gamma)}$$
    $$ \subset \Supp(\mc P^{-1} \circ (j_*\mc P)^{\otimes n}) \cap (M^{\gamma, \vee} \times M^{\gamma,\vee})$$
    because in general, given $\mc X = [X/G]$, since $\iota_{g *}^{-1} \circ \pi_g$ commutes with $G$-equivariant proper pushforward by (\ref{eq: pushforward loc}), in particular $\iota_{g*}^{-1} \circ \pi_g(\mc F)$ is supported on $(\Supp \mc F)^g$.
\end{proof}

\begin{prop} \label{prop: sheaf to tau supp}
    We have
    \begin{multline*}
        \dim ((1 \times u)^* \mf G_\gamma^{-1} \circ (\ch^{loc}(\psi^n((u \times 1)^*(\rho_\gamma(\mc P_\gamma)))) \cap [M^{\gamma, \vee}_\Gamma \times M])) \\ \le \dim \Supp (j_{u*}^T(1 \times u)^*(\rho_\gamma(\mc P^{-1}_\gamma)) \circ j_{u*}\psi^n((u \times 1)^*(\rho_\gamma(\mc P_\gamma))))
    \end{multline*}
    where by dimension of a class of $\CH_*(M^{\gamma,\vee} \times_B M^{\gamma,\vee})$ we mean the highest $k$ such that the $\CH_k$ component is nonzero.
\end{prop}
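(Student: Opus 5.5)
The plan is to realize the Chow-level composition on the left as the image under a $\tau$-type functor of the $K$-theoretic composition on the right; since $\tau$ does not increase the dimension of support, the inequality follows. Everything is carried out on the approximation schemes $M^{\gamma,\vee}_\Gamma$, so that Fulton's calculus applies: the localized Chern character $\ch^{loc}$, the Gysin map for the regular embedding given by the diagonal of the smooth variety $M$, and the Riemann--Roch compatibilities of $\tau^F$ \cite[Theorem 18.2, 18.3]{Fulton}.

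\textbf{Step 1: the factor $\mf G^{-1}_\gamma$.} First I rewrite $(1 \times u)^*\mf G_\gamma^{-1}$. Unwinding Definition \ref{def: G and p} and Definition \ref{defn: stacky tau}, $\mf G^{-1}_\gamma$ is a Todd class capped with $\tau^{EG}(\rho_\gamma(\mc P^{-1}_\gamma))$. Flat pullback along the approximation $u$ commutes with $\tau^{EG}$, so
$$(1\times u)^*\mf G^{-1}_\gamma = \td(\cdot) \cap \tau^F\bigl((1\times u)^*\rho_\gamma(\mc P^{-1}_\gamma)\bigr) = \td(\cdot)\cap \ch^{loc}\bigl(j^T_{u*}(1\times u)^*\rho_\gamma(\mc P^{-1}_\gamma)\bigr)\cap [M \times M^{\gamma,\vee}_\Gamma].$$
Here the second equality is Fulton's definition of $\tau^F$ via the smooth embedding $j^T_u$. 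The Todd factor is invertible and pulled back from the ambient smooth product, so by the projection formula it cannot enlarge supports. Hence it suffices to bound the composition with this factor removed.

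\textbf{Step 2: the composition as a localized class.} Next I show that the composition
$$\bigl(\ch^{loc}(A)\cap[M\times M^{\gamma,\vee}_\Gamma]\bigr)\circ\bigl(\ch^{loc}(B)\cap[M^{\gamma,\vee}_\Gamma\times M]\bigr),$$
with $A=j^T_{u*}(\cdots)$ and $B=\psi^n(\cdots)$, equals $p_{13*}$ of $\ch^{loc}$ of the pullback of $A\boxtimes B$ under the diagonal $\Delta_M$, capped with the fundamental class. Three facts give this:
\begin{enumerate}
\item $\ch^{loc}$ is multiplicative under exterior products \cite[Example 18.3.1]{Fulton} (or \cite[\S 18.1]{Fulton});
\item it commutes with refined Gysin pullback along the regular embedding $\Delta_M$, since $M$ is smooth \cite[p.~343, (19)]{Fulton};
\item it is compatible with proper pushforward via Riemann--Roch, after inserting Todd classes of the smooth ambient spaces $M^{\gamma,\vee}_\Gamma \times M^{\gamma,\vee}_\Gamma$.
\end{enumerate}
Combining these, the whole left side equals $\td(\cdot)\cap\tau^F$ applied to the $K$-class $\mc C := A\circ j_{u*}B$ on $M^{\gamma,\vee}_\Gamma\times M^{\gamma,\vee}_\Gamma$, up to invertible Todd factors from smooth spaces.

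One point needs care. $B$ lives in the relative group $K_0^{M^{\gamma,\vee}_\Gamma\times_B M}(M^{\gamma,\vee}_\Gamma\times M)$, and its image under $j_{u*}$ is exactly the class appearing in Proposition \ref{prop: psi support}. The identification \cite[Lemma 1.9]{Gillet-Soule} ensures that $\ch^{loc}(B)$ is computed from this class.

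\textbf{Step 3: conclusion.} Finally, $\tau^F(\mc C)$ is the image of a class in $K_0$ of the support $Z:=\Supp \mc C$, pushed forward along $Z\hookrightarrow$ the ambient space. By covariance of $\tau^F$, it lies in the image of $\CH_*(Z)_\BQ$, which vanishes in degrees above $\dim Z$. Capping with Todd classes only lowers degrees, so the top nonzero dimension of the left side is at most $\dim Z$, as claimed.

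The main obstacle is Step 2. The fiber product $M^{\gamma,\vee}_\Gamma\times_B M\times_B M^{\gamma,\vee}_\Gamma$ is singular, so the composition must be computed in the smooth ambient products, using the pushforwards $j_u, j_u^T$ as in the remark preceding Proposition \ref{prop: psi support}. I would first verify that refined $\Delta_M^!$ on $\ch^{loc}$ of supported complexes matches $\Delta_M^!$ of the corresponding Chow classes, by repeating the argument of Lemma \ref{lem: tauEG and Gysin} with all spaces replaced by smooth ambient ones.
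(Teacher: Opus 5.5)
Your overall route is the paper's. You apply Riemann--Roch for $\tau^F$ on the approximation scheme (exterior products, pullback along $\delta_M$, pushforward along $p_{13}$) to identify $\tau$ of the $K$-theoretic composition with the Chow composition on the left, capped by invertible Todd classes. You then conclude with $\dim \tau(\mc F) \le \dim \Supp \mc F$ from covariance of $\tau$, which is exactly the paper's final sentence.

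However, the reduction in your Step 1 fails as stated, and the Todd bookkeeping it hides is the actual content of Step 2. Writing $(1\times u)^*\mf G^{-1}_\gamma$ through the ambient $M \times M^{\gamma,\vee}_\Gamma$ produces two kinds of Todd classes.

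\begin{itemize}
\item The first is the one from Definition \ref{def: G and p}, together with the $T_{M^{\gamma,\vee}_\Gamma}$ part of the ambient class. These are pulled back from the outer factor $M^{\gamma,\vee}_\Gamma$ (and $B_\gamma$), so they do pass through $p_{13*}$ by the projection formula and can be stripped.
\item The second is the factor $\td(p_1^* T_M)$ of Fulton's ambient class $\td(T_{M\times M^{\gamma,\vee}_\Gamma})$. It is pulled back from the factor that becomes the \emph{middle} one in the composition. The projection formula does not apply to it, and discarding it can change the top nonzero degree after $p_{13*}$.
\end{itemize}

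In fact this second factor must be kept. With bare $\ch^{loc}$ classes on both sides, as in your Step 2, you get $p_{13*}(\ch^{loc}(\delta_M^*(\cdots)) \cap [M^{\gamma,\vee}_\Gamma \times M \times M^{\gamma,\vee}_\Gamma])$. Converting this into $\tau$ on the triple product leaves a $\td(-p_2^*T_M)$ inside $p_{13*}$, so the result is not a Todd twist of $\tau(\mc C)$ by classes on $M^{\gamma,\vee}_\Gamma\times M^{\gamma,\vee}_\Gamma$.

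What rescues the argument is that the $\td(T_M)$ carried by the $\tau$-class $\mf G^{-1}_\gamma$ cancels this middle-factor correction. In the paper's chain of identities, the diagonal correction $\td(-p_2^*T_M)$ is absorbed into $\tau$ of the $\psi^n$ factor, turning it into $\td(p_1^*T_{M^{\gamma,\vee}_\Gamma})\cap \ch^{loc}(\cdots)\cap[M^{\gamma,\vee}_\Gamma\times M]$. This asymmetry, a $\tau$-class on one side and a bare $\ch^{loc}$-class on the other, is exactly why the statement is formulated as it is.

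So strip only the outer-factor Todd classes and keep $\tau^F$ intact on the $\mf G^{-1}_\gamma$ side. Then your Step 2 yields the paper's identity, with every remaining Todd class pulled back from the outer factors, and Step 3 finishes the proof.
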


\begin{proof}
    By the standard properties of the (schematic) $\tau$ functor \cite[Theorem 18.2, Example 18.3.1]{Fulton}, and again using that $p_{13}$ is proper on the support of the element of $K_0(M^{\gamma,\vee}_\Gamma \times M \times M^{\gamma,\vee}_\Gamma)$ under consideration, we have
    $$\tau(j_{u*}^T(1 \times u)^*(\rho_\gamma(\mc P^{-1}_\gamma)) \circ j_{u*}\psi^n((u \times 1)^*(\rho_\gamma(\mc P_\gamma))))$$
    $$= p_{13*}(\td(-p_2^* T_M) \cap \delta_M^!(j_{u*}\tau(\psi^n((u \times 1)^*(\rho_\gamma(\mc P_\gamma)))) \times j_{u*}^T\tau((1 \times u)^*\rho_\gamma(\mc P^{-1}_\gamma))))$$
    $$= p_{13*}(\td(-p_2^* T_M) \cap \Delta_M^!(\tau(\psi^n((u \times 1)^*(\rho_\gamma(\mc P_\gamma)))) \times \tau((1 \times u)^*\rho_\gamma(\mc P^{-1}_\gamma))))$$
    since $\delta_M$ is a flat base change of $\Delta_M$, and by compatibility of proper pushforward with Gysin morphisms and pushforward
    $$ = p_{13*}\Delta_M^!((\td(-p_2^* T_M) \cap \tau(\psi^n((u \times 1)^*(\rho_\gamma(\mc P_\gamma))))) \times \tau((1 \times u)^*\rho_\gamma(\mc P^{-1}_\gamma)))$$
    $$ = \tau((1 \times u)^* \rho_\gamma(\mc P^{-1})) \circ (\td(p_1^*T_{M^{\gamma,\vee}_\Gamma}) \cap \ch^{loc}(\psi^n((u \times 1)^* \rho_\gamma(\mc P_\gamma))) \cap [M^{\gamma,\vee}_\Gamma \times M])$$
    $$ = (1 \times u)^*\tau^{EG}( \rho_\gamma(\mc P^{-1})) \circ (\td(p_1^*T_{M^{\gamma,\vee}_\Gamma}) \cap \ch^{loc}(\psi^n((u \times 1)^* \rho_\gamma(\mc P_\gamma)))\cap [M^{\gamma,\vee}_\Gamma \times M])$$
    by definition of $\tau^{EG}$
    \begin{multline*}
        = \td(p_1^* T_{M^{\gamma,\vee}_\Gamma} + p_2^*\pi_{B_\gamma}^* T_{B_\gamma}) \cap p_2^* u^* \td^I(-T_{\iota_{M^\vee}})_\gamma \\ \cap  ((1 \times u)^*\mf G^{-1}_\gamma \circ (\ch^{loc}(\psi^n((u \times 1)^* \rho_\gamma(\mc P_\gamma)))\cap [M^{\gamma,\vee}_\Gamma \times M])).
    \end{multline*}
    In particular, we conclude that
    \begin{multline*}
        \dim \tau(j_{u*}^T(1 \times u)^*(\rho_\gamma(\mc P^{-1}_\gamma)) \circ j_{u*}\psi^n((u \times 1)^*(\rho_\gamma(\mc P_\gamma)))) \\ = \dim ((1 \times u)^*\mf G^{-1}_\gamma \circ (\ch^{loc}(\psi^n((u \times 1)^* \rho_\gamma(\mc P_\gamma)))\cap [M^{\gamma,\vee}_\Gamma \times M])).
    \end{multline*}
    Thus, it suffices to note that in general
    $$\dim \Supp \mc F \ge \dim \tau(\mc F)$$
    by compatibility of $\tau$ with proper pushforward.
\end{proof}

\begin{prop} \label{prop: FV}
    (Fourier vanishing) We have
    $$\mf G_{\beta,i}^{-1} \circ \mf G_{\gamma,j} = 0 \,\,\, \mathrm{if} \,\, \begin{cases}
        \beta \ne \gamma \\
        \beta = \gamma, i+j<2h.
    \end{cases}$$
\end{prop}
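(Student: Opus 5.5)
The case $\beta \ne \gamma$ is exactly Proposition \ref{prop: FV for different chars}, so I will only treat $\beta = \gamma$ with $i + j < 2h$. The plan is to run the Adams-operations argument of \cite[\S 3.5]{MSY} on the approximation scheme $M^{\gamma,\vee}_\Gamma$. The input is the dimension bound of Propositions \ref{prop: psi support} and \ref{prop: sheaf to tau supp}; the output is the vanishing of the individual pieces $\mf G^{-1}_{\gamma,i}\circ\mf G_{\gamma,j}$.

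First I will express $\mf G_\gamma$ through the localized Chern character. Pulled back along $u \times 1$, and with the paper's Todd twists, the class $\mf G_\gamma$ equals a Todd class of an honest bundle on $M^{\gamma,\vee}_\Gamma \times M$ capped with $\ch^{loc}((u\times 1)^*\rho_\gamma(\mc P_\gamma)) \cap [M^{\gamma,\vee}_\Gamma \times M]$. This uses Lemma \ref{lem: smooth localization}, or directly the definition of $\tau^{EG}$, together with Fulton's description of $\tau$ via a smooth embedding. I will write $c_a := \ch^{loc,a}((u\times 1)^*\rho_\gamma(\mc P_\gamma)) \cap [M^{\gamma,\vee}_\Gamma\times M]$. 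Expanding the Todd factor gives $(u\times1)^*\mf G_{\gamma,j} = \sum_{a \le j} t_{j-a}\cdot c_a$, where the $t$'s are operational classes pulled back from the two factors. By the Kurano–Roberts identity (\ref{eq: Kurano-Roberts}), $\ch^{loc}(\psi^n(\cdot)) \cap [\,]$ equals $\sum_a n^a c_a$.

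Next I combine the two propositions. Together they say that
$$(1\times u)^*\mf G^{-1}_\gamma \circ \sum_a n^a c_a$$
has no components of dimension greater than $2\dim u + b - h + 2h_\gamma$, for every $n \ge 1$. This is a polynomial identity in $n$, so I can take a Vandermonde argument over finitely many $n$. It gives that for each fixed $a$ the class $(1\times u)^*\mf G^{-1}_\gamma \circ c_a$ has no components above that dimension. Decomposing $\mf G^{-1}_\gamma$ by degree and tracking codimension with the conventions of Definition \ref{def: G and p}, this should say that $\mf G^{-1}_{\gamma,i}\circ c_a = 0$ whenever $i + a < 2h$. The shift $d_\gamma$ in the degree convention, together with Corollary \ref{cor: h-hgamma} ($b - b_\gamma = h - h_\gamma$), is what makes the threshold come out as $2h$ rather than $2h_\gamma$. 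Finally I use $\mf G^{-1}_{\gamma,i}\circ(t\cdot c_a) = t\cdot(\mf G^{-1}_{\gamma,i}\circ c_a)$ for $t$ pulled back from the first factor. For classes pulled back from the middle factor $M$ I instead move them onto $\mf G^{-1}$, which only raises its degree. An induction on $i + j$ then gives $(1\times u)^*\mf G^{-1}_{\gamma,i}\circ(u\times 1)^*\mf G_{\gamma,j} = 0$ for $i + j < 2h$. Because $u$ is an approximation isomorphism in the relevant degrees, this descends to $\mf G^{-1}_{\gamma,i}\circ\mf G_{\gamma,j} = 0$.

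The main obstacle is the bookkeeping in this last part. The Todd corrections in $\mf G$ and $\mf G^{-1}$ involve $\td^I$ of moving parts, namely $T_{\iota_{M^\vee}}$ and $T_{\iota_B}$. I need to check that these are genuine invertible operational classes pulled back from single factors, so that moving them across the composition preserves the degree filtration. I also need the dimension count to translate exactly to the threshold $i + j < 2h$ with the fermionic shift $d_\gamma$. I would first check that the correction terms only increase the index, so that an induction on $(i,j)$ in lexicographic order, as in Proposition \ref{prop: FV for different chars}, absorbs them.
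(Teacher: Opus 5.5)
Your route is the paper's. You pass to the approximation $u\colon M^{\gamma,\vee}_\Gamma\to M^{\gamma,\vee}$, rewrite $(u\times 1)^*\mf G_\gamma$ through $\ch^{loc}$, and turn the support bound of Propositions \ref{prop: psi support} and \ref{prop: sheaf to tau supp} into vanishing of individual pieces via (\ref{eq: Kurano-Roberts}) and a Vandermonde argument in $n$.

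The gap is in the step you call bookkeeping. You leave the Todd factor as a product of classes from both factors, and propose to move the classes from the middle factor $M$ onto $\mf G^{-1}$ and absorb them by induction. This does not work:
\begin{itemize}
\item The Adams-operations argument only gives $(1\times u)^*\mf G^{-1}_{\gamma,i}\circ c_a=0$. The class $p_1^*t\cdot\mf G^{-1}_{\gamma,i}$ is not any $\mf G^{-1}_{\gamma,i'}$, so no induction on $(i,j)$ reaches $(p_1^*t\cdot\mf G^{-1}_{\gamma,i})\circ c_a$.
\item You cannot rerun the support argument for such a composite either. The bound of Proposition \ref{prop: psi support} is specific to the kernel $\mc P^{-1}\circ(j_*\mc P)^{\otimes n}$ and does not survive twisting by a bundle on the middle factor.
\item The lexicographic induction of Proposition \ref{prop: FV for different chars} does not transfer. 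It worked there only because the correction came from an automorphism $\sigma_\alpha$ leaving the composite unchanged, with degree-zero part a scalar $\neq 1$.
\end{itemize}

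The missing observation is that no middle-factor classes occur: the twist in Definition \ref{def: G and p} is chosen to cancel them.
\begin{itemize}
\item On the $\gamma$-component, $\td^I(\pi_{IB}^*T_{\iota_B}-T_{IM^\vee\times_B M})$ has no moving part. Computing the virtual tangent bundle with the flatness of $IM^\vee\to IB$, it equals $\td(\pi_{B_\gamma}^*T_{B_\gamma}-T_{M^{\gamma,\vee}}-T_M)$.
\item Fulton's $\tau$ for $M^{\gamma,\vee}_\Gamma\times_B M\subset M^{\gamma,\vee}_\Gamma\times M$ contributes $\td(T_{M^{\gamma,\vee}_\Gamma}+T_M)$.
\item The approximation directions are sums of torsion characters, so they have trivial rational Todd class.
\end{itemize}
Hence
\[
(u\times1)^*\mf G_{\gamma,j}=\sum_{a\ge 0}\td^a(\pi_{B_\gamma}^*T_{B_\gamma})\cap c_{j+b_\gamma-a},
\]
and the base class passes through the composition by the projection formula, with no induction needed.

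Also fix the index shift by $b_\gamma$, which comes from $\dim M^{\gamma,\vee}\times M=2b_\gamma+2h$. With your $c_a=\ch^{loc,a}\cap[\cdots]$, the range of $a$ is $a\le j+b_\gamma$, not $a\le j$. What the dimension bound yields is $\mf G^{-1}_{\gamma,i}\circ c_k=0$ for $i+k<2h+b_\gamma$. This is exactly what is needed when $i+j<2h$.

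Finally, the moving-part Todd class you worry about occurs only in $\mf G^{-1}$. It never has to be moved across the composition: it is already accounted for in Proposition \ref{prop: sheaf to tau supp}, where only its invertibility (nonzero constant term) matters.
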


\begin{proof}
    The first part is Proposition \ref{prop: FV for different chars}. As for the second, we start by noting that by construction of $u$ we have
    $$\mf G_{\gamma,i}^{-1} \circ \mf G_{\gamma,j} = 0 \iff (u \times u)^*(\mf G_{\gamma,i}^{-1} \circ \mf G_{\gamma,j}) = ((1 \times u)^* \mf G_{\gamma,i}^{-1}) \circ ((u \times 1)^* \mf G_{\gamma,j}) = 0 $$
    (where the second equality is because Gysin pullback $\Delta_M^!$ and projective pushforward $p_{13*}$ commute with flat pullback). By definition and the projection formula, we have
    \begin{multline*}
        ((1 \times u)^* \mf G_{\gamma,i}^{-1}) \circ ((u \times 1)^* \mf G_{\gamma,j}) \\ = \sum_{a \ge 0} \td^a(\pi_{B_\gamma}^* T_{B_\gamma}) \cap \left( (1 \times u)^* \mf G_{\gamma, i}^{-1} \circ (\ch^{loc, j-a+b_\gamma}((u \times 1)^*(\rho(\mc P_\gamma))) \cap [M^{\gamma, \vee}_\Gamma \times M]) \right).
    \end{multline*}
    It therefore suffices to show that
    \begin{equation} \label{eq: FV with chloc}
        (1 \times u)^* \mf G_{\gamma,i}^{-1} \circ (\ch^{loc,k}((u \times 1)^*(\rho_\gamma(\mc P_\gamma))) \cap [M^{\gamma,\vee}_\Gamma \times M]) = 0
    \end{equation}
    for $i + k < 2h+b_\gamma$.

    Now, combining Propositions \ref{prop: psi support} and \ref{prop: sheaf to tau supp}, for each $n \ge 1$ and $a < b_\gamma - b + h = h_\gamma$ (Remark \ref{rmk: h-hgamma}) we have
    $$\left[(1 \times u)^* \mf G_\gamma^{-1} \circ (\ch^{loc}(\psi^n((u \times 1)^*(\rho_\gamma(\mc P_\gamma)))) \cap [M^{\gamma, \vee}_\Gamma \times M])\right]_{\codim = a} = 0$$
    $$ = \sum_{j \ge 0} (1 \times u)^* \mf G_{\gamma,a+h+b-j}^{-1} \circ \ch^{loc, j}(\psi^n((u \times 1)^*(\rho_\gamma(\mc P_\gamma)))) \cap [M^{\gamma, \vee}_\Gamma \times M]$$
    $$ = \sum_{j \ge 0} n^j (1 \times u)^* \mf G_{\gamma,a+h+b-j}^{-1} \circ \ch^{loc, j}((u \times 1)^*(\rho_\gamma(\mc P_\gamma))) \cap [M^{\gamma, \vee}_\Gamma \times M]$$
    by (\ref{eq: Kurano-Roberts}). Since this is true for all $n \ge 1$, we conclude that each term
    $$(1 \times u)^* \mf G_{\gamma,a+h+b-j}^{-1} \circ \ch^{loc, j}((u \times 1)^*(\rho_\gamma(\mc P_\gamma))) \cap [M_\Gamma^{\gamma, \vee} \times M] = 0$$
    In other words, this is true for $j \ge 0$ and $a <h_\gamma$ and so for
    $$i+k = a+h+b = a + h + (b_\gamma + h - h_\gamma) < 2h + b_\gamma.$$
\end{proof}

\subsection{Proof of Theorem \ref{thm: main sec 2}(i)} \label{subsec: pf of projectors}
As in \cite[\S 2.5.1]{MSY}, the Fourier vanishing condition (Proposition \ref{prop: FV}) immediately implies that the correspondences $\mf p_k^{\mf G}, \mf q_k^{\mf G}$ are projectors.

\begin{proof}[Proof of Theorem \ref{thm: main sec 2}(i)]
    We have
    $$\mf p_{\gamma,k}^{\mf G} = [\Delta_M] \circ \mf p_{\gamma, k}^{\mf G} = \sum_{i \ge 0} \mf G_i \circ \mf G^{-1}_{2h-i} \circ \mf p_{\gamma,k}^{\mf G}$$
    by Proposition \ref{prop: G and G-1 inverse} and the fact that $[\Delta_M] \in \Corr^0_B(M,M)$
    $$ = \sum_\beta \sum_{i \ge 0} \mf G_{\beta,i} \circ \mf G_{\beta,2h-i}^{-1} \circ \sum_{j \le k} \mf G_{\gamma,j} \circ \mf G_{\gamma, 2h-j}^{-1}$$
    $$= \sum_{i \ge 0} \mf G_{\gamma,i} \circ \mf G_{\gamma,2h-i}^{-1} \circ\sum_{j \le k} \mf G_{\gamma,j} \circ \mf G_{\gamma, 2h-j}^{-1}$$
    by Proposition \ref{prop: FV for different chars}
    $$= \sum_{i \le k} \mf G_{\gamma,i} \circ \mf G_{\gamma,2h-i}^{-1} \circ\sum_{j \le k} \mf G_{\gamma,j} \circ \mf G_{\gamma, 2h-j}^{-1} = \mf p_{\gamma,k}^{\mf G} \circ \mf p_{\gamma,k}^{\mf G}$$
    by Proposition \ref{prop: FV}. Moreover, for any $\beta \ne \gamma$ we note that
    $$\mf p_{\beta,j}^{\mf G} \circ \mf p^{\mf G}_{\gamma,k} = 0 = \mf p^{\mf G}_{\gamma,k} \circ \mf p_{\beta,j}^{\mf G} $$
    by Proposition \ref{prop: FV for different chars}, and so any sum $\sum_\gamma \mf p_{\gamma, k_\gamma}^{\mf G} = \pg_{(k_\gamma)_\gamma}$ is a projector. It follows immediately that the difference
    $$[\Delta_M] - \sum_\gamma \mf p_{\gamma,k_\gamma}^{\mf G} = \sum_\gamma \mf q_{\gamma, k_\gamma+1}^{\mf G} = \qg_{(k_\gamma+1)_\gamma}$$
    is a projector orthogonal to $\pg_{(k_\gamma)_\gamma}$.
\end{proof}

\subsection{Proof of Theorem \ref{thm: main sec 2}(iii)} \label{subsec: pf of multiplicativity}
Recall the convolution kernel
$$\mc K := \mc P^{-1} \circ \ms O_{\Delta^s_M} \circ (\mc P \boxtimes \mc P) \in D^b\Coh(M^\vee \times_B M^\vee \times_B M^\vee)$$
from (\ref{eq: K def}). We define
$$\mf C := \td^I(p_1^*(\pi_{IB}^* T_{IB} - T_{IM^\vee}) +p_2^*(\pi_{IB}^* T_{IB} - T_{IM^\vee}) -p_3^*(\pi_{IB}^* T_{IB} - T_{\iota_M^\vee})) \cap \tau(\mc K)$$
$$\in \CH_*(I(M^\vee \times_B M^\vee \times_B M^\vee)) = \CH_*(IM^\vee \times_B IM^\vee \times_B IM^\vee).$$
The Todd class convention is chosen so that the following is true:

\begin{prop} \label{prop: C identity}
    We have
    $$\mf G \circ \mf C \circ (\mf G^{-1} \times \mf G^{-1}) = [\Delta_M^s] \in \CH_*(M \times_B M \times_B M).$$
\end{prop}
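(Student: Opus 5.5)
The plan follows the proof of Proposition \ref{prop: G and G-1 inverse}. We first lift the identity to the level of coherent sheaves and only then apply $\tau$. On $D^b\Coh$ we have, by Proposition \ref{prop: GS}, $\mc P\circ\mc P^{-1}\cong \ms O_{\Delta_M}$. Using associativity of the compositions of \S\ref{subsubsec: coherent sheaves}, together with the compatibility of the multiple composition with ordinary composition, we get
$$\mc P\circ \mc K\circ (\mc P^{-1}\boxtimes \mc P^{-1}) = (\mc P\circ\mc P^{-1})\circ \ms O_{\Delta^s_M}\circ \big((\mc P\circ \mc P^{-1})\boxtimes(\mc P\circ\mc P^{-1})\big) \cong \ms O_{\Delta^s_M}.$$
Here the outer factors are the right choices of $\mc P^{\pm1}$ with appropriate transposes. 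It therefore suffices to show that the Chow-level composition $\mf G\circ\mf C\circ(\mf G^{-1}\times\mf G^{-1})$ equals $\td(\cdots)\cap\tau$ of this sheaf-level composition, with the Todd corrections cancelling to give $\tau(\ms O_{\Delta^s_M})$ twisted by $\td(-T_M)$ factors. Those factors then reduce to $[\Delta^s_M]$ exactly as at the end of the first identity in Proposition \ref{prop: G and G-1 inverse}: since $\tau(\ms O_M)=\td(T_M)\cap[M]$, we get $\tau(\ms O_{\Delta^s_M})=\Delta^s_{M*}(\td(T_M)\cap[M])$.

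The key steps, in order, are as follows.

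(1) Expand the composition as $Ip_{*}\,(I\Delta_{M^\vee})^!(I\Delta_{M^\vee})^!(\cdots)$ of the exterior product $\mf G^{-1}\times\mf G^{-1}\times\mf C\times\mf G$. Here the two inner diagonals are on $IM^\vee$ factors and the outer composition with $\mf G$ is over $IM^\vee$ as well.

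(2) Use Proposition \ref{prop: ttd properties}(a),(b) to pull all the $\td^I$ factors out through the pullbacks and the proper pushforwards.

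(3) Combine the $\tau$'s into a single $\tau$ of an exterior product by Theorem \ref{thm: tau properties}(b).

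(4) Replace each $(I\Delta_{M^\vee})^!\tau(-)$ by $\ttd(p^*T_{M^\vee})\cap\tau(\delta_{M^\vee}^*(-))$ using Corollary \ref{cor: part e}. This applies because each $\delta$ is a flat base change of a diagonal, which is exactly the reason the multiple composition was set up with successive flat base changes $\delta_{\mc X_1},\delta_{\mc X_2}$. The fibered products involved embed representably into smooth nice quotients.

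(5) Push forward with Theorem \ref{thm: tau properties}(c) to obtain $\tau$ of the sheaf composition.

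(6) Verify the Todd bookkeeping. In $T_{IM^\vee\times_B M}$, flatness of $IM^\vee\to IB$ gives $\pi_{IB}^*T_{IB}$ terms. These combine with the $T_{\iota_B}$ from $\mf G$, the $-\pi_{IB}^*T_{IB}$ from $\mf G^{-1}$, and the $p_i^*$ terms in $\mf C$. The pieces $-T_{IM^\vee}+T_{\iota_{M^\vee}}$ combine with the $\ttd(T_{M^\vee})$ produced in step (4) into $\iota^*$ of tangent bundles, and everything cancels. The only survivors are $\td(-T_M)$ on the $M$-coordinates, which are then absorbed as above.

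I expect the main obstacle to be step (4), the stacky Gysin/pullback compatibility for the double composition. There are two parts to it. First, one must check that the composite $\delta_{\mc X_2}\circ\delta_{\mc X_1}$ can be handled one flat base change at a time. Second, the inertia of the relevant fibered products over $B$ is not smooth (Remark \ref{rmk: not reg}), so Corollary \ref{cor: part e} must be applied with suitable smooth ambient quotients $\mc Z_i$ (for instance $IM^\vee\times M^\vee$-type products). Care is needed that $\mc K$, which lives on $M^\vee\times_B M^\vee\times_B M^\vee$, is of the form required there. I would first try applying Corollary \ref{cor: part e} twice, with $\mc Y_1=M^\vee\times_B M^\vee$ and then $\mc Y_1=M$, each time keeping the other factors as the $\mc Y_i$. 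After that, the remaining concern is the sign and index bookkeeping of the Todd classes in step (6).
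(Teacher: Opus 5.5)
Your proposal is correct and is essentially the paper's proof. The paper derives $\mc P\circ\mc K\circ(\mc P^{-1}\boxtimes\mc P^{-1})\cong\ms O_{\Delta^s_M}$ and applies $\tau$ using Theorem \ref{thm: tau properties}(b),(c) and Corollary \ref{cor: part e}, once for each flat base change of $\Delta_{M^\vee}$. It takes the $\mc Z_i$ to be the $\mc Y_i$ with $\times_B$ replaced by $\times$; these are stack-level products, and no inertia-level ambient spaces are needed, since Theorem \ref{thm: tau properties}(e) handles the non-regularity of $I\delta$ internally. The Todd classes then cancel to give $\td(p_3^*T_M)\cap(\mf G\circ\mf C\circ(\mf G^{-1}\times\mf G^{-1}))=\tau(\ms O_{\Delta^s_M})=\td(p_3^*T_M)\cap[\Delta^s_M]$.

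The only difference is organizational. The paper works in two stages: it first computes $\tau(\mc K\circ(\mc P^{-1}\boxtimes\mc P^{-1}))=\td^I(p_3^*(\pi_{IB}^*T_{IB}-T_{\iota_{M^\vee}}))\cap(\mf C\circ(\mf G^{-1}\times\mf G^{-1}))$, then composes with $\mc P$. This avoids expanding the triple Chow composition all at once, and hence needs no separate associativity argument.
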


\begin{proof}
    First, from the definition of $\mc K$, we obtain
    \begin{equation} \label{eq: C before tau}
        \mc P \circ \mc K \circ (\mc P^{-1} \boxtimes \mc P^{-1}) = \ms O_{\Delta^s_M}
    \end{equation}
    since $\mc P \circ \mc P^{-1} = \ms O_{\Delta_M}$, composition is associative, and $(\mc A \boxtimes \mc B) \circ (\mc C \boxtimes \mc D) = (\mc A \circ \mc C) \boxtimes (\mc B \circ \mc D)$. As usual, we apply $\tau$ to both sides of (\ref{eq: C before tau}), using the properties established in Theorem \ref{thm: tau properties}. (We will use these properties without comment, and whenever we are applying Corollary \ref{cor: part e}, we take $\mc Z_i$ to be $\mc Y_i$ with all $\times_B$ replaced by $\times$.) 

    On the left, we start by calculating
    $$\tau(\mc K \circ (\mc P^{-1} \boxtimes \mc P^{-1})) := \tau(p_{125*} \delta_{M^\vee_1}^* \delta_{M^\vee_2}^*(\mc P^{-1} \boxtimes \mc P^{-1} \boxtimes \mc K))$$
    $$ = Ip_{125*} \tau(\delta_{M^\vee_1}^* \delta_{M^\vee_2}^*(\mc P^{-1} \boxtimes \mc P^{-1} \boxtimes \mc K))$$
    $$= Ip_{125*}(\ttd(-p_3^*T_{M^\vee}) \cap I\Delta_{M^\vee_1}^!\tau(\delta_{M^\vee_2}^*(\mc P^{-1} \boxtimes \mc P^{-1} \boxtimes \mc K)))$$
    $$= Ip_{125*}(\ttd(-p_3^*T_{M^\vee}) \cap I\Delta_{M^\vee_1}^!(\ttd(-p_5^* T_{M^\vee}) \cap I\Delta^!_{M^\vee_2}(\tau(\mc P^{-1} \boxtimes \mc P^{-1} \boxtimes \mc K))))$$
    $$= Ip_{125*}(\ttd(-p_3^*T_{M^\vee}-p_4^* T_{M^\vee}) \cap I\Delta_{M^\vee_1}^! I\Delta^!_{M^\vee_2}(\tau(\mc P^{-1}) \times \tau(\mc P^{-1}) \times \tau(\mc K)))$$
    \begin{multline*}
        = Ip_{125*}I\Delta_{M^\vee_1}^! I\Delta^!_{M^\vee_2}((\td^I(-\pi_{IB}^* T_{IB} + p_2^* T_{\iota_{M^\vee}}) \cap  \tau(\mc P^{-1})) \times (\td^I(-\pi_{IB}^* T_{IB} + p_2^* T_{\iota_{M^\vee}}) \cap  \tau(\mc P^{-1})) \\ \times
    (\td^I(p_1^*(\pi_{IB}^* T_{IB} - T_{IM^\vee}) + p_2^*(\pi_{IB}^* T_{IB} - T_{IM^\vee})) \cap \tau(\mc K)))
    \end{multline*}
    $$ = Ip_{125*}I\Delta_{M^\vee_1}^! I\Delta^!_{M^\vee_2}(\mf G^{-1} \times \mf G^{-1} \times (\td^I(p_3^*(\pi_{IB}^* T_{IB} - T_{\iota_M^\vee})) \cap \mf C))$$
    $$ = \td^I(p_3^*(\pi_{IB}^* T_{IB} - T_{\iota_M^\vee})) \cap Ip_{125*}I\Delta_{M^\vee_1}^! I\Delta^!_{M^\vee_2}(\mf G^{-1} \times \mf G^{-1} \times \mf C)$$
    \begin{equation} \label{eq: C 1}
        \implies \tau(\mc K \circ (\mc P^{-1} \boxtimes \mc P^{-1})) = \td^I(p_3^*(\pi_{IB}^* T_{IB} - T_{\iota_M^\vee})) \cap (\mf C \circ (\mf G^{-1} \times \mf G^{-1})).
    \end{equation}
    
    Then
    $$\tau(\mc P \circ \mc K \circ (\mc P^{-1} \boxtimes \mc P^{-1})) := \tau(p_{124*} \delta_{M^\vee}^*((\mc K \circ (\mc P^{-1} \boxtimes \mc P^{-1})) \boxtimes \mc P))$$
    $$ = Ip_{124*} \tau(\delta_{M^\vee}^*((\mc K \circ (\mc P^{-1} \boxtimes \mc P^{-1})) \boxtimes \mc P))$$
    $$ = Ip_{124*}(\ttd(-p_3^* T_{M^\vee}) \cap I\Delta_{M^\vee}^!\tau((\mc K \circ (\mc P^{-1} \boxtimes \mc P^{-1})) \boxtimes \mc P))$$
    $$ = Ip_{124*}(\ttd(-p_3^* T_{M^\vee}) \cap I\Delta_{M^\vee}^!(\tau(\mc K \circ (\mc P^{-1} \boxtimes \mc P^{-1})) \times \tau(\mc P)))$$
    \begin{multline*}
        = Ip_{124*} I\Delta_{M^\vee}^!((\td^I(-p_3^* \pi_{IB}^* T_{IB} + p_3^* T_{\iota_{M^\vee}}) \cap  \tau(\mc K \circ (\mc P^{-1} \boxtimes \mc P^{-1}))) \\ \times (\td^I(\pi_{IB}^* T_{IB} - p_1^* T_{IM^\vee}) \cap \tau(\mc P)))
    \end{multline*}
    $$ = (\td^I(p_2^* T_M) \cap \mf G) \circ (\td^I(-p_3^* \pi_{IB}^* T_{IB} + p_3^* T_{\iota_{M^\vee}}) \cap  \tau(\mc K \circ (\mc P^{-1} \boxtimes \mc P^{-1})))$$
    $$ = (\td^I(p_2^* T_M) \cap \mf G) \circ (\mf C \circ (\mf G^{-1} \times \mf G^{-1}))$$
    by (\ref{eq: C 1})
    $$ = \td^I(p_3^* T_M) \cap (\mf G \circ \mf C \circ (\mf G^{-1} \times \mf G^{-1}))$$
    by the projection formula. Thus, applying $\tau$ to (\ref{eq: C before tau}) we obtain
    $$\td^I(p_3^* T_M) \cap (\mf G \circ \mf C \circ (\mf G^{-1} \times \mf G^{-1})) = \tau(\ms O_{\Delta^s_M}) = \td(p_3^* T_M) \cap [\Delta_M^s]$$
    and as $\td^I(p_3^* T_M) = \td(p_3^* T_M)$ is invertible, we are done.
\end{proof}

\begin{proof}[Proof of Theorem \ref{thm: main sec 2}(iii)]
    By Proposition \ref{prop: C identity} we have
    $$\qg_{k+l+1} \circ [\Delta_M^s] \circ (\pg_k \times \pg_l) = \qg_{k+l+1} \circ  \mf G \circ \mf C \circ (\mf G^{-1} \times \mf G^{-1})\circ (\pg_k \times \pg_l)$$
    $$ = (\qg_{k+l+1} \circ  \mf G) \circ \mf C \circ ((\mf G^{-1} \circ \pg_k) \times (\mf G^{-1} \circ \pg_l)).$$
    Now, by Fourier vanishing (Proposition \ref{prop: FV}) we note that
    $$\mf G^{-1}_\gamma \circ \pg_k = \mf G^{-1}_\gamma \circ \pg_{\gamma,k} = \sum_{i\ge 0} \mf G_{\gamma,i}^{-1} \circ \sum_{j \le k} \mf G_{\gamma,j} \circ \mf G_{\gamma,2h-j}^{-1} = \sum_{i \ge 2h-k} \mf G_{\gamma,i}^{-1} \circ \pg_{\gamma,k} \in \Corr_B^{\ge h_\gamma-k}(M, M^{\gamma,\vee})$$
    and similarly for $\mf G^{-1}_\beta \circ \pg_k$ and
    $$\qg_{k+l+1} \circ  \mf G_\alpha =\qg_{\alpha, k+l+1} \circ \sum_{a\ge k+l+1} \mf G_{\alpha,i} \in \Corr_B^{\ge k+l+1 - h_\alpha}(M^{\alpha,\vee},M) .$$
    Note as well that
    $$\mf C_{(\gamma,\beta,\alpha)} \in \Corr_B^{\ge h_\alpha - h_\beta -h_\gamma}(M^{\gamma,\vee}, M^{\beta,\vee}; M^{\alpha,\vee})$$
    as by definition of $\mf C$ we have
    $$\dim \mf C_{(\gamma,\beta,\alpha)} = \dim \tau(\mc K)_{(\gamma,\beta,\alpha)} \le \dim (\Supp \mc K) \cap M^{\gamma,\vee} \times_B M^{\beta,\vee} \times_B M^{\alpha,\vee} \le b-h+h_\gamma+h_\beta+h_\alpha$$
    by Proposition \ref{prop: supp K endoscopic}.
    Putting this all together, we have
    $$\qg_{k+l+1} \circ [\Delta_M^s] \circ (\pg_k \times \pg_l) = \sum_{\gamma,\beta,\alpha \in \Gamma} \qg_{k+l+1} \circ  \mf G_\alpha \circ \mf C_{(\gamma,\beta,\alpha)} \circ ((\mf G^{-1}_\gamma \circ \pg_k) \times( \mf G^{-1}_\beta \circ \pg_l))$$
    $$\in \Corr_B^{\ge h_\gamma -k + h_\beta - l + h_\alpha - h_\beta - h_\gamma + k+l+1 - h_\alpha}(M,M;M) = \Corr_B^{\ge 1}(M,M;M).$$
    But since the composition
    $$\qg_{k+l+1} \circ [\Delta_M^s] \circ (\pg_k \times \pg_l) \in \Corr_B^0(M,M;M)$$
    we conclude this must be 0.
    
    Finally, since the homological realizations of $\pg_k, \pg_l, \qg_{k+l+1}$ are the perverse truncations described in Theorem \ref{thm: main sec 2}(ii) and the homological realization of $[\Delta_M^s]$ is the cup product, after taking cohomology we obtain the statement that
    $$P_kH^*(M) \cup P_lH^*(M) \xrightarrow{\cup} H^*(M) \twoheadrightarrow H^*(M)/P_{k+l}H^*(M)$$
    is trivial, i.e., the perverse filtration is multiplicative.
\end{proof}

\printbibliography
\end{document}